\numberwithin{equation}{section}
\newtheorem{mainthm}{Theorem}
\newtheorem{thm}{Theorem}[section]
\newtheorem{cor}[thm]{Corollary}
\newtheorem{lem}[thm]{Lemma}
\newtheorem{prop}[thm]{Proposition}
\theoremstyle{definition} 
\newtheorem{rem}[thm]{Remark}
\newtheorem{defn}[thm]{Definition}
\theoremstyle{remark}
\def\bN {\mathbb{N}}
\def\bR {\mathbb{R}}
\def\bZ {\mathbb{Z}}
\def\fv {\mathfrak{v}}
\def\cE {\mathcal{E}}
\def\grad {{\nabla}}
\newcommand{\la}{\langle}
\newcommand{\ra}{\rangle}
\newcommand{\La}{\big\langle}
\newcommand{\Ra}{\big\rangle}
\newcommand{\tx}[1]{\mathrm{#1}}
\newcommand{\wt}[1]{\widetilde{#1}}
\newcommand{\bs}[1]{\boldsymbol{#1}}
\newcommand{\vd}{\mathrm{d}}
\newcommand{\vD}{\mathrm{D}}
\newcommand{\uln}[1]{{\underline{ #1 }}}
\newcommand{\lin}{_{\textsc{l}}}
\newcommand{\Lin}{\textsc{l}} 
\newcommand{\sh}[1]{#1^\sharp}
\newcommand{\fl}[1]{#1^\flat}
\newcommand{\dd}[1]{\frac{\ud}{\ud{#1}}}
\definecolor{deepgreen}{cmyk}{1,0,1,0.5}
\newcommand{\E}{\mathcal{E}}
\newcommand{\LL}{\mathcal{L}}
\newcommand{\cS}{\mathcal{S}}
\newcommand{\N}{\mathbb{N}}
\newcommand{\R}{\mathbb{R}}
\newcommand{\Z}{\mathbb{Z}}
\newcommand{\al}{\alpha}
\newcommand{\be}{\beta}
\newcommand{\de}{\delta}
\newcommand{\fy}{\varphi}
\newcommand{\om}{\omega}
\newcommand{\lam}{\lambda}
\newcommand{\te}{\theta}
\newcommand{\s}{\sigma}
\newcommand{\si}{\varsigma}
\newcommand{\De}{\Delta}
\newcommand{\Om}{\Omega}
\newcommand{\Lam}{\Lambda}
\newcommand{\p}{\partial}
\newcommand{\na}{\nabla}
\newcommand{\Rmnum}[1]{\expandafter\@slowromancap\romannumeral #1@}
\newcommand{\ti}{\widetilde}
\newcommand{\U}{\underline}
\newcommand{\ang}[1]{\left\langle{#1}\right\rangle}
\newcommand{\abs}[1]{\left\lvert{#1}\right\rvert}
\newcommand{\EQ}[1]{\begin{equation}\begin{split} #1 \end{split}\end{equation}}
\newcommand{\pmat}[1]{\begin{pmatrix} #1 \end{pmatrix}}
\newcommand{\Del}[1]{}
\newcommand{\mand}{{\ \ \text{and} \ \  }}
\newcommand{\mor}{{\ \ \text{or} \ \ }}
\newcommand{\mif}{{\ \ \text{if} \ \ }}
\newcommand{\mfor}{{\ \ \text{for} \ \ }}
\newcommand{\mas}{{\ \ \text{as} \ \ }}
\newcommand{\uD}{\operatorname{D}}
\newcommand{\rest}{\!\!\restriction}
\definecolor{green}{rgb}{0,0.8,0} 
\newcommand{\ud}{\mathrm{d}}
\newcommand{\eps}{\epsilon}
\newcommand{\bfd}{{\bf d}}
\newcommand{\bfi}{{\bf i}}
\newcommand{\bfp}{{\bf p}}
\newcommand{\bfq}{{\bf q}}
\newcommand{\calC}{\mathcal C}
\newcommand{\calE}{\mathcal E}
\newcommand{\calJ}{\mathcal J}
\newcommand{\calL}{\mathcal L}
\newcommand{\calQ}{\mathcal Q}
\newcommand{\calS}{\mathcal S}
\newcommand{\calW}{\mathcal W}
\newcommand{\calY}{\mathcal Y}
\newcommand{\calZ}{\mathcal Z}
\newcommand{\ULam}{\U{\Lam}}
\begin{document}

\title[Soliton resolution for the radial energy-critical NLW]{Soliton resolution for the energy-critical nonlinear wave equation in the radial case}
\author{Jacek Jendrej}
\author{Andrew Lawrie}

\begin{abstract}
We consider the focusing energy-critical nonlinear wave equation for radially symmetric initial data in space dimensions  $D  \ge 4$. This equation has a unique (up to sign and scale) nontrivial, finite energy stationary solution $W$, called the ground state. 
We prove that every finite energy solution with bounded energy norm resolves, continuously in time, into a finite superposition of asymptotically decoupled copies of the ground state and free radiation.
\end{abstract}

\keywords{soliton resolution; multi-soliton; wave maps; energy-critical}
\subjclass[2010]{35L71 (primary), 35B40, 37K40}

\thanks{J.Jendrej is supported by  ANR-18-CE40-0028 project ESSED.  A. Lawrie is supported by NSF grant DMS-1954455, a Sloan Research Fellowship, and the Solomon Buchsbaum Research Fund}

\maketitle

\tableofcontents

\section{Introduction} 

%
\subsection{Setting of the problem}

We study the Cauchy problem for the focusing nonlinear wave equation in the energy-critical case and under the assumption of radial symmetry, i.e., 
\EQ{ \label{eq:nlw} 
\p_t^2 u - \De u - \abs{u}^{\frac{4}{D-2}} u   &= 0,  \\ 
 (u(T_0), \p_t u(T_0)) &= (u_0, \dot u_0), 
}
where here $D \in \{ 4, 5,  6, \dots\}$ is the underlying spatial dimension, $u = u(t, r) \in \R$, where  $r = \abs{x} \in (0, \infty)$ is the radial coordinate in $\R^D$, $\De := \p_r^2 + (D-1)r^{-1} \p_r$ is the radial Laplacian in $\R^D$, and $T_0 \in\R$. 

The conserved energy for~\eqref{eq:nlw} is given by 
\EQ{
E( u(t), \p_t u(t)) :=  \int_0^\infty \frac{1}{2} \Big[ (\p_t u(t))^2 + ( \p_r u(t))^2\Big]\, r^{D-1} \, \ud r  -  \int_0^\infty\frac{D-2}{2D}\abs{u(t)}^{\frac{2D}{D-2}} \, r^{D-1} \, \ud r. 
}

The Cauchy problem for~\eqref{eq:nlw} can be rephrased as a Hamiltonian system. To formulate it as such,  we will write pairs of functions using boldface,  $\bs v = (v, \dot v)$, noting that the notation $\dot v$ does not, in general, refer to the time derivative of $v$ but  just to the second component of the vector $\bs v$. We see that~\eqref{eq:nlw} is equivalent to 
\begin{equation} \label{eq:u-ham} 
\partial_t \bs u(t) =  J \circ \vD E( \bs u(t)),\qquad \bs u(T_0) = \bs u_0,
\end{equation}
where 
\begin{equation} \label{eq:DE}
J = \begin{pmatrix} 0 &1 \\ -1 &0\end{pmatrix}, \quad \vD E( \bs u(t))  =  \begin{pmatrix}- \De u(t)  - f(u) \\ \partial_t u(t) \end{pmatrix}.
\end{equation}
Above we have introduced the notation $f(u) := \abs{u}^{\frac{4}{D-2}} u$. 

Solutions to~\eqref{eq:nlw} are invariant under the scaling, 
\EQ{
\bs u(t, r) \mapsto \bs u_{\lam}(t, r) := ( \lambda^{-\frac{D-2}{2} }u(t/\lambda, r/ \lambda), \lambda^{-\frac{D}{2} } \p_t u(t/ \lambda, r/ \lambda), \quad \lambda >0, 
}
and~\eqref{eq:nlw} is called \emph{energy-critical} because $E( \bs u) = E( \bs u_{\lam})$.  

The linearization of~\eqref{eq:nlw} about the zero solution is the free scalar wave equation, 
\EQ{\label{eq:lin} 
\p_t^2 v - \De v = 0. 
}

In this paper we study solutions with initial data in the energy space $\E$, which is defined via the norm, 
\EQ{
\| \bs u_0 \|_{\E}^2 := \int_0^\infty  \Big[(\dot u_0(r))^2 + (\p_r u_0(r))^2 + \frac{(u_0(r))^2}{r^2}\Big] \, r^{D-1} \, \ud r . 
} 
Using Hardy's inequality, functions $\bs u(r)$ in $\E$ can be identified with radially symmetric functions $\bs v (x)$ in the space $\dot H^1 \times L^2( \R^D)$ in the usual way.  We will sometimes use the notation, 
\EQ{
\| u_0 \|_{H}^2 := \int_0^\infty  \Big[ (\p_r u_0(r))^2 + \frac{(u_0(r))^2}{r^2}\Big] \, r^{D-1} \, \ud r ,  
}
and write $\E = H \times L^2$. 

It is a classical result of  Ginibre and Velo~\cite{GV89} that~\eqref{eq:nlw} is well-posed in the space $\E$. In particular, to every $\bs u_0 \in\E$ we can associate  maximal forward and backward times of existence $T_+ \in (0, \infty]$ and $T_- \in [-\infty, 0)$. We will only consider solutions $\bs u(t) \in \E$ to~\eqref{eq:nlw} that satisfy, 
\EQ{ \label{eq:type-ii-intro} 
\limsup_{t \to T_{+}} \| \bs u(t) \|_{\E} < \infty \mor \limsup_{t \to T_{-}} \| \bs u(t) \|_{\E} < \infty.
}
Solutions for which $\lim_{t \to T_{+}} \| \bs u(t) \|_{\E} =\infty$ are known to exist and are called type-I (or ODE-type) blow up solutions; see e.g.,~\cite{Levine, BCT-04, Donn-Duke}.

We define the Aubin-Talenti solution, $\bs W(x) := (W(x), 0)$  where $W: \R^D \to \R$, by 
\EQ{
W(x) :=  \Big( 1 + \frac{\abs{x}^2}{D(D-2)}\Big)^{-\frac{D-2}{2}}, 
}
and note that $W(x)$ is the unique (up to sign, scaling, and translation), \emph{non-negative} and nontrivial $C^2$ solution to 
\EQ{
-\De W (x)= - \abs{W(x)}^{\frac{4}{D-2}} W(x) , \quad x \in \R^d. 
}
Abusing notation slightly and writing $W(x) = W(r)$ with $r = \abs{x}$, we see that $\bs W(r)$ is a stationary solution to~\eqref{eq:nlw}. In fact, it is the unique (up to sign and scaling) static radial nontrivial solution to~\eqref{eq:nlw} in $\E$. For each $\lam>0$,  we write $\bs W_\lam(r):= (\lam^{-\frac{D-2}{2}} W(r/ \lambda), 0)$.

\subsection{Statement of the results}  Our main result is formulated as follows. 

\begin{mainthm}[Soliton Resolution] \label{thm:main}  Let $D \ge 4$ and let $\bs u(t)$ be a finite energy solution to~\eqref{eq:nlw} with initial data $\bs u(0) = \bs u_0 \in \E$,  defined on its maximal forward interval of existence $[0,T_+)$. Suppose that, 
\EQ{ \label{eq:type-II-thm} 
\limsup_{t \to T_{+}} \| \bs u(t) \|_{\E} < \infty. 
}
 Then, 
 
 \emph{({Global solution})} if $T_+ = \infty$, there exist a time $T_0>0$, a solution $\bs u^*\lin(t) \in \E$ to the linear wave equation~\eqref{eq:lin}, an integer $N \ge 0$, continuous functions $\lam_1(t), \dots,  \lam_N(t) \in C^0([T_0, \infty))$, signs $\iota_1, \dots, \iota_N \in \{-1, 1\}$,  and $\bs g(t) \in \E$ defined by 
 \EQ{
 \bs u(t) =  \sum_{j =1}^N \iota_j \bs W_{\lam_j(t)}  + \bs u^*\lin(t) + \bs g(t) , 
 }
 such that 
 \EQ{
 \| \bs g(t)\|_{\E} + \sum_{j =1}^{N} \frac{\lam_{j}(t)}{\lam_{j+1}(t)}  \to 0 \mas t \to \infty,  
 }
 where above we use the convention that $\lam_{N+1}(t) = t$;

 \emph{({Blow-up solution})} if $T_+ < \infty$, there exists a time $T_0< T_+$, a function $\bs u_0^*\in \E$, an integer $N \ge 1$, continuous functions $\lam_1(t), \dots,  \lam_N(t) \in C^0([T_0, T_+))$, signs $\iota_1, \dots, \iota_N \in \{-1, 1\}$, and $\bs g(t) \in \E$ defined by 
 \EQ{
 \bs u(t) =  \sum_{j =1}^N \iota_j\bs W_{\lam_j(t)}  + \bs u^*_0 + \bs g(t) , 
 }
 such that 
 \EQ{
 \| \bs g(t)\|_{\E} + \sum_{j =1}^{N} \frac{\lam_{j}(t)}{\lam_{j+1}(t)}  \to 0 \mas t \to T_+, 
 }
 where above we use the convention that $\lam_{N+1}(t) = T_+-t$. 

Analogous statements hold for the backwards-in-time evolution. 
\end{mainthm}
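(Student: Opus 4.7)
The plan is to prove the theorem in two stages: first establish \emph{sequential} soliton resolution along some sequence $t_n \to T_+$, and second upgrade it to the continuous-in-time resolution asserted above. The second stage is where the bulk of the new work lies.

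For the sequential stage I would extract the radiation (or regular part) as follows. In the global case, a linear profile decomposition in the spirit of Bahouri--G\'erard, combined with the boundedness of $\|\bs u(t)\|_{\E}$, produces a free wave $\bs u^*\lin(t)$ such that the remainder $\bs u(t) - \bs u^*\lin(t)$ concentrates in a compact region around $r=0$; in the blow-up case, finite speed of propagation localizes the analysis to the backward light cone and yields the regular profile $\bs u_0^*$. Applying a nonlinear profile decomposition to the concentrating remainder, each nontrivial profile is a non-scattering radial solution of~\eqref{eq:nlw} with the compactness property, and by the classification of such minimal solutions (channels-of-energy rigidity and the Duyckaerts--Kenig--Merle/Duyckaerts--Jia--Kenig--Merle machinery, together with the appropriate even-dimensional substitutes) each one must be a rescaled $\pm \bs W$. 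This produces times $t_n \to T_+$ along which the asserted decomposition holds, with $\|\bs g(t_n)\|_{\E} + \sum_j \lam_j(t_n)/\lam_{j+1}(t_n) \to 0$.

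To upgrade to continuous resolution, define a multi-bubble proximity function $\bs d(\bs u(t))$ equal to the infimum, over admissible parameters $N, \iota_j, 0 < \lam_1 < \cdots < \lam_N$, of $\|\bs g\|_{\E} + \sum_j \lam_j/\lam_{j+1}$. Sequential resolution gives $\bs d(\bs u(t_n)) \to 0$, and I need to show $\bs d(\bs u(t)) \to 0$. Arguing by contradiction, if $\bs d$ does not vanish then one can extract \emph{collision intervals} $[a_n,b_n]$ on which $\bs d$ is small at the endpoints, crosses a fixed threshold $\eta$ inside, and stays below a larger threshold throughout. On each such interval an implicit function theorem gives a modulated decomposition
\[
\bs u(t) = \bs u^*\lin(t) + \sum_{j=1}^N \iota_j \bs W_{\lam_j(t)} + \bs g(t),
\]
where the orthogonality conditions imposed on $\bs g(t)$ eliminate the scaling zero modes $\Lambda W_{\lam_j}$ of the linearized operator at each bubble. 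A Taylor expansion of $E$ at the multi-bubble then yields coercivity of $\|\bs g(t)\|_{\E}^2$ modulo those directions, and standard modulation computations give equations for $\dot\lam_j/\lam_j$ of the form ``scale velocity $=$ interaction $+$ error''.

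The decisive input is a \emph{no-return lemma} derived from a carefully localized virial-type functional of the schematic form $\mathcal I(t) = \sum_j \int \chi_j(t,r)\, \p_t u \,(r\p_r + \tfrac{D-2}{2}) u \, r^{D-1} \,\ud r$, where the cutoffs $\chi_j$ are adapted to dyadic windows between the scales $\lam_{j-1}(t)$ and $\lam_{j+1}(t)$. Differentiating $\mathcal I$ along the flow produces a coercive bulk term controlling both the integrated motion of the scale ratios $\lam_j/\lam_{j+1}$ and the internal energy of $\bs g$, modulo boundary terms bounded by $\bs d$ at the endpoints of $[a_n,b_n]$ and modulo multi-bubble interaction errors. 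Comparing the smallness of the boundary contributions with the lower bound on the bulk contradicts the assumed excursion of $\bs d$ inside the interval. I expect the main obstacle to be precisely this virial step: one must choose the cutoffs, orthogonality conditions, and multipliers jointly so that (i) the scaling zero modes cancel after modulation, (ii) nonlinear interactions between neighboring bubbles are strictly subdominant, which is where the dimension restriction $D \ge 4$ enters through the pointwise decay rates of $W$, and (iii) the free radiation $\bs u^*\lin(t)$ acts only as a perturbation, controlled via its vanishing in the mean on compact regions as $t \to \infty$.
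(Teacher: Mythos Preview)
Your two-stage architecture is correct, and the sequential resolution step is broadly right (though the paper obtains it via a Jia--Kenig virial ``compactness lemma'' rather than channels of energy, precisely to cover all $D \ge 4$ uniformly). The genuine gap is in your no-return analysis.

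You define collision intervals $[a_n, b_n]$ on which $\bfd$ is small at the endpoints, crosses a threshold $\eta$ inside, and ``stays below a larger threshold throughout.'' That last clause is the problem: there is no a priori reason $\bfd(t)$ should remain below any modulation threshold $\eta_0$ on all of $[a_n, b_n]$, and once $\bfd(t) \ge \eta_0$ the implicit function theorem no longer supplies a decomposition, the scales $\lambda_j(t)$ are undefined, and your virial functional $\mathcal I(t)$ with cutoffs between adjacent bubble scales cannot even be written down. So the modulation-plus-virial argument you sketch covers only the portions of $[a_n, b_n]$ where $\bfd$ is small, and says nothing about the sub-intervals where the bubbles have genuinely lost their shape.

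The paper resolves this by introducing a second layer of structure. First, it defines $K$ as the \emph{minimal} number of ``interior'' bubbles that must collide, so that $N-K$ ``exterior'' bubbles remain coherent (in the sense of a localized distance $\bfd_K(t;\rho_K(t)) \le \epsilon_n$) on the entire collision interval; minimality of $K$ is what later forces a lower bound $d_n - c_n \gtrsim \lambda_K$ on the length of sub-intervals where $\bfd$ is bounded away from zero. Second, each $[a_n,b_n]$ is partitioned into \emph{modulation intervals} (where $\bfd \le \eta_0$, and your refined-modulation/ejection analysis applies to bound $\int \bfd \, \ud t$) and \emph{compactness intervals} (where $\bfd \ge \epsilon$, and the sequential compactness lemma is invoked \emph{again} to produce strictly negative drift of a single global virial functional cut off at a scale $\rho(t)$ sitting between $\lambda_K$ and $\mu_{K+1}$). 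The contradiction comes from summing these contributions, not from a single modulation computation. Your proposal is missing both the minimal-$K$ mechanism and the compactness-interval half of the dichotomy.
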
 


\begin{rem} This type of behavior is referred to as soliton resolution. Theorem~\ref{thm:main} has been proved for~\eqref{eq:nlw} in a series of remarkable works by Duyckaerts, Kenig, and Merle in odd space dimensions $D \ge 3$; see~\cite{DKM3} for dimension $D = 3$ and see ~\cite{DKM7, DKM8, DKM9} for odd space dimensions $D \ge 5$. The space dimension $D=4$ was treated by Duyckaerts, Kenig, Martel,  and Merle in~\cite{DKMM}, which also covers the $k=1$-equivariant wave maps equation, and dimension $D=6$ was treated by  Collot, Duyckaerts, Kenig, and Merle~\cite{CDKM}. All of these papers use, in some fashion, 
the method of energy channels.  
Roughly, energy channels refer to measurements of the portion of energy that a linear or nonlinear wave radiates outside fattened light cones.
The approach we take to prove Theorem~\ref{thm:main} is independent of the method of energy channels. Our method of proof follows closely our recent preprint~\cite{JL6}, which proved the analogue of Theorem~\ref{thm:main} for the $k$-equivariant wave maps equation in all equivariance classes $k \in \N$. 
\end{rem}

\begin{rem}
The soliton resolution problem is inspired by the theory of completely integrable systems, e.g.,~\cite{Eck-Sch, Sch, SA},  motivated by numerical simulations,~\cite{FPU, ZK}, and by the bubbling theory of harmonic maps in the elliptic and parabolic settings~\cite{Struwe85, Qing, QT, Top-JDG, Topping04}; see also~\cite{DJKM1, DKM9, DKMM} for discussions on the history of the problem. 
\end{rem} 

\begin{rem} 
Equation~\ref{eq:nlw}, its counterpart in defocusing case, as well as the sub- and super-critical versions, have been classically studied; see for example the articles~\cite{Jorgens, Segal, Segal-Annals, Morawetz68, Strauss68, MoSt72, Rauch, Struwe88, SS93, SS94, BS98, Grillakis90, Pecher78, Pecher88, Kap94} and the books by Strauss~\cite{Strauss}, Sogge~\cite{Sogge}, and Statah, Struwe~\cite{SSbook}.

\end{rem} 


\begin{rem} 
Theorem~\ref{thm:main} is a qualitative description of the dynamics of all finite energy solutions to~\eqref{eq:nlw} with bounded critical norm.  A natural, challenging question is to ask which types of configurations of solitons and radiation are realized in solutions. The first results in this direction were by Krieger and Schlag~\cite{KS07} who found a manifold of global-in-time solutions that decoupled into a static $\bs W$ and free radiation; see also the improvement by Beceanu~\cite{Bec-14}.  Kenig and Merle~\cite{KM08} gave the first general description of dynamics in a non-perturbative setting for the focusing energy critical NLW (non-radial), proving that solutions $\bs u$ with energy below the ground state energy  scatter in both directions if $\| \na u_0 \|_{L^2}^2 < \| \na W \|_{L^2}^2$ or blow up in finite time in both directions if $\| \na u_0 \|_{L^2}^2 < \| \na W \|_{L^2}^2$.  In~\cite{DM08} Duyckaerts and Merle classified the dynamics of solutions at the threshold energy $E = E(\bs W)$. Characterizations of the dynamics for energies slightly above the ground state energy were given by Krieger, Nakanishi, and Schlag in~\cite{KNS13, KNS15}. The first construction of a solution developing a bubbling singularity (with one concentrating bubble) in finite time was done by Krieger, Schlag, and Tataru~\cite{KST3}; see also Hillairet and Raphael~\cite{HiRa12} for a different construction in dimension $D=4$,  and also~\cite{KrSc14}. 

In~\cite{JJ-AJM}, the first author constructed a solution exhibiting more than one bubble in the decomposition, showing the existence of a solution that forms a $2$-bubble in infinite time with zero radiation in dimension $D = 6$. We expect that solutions that form $2$-bubble in forward infinite time also exist in dimensions $D \ge 7$. 
\end{rem}


%

When multi-bubble solutions do occur in one time direction, it is natural to ask about the dynamics  of those solutions   in the opposite time direction.
We answered this question in~\cite{JL1} in the setting of equivariant wave maps  for the pure $2$-bubble solution $\bs u_{(2)}$ constructed by the first author in~\cite{JJ-AJM}. We showed that any $2$-bubble in forward time must scatter freely in backwards time.  When the scales of the bubbles become comparable, this ‘collision’ completely annihilates the $2$-bubble structure and the entire solution becomes free radiation, i.e., the collision is \emph{inelastic}. Viewed in forward time, this means that the $2$-soliton structure emerges from pure radiation, and constitutes an orbit connecting two different dynamical behaviors. We later showed in~\cite{JL2-regularity, JL2-uniqueness} that $\bs u_{(2)}(t)$ is the unique $2$-bubble solution up to sign, translation, and scaling in equivariance classes $k \ge 4$. 
 While we do not consider such refined two-directional analysis here, a relatively straightforward corollary of the proof of Theorem~\ref{thm:main} is that there can be \emph{no elastic collisions of pure multi-bubbles}, which we formulate as a proposition below. 
 
 Before stating the result, we define pure multi-bubbles in forward or backward time. 
\begin{defn}
\label{def:pure}
With the notations from the statement of Theorem~\ref{thm:main}, we say that
$\bs u$ is a \emph{pure multi-bubble} in the forward time direction if
$\bs u\lin^* = 0$ in the case $T_+ = +\infty$, and $\bs u_0^* = 0$ in the case $T_+ < +\infty$.

We say that $\bs u$ is a pure multi-bubble in the backward time direction
if $t \mapsto \bs u(-t)$ is a pure multi-bubble in the forward time direction.
\end{defn}


\begin{prop}[No elastic collisions of pure multi-bubbles]
\label{prop:inelastic}
Stationary solutions are the only pure multi-bubbles in both time directions. 
\end{prop}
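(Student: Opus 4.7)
The plan is to argue by contradiction, combining energy conservation with the coercivity and modulation estimates developed in the proof of Theorem~\ref{thm:main}. Assume $\bs u$ is a pure multi-bubble in both time directions with $N_\pm \ge 0$ bubbles as $t \to T_\pm$, and suppose for contradiction that $\bs u$ is not stationary. Because the bubbles asymptotically decouple and there is no radiation at either end, energy conservation yields
\begin{equation*}
E(\bs u) \;=\; N_+\, E(\bs W) \;=\; N_-\, E(\bs W),
\end{equation*}
hence $N := N_+ = N_-$. If $N = 0$, then $E(\bs u) = 0$ while $\bs u(t) \to 0$ in $\E$ at both endpoints; together with the Sobolev inequality applied at a time of small energy norm, this forces $\bs u \equiv 0$, a stationary solution.

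Assume from now on $N \ge 1$. The first and most delicate step is to show that $\bs u(t)$ stays in a small $\E$-neighborhood of the $N$-bubble family for \emph{every} $t \in (T_-, T_+)$, not merely near the endpoints. This is carried out by a no-return compactness argument: if it failed, one could extract a sequence of interior times $t_n$ along which $\bs u(t_n)$ is bounded away from the $N$-bubble family. Applying profile decomposition and Theorem~\ref{thm:main} to the resulting nonlinear profiles, any subsequential limit must itself decompose into decoupled bubbles and radiation; the absence of radiation at $T_\pm$ combined with the equality $E(\bs u) = N\, E(\bs W)$ precludes any additional bubbles or nontrivial radiation in the limit, contradicting the assumed separation.

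Once uniform-in-time proximity to the $N$-bubble family is in hand, one applies the coercive modulation estimate at the heart of the proof of Theorem~\ref{thm:main}: a localized virial-type functional $\mathcal{J}(\bs u(t))$ whose time derivative dominates a positive multiple of $\sum_{j=1}^{N} \lam_j(t)/\lam_{j+1}(t)$, up to error terms controlled by the distance to the $N$-bubble family. Integrating this differential inequality from $T_-$ to $T_+$, the total variation of $\mathcal{J}$ over the multi-bubble family is bounded, while the ratios $\lam_j(t)/\lam_{j+1}(t)$ vanish at both endpoints by the pure multi-bubble hypothesis. Unpacking the coercivity forces these ratios to vanish for all $t$, hence $N = 1$ with $\lam_1(t)$ constant; in other words $\bs u(t) = \iota_1 \bs W_{\lam_1}$, contradicting the non-stationarity hypothesis. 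The main obstacle is the global-in-time proximity to the $N$-bubble family: the modulation analysis and virial identities are essentially classical once one works inside a small neighborhood of the multi-bubble manifold, but establishing that the trajectory remains in that neighborhood for all interior times is where the full concentration-compactness and no-return machinery developed for Theorem~\ref{thm:main} is needed, and this is precisely why the proposition follows only as a corollary of the soliton resolution proof rather than being accessible directly.
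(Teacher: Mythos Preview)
Your argument has a genuine gap at the ``uniform-in-time proximity'' step, and the paper's proof in fact avoids this step entirely.

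The profile-decomposition sketch you give does not establish that $\bfd(t)$ stays small on the whole interval $(T_-,T_+)$. Extracting profiles at bad times $t_n$ and applying Theorem~\ref{thm:main} to the \emph{nonlinear profiles} tells you about the asymptotics of those new solutions, not about $\bs u(t_n)$ itself. To identify the profiles as bubbles (rather than arbitrary solutions) you would need vanishing of the kinetic energy along $t_n$, which is exactly the input to the sequential resolution Theorem~\ref{thm:seq} and is not available at arbitrary interior times. The equality $E(\bs u)=NE(\bs W)$ alone does not prevent the solution from wandering away from the $N$-bubble family at intermediate times; a~priori $\bfd(t)$ could be large on a compact set.

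The paper's proof bypasses this issue. It only uses $\bfd(t)\le\eta$ on neighborhoods $[t_+,T_+)$ and $(T_-,t_-]$ of the endpoints, where the ejection Lemma~\ref{lem:ejection} (with $\zeta_n=\delta_n=0$ since $\bs u^*\equiv 0$) gives $\int_{t_+}^{T_+}\bfd(t)\,\ud t\lesssim\bfd(t_+)^{4/(D-2)}\lambda_N(t_+)$; from $|\lambda_N'|\lesssim\bfd$ this forces $T_\pm=\pm\infty$ and $\lambda_N(t)$ to converge. One then integrates the virial identity~\eqref{eq:virial} with constant radius $R$ over all of $\bR$: the error $\Omega_R(\bs u(t))$ is bounded by $C_0\bfd(t)$ near $\pm\infty$ (so its integral there is $\le\delta/3$ by the ejection bound), while on the remaining \emph{compact} interval $[T_1,T_2]$ one uses the crude bound $|\Omega_R(\bs u(t))|\lesssim\|\bs u(t)\|_{\cE(R,2R)}$, which is small for $R$ large regardless of the size of $\bfd$. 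This yields $\int_{\bR}\|\partial_t u(t)\|_{L^2}^2\,\ud t\le\delta$ for all $\delta>0$, hence $\partial_t u\equiv 0$. No global smallness of $\bfd$ is needed, and the conclusion is $\partial_t u\equiv 0$ directly, rather than the statement ``the ratios vanish for all $t$'' (which, taken literally, is impossible for $N\ge 2$ at any fixed time).
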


\begin{rem}
 We note that Proposition~\ref{prop:inelastic} was also proved for odd dimensions $D \ge 3$ in~\cite{DKM4, DKM7, DKM8, DKM9} and in dimensions $D =4, 6$ in~\cite{DKMM, CDKM} by the method of energy channels. The case of $k=1$ equivariant wave maps was treated in~\cite{DKMM} using energy channels. Here the proof of Proposition~\ref{prop:inelastic} follows from the method introduced by the authors in~\cite{JL6} where we treated equivariant wave maps for all equivariance classes $k \ge 1$.  See ~\cite{MM11, MM11-2, MM18} for more regarding the inelastic soliton collision problem for non-integrable PDEs. 
\end{rem}

\subsection{Summary of the proof}  

The proof of Theorem~\ref{thm:main} is built on two significant partial results;  (1) that the radiation term, $\bs u^*\lin$ in the global case and $\bs u^*_0$ in the blow-up setting,  can be identified continuously in time, and (2) that the resolution is known to hold along a  well-chosen sequence of times (at least in the case of certain space dimensions). Because the existing literature does not cover all space dimensions, we sketch a unified proof of the sequential soliton resolution (see Theorem~\ref{thm:seq} below) as a consequence of what we call the Compactness Lemma (see Lemma~\ref{lem:compact}, which is also used crucially in the proof of the main theorem), the identification of the radiation, and the fact that no energy can concentrate in the self-similar region of the light cone. 

We discuss these  results in more detail. 
To unify the blow-up and global-in-time settings we make the following conventions. Consider a finite energy solution  $\bs u(t) \in \E$. We assume that either $\bs u(t)$ blows up in backwards time at $T_-=0$ and is defined on an interval $I_*:=(0, T_0]$, or $\bs u(t)$ is global in forward time and defined on the  interval $I_* := [T_0, \infty)$ where in both cases $T_0>0$. We let $T_* := 0$ in the blow-up case and $T_* := \infty$ in the global case. We assume that $\bs u(t)$ exhibits type II behavior in that, 
\EQ{ \label{eq:typeII} 
\lim_{t \to T_*} \| \bs u(t) \|_{\E} < \infty. 
}

\textbf{Step 1: Extraction of the radiation.} 
%
Below we will use the notation $\cE(r_1, r_2)$ to denote the localized energy norm
\begin{equation} \label{eq:local-E-norm} 
\|\bs g\|_{\cE(r_1, r_2)}^2 := \int_{r_1}^{r_2} \Big((\dot g)^2 + (\partial_r g)^2 + \frac{g^2}{r^2}\Big)\,r^{D-1}\vd r,
\end{equation}
By convention, $\cE(r_0) := \cE(r_0, \infty)$ for $r_0 > 0$. The local nonlinear energy is denoted $E(\bs u_0; r_1, r_2)$. 
We adopt similar conventions as for $\cE$ regarding the omission
of $r_2$, or both $r_1$ and $r_2$.

\begin{thm}[Properties of the radiation] \emph{\cite{DKM1, DKM3, CKLS3, JK, Rod16-adv}}  \label{thm:radiation} 
Let $\bs u(t) \in \E$ be a finite energy solution to~\eqref{eq:nlw} on an interval $I_*$ as above and such that~\eqref{eq:typeII} holds.  Then, 
there is a  finite energy solution $\bs u^*(t) \in \E$ to~\eqref{eq:nlw} called the radiation, and a function $\rho : I_* \to (0,\infty)$ that satisfies, 
\begin{equation} \label{eq:radiation} 
\lim_{t \to T_*} \big((\rho(t) / t)^{\frac{D-2}{2}} + \|\bs u(t) -  \bs u^*(t)\|_{\cE(\rho(t))}^2\big) = 0.
\end{equation}
Moroever, for any $\al \in (0, 1)$,  
\EQ{ \label{eq:rad-rho} 
\| \bs u^*(t)\|_{\E( 0, \al t)} \to 0 \mas t \to T_*. 
}
\end{thm}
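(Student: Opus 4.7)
The plan is to produce $\bs u^*$ as the asymptotic exterior profile of $\bs u$, then to identify it as a nonlinear solution of~\eqref{eq:nlw}, and finally to confirm both~\eqref{eq:radiation} and~\eqref{eq:rad-rho}. First I would fix a sequence $t_n \to T_*$ and radii $\rho_n > 0$, arranged so that $\rho_n / t_n \to 0$ in the global case and $\rho_n$ lies well above any concentration scales of $\bs u(t_n)$ in the blow-up case. Set $\bs v_n := \chi_n \bs u(t_n)$ for a smooth radial cutoff $\chi_n$ supported in $r \geq \rho_n$. By~\eqref{eq:typeII} and weak sequential compactness in $\E$, pass to a subsequence so that $\bs v_n \rightharpoonup \bs v^*$ in $\E$, with $\bs v^*$ supported away from the origin in the blow-up case.

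Next I would define $\bs u^*$ as a nonlinear solution and establish the closeness statement~\eqref{eq:radiation}. In the global case, $\bs u^*$ is the unique scattering solution of~\eqref{eq:nlw} whose forward scattering data matches $\bs v^*$, obtained by running small-data/exterior theory backward in time from $T_* = \infty$. In the blow-up case, $\bs u^*$ is the solution of~\eqref{eq:nlw} with data $\bs v^*$ posed at $t = T_+$; since $\bs v^*$ is regular near $r = 0$, local well-posedness extends it past $T_+$. To verify~\eqref{eq:radiation}, write $\bs w := \bs u - \bs u^*$ so that $w$ satisfies the linear wave equation with source $f(u) - f(u^*)$. In the exterior region $r \geq \rho(t)$, the radial Sobolev bound
\[
|u(t,r)| \lesssim r^{-(D-2)/2} \|u(t)\|_{H}
\]
makes this source small; a Duhamel/Strichartz bootstrap combined with finite speed of propagation and the weak-limit extraction above yields $\|\bs w(t)\|_{\cE(\rho(t))} \to 0$. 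The accompanying rate $(\rho(t)/t)^{(D-2)/2} \to 0$ baked into~\eqref{eq:radiation} reflects the precise balance needed to close the bootstrap, and is enabled by a no-self-similar-concentration estimate of Christodoulou--Tahvildar-Zadeh type, derived from a localized virial identity.

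For~\eqref{eq:rad-rho}, in the global case $\bs u^*$ scatters to a radial linear solution $\bs u\lin^*$ of~\eqref{eq:lin}; by the sharp asymptotics of radial linear waves (ultimately Huygens' principle together with a density argument in even dimensions), the energy of $\bs u\lin^*$ concentrates on the light cone $r \sim t$, so $\|\bs u\lin^*(t)\|_{\E(0, \alpha t)} \to 0$, and this passes to $\bs u^*$ via the scattering bound. In the blow-up case, $\bs u^*$ is continuous in $\E$ across $t = T_+ = 0$, hence $\|\bs u^*(t)\|_{\E(0, \alpha t)} \to 0$ by absolute continuity of the energy integral on a shrinking ball.

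The hard part is the exterior closeness argument: the radial Sobolev bound gives only $L^\infty$ decay of order $r^{-(D-2)/2}$, which is borderline for closing a perturbation estimate on $f(u) - f(u^*)$ in exterior Strichartz norms. The tension between the size of the nonlinearity and the exterior scale $\rho(t)$ is what dictates the rate in~\eqref{eq:radiation}, and controlling it ultimately rests on the no-self-similar-concentration estimate, which I expect to be the technical heart of the proof.
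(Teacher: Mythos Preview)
Your overall architecture is right: identify $\bs u^*$, prove exterior closeness, invoke non-self-similar concentration to push the radius $\rho(t)$ down to $o(t)$, and handle~\eqref{eq:rad-rho} via light-cone concentration (global case) or continuity (blow-up case). The paper's proof is in fact just an assembly of cited black-box results (Propositions~\ref{prop:rad-blowup}, \ref{prop:rad-global}, \ref{prop:stz-blowup}, \ref{prop:stz-global}), so you are essentially sketching the content of those propositions.

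However, your construction of $\bs u^*$ in the global case has a genuine gap. You take $\bs v_n := \chi_n \bs u(t_n)$ with $t_n \to \infty$ and extract a weak limit $\bs v^*$ in $\cE$. But the radiation at time $t_n$ is supported near $r \sim t_n \to \infty$, so it escapes to spatial infinity and contributes nothing to a weak limit in $\cE$; what survives in $\bs v^*$ would be whatever non-radiative part of $\bs u(t_n)$ sits at bounded scales, not the radiation. Calling $\bs v^*$ the ``forward scattering data'' does not fix this: scattering data is recovered by pulling back along the free flow, not by taking raw weak limits of late-time data. The correct construction (this is the content of Proposition~\ref{prop:rad-global}, following~\cite{DKM3}) is to show that $S\lin(-t)\bs u(t) \rightharpoonup \bs u\lin^*(0)$ weakly in $\cE$ as $t \to \infty$, and then define $\bs u^*$ as the nonlinear solution scattering to $\bs u\lin^*$ via the wave operator. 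The pull-back by $S\lin(-t)$ is the missing idea; without it you cannot isolate the outgoing radiation from a global solution.

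In the blow-up case your weak-limit construction is essentially correct and matches Proposition~\ref{prop:rad-blowup}. Your identification of the non-self-similar-concentration estimate as the technical heart is also accurate; the paper obtains it (Propositions~\ref{prop:stz-blowup}, \ref{prop:stz-global}) by reducing to a two-dimensional wave-maps-type equation via $\psi = r^{(D-2)/2}u$ and running the Shatah--Tahvildar-Zadeh flux/virial machinery, rather than via an exterior Strichartz bootstrap as you suggest.
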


\begin{rem} 
In the global setting, i.e., $I_* = [T_0, \infty)$  the linear wave $\bs u\lin^*(t) \in \E$ that appears in Theorem~\ref{thm:main} is the unique solution to the linear equation~\eqref{eq:lin} satisfying,
\EQ{
\| \bs u^*(t) - \bs u\lin^*(t) \|_{\E} \to 0 \mas n \to \infty, 
} 
see Proposition~\ref{prop:rad-global}. In the finite time blow-up setting the final radiation $\bs u^*_0 \in \E$ that appears in Theorem~\ref{thm:main} can be viewed as initial data for $\bs u^*(t)$, i.e., the radiation $\bs u^*(t)$ in Theorem~\ref{thm:radiation} satisfies $\bs u(t, r)  =   \bs u^*(t, r)$ for $r > t$. We refer the reader to Section~\ref{ssec:seq} for a sketch of the proof of Theorem~\ref{thm:radiation} following the scheme of Duyckaerts, Kenig, and Merle~\cite{DKM3} (see also the preliminary results in Sections~\ref{ssec:rad} for the identification of the radiation and Section~\ref{ssec:ssim} for non-concentration of energy in the self-similar region of the cone, which follow the methods of~\cite{CKLS3, JK} by C\^ote, Kenig, the second author, and Schlag, and by Jia and Kenig). 


\end{rem}

\begin{rem} 
 The radiation field for~\eqref{eq:nlw} can be identified even outside radial symmetry; see the work of Duyckaerts, Kenig, and Merle~\cite{DKM19}. The radiation field can be identified in several other contexts and by different means. For example, Tao accomplished this in~\cite{Tao-07} for certain high dimensional NLS.  
\end{rem} 

\textbf{Step 2: Sequential soliton resolution}. 
A deep insight of Duyckaerts, Kenig, and Merle, proved in~\cite{DKM3} for $D=3$, is that once the linear radiation is subtracted from the solution, the entire remainder should exhibit strong sequential compactness -- it decomposes into a finite sum of asymptotically decoupled elliptic objects, in our case these are copies of the ground state, along at least one time sequence, up to an error that vanishes in the energy space. A crucial tool in proving such a compactness statement is the remarkable theory of profile decompositions for dispersive equations developed by Bahouri and G\'erard~\cite{BG}. However, after finding the profiles and their space-time concentration properties (in our case their scales) via the main result in~\cite{BG}, one must identify them as elliptic objects (solitons) by some means, and then prove that the error vanishes in the energy space,  rather the weaker form of compactness (vanishing of certain Strichartz norms) given by~\cite{BG}.  This was proved in the breakthrough paper~\cite{DKM3} using  linear energy channels (amongst other techniques).  The program from~\cite{DKM3} was used to extend the sequential resolution result to all  all odd dimensions $D \ge 3$ in~\cite{Rod16-adv}. It was shown in~\cite{CKS, CKLS3} that scheme of proof from~\cite{DKM3} could be extended to the subset of even space dimensions $D = 0 \mod(4)$. Jia and Kenig then proved the sequential soliton resolution result for dimension $D =6$ using a different scheme based on virial inequalities rather than energy channels. We follow the Jia-Kenig approach here to prove a general result, which we call the Compactness Lemma~\ref{lem:compact}, which we then combine with Theorem~\ref{thm:radiation} to give a unified proof of the sequential resolution in all space dimensions $D \ge 4$; for the latter, see Section~\ref{ssec:seq}. 

Before stating the sequential resolution result, we introduce some notation. 

%

\begin{defn}[Multi-bubble configuration] \label{def:multi-bubble} \label{def:M-bubble} 
Given $M \in \{0, 1, \ldots\}$, $\vec\iota = (\iota_1, \ldots, \iota_M) \in \{-1, 1\}^M$
and an increasing sequence $\vec\lambda = (\lambda_1, \ldots, \lambda_M) \in (0, \infty)^M$,
a \emph{multi-bubble configuration} is defined by the formula
\EQ{
\bs\calW( \vec\iota, \vec\lambda; r) := \sum_{j=1}^M\iota_j\bs W_{\lambda_j}(r) .
}
\end{defn}
\begin{rem}
If $M = 0$, it should be understood that $\bs \calW( \vec\iota, \vec\lambda; r) = 0$
for all $r \in (0, \infty)$, where $\vec\iota$ and $\vec\lambda$ are $0$-element sequences,
that is the unique functions $\emptyset \to \{-1, 1\}$ and $\emptyset \to (0, \infty)$, respectively. 
\end{rem}


\begin{thm}[Sequential soliton resolution] \emph{\cite[Theorems 1 and 4]{DKM3},~\cite[Theorems 1.1 and 1.3]{CKLS3}~\cite[Theorem 1.1]{JK}\cite[Theorems 1.1 and 1.3]{Rod16-adv}} 
 \label{thm:seq}  
Let $\bs u(t) \in \E$ be a finite energy solution to~\eqref{eq:nlw} on an interval $I_*$ as above. Let the radiation $\bs u^*(t) \in \E$ be as in Theorem~\ref{thm:radiation}. Then, there exists an integer $N \ge 0$, a sequence of times $t_n \to T_*$,  a vector of signs $\vec \iota \in \{-1, 1\}^N$, and a sequence of scales $\vec \lam_n \in (0, \infty)^N$  such that, 
 \EQ{
 \lim_{n \to \infty} \Big( \| \bs u(t_n) - \bs u^*(t_n)  - \bs \calW( \vec \iota, \vec \lam_n) \|_{\E} + \sum_{j =1}^{N}  \frac{\lam_{n, j}}{\lam_{n, j+1}} \Big)  = 0, 
 }
 where above we use the convention $\lam_{n, N+1} := t_n$. 
\end{thm}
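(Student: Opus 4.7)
The plan is to subtract the radiation to obtain a localized remainder, to select a compactness-friendly sequence of times by a virial argument, and then to feed the resulting sequence into the Compactness Lemma~\ref{lem:compact} to extract the multi-bubble configuration. Set $\bs h(t) := \bs u(t) - \bs u^*(t)$. By Theorem~\ref{thm:radiation}, $\|\bs h(t)\|_{\cE(\rho(t))} \to 0$ with $\rho(t)/t \to 0$ in the global case (resp.\ $\rho(t)/(T_+ - t) \to 0$ in the blow-up case), while \eqref{eq:typeII} together with the finite energy of $\bs u^*$ keeps $\|\bs h(t)\|_{\cE}$ uniformly bounded on $I_*$. Hence the interesting content of $\bs h$ is asymptotically concentrated strictly inside the light cone, in the annulus $\{r \lesssim \rho(t)\}$, with $\rho(t) \ll t$.

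The second step is to produce a sequence $t_n \to T_*$ along which $\bs h(t_n)$ is \emph{almost stationary} in this interior region. The tool is a truncated virial identity applied to $\bs u - \bs u^*$: if $\chi$ is a smooth spatial cutoff at scale $\rho(t)$, then differentiating in time the quantity $\int \chi(r/\rho(t)) \, h \, \partial_t h \, r^{D-1}\, \ud r$ produces, after time averaging between two times $\tau_1 < \tau_2 \to T_*$ and after invoking the non-concentration of energy in the self-similar region $\{r \sim t\}$ (coming from Section~\ref{ssec:ssim}), the existence of a sequence $t_n \to T_*$ such that
\EQ{
\int_0^\infty \chi(r/\rho(t_n)) \, (\partial_t h(t_n,r))^2 \, r^{D-1}\, \ud r \longrightarrow 0.
}
On such a sequence the remainder carries essentially no kinetic energy in the region where its potential energy is concentrated.

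In the third step we apply Lemma~\ref{lem:compact} to $\bs h(t_n)$. The lemma takes as input a bounded-energy sequence whose kinetic energy vanishes in the interior and whose exterior energy vanishes, and classifies its strong limiting structure: passing to a subsequence, it produces an integer $N \ge 0$, signs $\vec\iota \in \{-1,1\}^N$, and scales $\vec\lam_n$ with $\lam_{n,j}/\lam_{n,j+1} \to 0$ for $1 \le j \le N-1$, such that
\EQ{
\bigl\| \bs h(t_n) - \bs\calW(\vec\iota, \vec\lam_n) \bigr\|_{\cE} \longrightarrow 0.
}
The remaining comparison $\lam_{n,N}/t_n \to 0$ (or the blow-up analogue with $T_+ - t_n$) is inherited from the confinement of $\bs h(t_n)$ to $\{r \le \rho(t_n)\}$ up to $\cE$-error, combined with $\rho(t_n)/t_n \to 0$ provided by Theorem~\ref{thm:radiation}. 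Adding the bubble decomposition to $\bs u^*(t_n)$ then gives the statement.

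The main obstacle is the Compactness Lemma itself, namely showing that a sequence of configurations with no kinetic energy in the interior must decompose, in the \emph{strong} $\cE$ topology, into $\pm \bs W$ profiles at separated scales. A Bahouri--G\'erard profile decomposition extracts limiting linear profiles only up to an error that is small in a dispersive (Strichartz-type) sense, which is insufficient for the theorem, and one must further identify each nontrivial profile as a rescaled ground state rather than an arbitrary nonlinear scattering datum. Following the Jia--Kenig strategy, the expected route is a virial monotonicity applied profile by profile: the vanishing of the kinetic part together with the type-II hypothesis excludes non-stationary profiles, the focusing sign and the characterization of finite-energy stationary radial solutions force each profile to be $\pm \bs W$, and a bootstrap across the scale-separated sum shows that the residual error is actually zero in $\cE$ in the limit. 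Matching the hypotheses of this lemma to the time sequence produced in Step 2 is where the careful choice of the cutoff radius $\rho(t)$ and the sharp self-similar non-concentration estimate become indispensable.
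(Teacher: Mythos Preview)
Your outline has the right three ingredients (identification of the radiation, a virial-based time selection, and the Compactness Lemma), but the way you interface with Lemma~\ref{lem:compact} does not match its hypotheses, and this is a genuine gap.

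First, Lemma~\ref{lem:compact} does not take as input a sequence of \emph{single-time states} with pointwise vanishing kinetic energy; it takes a sequence of \emph{solutions to~\eqref{eq:nlw} on time intervals} $[0,\rho_n]$ with \emph{time-averaged} kinetic energy vanishing on an expanding spatial region. You cannot ``apply Lemma~\ref{lem:compact} to $\bs h(t_n)$'': the object $\bs h = \bs u - \bs u^*$ is not a solution to~\eqref{eq:nlw}, and even if you replace it by $\bs u$, a single time $t_n$ is not enough. The paper works with $\bs u$ (not $\bs h$) throughout the compactness step and only subtracts $\bs u^*$ at the very end, using Theorem~\ref{thm:radiation} in the exterior region and~\eqref{eq:rad-rho} in the interior.

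Second, and relatedly, your Step~2 only yields pointwise smallness of the kinetic energy at some $t_n$. What the paper actually needs (and proves) is stronger: starting from Corollaries~\ref{cor:kin-blowup}--\ref{cor:kin-global}, which give $\tau^{-1}\int_0^\tau\int_0^{t/2}(\partial_t u)^2\,r^{D-1}\,\ud r\,\ud t\to 0$, a maximal function argument produces $\tau_n\to T_*$ with $M\phi(\tau_n)\to 0$ for $\phi(t)=\int_0^{t/2}(\partial_t u)^2\,r^{D-1}\,\ud r$. This is precisely the uniform-in-$\sigma$ averaged control $\sup_{0<\sigma<\tau_n}\sigma^{-1}\int_{\tau_n}^{\tau_n+\sigma}\phi(t)\,\ud t\to 0$ needed to feed the rescaled solutions $\bs u_n(s):=\bs u(\tau_n+s)$, on intervals $[0,\rho_n]$ with $\rho_n\simeq\rho(\tau_n)$ and $R_n=\tau_n/(2\rho_n)\to\infty$, into Lemma~\ref{lem:compact}. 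The lemma then returns $s_n\in[0,\rho_n]$ and $r_n\to\infty$ with $\bs\delta_{r_n\rho_n}(\bs u(\tau_n+s_n))\to 0$, i.e.\ a \emph{localized} bubble decomposition of $\bs u$, which is upgraded to the global statement for $\bs u-\bs u^*$ only afterwards. The virial quantity you wrote, $\int \chi\, h\,\partial_t h$, is not the one that yields this kinetic control; the relevant identity is~\eqref{eq:virial} applied to $\bs u$ with cutoff at scale $t/2$.
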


%

\begin{rem} 

The Duyckaerts, Kenig, and Merle approach from~\cite{DKM3} to sequential soliton resolution has been successful in other settings. The same authors with Jia proved the sequential decomposition for the full energy critical NLW (i.e., not assuming radial symmetry) in~\cite{DJKM1} and for wave maps outside equivariant symmetry for data with energy slightly above the ground state~\cite{DJKM2}. 
\end{rem}

\textbf{Step 3: Collision intervals and no-return analysis}
The challenging nature of bridging the  gap between Theorem~\ref{thm:seq}, which is the resolution along one sequence of times, and Theorem~\ref{thm:main} is apparent from the following consideration. The sequence $t_n \to T_*$ in Theorem~\ref{thm:seq} gives no relationship between the lengths of the time intervals $[t_n, t_{n+1}]$ and the concentration scales $\vec \lam_n$ of the bubbles  in the decomposition. One immediate enemy 
is then the  possibility of \emph{elastic collisions}.
If colliding solitons could recover their shape after a collision, then one could potentially
encounter the following scenario: the solution approaches a multi-soliton configuration for a sequence of times,
but in between infinitely many collisions take place, so that there is no soliton resolution in continuous time.

We describe our approach.  Fix $\bs u(t) \in \E$,  a finite energy solution to~\eqref{eq:nlw} on the time interval $I_*$ as defined above. Let $N \ge 0$ and  the radiation $\bs u^*(t) \in \E$ be as in Theorem~\ref{thm:seq}.  We define a 
\emph{multi-bubble proximity function} at each $t \in I_*$ by
\begin{equation} \label{eq:d-intro} 
\bfd(t) := \inf_{\vec \iota, \vec\lam}\bigg( \| \bs u(t) - \bs u^*(t) - \bs\calW( \vec\iota, \vec\lambda) \|_{\E}^2 + \sum_{j=1}^{N}\Big(\frac{ \lam_{j}}{\lam_{j+1}}\Big)^{\frac{D-2}{2}} \bigg)^{\frac{1}{2}},
\end{equation}
where $\vec\iota := (\iota_{1}, \ldots, \iota_N) \in \{-1, 1\}^{N}$, $\vec\lambda := (\lambda_{1}, \ldots, \lambda_N) \in (0, \infty)^{N}$,  and $\lambda_{N+1} := t$. Note that $d(t)$ is continuous on $I_*$.  

With this notation, we see that Theorem~\ref{thm:seq} gives a monotone sequence of times $t_n \to T_*$ such that, 
\begin{equation} \label{eq:seq} 
\lim_{n \to \infty} \bfd(t_n) = 0.
\end{equation}
Theorem~\ref{thm:main} is a direct consequence of showing that $\lim_{t \to T_*} \bfd(t) = 0$. We argue by contradiction, assuming that $\limsup_{t  \to T_*} \bfd(t) >0$.  This means that there is  some sequence of times where $\bs u - \bs u^*$ approaches an $N$-bubble and another sequence of times for which it stays bounded away from $N$-bubble configurations. It is natural to rule out this  behavior  by proving what is called a \emph{no-return} lemma. In this generality, our approach  is inspired by no-return results for one soliton by Duyckaerts and Merle~\cite{DM08, DM09}, Nakanishi and Schlag~\cite{NaSc11-1, NaSc11-2}, and Krieger, Nakanishi and Schlag~\cite{KNS13, KNS15}. In those works a key role is played by exponential instability, where here we have in addition  attractive nonlinear interactions between the solitons. This latter consideration, and indeed the overall scheme of the proof is based on our previous works \cite{JL1, JL6}. We remark that the argument in~\cite{JL1} marks the first time where modulation analysis of bubble interactions was used
 in the context of the soliton resolution problem. 

The basic tool we use is the standard virial functional 
\EQ{
\fv(t) :=  \Big\langle  \p_t u(t) \mid \chi_{\rho(t)} \, \Big(r \p_r u(t) + \frac{D-2}{2} u(t)\Big)\Big\rangle,  
}
where the cut-off $\chi$ is placed along a Lipschitz curve $r = \rho(t)$ that will be carefully chosen (note that a time-dependent cut-off of the virial functional was also used in~\cite{NaSc11-1, NaSc11-2}). Here the inner product is, 
 \EQ{ \label{eq:inner-prod}
\ang{\phi \mid \psi}  := \int_0^\infty \phi(r) \psi(r) \, r^{D-1} \ud r, \qquad \text{for }\phi, \psi : (0, \infty) \to \bR.
}
 Differentiating $\fv(t)$ in time we have, 
\EQ{ \label{eq:v'} 
\fv'(t) = - \int_0^\infty \abs{\p_t u(t, r)}^2\chi_{\rho(t)}(r) \, r^{D-1}\,   \ud r + \Om_{\rho(t)}(\bs u(t)), 
}
where $\Om_{\rho(t)}(\bs u(t))$ is the error created by the cut-off. Importantly, this error has structure, see Lemmas~\ref{lem:vir} and \ref{lem:virial-error}, and satisfies the estimates, 
\EQ{
\Om_{\rho(t)}(\bs u(t)) \lesssim (1+ \abs{ \rho'(t)}) \min \{ \|\bs u(t)\|_{\E( \rho(t), 2 \rho(t))}, \bfd(t)\} . 
}
Roughly, this allows us to think of $\fv(t)$ as a Lyapunov functional for our problem, localized to scale $\rho(t)$, with ``almost'' critical points given by multi-bubbles $\bs\calW( \vec \iota, \vec \lam)$. Indeed, if $\bs u(t)$ is close to a multi-bubble up to scale $\rho(t)$, and $\abs{\rho'(t)} \lesssim 1$,  then $\abs{\fv'(t)} \lesssim \bfd(t)$. 

Our first result is a localized compactness lemma. In Section~\ref{sec:compact} we prove the following: given a sequence of nonlinear waves $\bs u_n(t) \in\E$  on time intervals $[0, \tau_n]$ with bounded energy, and a sequence $R_n \to \infty$ such that 
\EQ{
\lim_{n \to \infty} \frac{1}{\tau_n} \int_0^{\tau_n} \int_0^{R_n\tau_n} \abs{ \p_t u_n(t, r)}^2 \, r^{D-1} \, \ud r \, \ud t  = 0, 
}
one can find a new sequence $1 \ll r_n \ll R_n$ and a sequence of times $s_n \in [0, \tau_n]$,  so that up to passing to a subsequence of the $\bs u_n$,  we have $\lim_{n \to \infty} \bs \de_{r_n \tau_n}( \bs u_n(s_n)) = 0$. Here  $\bs \de_{R}(\bs u)$ is a local (up to scale $R$) version of the distance function $\bfd$. 

We give a caricature of the no-return analysis, pointing the reader to the technical arguments in Sections~\ref{sec:decomposition},~\ref{sec:conclusion} for the actual arguments.  
We would like to integrate~\eqref{eq:v'} over intervals $[a_n, b_n]$ with $a_n, b_n \to T_*$ such that $\bfd(a_n), \bfd(b_n) \ll 1$ but contain some subinterval $[c_n, d_n] \subset [a_n, b_n]$ on which $\bfd(t) \simeq 1$; such intervals exist under the contradiction hypothesis.  From~\eqref{eq:v'} we obtain, 
\EQ{ \label{eq:vir-ineq} 
\int_{a_n}^{b_n} \int_0^{\rho(t)}\abs{\p_t u(t, r)}^2\,  r^{D-1}\, \ud r \, \ud t \lesssim \rho(a_n) \bfd(a_n) + \rho(b_n) \bfd(b_n) + \int_{a_n}^{b_n}\abs{\Om_{\rho(t)}(\bs u(t))} \, \ud t. 
}
 We consider the choice of $\rho(t)$. One can use the sequential compactness lemma so that choosing $\rho(t)/(d_n - c_n) \gg 1$  we have, 
\EQ{ \label{eq:compact} 
\int_{c_n}^{d_n}\int_0^{\rho(t)}\abs{\p_t u(t, r)}^2 \, r^{D-1} \, \ud r \, \ud t \gtrsim d_n-c_n , 
}
and one can expect that the integral of the error $\int_{c_n}^{d_n} \abs{\Om_{\rho(t)}(\bs u(t))} \, \ud t \ll \abs{d_n-c_n}$ absorbs into the left-hand side by choosing $\rho(t)$ to lie in a region where $\bs u(t)$ has negligible energy.  

To complete the proof one would need to show that the error generated on the intervals $[a_n, c_n]$ and $[d_n, b_n]$ can also be absorbed into the left-hand side, and moreover that the boundary terms $\rho(a_n) \bfd(a_n),  \rho(b_n) \bfd(b_n) \ll d_n - c_n$. For this, we require a more careful choice of the intervals $[a_n, b_n]$ and placement of the cut-off $\rho(t)$, which 
motivates the notion of \emph{collision intervals} introduced  in Section~\ref{ssec:proximity}. These allow us to distinguish between ``interior'' bubbles that come into collision, and ``exterior'' bubbles that stay coherent throughout the intervals $[a_n, b_n]$, and to ensure we place the cutoff in the region between the interior and exterior bubbles. 

Given $K \in \{1, \dots, N\}$, 
we say that an interval $[a, b]$ is a collision interval with parameters $0<\eps< \eta$ and $N-K$ exterior bubbles for some $1 \le K \le N$, if $\bfd(a),  \bfd(b) \le \eps$, there exists a $c \in [a, b]$ with $\bfd(c) \ge \eta$, and a curve $r = \rho_K(t)$ outside of which $\bs u(t) - \bs u^*(t)$ is within $\eps$ of an $N-K$-bubble in the sense of~\eqref{eq:d-intro} (a localized version of $\bfd(t)$); see Defintion~\ref{def:collision}.  We now define $K$ to be the \emph{smallest} non-negative integer for which there exists $\eta>0$, a sequence $\eps_n \to 0$,  and sequences $a_n, b_n \to T_*$, so that $[a_n, b_n]$ are collision intervals with parameters $\eps_n, \eta$ and $N-K$ exterior bubbles, and we write $[a_n, b_n] \in \calC_K( \eps_n, \eta)$; see Section~\ref{ssec:proximity} for the proof that $K$ is well-defined and $\ge 1$, under the contradiction hypothesis.  

We revisit~\eqref{eq:vir-ineq} on a sequence of collision intervals $[a_n, b_n] \in \calC_K( \eps_n, \eta)$. Near the endpoints $a_n, b_n$,  $\bs u(t) - \bs u^*(t)$ is close to an $N$-bubble configuration and we denote the interior scales, which will come into collision, by $\vec \lam = ( \lam_1, \dots, \lam_K)$ and the exterior scales, which stay coherent, by $\vec \mu = ( \vec \mu_{K+1}, \dots, \vec \mu_N)$. We assume for simplicity in this discussion that the collision intervals have only a single subinterval $[c_n, d_n]$ as above, and that $\bfd(t)$ is sufficiently small on the intervals $[a_n, c_n]$ and $[d_n, b_n]$ so that the interior scales are well defined (via modulation theory) there. We call $[a_n, c_n], [d_n, b_n]$ \emph{modulation intervals} and $[c_n, d_n]$ \emph{compactness intervals}. 

 The scale of the $K$th bubble $\lam_K(t)$ plays an important role and must be carefully tracked. We will need to also make sense of this scale on the compactness intervals, where the bubble itself may lose its shape from time to time. We do this by energy-norm considerations; see Definition~\ref{def:mu}. Crucially, the minimality of $K$ can be used to ensure that the intervals $[c_n, d_n]$ as above satisfy $d_n -c_n \simeq \max\{\lam_K(c_n), \lam_K(d_n)\}$; see Lemma~\ref{lem:cd-length}. Thus the first terms on the right-hand-side of~\eqref{eq:vir-ineq} can be absorbed using~\eqref{eq:compact} by ensuring $\rho(a_n) = o(\eps_n^{-1}) \lam_K(a_n), \rho(b_n) = o(\eps_n^{-1}) \lam_K(b_n)$ if we can additionally prove that  the scale $\lam_K(t)$ does not change much on the modulation intervals. Note that our choice of cut-off  will  satisfy $\lam_K(t) \ll \rho(t) \ll \mu_{K+1}(t)$. 
 
 We  must also absorb the errors $(\int_{a_n}^{c_n}+ \int_{d_n}^{b_n})|\Om_{\rho(t)}(\bs u(t))| \, \ud t  \lesssim (\int_{a_n}^{c_n}+ \int_{d_n}^{b_n}) \bfd (t) \, \ud t$ on the modulation intervals. Here we perform a refined modulation analysis on the interior bubbles, which allows us to track the growth of $\bfd(t)$ through a collision of (possibly) many bubbles. Roughly, up to scale $\rho(t)$, $\bs u(t)$ looks like a $K$-bubble, and using the implicit function theorem we define modulation parameters $\vec \iota$,  $\vec \lam(t)$, and error $\bs g(t)$ with 
 \EQ{
 \bs u(t, r) = \bs\calW( \vec \iota, \vec \lam(t); r) + \bs g(t, r), \mif r \le \rho(t),\quad \La\Lam W_{\lam_j(t)} \mid g(t) \Ra = 0, \mfor \, \,  j = 1,\dots, K, 
 }
 where $\Lam:= r\p_r + \frac{D-2}{2}$ is the generator of the $H$-invariant scaling (note that for $D=4, 5, 6$ the decomposition is slightly different due to the slow decay of $\Lam W$). 
The orthogonality conditions and an expansion of the nonlinear energy of $\bs u(t)$ up to scale $\rho(t)$  lead to the coercivity estimate, 
 \EQ{
 \| \bs g(t) \|_{\E} + \sum_{j \neq \calS} \Big( \frac{\lam_{j}(t)}{\lam_{j+1}(t)} \Big)^{\frac{D-2}{4}} \lesssim \max_{i \in \calS} \Big( \frac{\lam_i(t)}{\lam_{i+1}(t)} \Big)^{\frac{D-2}{4}} + \max_{1 \le i \le K}\abs{a^{\pm}_i(t)} +  o_n(1)  \simeq \bfd(t) + o_n(1), 
 }
where $\calS = \{ j \in 1, \dots, K-1 \, : \, \iota_j = \iota_{j+1}\}$ captures the non-alternating bubbles (which experience an attractive interaction force). The terms $a_j^{\pm}(t)$ on the right-hand side above are, roughly speaking, the projections of $\bs g(t)$ onto the unstable/stable directions related to the unique, simple negative eigenvalue associated to the linearization about $W$.  The  $o_n(1)$ term comes from errors due to the presence  of the radiation $\bs u^*$ in the region $r \lesssim \rho(t) \ll t$.  In fact, since $\bfd(t)$ grows out of the modulation intervals we can absorb these errors into $\bfd(t)$ by enlarging the parameter $\eps_n$ and requiring the lower bound $\bfd(t) \ge \eps_n$ on the modulation intervals. 

The growth of $\bfd(t)$ is then captured by the dynamics of adjacent bubbles with the same sign, or by the dynamics along the unstable/stable directions $a_{j}^{\pm}(t)$. In the case when the dynamics is driven by bubble interactions, precise information enters at the level of $\lam_{j}''(t)$, since~\eqref{eq:nlw} is second order.  However, it is not clear how to derive useful estimates from the equation for $\lam''(t)$ obtained by twice differentiating the orthogonality conditions. To cancel terms with critical size, but indeterminate sign, we introduce a localized virial correction to $\lam_j'\simeq - \iota_j  \| \Lam W \|_{L^2}^{-2} \lam_j^{-1}\La \Lam W_{\lam_j} \mid \dot g\Ra$, defining 
\EQ{
\beta_j(t) = - \iota_j \| \Lam W \|_{L^2}^{-2}\La \Lam W_{\U{\lam_j(t)}} \mid \dot g(t)\Ra  -   \| \Lam W \|_{L^2}^{-2}\ang{ \uln A( \lam_j(t)) g(t) \mid \dot g(t)}, 
}
where $\U A(\lam)$ is a truncated (to scale $\lam$) version of  $\U{\Lam} = r \p_r + \frac{D}{2}$, the generator of $L^2$ scaling. Roughly, we show in Sections~\ref{ssec:ref-mod} and~\ref{ssec:bub-dem},  that $(\lam_j(t), \beta_j(t), a_{j}^{\pm}(t))$  satisfy a system of differential inequalities that can be used to control the growth of $\bfd(t)$ until the solution exits a modulation interval. All the while, the $K$th scale $\lam_K(t)$ does not move much, and we obtain bounds of the form
$\int_{a_n}^{c_n}  \bfd(t)\ud t\leq C_0\big(\bfd(a_n)^{\min\big(1, \frac{4}{D-2}\big)} \lambda_K(a_n) + \bfd(c_n)^{\min\big(1, \frac{4}{D-2}\big)} \lambda_K(c_n)\big)$, and an analogous bound on the interval $[d_n, b_n]$ (see the ``ejection'' Lemma~\ref{lem:ejection}). Thus the errors can be absorbed into the left-hand side of~\eqref{eq:vir-ineq} and we obtain a contradiction. 
In dimensions $D \ge 5$,  this proof follows closely the scheme from~\cite{JL6} together with an elegant ``weighted sum" trick from~\cite[Section 6]{DKM9}, which simplifies some of the ODE analysis from~\cite{JL6}; see Section~\ref{ssec:bub-dem}. The analysis in dimension $D=4$ is more complicated, due to the fact that the modulation inequalities for the $j$th scale are only valid on subintervals where the ratio $(\lam_{j}(t)/\lam_{j+1}(t))^{\frac{D-2}{4}}$ is comparable to $\bfd(t)$, and thus a weighted sum trick involving the dynamics of all the bubbles at once does not seem to apply.  For this special case we introduce an induction scheme together with the notion of an ``ignition condition'', (see Definition~\ref{def:ignition}) which identifies the most relevant controllable index $j$ on a given subinterval of the modulation interval; see Appendix~\ref{sec:D=4}. 

While other aspects of the proof adapt readily to dimension $D=3$, this refined analysis of the modulation parameters introduces significant complications due to the slow decay of $\Lam W$.

A similar, but simpler 
refined  modulation  
analysis was performed in~\cite{JL1}.  
The use of  refinements to modulation parameters to obtain dynamical control was introduced by the first author in the context of a two-bubble construction for $NLS$  in~\cite{JJ-APDE}. The notion of localized virial corrections to modulation parameters  was first introduced by Rapha\"el and Szeftel in~\cite{RaSz11} in a different context.

\subsection{Notational conventions}
%
%
The nonlinear energy is denoted $E$, $\cE$ is the energy space.

Given a function $\phi(r)$ and $\lambda>0$, we denote by $\phi_{\lam}(r) = \lam^{-\frac{D-2}{2}}\phi(r/ \lam)$, the $H$-invariant re-scaling, and by $\phi_{\U{\lam}}(r) = \lam^{-\frac{D}{2}} \phi(r/ \lambda)$ the $L^2$-invariant re-scaling. We denote by $\Lam :=r \p_r + \frac{D-2}{2}$ and $\ULam := r \partial r +\frac{D}{2}$ the infinitesimal generators of these scalings. We denote $\ang{\cdot\mid\cdot}$
the radial $L^2(\bR^2)$ inner product given by \eqref{eq:inner-prod}.

We denote by $f(u) := \abs{u}^{\frac{4}{D-2}} u$ the nonlinearity in \eqref{eq:nlw}.
We let $\chi$ be a smooth cut-off function, supported in $r \leq 2$ and equal $1$ for $r \le 1$.

The general rules we follow giving names to various objects are:
\begin{itemize}
\item index of an infinite sequence: $n$
\item sequences of small numbers: $\gamma, \delta, \epsilon, \zeta, \eta, \theta$
\item scales of bubbles and quantities describing the spatial scales: $\lambda, \mu, \nu, \xi, \rho$;
in general we call $\lambda$ the scale of the interior bubbles and $\mu$ the exterior ones
(once these notions are defined)
\item moment in time: $t, s, \tau, a, b, c, d, e, f$
\item indices in summations: $ i, j, \ell$
\item time intervals: $I, J$
\item number of bubbles: $K, M, N$
\item signs are denoted $\iota$ and $\sigma$
\item boldface is used for pairs of elements related to the Hamiltonian structure; an arrow is used for vectors (finite sequences) in other contexts.
\end{itemize}
We call a ``constant'' a number which depends only on the dimension $D$ and the number of bubbles $N$.
Constants are denoted $C, C_0, C_1, c, c_0, c_1$. We write $A \lesssim B$ if $A \leq CB$ and $A \gtrsim B$ if $A \geq cB$.
We write $A \ll B$ if $\lim_{n\to \infty} A / B = 0$.

For any sets $X, Y, Z$ we identify $Z^{X\times Y}$ with $(Z^Y)^X$, which means that
if $\phi: X\times Y \to Z$ is a function, then for any $x \in X$ we can view $\phi(x)$ as a function $Y \to Z$
given by $(\phi(x))(y) := \phi(x, y)$.

\section{Preliminaries}

\subsection{Virial identities} 

 We have the following virial identities.  
 \begin{lem}[Virial identities] \label{lem:vir} 
 Let $\bs u(t) \in \E$ be a solution to~\eqref{eq:nlw} on an open time interval $I$ and $\rho: I \to (0, \infty)$
 a Lipschitz function. Then for almost all $t \in I$, 
 \EQ{\label{eq:vir-1}
 \frac{\ud}{\ud t} \ang{  \p_t u(t) \mid \chi_{\rho(t)} \, r \p_r u(t)}  &= - \frac{D}{2} \int_0^\infty (\p_t u(t, r))^2\chi_{\rho(t)}(r) \, r^{D-1} \, \ud r  \\
 &\quad + \frac{D-2}{2}  \int_0^\infty \Big[(\p_r u(t, r))^2 - \abs{u(t, r)}^{\frac{2D}{D-2}} \Big]\chi_{\rho(t)}(r) \, r^{D-1} \, \ud r \\
&\quad  + \Om_{1, \rho(t)}(\bs u(t)), 
 }
 and, 
 \EQ{
\label{eq:vir-2}
\frac{\ud}{\ud t} \ang{  \p_t u(t) \mid \chi_{\rho(t)} \,u(t)}  &= \int_0^\infty (\p_t u(t, r))^2\chi_{\rho(t)}(r) \, r^{D-1} \, \ud r \\
& \quad - \int_0^\infty\Big[(\p_r u(t, r))^2 - \abs{u(t, r)}^{\frac{2D}{D-2}} \Big]\chi_{\rho(t)}(r) \, r^{D-1} \, \ud r  - \Om_{2, \rho(t)}(\bs u(t)) 
 }
 where 
 \EQ{ \label{eq:Om-rho-def} 
 \Om_{1, \rho(t)}(\bs u(t)) &:= - \frac{\rho'(t)}{\rho(t)} \int_0^\infty \p_t u(t, r) r \p_r u(t, r)   (r \p_r \chi)(r/\rho(t))  \,r^{D-1}\,  \ud r \\
 & \quad - \frac{1}{2}\int_0^\infty  \Big( (\p_t u(t, r))^2 + (\p_r u(t, r))^2 - \frac{D-2}{D} \abs{u(t, r)}^{\frac{2D}{D-2}} \Big)   (r \p_r \chi)(r/\rho(t)) r^{D-1} \,  \ud r, \\
  \Om_{2, \rho(t)}(\bs u(t))&:=- \frac{\rho'(t)}{\rho(t)} \int_0^\infty \p_t u(t, r)  u(t, r)   (r \p_r \chi)(r/\rho(t))  \,r^{D-1}\,  \ud r \\
  &\quad - \int_0^\infty \p_r u(t, r) \frac{u(t, r)}{r} (r \p_r \chi)(r/\rho(t))   \, r^{D-1} \, \ud r . 
 }

 \end{lem}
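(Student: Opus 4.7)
My plan is to derive both identities by direct differentiation in time followed by integration by parts, working first with smooth solutions and then extending to energy solutions by a density and continuity argument. Applying the product rule to the left-hand side of \eqref{eq:vir-1} produces
\[
\langle \partial_t^2 u \mid \chi_{\rho(t)} r \partial_r u\rangle + \langle \partial_t u \mid (\partial_t \chi_{\rho(t)}) r \partial_r u\rangle + \langle \partial_t u \mid \chi_{\rho(t)} r \partial_r \partial_t u\rangle.
\]
The chain rule gives $\partial_t[\chi(r/\rho(t))] = -\frac{\rho'(t)}{\rho(t)} (r\partial_r\chi)(r/\rho(t))$, so the middle term contributes the first piece of $\Omega_{1,\rho(t)}$. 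I then substitute the equation $\partial_t^2 u = \Delta u + f(u)$ with $f(u) = |u|^{\frac{4}{D-2}}u$ into the first term.

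The computational core is three integrations by parts, each of which produces a ``bulk'' term plus a $(r\partial_r\chi)$-correction. In the last term I symmetrize $\partial_t u \cdot r \partial_r \partial_t u = \tfrac{1}{2} r \partial_r((\partial_t u)^2)$ and use the identity $\partial_r(\chi_{\rho} r^D) = D\chi_{\rho} r^{D-1} + (r\partial_r \chi)(r/\rho) r^{D-1}$; this yields $-\frac{D}{2}\int(\partial_t u)^2 \chi_\rho r^{D-1} \, \ud r$ with a cutoff-error. For $\langle \Delta u \mid \chi_\rho r \partial_r u\rangle$, I write the radial Laplacian as $r^{-(D-1)} \partial_r(r^{D-1} \partial_r u)$, integrate by parts once, and symmetrize the resulting $\partial_r u \cdot \partial_r^2 u$ to $\tfrac12 \partial_r((\partial_r u)^2)$, producing the main term $\frac{D-2}{2} \int (\partial_r u)^2 \chi_\rho r^{D-1} \, \ud r$ plus cutoff-error. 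For $\langle f(u) \mid \chi_\rho r \partial_r u\rangle$, I introduce the antiderivative $F(u) := \frac{D-2}{2D}|u|^{\frac{2D}{D-2}}$ so that $F'(u) = f(u)$, rewrite $f(u) r \partial_r u \cdot r^{D-1} = r^D \partial_r F(u)$, and integrate by parts. Adding the three main terms and collecting the three cutoff-errors reproduces the right-hand side of \eqref{eq:vir-1}, with the $(r\partial_r\chi)$-pieces combining into precisely the second line of the definition of $\Omega_{1,\rho(t)}$.

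Identity \eqref{eq:vir-2} follows the same template but is simpler. The contribution $\langle \partial_t u \mid \chi_\rho \partial_t u\rangle$ is $\int (\partial_t u)^2 \chi_\rho r^{D-1}\,\ud r$ with no symmetrization needed; $\langle f(u) \mid \chi_\rho u\rangle$ equals $\int |u|^{\frac{2D}{D-2}}\chi_\rho r^{D-1}\,\ud r$ directly; and $\langle \Delta u \mid \chi_\rho u\rangle$ requires a single integration by parts, yielding $-\int (\partial_r u)^2 \chi_\rho r^{D-1}\,\ud r$ together with exactly the error $-\int \partial_r u \cdot (u/r)(r\partial_r\chi)(r/\rho) r^{D-1}\, \ud r$ that appears in $\Omega_{2,\rho(t)}$. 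The time derivative of $\chi_{\rho(t)}$ contributes the remaining half of $\Omega_{2,\rho(t)}$.

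The principal obstacle is not the algebra but the regularity: $\bs u(t) \in \E$ is merely a distributional solution, so direct termwise differentiation and integration by parts are not a priori legal. I would dispose of this in the standard way by smoothing the initial data, applying the identities to the resulting smooth solutions (whose propagation of regularity is classical), and passing to the limit using continuity of the flow in $\E$; the cutoff $\chi_\rho$ has compact $r$-support for fixed $t$, which eliminates issues at spatial infinity, and the Sobolev embedding $\dot H^1(\R^D) \hookrightarrow L^{\frac{2D}{D-2}}(\R^D)$ controls the nonlinear contribution uniformly. Because $\rho$ is only Lipschitz, $\rho'$ exists merely almost everywhere, which is why the identities are asserted for almost every $t \in I$.
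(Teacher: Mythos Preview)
Your proposal is correct and follows essentially the same approach as the paper, which simply states that the proof is a direct computation together with an approximation argument for fixed $t\in I$ at which $\rho$ is differentiable. You have supplied the details of that computation and approximation, and they match what the paper leaves implicit.
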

 \begin{proof}
The proof is a direct computation along with an approximation argument for fixed $t \in I$, assuming $\rho$ is differentiable at $t$.
  \end{proof}
  
  \begin{rem}\label{rem:virial} In practice we will make use of the following two linear combinations of the identities~\eqref{eq:vir-1} and~\eqref{eq:vir-2}. 
  \EQ{\label{eq:virial} 
  \frac{\ud}{\ud t}   \Big\langle  \p_t u(t) \mid \chi_{\rho(t)} \, \Big(r \p_r u(t) + \frac{D-2}{2} u(t)\Big)\Big\rangle & = - \int_0^\infty (\p_t u(t, r))^2\chi_{\rho(t)}(r) \, r^{D-1} \, \ud r \\
  &\quad + \Om_{1, \rho(t)}(\bs u(t)) + \frac{D-2}{2} \Om_{2, \rho(t)}(\bs u(t))
  }
  and, 
  \EQ{ \label{eq:jk} 
   \frac{\ud}{\ud t}   \Big\langle  \p_t u(t) \mid \chi_{\rho(t)} \, \Big(r \p_r u(t) + \frac{D}{2} u(t)\Big)\Big\rangle & = - \int_0^\infty \Big[(\p_r u(t, r))^2 - \abs{u(t, r)}^{\frac{2D}{D-2}} \Big]\chi_{\rho(t)}(r) \, r^{D-1} \, \ud r \\
   &\quad + \Om_{1, \rho(t)}(\bs u(t)) + \frac{D}{2} \Om_{2, \rho(t)}(\bs u(t))
  }
  Note that the multiplier $(r \p_r + \frac{D-2}{2}) u$ in the first indentity~\eqref{eq:virial} corresponds to the generator of $\dot H^1$-invariant dilations $x \cdot \nabla u$. 
  \end{rem}

\subsection{Local Cauchy theory} \label{sec:Cauchy}

In the remainder of this section, we follow the presentation in \cite[Section 2 and Appendix A]{Rod16-adv}.

Given that we are restricting our attention to radially symmetric functions $v: \R^D \to \R$, we will often abuse notation slightly, writing $v = v(r)$ and denoting, for $p\ge 1$, 
\EQ{
\| f \|_{L^p( \R^D)} := \left( \int_0^\infty \abs{ f (r)}^p \, r^{D-1} \, \ud r \right)^{\frac{1}{p}}. 
}

For $0 < s < 1$ and $1 < p < \infty$, $\dot B_{p, 2}^s = \dot B_{p, 2}^s(\bR^D)$ denotes the homogeneous Besov space with norm
\begin{equation}
\|u\|_{\dot B_{p,2}^s} := \bigg(\sum_{j\in \bZ}2^{2js}\|P_j u\|_{L^p}^2\bigg)^\frac 12,
\end{equation}
where $P_j$ are the Littlewood-Paley projections. We recall that if $s < D/p$, then $\dot B_{p, 2}^s$ is a Banach space,
see \cite[Theorem 2.25]{BaChDa11}, and that the homogeneous Besov norms can be equivalently defined in terms of finite differences:
\begin{equation}
\|u\|_{\dot B_{p,2}^s} \simeq \bigg(\int_{\bR^D}|y|^{-D-2s}\|u(\cdot + y) - u\|_{L^p}^2\ud y\bigg)^\frac 12,
\end{equation}
see \cite[Theorem 2.36]{BaChDa11}.

For any time interval $I \subset \bR$, we introduce the Strichartz-type spaces and norms
\begin{align}
S(I) &:= L^\frac{2(D+1)}{D-2}(I\times \bR^D), \\
W(I) &:= L^\frac{2(D+1)}{D-1}\Big(I; \dot B_{\frac{2(D+1)}{D-1}, 2}^\frac 12(\bR^D)\Big) 
\end{align}
We denote $S\lin(t)$ the free wave propagator, in other words for all $\bs v_0 = (v_0, \dot v_0)$ we have
\begin{equation}
S\lin(t)\bs v_0 = \bigg(\cos(t|\grad|)v_0 + \frac{\sin(t|\grad|)}{|\grad|}\dot v_0,
-|\grad|\sin(t|\grad|)v_0 + \cos(t|\grad|)\dot v_0\bigg).
\end{equation}
We say that $\bs u$ is a solution of \eqref{eq:nlw} on a time interval $I \owns 0$ with initial data $\bs u_0 \in \cE$ if
\begin{itemize}
\item $\bs u \in C^0(I; \cE)$,
\item $\|u\|_{S(J)} + \|u\|_{W(J)} < \infty$ for all compact intervals $J \subset I$,
\item $\bs u$ satisfies the Duhamel formula
\begin{equation}
\bs u(t) = S\lin(t)\bs u_0 + \int_0^t S\lin(t - s)(0, f(u(s)))\ud s.
\end{equation}
\end{itemize}

Local well-posedness was obtained by Ginibre and Velo~\cite{GV89}, who used a slightly different but equivalent notion of solution; see also~\cite{Pecher88, Struwe88, Grillakis90}.  We use the versions in \cite{KM08, BCLPZ13}. Key to the proof are Strichartz estimates for the wave equation; see, Lindblad, Sogge~\cite{LinS}, and Ginibre, Velo~\cite{GiVe95}.
\begin{lem}[Cauchy theory in $\E$]\emph{\cite[Theorem 2.7]{KM08} \cite[Theorem 3.3]{BCLPZ13}} \label{lem:Cauchy}  There exists $\de_0>0$ and functions $\eps,  C: [0, \infty) \to (0, \infty)$ with $\eps(\de) \to 0 \mas \de \to 0$,  with the following properties.
 Let $A>0$ and $\bs u_0 = (u_0, u_1) \in \E$ with $\| \bs u_0 \|_{\E} \le A$.  Let $I \ni 0$ be an open interval such that 
\EQ{
\| S\lin(t) \bs u_0 \|_{\calS(I)} = \de \le \de_0  
}
Then there exists a unique solution $\bs u(t)$ to~\eqref{eq:nlw} in the space $C^0(I; \E) \cap S(I) \cap W(I)$ with initial data $\bs u(0) = \bs u_0$. The solution $\bs u(t)$ satisfies the bounds $\| u \|_{\calS(I)} \le C(A) \eps(\de)$, and $ \| \bs u \|_{L^\infty_t(I; \E)} \le C(A)$. To each solution $\bs u(t)$ to~\eqref{eq:nlw} we can associate a maximal interval of existence $0 \in I_{\max}(\bs u) = (-T_-, T_+)$ such that for each compact subinterval $I' \subset I_{\max}$ we have $\| u \|_{S(I')} < \infty$ and, if, say $T_+<\infty$, then $\lim_{T\to T_+} \| u \|_{S( [0, T))} = \infty$. 

The data to solution map is continuous in the following sense. Let $\bs u_0 \in \E$ and let $\bs u(t) \in \E$ be the unique solution to~\eqref{eq:nlw} with data $\bs u_0$, and let  $T_0< T_+( \bs u_0)$. Then for every $\eps>0$ there exists $\de >0$ with the following property:  for all $\bs v_0 \in \E$ and all $T \le T_0$ with $\| \bs u_0 - \bs v_0 \|_{\E}  + T < \de$ we have $T< T_+(\bs v_0)$ and  $\sup_{t \in [0, T]} \| \bs u(t) - \bs v(t)\|_{\E} < \eps$, where $\bs v(t)$ is the unique solution to~\eqref{eq:nlw} associated to $\bs v_0$. 

The completeness of wave operators holds: there exists $\eps_0$ small enough so that if $\bs u_0 \in \E$ satisfies $\|\bs u_0\|_{\E} < \eps_0$, the solution $\bs u(t)$ given above is defined globally in time, satisfies the bound, 
\EQ{ \label{eq:small-norm} 
\sup_{t \in \R}\| \bs u(t) \|_{\E} + \| u \|_{S(\R)}+ \| u \|_{W(\R)} \lesssim \|\bs u_0\|_{\E}, 
}
and scatters in the following sense: there exist free waves $\bs u_\Lin^{\pm}(t) \in \E$  such that 
\EQ{ \label{eq:scattering} 
\| \bs u(t) - \bs u\lin^{\pm}(t) \|_{\E} \to 0 \mas  t \to \pm \infty
}
Conversely, the existence of wave operators holds, i.e.,  for any solution $\bs v_{\Lin}(t) \in \E$ to the free linear equation, there exists a unique, global-in-forward time solution $\bs u(t) \in \E$ to~\eqref{eq:nlw} such that~\eqref{eq:scattering} holds as $t \to \infty$. An analogous statement holds for negative times. 
\end{lem}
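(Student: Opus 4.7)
The plan is the classical Strichartz-based contraction mapping argument applied to the Duhamel formulation
\[
\Phi(\bs u)(t) := S\lin(t)\bs u_0 + \int_0^t S\lin(t-s)\big(0, f(u(s))\big)\, \ud s,
\]
seeking a fixed point in a closed ball of $C^0(I;\cE)\cap S(I)\cap W(I)$, with $\delta := \|S\lin(\cdot)\bs u_0\|_{S(I)}$ playing the role of the small parameter and $A = \|\bs u_0\|_\cE$ controlling the $L^\infty_t\cE$-size of the iterate separately.

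\textbf{Strichartz and nonlinear estimates.} The key linear input is the homogeneous/inhomogeneous Strichartz estimate of Lindblad--Sogge and Ginibre--Velo for the free wave equation in dimension $D$, adapted to the pair $(S(I), W(I))$: it gives
\[
\|S\lin(\cdot)\bs v_0\|_{L^\infty_t(I;\cE)\cap S(I)\cap W(I)} \lesssim \|\bs v_0\|_\cE,
\]
together with a matching dual estimate for the Duhamel integral. Combined with the fractional chain rule in Besov spaces (which is why the $W$-norm is defined through $\dot B_{p,2}^{1/2}$ rather than $\dot H^{1/2}$), this yields the nonlinear bound
\[
\Big\|\int_0^t S\lin(t-s)\big(0, f(u(s))\big)\, \ud s\Big\|_{L^\infty_t(I;\cE)\cap S(I)\cap W(I)} \lesssim \|u\|_{S(I)}^{4/(D-2)}\, \|u\|_{W(I)},
\]
together with an analogous Lipschitz-type estimate for $f(u) - f(v)$.

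\textbf{Fixed point and its consequences.} When $\delta \le \delta_0(A)$ is sufficiently small, the nonlinear estimate shows that $\Phi$ maps the ball $\{\|u\|_{S(I)} + \|u\|_{W(I)} \le C(A)\eps(\delta),\ \|\bs u\|_{L^\infty_t(I;\cE)} \le C(A)\}$ into itself and contracts, giving local existence, uniqueness, and the quantitative bounds. The maximal interval $I_{\max}$ and the blow-up criterion via $\|u\|_{S}$ follow by iterating forward as long as $\|u\|_{S([0,T])} < \infty$: on such a $[0,T]$ one partitions into finitely many sub-intervals on which the $S$-norm of the linear evolution at the left endpoint is smaller than $\delta_0$, and applies the local result on each. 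Continuous dependence in $\cE$ is the standard perturbation (stability) argument applied to the difference equation on such a partition. For small data the bound $\|S\lin(t)\bs u_0\|_{S(\bR)} \lesssim \|\bs u_0\|_\cE < \eps_0$ already satisfies the smallness hypothesis on all of $\bR$, so the contraction runs globally and yields \eqref{eq:small-norm}; scattering then follows from the Cauchy criterion for $S\lin(-t)\bs u(t) \in \cE$, using that $\|u\|_{S((T,\infty))} \to 0$. Wave operators are constructed by running the same contraction for the integral equation $\bs u(t) = \bs v\lin(t) - \int_t^\infty S\lin(t-s)(0, f(u(s)))\, \ud s$ on $[T,\infty)$ with $T$ so large that $\|v\lin\|_{S([T,\infty))}$ is small, and then extending backward by the local theory.

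\textbf{Main obstacle.} The technically most delicate step is the nonlinear estimate in dimensions $D \ge 7$, where $4/(D-2) < 1$ and $f$ is only Hölder continuous, so the ordinary Sobolev chain rule is unavailable. Here one has to work directly with the finite-difference characterization of $\dot B_{p,2}^{1/2}$ and bound $\|f(u) - f(v)\|_{\dot B_{p',2}^{1/2}}$ by combining a pointwise Hölder estimate on $f$ with a Hölder-in-space interpolation; this is exactly why the Strichartz pair in the statement is defined through Besov (rather than Sobolev) scales, and it is the one place where the argument in $D \ge 7$ departs meaningfully from the integer- or near-integer-exponent cases $D \in \{4,5,6\}$.
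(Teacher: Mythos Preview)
Your sketch is correct and follows the standard Strichartz/contraction-mapping scheme, which is precisely the approach of the cited references \cite{KM08, BCLPZ13}. The paper itself does not give a proof of this lemma: it simply records the result as known from those references and adds a remark that continuous dependence in dimensions $D\ge 6$ follows from the proof in \cite{BCLPZ13} (see \cite[Remark~4.3]{BCLPZ13}). So there is nothing to compare --- your outline is essentially the argument in the sources the paper defers to, including the reason for using Besov-based Strichartz norms when $D\ge 7$.
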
 

\begin{rem} 
For dimensions $D \ge 6$ the continuous dependence on the initial data is not stated explicitly as part of ~\cite[Theorem 3.3]{BCLPZ13}, but it does follow from their proof; see for example \cite[Remark 4.3]{BCLPZ13}. 
\end{rem}

%

The following lemma is  a consequence of the local Cauchy theory. 
 
 \begin{lem}(Propagation of small $\E$ norm)  \label{lem:prop-small-E} There exists $\de, C>0$ with the following properties. Let $I \ni 0$ be a time interval and let $\bs v(t) \in \E$ be the solution to~\eqref{eq:nlw} on $I$ with initial data $\bs v(0) = \bs v_0$. If, 
 \EQ{
 \| \bs v_0 \|_{\E} \le \de, 
 } 
 then $0 \le  E( \bs v_0) \le C\de^2$ and 
 \EQ{
\sup_{t \in I} \| \bs v(t) \|_{\E}^2 \le C E( \bs v) \le C \de^2. 
 }
 \end{lem}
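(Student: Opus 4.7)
The plan is to derive a coercivity inequality bounding $E$ from below by a multiple of $\|\bs v\|_{\cE}^2$ for small data, then combine it with conservation of energy and Lemma~\ref{lem:Cauchy}. First, by Hardy's inequality (equivalently, the identification of $H$ with the radial subspace of $\dot H^1(\bR^D)$), there is a constant $c_1 > 0$ depending only on $D$ such that
\begin{equation}
c_1\|\bs v\|_{\cE}^2 \le \int_0^\infty\bigl[(\dot v)^2 + (\partial_r v)^2\bigr]\,r^{D-1}\ud r \le \|\bs v\|_{\cE}^2,
\end{equation}
and the Sobolev embedding $\dot H^1(\bR^D)\hookrightarrow L^{\frac{2D}{D-2}}(\bR^D)$, applied to radial functions, yields
\begin{equation}
\int_0^\infty |v|^{\frac{2D}{D-2}}\,r^{D-1}\ud r \le C_1 \|\bs v\|_{\cE}^{\frac{2D}{D-2}}.
\end{equation}
Substituting these into the definition of $E$ produces universal constants $c_0, C_0>0$ (depending only on $D$) for which
\begin{equation}
c_0\|\bs v\|_{\cE}^2 - C_0\|\bs v\|_{\cE}^{\frac{2D}{D-2}} \le E(\bs v) \le \tfrac{1}{2}\|\bs v\|_{\cE}^2.
\end{equation}

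Next, I would fix $\delta>0$ so small that simultaneously $C_0\delta^{\frac{4}{D-2}} \le c_0/2$ and $\delta \le \varepsilon_0$, where $\varepsilon_0$ is the small-data threshold furnished by Lemma~\ref{lem:Cauchy}. Applying the above two-sided estimate to $\bs v_0$ gives $0\le E(\bs v_0)\le \tfrac{1}{2}\delta^2$, which is the first assertion. For the time-propagated bound, Lemma~\ref{lem:Cauchy} ensures that $\bs v$ is defined globally with $\sup_{t\in\bR}\|\bs v(t)\|_{\cE}\lesssim \|\bs v_0\|_{\cE}\le \delta$, and by shrinking $\delta$ once more we keep $\|\bs v(t)\|_{\cE}$ in the coercivity window $C_0\|\bs v(t)\|_{\cE}^{\frac{4}{D-2}} \le c_0/2$ for all $t\in I$.

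Then, applying the lower bound from the coercivity inequality at each time $t\in I$ and using conservation of energy,
\begin{equation}
\tfrac{c_0}{2}\|\bs v(t)\|_{\cE}^2 \le E(\bs v(t)) = E(\bs v_0) \le \tfrac{1}{2}\delta^2,
\end{equation}
yields the stated bound $\sup_{t\in I}\|\bs v(t)\|_{\cE}^2 \le C\,E(\bs v_0) \le C\delta^2$.

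There is no substantive obstacle here; the only care needed is in ordering the smallness thresholds on $\delta$. As an alternative to invoking small-norm global existence from Lemma~\ref{lem:Cauchy}, one could run a continuity/bootstrap argument on the open set $\{T\in I : \sup_{[0,T]}\|\bs v(t)\|_{\cE}\le 2\delta\}$, closed by the same coercivity inequality combined with energy conservation; this would use only the local theory and would avoid any appeal to the small-data scattering part of Lemma~\ref{lem:Cauchy}.
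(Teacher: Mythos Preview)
Your proof is correct and follows essentially the same approach as the paper: both derive a coercivity inequality for $E(\bs v)$ via Sobolev and Hardy, and both invoke the small-data bound \eqref{eq:small-norm} from Lemma~\ref{lem:Cauchy}. The only minor stylistic difference is that the paper applies coercivity once at $t=0$ and then uses $\sup_t\|\bs v(t)\|_{\cE}\lesssim\|\bs v_0\|_{\cE}$ directly, whereas you reapply coercivity at each $t$ together with energy conservation; both routes are equivalent here.
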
 
 
 \begin{proof}
 Abusing notation and identifying $\bs v_0 \in \E$ with $\bs v_0 \in \dot H^1 \times L^2( \R^D)$ we can express the nonlinear energy of $\bs v_0$ as
 \EQ{
 E( \bs v_0) = \frac{1}{2} \| \dot v_0 \|_{L^2(\R^D)}^2 +  \frac{1}{2} \| \na v_0 \|_{L^2(\R^D)}^2  - \frac{D-2}{2D} \| v_0 \|_{L^{\frac{2D}{D-2}}}^{\frac{2D}{D-2}(\R^D)}. 
 }
 It is clear that $E( \bs v) = E( \bs v_0) \le C \| \bs v_0\|_{\E}^2 \le C\de^2$. Since $\frac{2D}{D-2} >2$, by the Sobolev inequality, $\| v \|_{L^{\frac{2D}{D-2}}(\R^D)} \le C_1 \| \na v \|_{L^2(\R^D)}$  together  with  Hardy's inequality,  $\| \abs{x}^{-1} v \|_{L^2(\R^D)} \le C_2 \| \na v \|_{L^2(\R^D)}^2$, which hold for all $v \in \dot H^1( \R^D)$, we see that by taking $\de>0$ small enough we can find $C>0$ so that 
 \EQ{
 E( \bs v) = E( \bs v_0) \ge C^{-1} \| \bs v_0 \|_{\E}^2 
 }
 The remaining conclusion now follows from the estimate~\eqref{eq:small-norm} restricted to the time interval $I$. 
 \end{proof} 
 
 Using the finite speed of propagation and the previous lemma, one obtains the following localized version. 
 
 \begin{lem}(Propagation of small localized $\E$ norm)  \label{lem:prop-small-E-local} 
 There exists $\de, C>0$ with the following properties. Let $I \ni 0$ be a time interval and let $\bs u(t) \in \E$ be a solution to~\eqref{eq:nlw} on $I$ with initial data $\bs u(0) = \bs u_0$. Let $0< r_1< r_2$. Suppose that 
 \EQ{
  \| \bs u_0 \|_{\E(\frac{ r_1}{4}, 4r_2)} \le \de .
 }
 Then, 
 \EQ{
 \| \bs u(t)  \|_{\E(r_1 + \abs{t}, r_2 - \abs{t})} \le C \de, 
 }
 for all $t \in I$ such that $r_1 + 2 \abs{t} < r_2$. 
 
 \end{lem}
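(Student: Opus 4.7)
The plan is to reduce the localized statement to the global one (Lemma~\ref{lem:prop-small-E}) by a cut-off/finite-speed-of-propagation argument. Fix a smooth radial cut-off $\phi: (0,\infty) \to [0,1]$ with $\phi(r) = 1$ for $r \in [r_1/2, 2r_2]$ and $\phi(r) = 0$ for $r \notin [r_1/3, 3r_2]$, satisfying the pointwise bounds $|\phi'(r)| \lesssim 1/r$ on the two transition annuli. Define truncated initial data
\[
\bs v_0(r) := \bigl(\phi(r) u_0(r),\ \phi(r) \dot u_0(r)\bigr).
\]

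First I would check that $\bs v_0 \in \E$ with $\|\bs v_0\|_{\E} \lesssim \|\bs u_0\|_{\E(r_1/4, 4r_2)} \le \delta$. The velocity component is immediate. For the $H$-component, the chain rule gives $\partial_r(\phi u_0) = \phi \partial_r u_0 + \phi' u_0$, and since $|\phi'(r)| \lesssim 1/r$ with support in $[r_1/4, 4 r_2]$, the cross term is controlled by the Hardy weight $u_0^2/r^2$ which already appears in the $\E(r_1/4, 4r_2)$ norm. Similarly, $(\phi u_0)^2/r^2 \le u_0^2/r^2$ on the support of $\phi$. Taking $\delta$ small enough that the resulting bound $\|\bs v_0\|_{\E} \le C'\delta$ lies in the smallness regime of Lemma~\ref{lem:prop-small-E}, we obtain a global solution $\bs v(t)$ of~\eqref{eq:nlw} with $\sup_{t \in \R} \|\bs v(t)\|_{\E} \le C \delta$.

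Next I would invoke finite speed of propagation for~\eqref{eq:nlw} (a standard consequence of the local Cauchy theory recalled in Lemma~\ref{lem:Cauchy} together with the Duhamel formula and finite propagation for the linear wave equation). Since $\bs u_0$ and $\bs v_0$ coincide on the annulus $\{r_1/2 \le r \le 2 r_2\}$, the corresponding solutions satisfy
\[
\bs u(t, r) = \bs v(t, r) \qquad \text{for all } t \in I \text{ and } r \text{ with } r_1/2 + |t| \le r \le 2 r_2 - |t|.
\]
In particular, whenever $r_1 + 2|t| < r_2$, the interval $[r_1 + |t|, r_2 - |t|]$ is contained in the above region, hence
\[
\|\bs u(t)\|_{\E(r_1 + |t|, r_2 - |t|)} = \|\bs v(t)\|_{\E(r_1 + |t|, r_2 - |t|)} \le \|\bs v(t)\|_{\E} \le C \delta,
\]
which is the desired bound.

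The only real subtlety is the Hardy term in the $\E$ norm: a naive cut-off at radius $r_1$ could in principle produce a derivative concentrating like $1/r_1$, which is exactly the scale at which the Hardy weight acts. The observation that $|\phi'(r)| \lesssim 1/r$ on the transition annuli lets the cross term $\phi' u_0$ be absorbed into $u_0/r$, so that no loss in $\delta$ occurs; this is the step I would treat most carefully. Everything else is a direct application of the already-recorded results Lemma~\ref{lem:prop-small-E} and Lemma~\ref{lem:Cauchy} together with finite speed of propagation.
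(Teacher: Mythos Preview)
Your proposal is correct and follows essentially the same approach as the paper: multiply the data by a smooth cut-off supported in $[r_1/4, 4r_2]$ and equal to $1$ on $[r_1/2, 2r_2]$, control the $\E$ norm of the truncated data (using that $|\phi'(r)| \lesssim 1/r$ to absorb the cross term into the Hardy weight), apply Lemma~\ref{lem:prop-small-E}, and conclude by finite speed of propagation. The paper's proof is identical in structure, only slightly terser about the Hardy term.
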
 
 
 \begin{proof} 
Let $\fy(r)$ be a smooth cut-off function such that $\fy(r) = 1$ if $r \in [\frac{1}{2} r_1, 2 r_2]$, $\fy(r)= 0$ if $r \in (0, \frac{1}{4} r_1] \cup [4 r_2, \infty)$ and such that $\abs{\p_r \fy (r)} \le 4 r_1^{-1}$ for $r \in [\frac{1}{4} r_1, \frac{1}{2} r_1]$ and $\abs{\p_r \fy (r)} \le 4 r_2^{-1}$ for $r \in [2 r_2, 4r_2]$. Setting $\bs v_0 := \fy \bs u_0$ it follows from the definition of the local $\E$-norm in~\eqref{eq:local-E-norm} that $\|\bs v_0\|_{\E} \le C \|\bs u_0\|_{\E(\frac{ r_1}{4}, 4r_2)}$ for some constant $C>0$ independent of $\bs u_0, r_1, r_2$. Taking $\de>0$ sufficiently small we may apply Lemma~\ref{lem:prop-small-E} to the solution $\bs v(t)$ with initial data $\bs v(0) = \bs v_0$ The conclusion then follows by finite speed of propagation, which ensures that $\bs u(t, r) = \bs v(t, r)$ for all $(t, r) \in I \times (0, \infty)$ with $r \in (r_1 + \abs{t}, r_2 - \abs{t})$ and $r_1 + 2 \abs{t} < r_2$. 
 \end{proof} 
 
 \begin{lem}[Short time evolution close to $W$]  \label{lem:evolve-W} Let $\iota \in \{-1, 1\}$. There exists $\de_0>0$ and a function $\eps_0: [0, \de_0] \to [0, \infty)$ with $\eps_0( \de) \to 0$ as $\de \to 0$ with the following properties. 
 Let $\bs v_0 \in \E$ and let $\bs v(t)$ denote the unique solution to~\eqref{eq:nlw} with $\bs v_0(0) = \bs v_0$. Let $\mu_0, T_0>0$ and suppose that 
 \EQ{
 \| \bs v_0 -  \iota \bs W_{\mu_0} \|_{\E} + \frac{T_0}{\mu_0}   = \de \le \de_0
 }
 Then, $T_0< T_{+}( \bs v_0)$ and
 \EQ{
 \sup_{t \in [0, T_0]} \| \bs v(t) - \iota \bs W_{\mu_0} \|_{\E} <  \eps_0( \de)  
 }
 
 \end{lem}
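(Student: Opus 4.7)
\emph{Proof plan.} The result should follow from the scale invariance of~\eqref{eq:nlw} on the energy space, combined with the continuous dependence on initial data from Lemma~\ref{lem:Cauchy} applied to the static solution $\iota \bs W$.

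I would first reduce to the case $\mu_0 = 1$ by rescaling. Set $\ti{\bs v}_0 := (\bs v_0)_{1/\mu_0}$ and let $\ti{\bs v}(t)$ be the corresponding solution of~\eqref{eq:nlw}. Since $(\bs W_{\mu_0})_{1/\mu_0} = \bs W$ and the $\E$-norm is scale invariant,
\[
\|\ti{\bs v}_0 - \iota \bs W\|_{\E} = \|\bs v_0 - \iota \bs W_{\mu_0}\|_{\E} \le \de,
\]
while scale invariance of~\eqref{eq:nlw} together with the time dilation $t \mapsto t/\mu_0$ give
\[
\|\bs v(t) - \iota \bs W_{\mu_0}\|_{\E} = \|\ti{\bs v}(t/\mu_0) - \iota \bs W\|_{\E} \mfor t \in [0, T_0].
\]
Thus it suffices to bound $\|\ti{\bs v}(s) - \iota \bs W\|_\E$ uniformly for $s$ in the interval $[0, T_0/\mu_0]$, which has length $\le \de$.

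Next, the constant function $t \mapsto \iota \bs W$ is a global solution of~\eqref{eq:nlw} whose restriction to $[0, 1]$ lies in $S([0,1]) \cap W([0,1])$ by the polynomial decay $W(r) \lesssim (1+r)^{-(D-2)}$ and a direct check of the relevant Lebesgue and Besov norms. Choosing $\de_0 \le 1$ so that $T_0/\mu_0 \le 1$, I would apply the continuous dependence in Lemma~\ref{lem:Cauchy} with background $\iota \bs W$ on $[0, 1]$: for every $\eps > 0$ there is a $\de_\eps > 0$ such that $\|\ti{\bs v}_0 - \iota \bs W\|_\E + s < \de_\eps$ implies that $\ti{\bs v}$ exists on $[0, s]$ and $\sup_{s' \in [0, s]} \|\ti{\bs v}(s') - \iota \bs W\|_\E < \eps$. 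Setting $\eps_0(\de) := \inf\{\eps > 0 : \de < \de_\eps\}$ then gives a nondecreasing modulus with $\eps_0(\de) \to 0$ as $\de \to 0^+$, and undoing the rescaling completes the argument.

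I do not expect a serious obstacle in this argument; the only routine technical step is verifying that $\iota W$ has finite Strichartz norm on a fixed bounded time interval, which is immediate from the explicit decay of $W$ and the admissibility of the Strichartz exponents recalled in Section~\ref{sec:Cauchy}.
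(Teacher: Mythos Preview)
Your proposal is correct and takes essentially the same approach as the paper: rescale to $\mu_0 = 1$ and invoke the continuous dependence on initial data from Lemma~\ref{lem:Cauchy} at the stationary solution $\iota\bs W$. The paper's proof is a one-line appeal to exactly this; your additional details (explicit rescaling, finiteness of the Strichartz norm of $W$ on a bounded interval, construction of $\eps_0$) are all routine and correct.
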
 
 
 \begin{proof} By rescaling we may assume $\mu_0 = 1$. The result is then a particular case of the local Cauchy theory, in particular the continuity of the data to solution map at $\bs W$.  
 \end{proof} 
 
 We also require the following localized version. 
  \begin{lem}[Localized short time evolution close to $W$]  \label{lem:evolve-W-local} Let $\iota \in \{-1, 1\}$.
 There exists $\de_0>0$ and a function $\eps_0: [0, \de_0] \to [0, \infty)$ with $\eps_0( \de) \to 0$ as $\de \to 0$ with the following properties. 
 Let $\bs u_0 \in \E$, $T_0 < T_+(\bs u_0)$, and let $\bs u(t)$ denote the unique solution to~\eqref{eq:nlw} with $\bs u_0(0) = \bs u_0$. Let $\mu_0>0$, $0 < r_1 < r_2 < \infty$  and suppose that 
 \EQ{
 \| \bs u_0 -  \iota \bs W_{\mu_0} \|_{\E( \frac{1}{4} r_1, 4 r_2)} + \frac{T_0}{\mu_0}  =   \de \le \de_0
 }
 Then, 
 \EQ{
 \| \bs u(t) - \iota \bs W_{\mu_0} \|_{\E( r_1 + t, r_2 - t)} <  \eps_0( \de)  
 }
 for all $0 < t \le T_0$ such that $r_1 + 2 \abs{t} < r_2$. 
  \end{lem}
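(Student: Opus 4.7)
My plan is to reduce to the non-localized statement in Lemma~\ref{lem:evolve-W} by pasting $\bs u_0$ onto $\iota\bs W_{\mu_0}$ outside the annulus $[\tfrac{r_1}{4},4r_2]$ and then invoking finite speed of propagation for \eqref{eq:nlw}.

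First, I would fix a smooth radial cutoff $\varphi$ with $\varphi(r)=1$ for $r\in[\tfrac{r_1}{2},2r_2]$, $\supp\varphi\subset(\tfrac{r_1}{4},4r_2)$, and $|\partial_r\varphi(r)|\le Cr^{-1}$, and define the auxiliary datum
\[
\bs v_0 := \iota\bs W_{\mu_0} + \varphi\,(\bs u_0 - \iota\bs W_{\mu_0}),
\]
so that $\bs v_0=\bs u_0$ on $(\tfrac{r_1}{2},2r_2)$ and $\bs v_0=\iota\bs W_{\mu_0}$ outside $(\tfrac{r_1}{4},4r_2)$. Expanding $\partial_r(\varphi w)=\varphi'w+\varphi\,\partial_r w$ and using $|\varphi'(r)|^2\lesssim r^{-2}$, the Hardy-type weight $r^{-2}$ in the definition \eqref{eq:local-E-norm} of $\|\cdot\|_\E$ absorbs the derivative of the cutoff, yielding
\[
\|\bs v_0-\iota\bs W_{\mu_0}\|_\E \le C\,\|\bs u_0-\iota\bs W_{\mu_0}\|_{\E(\tfrac{r_1}{4},4r_2)} \le C\delta.
\]

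Next, I would shrink $\delta_0$ if necessary so that $(C+1)\delta_0$ lies below the threshold in Lemma~\ref{lem:evolve-W}. That lemma, applied to the datum $\bs v_0$ with $\|\bs v_0-\iota\bs W_{\mu_0}\|_\E + T_0/\mu_0 \le (C+1)\delta$, produces a solution $\bs v(t)$ of \eqref{eq:nlw} on $[0,T_0]$ with
\[
\sup_{t\in[0,T_0]}\|\bs v(t)-\iota\bs W_{\mu_0}\|_\E \le \eps_0(\delta),
\]
where $\eps_0(\delta)\to 0$ as $\delta\to 0$ (after absorbing the harmless constant $C+1$).

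Finally, since $\bs u_0$ and $\bs v_0$ coincide on the open region $(\tfrac{r_1}{2},2r_2)$, the finite speed of propagation for \eqref{eq:nlw} (applied to the two solutions $\bs u$ and $\bs v$ in $C^0(I;\E)$ through their Duhamel representation from Lemma~\ref{lem:Cauchy}) gives $\bs u(t,r)=\bs v(t,r)$ for all $0\le t\le T_0$ and $r\in(\tfrac{r_1}{2}+t,\,2r_2-t)$. Whenever $r_1+2|t|<r_2$, the interval $[r_1+t,r_2-t]$ sits inside $(\tfrac{r_1}{2}+t,\,2r_2-t)$, hence
\[
\|\bs u(t)-\iota\bs W_{\mu_0}\|_{\E(r_1+t,\,r_2-t)} = \|\bs v(t)-\iota\bs W_{\mu_0}\|_{\E(r_1+t,\,r_2-t)} \le \|\bs v(t)-\iota\bs W_{\mu_0}\|_\E < \eps_0(\delta),
\]
which is the desired conclusion.

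The argument has no serious obstacle; the only step deserving care is the cutoff estimate $\|\bs v_0-\iota\bs W_{\mu_0}\|_\E \le C\,\|\bs u_0-\iota\bs W_{\mu_0}\|_{\E(\tfrac{r_1}{4},4r_2)}$, which works precisely because the Hardy weight $r^{-2}$ in $\|\cdot\|_\E$ neutralizes the term $|\varphi'|^2$ produced when differentiating. Everything else is a routine application of Lemma~\ref{lem:evolve-W} together with the standard domain-of-dependence identity for \eqref{eq:nlw}.
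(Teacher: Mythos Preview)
Your proof is correct and follows essentially the same approach as the paper: define $\bs v_0 = \varphi\,\bs u_0 + (1-\varphi)\,\iota\bs W_{\mu_0}$ using the cutoff from Lemma~\ref{lem:prop-small-E-local}, apply Lemma~\ref{lem:evolve-W} to $\bs v_0$, and conclude by finite speed of propagation. You supply more detail on the cutoff estimate and the domain-of-dependence inclusion than the paper does, but the strategy is identical.
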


\begin{proof} Let $\fy(r)$ be as in the proof of Lemma~\ref{lem:prop-small-E-local} and define $\bs v_0 := \fy \bs u_0 + (1- \fy) \bs W_{\mu_0}$. By taking $\de_0$ sufficiently small we see that $\bs v_0, \mu_0, T_0$ satisfy the hypothesis of Lemma~\ref{lem:evolve-W}. The conclusion then follows from the finite speed of propagation. 
\end{proof} 

We will make use of the following consequence of the previous four lemmas. 
 
 \begin{lem}
\label{lem:evol-of-Q}
If  $\iota_n \in \{-1, 0, 1\}$, $0 < r_n \ll \mu_n \ll R_n$, $0 < t_n \ll \mu_n$ and $\bs u_{n}$ a sequence of
solutions of \eqref{eq:nlw} such that $\bs u_n(t)$ is defined for $t \in [0, t_n]$ and
\begin{equation}
\lim_{n\to\infty}\|\bs u_n(0) - \iota_n \bs W_{\mu_n}\|_{\cE(\frac 14 r_n, 4 R_n)} = 0,
\end{equation}
then
\begin{equation}
\lim_{n\to\infty}\sup_{t\in [0, t_n]}\|\bs u_n(t) - \iota_n \bs W_{\mu_n}\|_{\cE(r_n + t, R_n-t)} = 0.
\end{equation}
\end{lem}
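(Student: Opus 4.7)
The plan is to cover the annulus $(r_n+t, R_n-t)$, for each $t \in [0, t_n]$, by three overlapping subregions and to treat each one with a previous lemma. After passing to subsequences I may assume $\iota_n$ is constant in $\{-1, 0, +1\}$; if $\iota_n \equiv 0$ the statement reduces to Lemma~\ref{lem:prop-small-E-local} applied with $r_1 = r_n$ and $r_2 = R_n$, so assume $\iota_n \in \{-1, +1\}$. Fix intermediate scales $\alpha_n, \beta_n$ with $r_n + t_n \ll \alpha_n \ll \mu_n \ll \beta_n \ll R_n$, for instance $\alpha_n := \sqrt{(r_n+t_n)\mu_n}$ and $\beta_n := \sqrt{\mu_n R_n}$.

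On the middle annulus containing the bubble scale $\mu_n$ I would apply Lemma~\ref{lem:evolve-W-local} with $r_1 = \alpha_n$, $r_2 = \beta_n$ and $T_0 = t_n$: the hypothesis yields $\|\bs u_n(0) - \iota_n \bs W_{\mu_n}\|_{\cE(\alpha_n/4, 4\beta_n)} \to 0$ and $t_n/\mu_n \to 0$, so the lemma gives
\begin{equation*}
\sup_{t \in [0, t_n]}\|\bs u_n(t) - \iota_n\bs W_{\mu_n}\|_{\cE(\alpha_n + t,\, \beta_n - t)} \longrightarrow 0.
\end{equation*}

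On the inner and outer annuli the approach is to observe that $\bs W_{\mu_n}$ itself has asymptotically vanishing $\cE$-norm there, so the initial data is small on these regions and Lemma~\ref{lem:prop-small-E-local} propagates the smallness. By scale invariance,
\begin{equation*}
\|\bs W_{\mu_n}\|_{\cE(r_n/4,\, 4\alpha_n)} = \|\bs W\|_{\cE(r_n/(4\mu_n),\, 4\alpha_n/\mu_n)}, \qquad \|\bs W_{\mu_n}\|_{\cE(\beta_n/4,\, 4R_n)} = \|\bs W\|_{\cE(\beta_n/(4\mu_n),\, 4R_n/\mu_n)},
\end{equation*}
and both right-hand sides tend to $0$: the first because both endpoints tend to $0$ and the integrand defining $\|\bs W\|_{\cE}^2$ is integrable at the origin (here the assumption $D \ge 4$ is used, so that $r^{D-3}$ is integrable on $[0,s]$), and the second because both endpoints tend to $\infty$ and the $|x|^{-(D-2)}$ decay of $W$ makes the integrand integrable at infinity. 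Combined with the hypothesis and the triangle inequality, $\|\bs u_n(0)\|_{\cE(r_n/4,\, 4\alpha_n)} \to 0$ and $\|\bs u_n(0)\|_{\cE(\beta_n/4,\, 4R_n)} \to 0$, so Lemma~\ref{lem:prop-small-E-local} yields $\sup_{t \in [0, t_n]}\|\bs u_n(t)\|_{\cE(r_n+t,\, 2\alpha_n-t)} \to 0$ and $\sup_{t \in [0, t_n]}\|\bs u_n(t)\|_{\cE(\beta_n/2+t,\, R_n-t)} \to 0$. Reinserting the still-negligible contribution of $\iota_n \bs W_{\mu_n}$ on the same regions gives the corresponding estimates for $\bs u_n(t) - \iota_n \bs W_{\mu_n}$.

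The three estimates glue once one checks that the annuli $(r_n+t,\, 2\alpha_n - t)$, $(\alpha_n + t,\, \beta_n - t)$ and $(\beta_n/2 + t,\, R_n - t)$ truly cover $(r_n+t,\, R_n-t)$ uniformly in $t \in [0, t_n]$, which reduces to $2t < \alpha_n$ and $2t < \beta_n/2$; both hold for $n$ large since $t_n \ll \alpha_n$ and $t_n \ll \beta_n$. The only substantive new input beyond the previous lemmas is the vanishing of the localized $\cE$-norm of $\bs W$ near $0$ and near $\infty$; I expect the bookkeeping of the three-region covering to be the main source of notational friction rather than a genuine obstacle.
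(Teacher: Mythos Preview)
Your argument is correct, but it is more elaborate than needed. The paper's proof is a single sentence: apply Lemma~\ref{lem:evolve-W-local} directly with $r_1 = r_n$, $r_2 = R_n$, $\mu_0 = \mu_n$, $T_0 = t_n$ (and Lemma~\ref{lem:prop-small-E-local} when $\iota_n = 0$). No splitting into three regions is required.

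The reason your decomposition is unnecessary is that Lemma~\ref{lem:evolve-W-local} carries no hypothesis relating $r_1$, $r_2$ to $\mu_0$; its proof builds the extended datum $\bs v_0 := \varphi\,\bs u_0 + (1-\varphi)\,\iota\bs W_{\mu_0}$, which is globally close to $\iota\bs W_{\mu_0}$ regardless of whether $\mu_0$ lies inside $(r_1, r_2)$. So the lemma already absorbs the inner and outer regions where you invoked the smallness of $\bs W_{\mu_n}$ and Lemma~\ref{lem:prop-small-E-local} separately. Your three-region covering works and the bookkeeping is sound (up to trivialities such as $4\alpha_n$ versus $8\alpha_n$ in the hypothesis of Lemma~\ref{lem:prop-small-E-local}), but it recreates by hand what the extension in Lemma~\ref{lem:evolve-W-local} already does.
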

\begin{proof}
This is a direct consequence of Lemma~\ref{lem:prop-small-E-local} when $\iota_n = 0$ and Lemma~\ref{lem:evolve-W-local} when $\iota_n \in \{-1, 1\}$. 
%
\end{proof}

%

%
%

\subsection{Profile decomposition}

The linear  profile decomposition of Bahouri and G\'erard~\cite{BG} is an essential ingredient in the study of solutions to~\eqref{eq:nlw}; see also~\cite{BrezisCoron, Gerard, Lions1, Lions2,  MeVe98}. 


\begin{lem}[Linear profile decomposition]\emph{\cite{BG}}  \label{lem:pd}  Let  $\bs u_n$ be a bounded sequence in $\E$, i.e.,  $\limsup_{n \to \infty} \|\bs u_n\|_{\E} <\infty$. Then, after passing to a subsequence, there exists sequences $ \lam_{n, j} \in (0, \infty)$, and  $t_{n, j} \in \R$  and finite energy free waves $\bs v\lin^{j} \in \E$ such that for each $J \ge 1$, 
\EQ{
\bs u_n &=  \sum_{j =1}^J \Big( \lam_{n, j}^{-\frac{D-2}{2}}v_{\Lin}^j \big(  \frac{-t_{n, j}}{\lambda_{n, j}}, \frac{ \cdot}{\lambda_{n, j}} \big) , \lambda_{n, j}^{-\frac{D}{2}} \p_t v_{\Lin}^j\big( \frac{-t_{n, j}}{\lambda_{n, i}}, \frac{ \cdot}{\lam_{n, j}} \big)\Big) + \bs w_{n, 0}^J( \cdot)
}
where, denoting by $\bs w_{n, \Lin}^J(t)$ the free wave  with initial data $\bs w_{n, 0}^J$, the following hold: 
\begin{itemize} 
\item for each $j$,  either $t_{n, j} = 0$ for all $n$ or $\lim_{n\to \infty} \frac{-t_{n, j}}{\lam_{n, i}}  = \pm \infty$. If $t_{n, j} = 0$ for all $n$, then one of $\lam_{n, j} \to 0$, $\lam_{n, j} = 1$ for all $n$, or $\lam_{n, j} \to \infty$ as $n \to \infty$, holds; 
\item the scales $\lam_{n, j}$ and times $t_{n, j}$ satisfy, 
\EQ{
\frac{\lam_{n, j}}{\lam_{n, j'}} + \frac{ \lam_{n, j'}}{\lam_{n, j}} + \frac{ \abs{ t_{n, j} - t_{n, j'}}}{\lam_{n, j}} \to \infty \mas n \to \infty; 
}
\item the error term $\bs w_{n}^J$ satisfies, 
\EQ{
&( \lam_{n, j}^{\frac{D-2}{2}}w_{n, \Lin}^J( t_{n, j}, \lambda_{n, j} \cdot), \lam_{n, i}^{\frac{D}{2}} \p_t w_{n, \Lin}^J( t_{n, j}, \s_{n, j} \cdot)) \rightharpoonup 0\in \E \mas n \to \infty
}
for each $J \ge 1$, each $j \in \N$,  and vanishes strongly in the sense that 
\EQ{
\lim_{J \to \infty} \limsup_{n \to \infty} \Big(  \| w_{n, \Lin}^J \|_{L^\infty_{t}L^{\frac{2D}{D-2}}(\R \times \R^D)} + \| w_{n, \Lin} \|_{\calS(\R)} \Big)  = 0; 
}
\item the following  pythagorean decomposition of the free energy holds:  for each $J \ge 1$, 
\EQ{ \label{eq:pyth} 
\| \bs u_n\|_{\E}  &=  \sum_{j=1}^J \|  v^j\lin (-t_{n,j}/ \lam_{n, j}),  \p_t v^j\lin  (-t_{n,j}/ \lam_{n, j}))\|_{\E}  +\| \bs w_{n}^J \|_{\E} + o_n(1) 
} 
as $n \to \infty$. 
 
\end{itemize} 

\end{lem}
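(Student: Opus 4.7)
The plan is to follow the classical extraction procedure of Bahouri and G\'erard, iteratively peeling off bubbles of concentration from the sequence by weak limits in suitably rescaled and time-translated frames. The heart of the argument is a single key inequality that converts non-smallness of a scale-invariant dispersive norm into the existence of a concentrating profile.

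The first step is to establish a refined Strichartz-type inequality of the schematic form
\[
\| S\lin(t) \bs w \|_{L^\infty_t L^{\frac{2D}{D-2}}(\bR\times\bR^D)} \lesssim \| \bs w\|_{\cE}^{\theta} \cdot N(\bs w)^{1-\theta},
\]
where $N(\bs w)$ is a weaker Besov-type norm that is invariant under the same rescalings and time translations as the energy. (One derives this by combining standard Sobolev embeddings with Littlewood--Paley theory, exactly as in Bahouri--G\'erard.) This inequality has the consequence that whenever $\limsup_n \| S\lin(t) \bs w_n\|_{L^\infty_t L^{2D/(D-2)}}$ is bounded below by $\eta>0$, there exist $\lambda_n \in (0,\infty)$ and $t_n\in\bR$ so that, after rescaling by $\lambda_n$ and translating by $t_n$, one has a uniform lower bound on a weak-compactness friendly functional applied to $S\lin(t_n) \bs w_n$.

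For the single-profile extraction, apply this to $\bs w_n=\bs u_n$. Form the rescaled--translated pairs $\bs U_n$ obtained by evaluating $S\lin(t_n)\bs u_n$ at $\lambda_n \cdot$ with the appropriate $\E$-invariant prefactors. These remain bounded in $\cE$; by weak compactness pass to a subsequence with a weak limit $\bs V^1\in\cE$, nonzero thanks to the uniform lower bound from the refined inequality. Set $\bs v^1\lin(t):=S\lin(t)\bs V^1$; by passing to a further subsequence normalize $-t_{n,1}/\lambda_{n,1}$ to have limit in $\{-\infty,0,+\infty\}$, and if it is $0$ normalize $\lambda_{n,1}$ to tend to $0$, $\infty$, or equal $1$. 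Subtract the corresponding concrete bubble from $\bs u_n$ to obtain the remainder $\bs w_n^1$, and iterate.

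The orthogonality of parameters between $(\lambda_{n,j},t_{n,j})$ and $(\lambda_{n,j'},t_{n,j'})$ for $j\ne j'$ is forced by the construction: the new profile is extracted as a weak limit from a sequence that, when viewed in all previous frames, weakly converges to $0$. If the new frame failed to be orthogonal to an old one, a diagonal argument would transport the nonzero weak limit back into the old frame, contradicting the vanishing built into the subtraction. With orthogonality in hand, the Pythagorean identity for $\|\bs u_n\|_{\cE}^2$ follows because all cross terms between distinct frames, and between the finite bubble sum and $\bs w_n^J$, vanish weakly. The Pythagorean identity bounds $\sum_j \|(v\lin^j,\partial_t v\lin^j)(-t_{n,j}/\lambda_{n,j})\|_{\cE}^2$ by $\limsup_n\|\bs u_n\|_{\cE}^2$, hence the individual profile energies form a summable sequence. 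Finally, to obtain the smallness of $\bs w_n^J$ in $L^\infty_t L^{2D/(D-2)}$ and in $\cS(\bR)$ as $J\to\infty$, apply the refined inequality once more: were $\limsup_n\|S\lin(t)\bs w_n^J\|_{L^\infty_t L^{2D/(D-2)}}$ to remain above $\eta>0$ for all $J$, one could extract another profile of energy bounded below, violating the summability just established; upgrading from $L^\infty_t L^{2D/(D-2)}$ to the Strichartz norm $\cS(\bR)$ is then a standard interpolation using the uniform $\cE$-bound.

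The main obstacle is the refined inequality of the first paragraph, since everything else (weak extraction, iterative subtraction, orthogonality, Pythagorean accounting) is soft functional analysis once that tool is available. A secondary technical point requiring care is the book-keeping when matching profiles to a diagonal subsequence so that the parameter normalization conditions hold simultaneously for all $j$; this is handled by a standard diagonal extraction.
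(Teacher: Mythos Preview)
The paper does not prove this lemma at all: it is stated with a citation to Bahouri--G\'erard \cite{BG} and used as a black box, so there is no ``paper's own proof'' to compare against. Your sketch is essentially the classical Bahouri--G\'erard extraction argument (refined Sobolev/Besov inequality, iterative weak-limit extraction, orthogonality of parameters, Pythagorean expansion, termination via summability of profile energies), which is exactly what the cited reference does; so your proposal is consistent with the literature the paper relies on.
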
 

\begin{rem} 
We call  the triplets $(\bs v\lin^i, \lam_{n, j}, t_{n, j})$ \emph{profiles}. Following Bahouri and G\'erard~\cite{BG} we refer to  the profiles  $(\bs v\lin^j, \lam_{n, j}, 0)$ as \emph{centered}, to  the profiles $(\bs v\lin^j , \lam_{n, j}, t_{n, j})$ with $- t_{n, j}/ \lam_{n, j} \to \infty$ as $n \to \infty$ as \emph{outgoing},  and those with $- t_{n, j}/ \lam_{n, j} \to -\infty$ as \emph{incoming}. 
\end{rem} 

\begin{rem} 
In Section~\ref{sec:compact} we implicitly make use of nonlinear profile decompositions, in addition to Lemma~\ref{lem:pd}, when we invoke arguments from~\cite{DKM3erratum}. We refer the reader to~\cite[Proposition 2.1]{DKM3erratum} for the statement. 
\end{rem}

\subsection{Multi-bubble configurations}

In this section we study properties of finite energy maps near a multi-bubble configuration, and we record several properties of the ground state $W$.

The operator $\LL_{\calW}$ obtained by linearization of~\eqref{eq:nlw} about an $M$-bubble configuration $\bs \calW(\vec \iota, \vec \lam)$ is given by, 
\EQ{  \label{eq:LW-def} 
\LL_{\calW} \, g := \uD^2 E_{\bfp}(\calW( \vec\iota, \vec \lam)) g = - \De g - f'(\calW( \vec\iota, \vec \lam) )g, 
}
where $f(z) := \abs{u}^{\frac{4}{D-2}} u$ and $f'( z) = \frac{D+2}{D-2} \abs{z}^{\frac{4}{D-2}}$. Given $\bs g = (g,\dot g) \in \E$, 
\EQ{
\La \uD^2 E(\bs\calW(\vec \iota, \vec \lam)) \bs g \mid \bs g \Ra =  \int_0^\infty \Big(\dot g(r)^2 + (\p_r g(r))^2 -  f'(\calW( \vec \iota, \vec \lam)) g(r)^2 \, \Big) r \ud r. 
}
An important instance of the operator $\LL_{\calW}$ is given by linearizing \eqref{eq:nlw} about a single copy of the ground state $ \calW(  \vec\iota, \vec \lam) = W_{\lam}$. In this case we use the short-hand notation, 
\EQ{ \label{eq:LL-def} 
\LL_{\lam} := -\De  - f'(W_\lam) 
}
We write $\calL := \calL_1$.

We define the infinitesimal generators of $\dot H^1$-invariant dilations by $\Lam$ and in the $L^2$-invariant case we write $\ULam$, which are given by 
\EQ{
\Lam := r \p_r + \frac{D-2}{2}, \quad \ULam := r\p_r + \frac{D}{2}
}
We have 
\EQ{
\Lam W(r) = \Big( \frac{D-2}{D} - \frac{r^2}{ 2D}  \Big) \Big( 1 + \frac{r^2}{D(D-2)}\Big)^{-\frac{D}{2}}
}
Note that both $W$ and $\Lam W$ satisfy,  
\EQ{
\abs{ W(r)} , \, \abs{\Lam W(r) } \simeq 1 \mif r \le 1,  \mand \abs{W(r)}, \,  \abs{ \Lam W(r)} \simeq r^{-D + 2} \mif r \ge 1
}
In fact, 
\EQ{ \label{eq:W-leading} 
W(r) &= 1 + O(r^2) \mif r \ll 1 \\
& = (D(D-2))^{\frac{D-2}{2}} r^{-D+2} + O( r^{-D}) \mif r \gg 1. 
}
and, 
\EQ{ \label{eq:LamW-leading} 
\Lam W(r) & = \frac{D-2}{2} + O(r^2) \mif r \ll 1 \\
& = - \frac{(D(D-2))^{\frac{D}{2}}}{2D} r^{-D+2} + O( r^{-D}) \mif r \gg 1
}
When $D=4$ we will use the extra decay in 
\EQ{\label{eq:D4-magic} 
\ULam \Lam W(r) = (r \p_r + 2) \Lam W(r) = \frac{1}{4} \frac{ 3r^2 -8}{ (1+ \frac{1}{8}r^2)^3} \simeq r^{-4} \mas r \to \infty \mif D=4
}
We note that if $D\ge 5$ then $\ang{\ULam \Lam W \mid \Lam W} = 0$. If $D=4$ then $\ang{\ULam \Lam W \mid \Lam W} = 32$. 

We will use the following computations, 
\EQ{ \label{eq:interaction} 
\frac{D+2}{D-2} &\int_0^\infty \Lam W(r) W(r)^{\frac{4}{D-2}} \, r^{D-1} \, \ud r = -\frac{D-2}{2D} ( D(D-2))^{\frac{D}{2}}\\
\frac{D+2}{D-2}( D(D-2))^{\frac{D-2}{2}}&\int_0^\infty \Lam W(r) W(r)^{\frac{4}{D-2}}  r \, \ud r = \frac{D-2}{2D} ( D(D-2))^{\frac{D}{2}}\\
\| \Lam W \|_{L^2}^2 &= \frac{2(D^2-4)(D(D-2))^{\frac{D}{2}}}{D^2(D-4)} \frac{\Gamma(1+ \frac{D}{2})}{\Gamma(D)} \mif D \ge 5 . 
}
If 
$D=4$, 
\EQ{ \label{eq:LamW-L2-4} 
\int_0^R (\Lam W(r))^2 r^3 \, \ud r = 16 \log R + O(1) \mas R \to \infty.
}
If $\s \ll 1$ we have, 
\EQ{
\ang{ \Lam W_{\U{\s}} \mid \Lam W} \lesssim \s^{\frac{D-4}{2}} \mif D \ge 5.
}
For any $R>0$, 
\EQ{
\ang{ \chi_{R \sqrt{\s}} \Lam W_{\U\sigma}  \mid \Lam W} \lesssim R^2 \s^{\frac{D -2}{2}} \mif D \ge 4. 
}

Next we discuss the spectral properties of $\LL$. Importantly, 
\EQ{
\LL ( \Lam W) = \frac{\ud}{\ud \lam}\rest_{\lam =1} ( -\De W_\lam - f(W_{\lam}) ) = 0
}
and thus if $D \ge 5$, $W \in L^2$ is a zero energy eigenvalue for $\LL$ and a threshold resonance if $D= 4$. In fact, $\{ (f, 0) \in \E \mid \LL f =0\} = \textrm{span}\{\bs W\}$ (see~\cite[Proposition 5.5]{DM08}). In addition to this fact, it was also shown in~\cite[Proposition 5.5]{DM08} that $\LL$ has a unique negative simple eigenvalue that we denote by $-\kappa^2<0$ (we take $\kappa>0$ 
We denote the associated eigenfunction by $\calY$ normalized in $L^2$ so that $\| \calY \|_{L^2} = 1$. By elliptic regularity $\calY$ is smooth, and by Agmon estimates it decays exponentially. Using that $\LL$ is symmetric we deduce that $\ang{\calY \mid \Lam W} = 0$. 

We follow the notations and set-up in~\cite[Section 3]{JJ-AJM}. Define
\EQ{
\bs \calY^- := ( \frac{1}{\kappa} \calY, - \calY), \quad \bs \calY^+ := ( \frac{1}{\kappa} \calY, \calY) 
}
and, 
\EQ{
\bs \al^-= \frac{\kappa}{2} J \bs \calY^+ = \frac{1}{2}( \kappa \calY, - \calY), \quad \bs \al^+:= -\frac{\kappa}{2} J \bs \calY^- = \frac{1}{2}( \kappa \calY, \calY) 
}
Recalling that $J \circ \uD^2 E( \bs W) = \pmat{0 & \textrm{Id} \\ - \LL & 0}$ we see that 
\EQ{
J \circ \uD^2 E( \bs W) \bs \calY^- = - \kappa \bs \calY^-, \mand J \circ \uD^2 E( \bs W) \bs \calY^+ = \kappa \bs \calY^+
}
and for  all $\bs h \in \E$, 
\EQ{
\ang{ \bs \al^- \mid J \circ \uD^2 E( \bs W) \bs h} = - \kappa\ang{ \bs \al^- \mid \bs h} , \quad \ang{ \bs \al^+ \mid J \circ \uD^2 E( \bs W) \bs h} = \kappa\ang{ \bs \al^+ \mid \bs h} . 
}
We view $\bs \al^{\pm} $ as linear forms on $\E$ and we note that $\ang{\bs \al^- \mid \bs \calY^-} = \ang{\bs \al^+ \mid \bs \calY^+} = 1$ and $\ang{\bs \al^- \mid \bs \calY^+} = \ang{\bs \al^+ \mid \bs \calY^-}  = 0$. For $\lam>0$ the rescaled versions of these objects are defined as, 
\EQ{
\bs\calY_{\lam}^-:= ( \frac{1}{\kappa} \calY_\lam, - \calY_{\U\lam}), \quad \bs \calY^+_\lam := ( \frac{1}{\kappa} \calY_{\lam}, \calY_{\U \lam} ) 
}
and, 
\EQ{ \label{eq:al-def} 
\bs \al^-_{\lam} := \frac{\kappa}{2\lam} J \bs \calY^+_\lam = \frac{1}{2}( \frac{\kappa}{\lam} \calY_{\U \lam} , - \calY_{\U\lam}), \quad \bs \al^+_\lam:= -\frac{\kappa}{2\lam} J \bs \calY^- = \frac{1}{2}( \frac{\kappa}{\lam} \calY_{\U\lam}, \calY_{\U\lam}) 
}
These choices of scalings ensure that $\ang{\bs \al^-_\lam \mid \bs \calY^-_\lam} = \ang{\bs \al^+_\lam \mid \bs \calY^+_\lam} = 1$. We have, 
\EQ{
J \circ \uD^2 E( \bs W_\lam) \bs \calY^-_\lam = - \frac{\kappa}{\lam} \bs \calY^-_\lam, \mand J \circ \uD^2 E( \bs W_\lam) \bs \calY^+_\lam = \frac{\kappa}{\lam} \bs \calY^+_\lam
}
and for  all $\bs h \in \E$, 
\EQ{ \label{eq:alpha-pair} 
\ang{ \bs \al^-_\lam \mid J \circ \uD^2 E( \bs W_\lam) \bs h} = - \frac{\kappa}{\lam}\ang{ \bs \al^-_\lam \mid \bs h} , \quad \ang{ \bs \al^+_\lam \mid J \circ \uD^2 E( \bs W_\lam) \bs h} = \frac{\kappa}{\lam}\ang{ \bs \al^+_\lam \mid \bs h} . 
}


 
We define a smooth non-negative function $\calZ \in C^{\infty}(0, \infty) \cap L^1((0, \infty), r^{D-1}\, \ud r)$ as follows. First if $D\ge 7$ we simply define 
 \EQ{ \label{eq:Z-def} 
 \calZ(r) :=  \Lam W(r)  \mif D \ge 7 
 }
and note that  
 \EQ{\label{eq:ZQ-7}
 \ang{ \calZ \mid \Lam W} >0   \mand \ang{\calZ \mid \calY} = 0,  
 }
 In fact the precise form of $\calZ$ is not so important, rather only the properties in~\eqref{eq:ZQ-7} and that it has sufficient decay and regularity. As $\Lam W \not \in \dot{H}^{-1}$ for $D \le 6$ we cannot take $\calZ = \Lam W$. Rather if $4 \le D \le 6$ we fix any $\calZ \in C^{\infty}_0(0, \infty)$ so that  
 \EQ{\label{eq:ZQ} 
  \ang{ \calZ \mid \Lam W} >0   \mand \ang{\calZ \mid \calY} = 0. 
 }
We record the following localized coercivity lemma from~\cite{JJ-AJM}.

\begin{lem}[Localized coercivity for $\LL$] \emph{\cite[Lemma 3.8]{JJ-AJM}}  \label{l:loc-coerce} 
Fix $D \ge 4$.  There exist uniform constants $c< 1/2, C>0$ with the following properties. Let $\bs g = (g, 0)  \in \E$. Then, 
\EQ{ \label{eq:L-coerce}
\ang{ \LL g \mid g} \ge c  \| \bs g \|_{\E}^2  - C\ang{ \calZ \mid g}^2  - C\ang{ \calY \mid g}^2
}
If $R>0$ is large enough then,  
\EQ{ \label{eq:L-loc-R} 
(1-2c)&\int_0^{R} (\p_r g(r))^2  \, r^{D-1} \ud r +  c \int_{R}^\infty (\p_r g(r))^2  \, r^{D-1} \ud r  - \int_0^\infty f'(W(r)) g(r)^2 \, r^{D-1}\, \ud r  \\
& \ge  - C\ang{ \calZ \mid g}^2 - C\ang{ \calY \mid g}^2. 
}
If $r>0$ is small enough, then
\EQ{ \label{eq:L-loc-r} 
(1-2c)&\int_r^{\infty} (\p_r g(r))^2  \, r \ud r +  c \int_{0}^r (\p_r g(r))^2  \, r \ud r  -  \int_0^\infty f'(W(r)) g(r)^2 \, r^{D-1}\, \ud r  \\
& \ge  - C\ang{ \calZ \mid g}^2 - C\ang{ \calY \mid g}^2. 
}
\end{lem}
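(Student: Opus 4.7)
The proof rests on the spectral description of $\LL$ recorded just above the lemma: on $L^2(\bR^D)$, $\LL$ has a unique simple negative eigenvalue $-\kappa^2$ with eigenfunction $\calY$; its kernel in $\dot H^1$ is spanned by $\Lam W$ (an honest $L^2$ eigenfunction when $D\ge 5$, a threshold resonance when $D=4$); and the remainder of the spectrum sits in $[0,\infty)$. The plan is to first establish the global estimate \eqref{eq:L-coerce}, then deduce \eqref{eq:L-loc-R} and \eqref{eq:L-loc-r} by cutoff arguments.

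For \eqref{eq:L-coerce} I would use $\ang{\calZ\mid \calY}=0$ and $\ang{\calZ\mid\Lam W}>0$ from \eqref{eq:ZQ-7}--\eqref{eq:ZQ} to decompose
\begin{equation*}
g = a\,\calY + b\,\Lam W + g_\perp,\qquad \ang{g_\perp\mid\calY}=\ang{g_\perp\mid\calZ}=0,
\end{equation*}
with $a,b$ linear functionals of $g$ expressible through $\ang{\calY\mid g}$ and $\ang{\calZ\mid g}$. Since $\calY,\Lam W$ are eigenmodes of the self-adjoint $\LL$ (with eigenvalues $-\kappa^2$ and $0$), cross terms cancel and $\ang{\LL g\mid g}=-\kappa^2 a^2 + \ang{\LL g_\perp\mid g_\perp}$. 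The heart of the proof is the claim $\ang{\LL g_\perp\mid g_\perp}\ge c_0\|g_\perp\|_{\dot H^1}^2$, which I would show by a standard concentration-compactness argument: a unit $\dot H^1$ sequence $g_n$ with $\ang{g_n\mid\calY}=\ang{g_n\mid\calZ}=0$ and $\ang{\LL g_n\mid g_n\rangle\to 0$ has a weak $\dot H^1$ limit $g_*$; local Rellich compactness together with the decay $f'(W)\lesssim\langle r\rangle^{-4}$ lets one pass to the limit in the potential term, so $g_*$ minimizes the form on the constraint subspace and therefore lies in $\ker\LL=\spn\{\Lam W\}$. The orthogonality $\ang{g_*\mid\calZ}=0$ combined with $\ang{\calZ\mid\Lam W}\ne 0$ then forces $g_*=0$, and the vanishing of the potential term forces $\|g_n\|_{\dot H^1}\to 0$, contradiction. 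Putting the pieces together and absorbing $-\kappa^2 a^2$ via $|a|\lesssim|\ang{\calY\mid g}|$, $|b|\lesssim|\ang{\calZ\mid g}|+\|g_\perp\|_{\dot H^1}$, yields \eqref{eq:L-coerce}.

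For \eqref{eq:L-loc-R} I would apply \eqref{eq:L-coerce} to $\tilde g := \chi_R g$ where $\chi_R$ is smooth, equal to $1$ on $[0,R]$ and supported in $[0,2R]$. The difference $(\p_r\tilde g)^2 - \chi_R^2(\p_r g)^2$ produces commutator terms localized to $[R,2R]$, controlled by $R^{-2}\int_R^{2R} g^2 r^{D-1}\,\ud r\lesssim \int_R^{2R}(\p_r g)^2 r^{D-1}\,\ud r$ using Hardy. Moreover, for $r\ge R$, $f'(W(r))\lesssim R^{-4}$, so a second application of Hardy gives
\begin{equation*}
\int_R^\infty f'(W(r))\,g(r)^2\,r^{D-1}\,\ud r\lesssim R^{-2}\int_R^\infty (\p_r g)^2 r^{D-1}\,\ud r,
\end{equation*}
which for $R$ large is dominated by the $c$-fraction of the exterior gradient norm in \eqref{eq:L-loc-R}. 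Finally, $\ang{\calZ\mid\tilde g}=\ang{\calZ\mid g}+o_R(1)\|g\|_{\dot H^1}$ and similarly for $\calY$, by compact support of $\calZ$ when $D\le 6$ (from \eqref{eq:ZQ}), polynomial decay when $D\ge 7$, and exponential decay of $\calY$. Estimate \eqref{eq:L-loc-r} is symmetric: use a cutoff that vanishes near the origin, and control the interior tail $\int_0^r f'(W)g^2\,r^{D-1}\,\ud r$ by $r^2\int_0^r(\p_r g)^2 r^{D-1}\,\ud r$ via Hardy plus the small-volume factor, which is absorbable once $r$ is small.

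The main obstacle is the $D=4$ case of the first step, where $\Lam W\notin L^2$: one cannot pair against it in $L^2$ to define orthogonal projections, and the weak limit $g_*$ a priori lives only in $\dot H^1$. The point of introducing $\calZ$, smooth and compactly supported in dimensions $4\le D\le 6$, is precisely to bypass this — the constraint $\ang{g_*\mid\calZ}=0$ is well-defined for $\dot H^1$ limits and still suffices to eliminate the resonance direction $\Lam W$. Making the compactness argument rigorous therefore requires verifying that the limiting $g_*$ satisfies $\LL g_*=0$ in the distributional sense and then invoking the classification of radial solutions to this linearized equation in $\dot H^1$ (which is one-dimensional, spanned by $\Lam W$, for every $D\ge 4$), rather than using ungrounded $L^2$-spectral arguments in the threshold dimension.
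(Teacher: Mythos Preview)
The paper does not supply its own proof of this lemma; it simply records the statement and cites \cite[Lemma~3.8]{JJ-AJM}. So there is no argument in the paper to compare against, and your outline is being measured against the standard proof in the cited reference.

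Your approach to \eqref{eq:L-coerce} is the canonical one and is essentially correct: decompose along $\calY$ and $\Lam W$ using the pairing with $\calZ$, prove coercivity on the remainder by a concentration--compactness contradiction argument, and then reassemble. Your remark on the $D=4$ resonance case is also accurate --- the whole point of $\calZ$ is that the constraint $\ang{\calZ\mid\cdot}=0$ is well defined on $\dot H^1$ and still kills the direction $\Lam W$.

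For the localized estimates \eqref{eq:L-loc-R} and \eqref{eq:L-loc-r}, however, your cutoff sketch is too optimistic as written. If you apply \eqref{eq:L-coerce} to $\tilde g=\chi_R g$ and rearrange, you obtain a lower bound of the form $(1-c)\int(\partial_r\tilde g)^2 - \int f'(W)\tilde g^2 \ge -C(\ldots)$; but the target inequality only carries coefficient $(1-2c)$ on the interior gradient and $c$ on the exterior. When you compare the two, the mismatch produces a term $-c\int_0^R(\partial_r g)^2$ with the wrong sign, which your outline does not address. The Hardy and potential-decay estimates you invoke control the \emph{exterior} errors but do not by themselves recover this lost $c$-fraction of the interior gradient. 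In the cited reference the argument retains the positive term $c\int \tilde g^2/r^2$ coming from the full $\cE$-norm on the right-hand side of \eqref{eq:L-coerce} and uses it to absorb the commutator and cross terms; dropping that term, as you implicitly do, breaks the bookkeeping. So the overall strategy is right, but the mechanism that closes the estimate is missing from your sketch.
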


As a consequence, (see for example~\cite[Proof of Lemma 3.9]{JJ-AJM} for an analogous argument in the case of two bubbles) we have the following coercivity property of $\LL_{\calW}$. 

\begin{lem} \label{lem:D2E-coerce}  Fix $D \ge 4$, $M \in \N$. There exist $\eta, c_0>0$ with the following properties. Consider the subset of $M$-bubble configurations $\bs \calW( \vec\iota, \vec \lam)$ for $\vec \iota \in \{-1, 1\}^M$, $\vec \lam \in (0, \infty)^M$ such that, 
\EQ{ \label{eq:lam-ratio} 
\sum_{j =1}^{M-1} \Big( \frac{\lam_j}{\lam_{j+1}} \Big)^{\frac{D-2}{2}} \le \eta^2. 
}
Let $\bs g \in \E$ be such that 
\EQ{
0 = \La \calZ_{\U{\lam_j}} \mid g\Ra  \mfor j = 1, \dots M. 
}
for $\vec \lam$ as in~\eqref{eq:lam-ratio}. Then, 
\EQ{ \label{eq:coercive} 
\frac{1}{2}\La \uD^2 E( \calW(  \vec \iota, \vec \lam)) \bs g \mid \bs g\Ra + 2 \sum_{j = 1}^M \big(\La \al_{\lam_j}^- \mid \bs g\Ra^2 + \La \al_{\lam_j}^+ \mid \bs g\Ra^2 \big) \ge c_0 \| \bs g \|_{\E}^2. 
} 
\end{lem}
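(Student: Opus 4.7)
The plan is to reduce the multi-bubble coercivity to $M$ applications of the single-bubble localized coercivity of Lemma~\ref{l:loc-coerce}, via a partition of unity adapted to the well-separated scales $\lam_1 \ll \cdots \ll \lam_M$, using the smallness of $\eta$ in \eqref{eq:lam-ratio} to absorb all scale-separation and cross-interaction errors.

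Set $r_0 := 0$, $r_M := \infty$, and $r_j := \sqrt{\lam_j\lam_{j+1}}$ for $j=1,\ldots,M-1$; by \eqref{eq:lam-ratio} both $r_{j-1}/\lam_j$ and $\lam_j/r_j$ are $O(\eta^{2/(D-2)})$, so $\lam_j$ sits logarithmically far inside $[r_{j-1},r_j]$. Choose smooth radial cutoffs $\psi_1,\ldots,\psi_M$ with $\sum_j \psi_j^2 \equiv 1$, $\psi_j \equiv 1$ on $[r_{j-1},r_j]$, $\supp\psi_j \subset [r_{j-1}/2,\,2r_j]$, and $|r\partial_r\psi_j|\lesssim 1$. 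Since $\sum_j \partial_r(\psi_j^2) = 0$, the cross commutator terms cancel in the kinetic energy and we obtain
\begin{equation*}
\langle\uD^2E(\bs\calW)\bs g\mid\bs g\rangle = \sum_{j=1}^M\bigl(\|\psi_j \dot g\|_{L^2}^2 + \langle\LL_{\calW}(\psi_j g)\mid \psi_j g\rangle\bigr) - \sum_{j=1}^M\int_0^\infty(\partial_r\psi_j)^2\,g^2\,r^{D-1}\,\mathrm d r,
\end{equation*}
where the last sum is supported on the ``gap'' annuli $\supp(\partial_r\psi_j)\subset[r_{j-1}/2,r_{j-1}]\cup[r_j,2r_j]$ and, via Hardy, is controlled by $\|g\|_H^2$ restricted to those annuli. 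On $\supp\psi_j$, the asymptotics \eqref{eq:W-leading} give $|\calW(\vec\iota,\vec\lam) - \iota_j W_{\lam_j}|\lesssim \eta^\delta$ in the integral sense relevant to $f'$, so $\langle\LL_{\calW}(\psi_j g)\mid\psi_j g\rangle = \langle\LL_{\lam_j}(\psi_j g)\mid\psi_j g\rangle + O(\eta^\delta)\|\bs g\|_\E^2$.

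Rescaling $\psi_j g$ to the scale $\lam_j=1$, its support is contained in an interval whose inner endpoint tends to $0$ and outer endpoint to $\infty$ with $\eta$. The localized estimates \eqref{eq:L-loc-R} and \eqref{eq:L-loc-r} of Lemma~\ref{l:loc-coerce} both apply on the outer and inner boundaries of $\supp\psi_j$, and rescaling back yields
\begin{equation*}
\langle\LL_{\lam_j}(\psi_j g)\mid\psi_j g\rangle \ge c\,\|\psi_j g\|_H^2 - C\lam_j^{-2}\bigl(\langle\calZ_{\U{\lam_j}}\mid\psi_j g\rangle^2 + \langle\calY_{\U{\lam_j}}\mid\psi_j g\rangle^2\bigr).
\end{equation*}
Because $\calZ$ is either compactly supported ($4\le D\le 6$) or has the decay of $\Lam W$ ($D\ge 7$), the rescaled $\calZ_{\U{\lam_j}}$ is essentially concentrated where $\psi_j\equiv 1$, and the hypothesis $\langle\calZ_{\U{\lam_j}}\mid g\rangle = 0$ gives $\langle\calZ_{\U{\lam_j}}\mid\psi_j g\rangle = \langle\calZ_{\U{\lam_j}}\mid(\psi_j-1)g\rangle = O(\eta^\delta)\|\bs g\|_\E$, whose square is absorbed. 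Exponential decay of $\calY$ gives similarly $\langle\calY_{\U{\lam_j}}\mid\psi_j g\rangle = \langle\calY_{\U{\lam_j}}\mid g\rangle + O(\eta^\delta)\|\bs g\|_\E$, and from \eqref{eq:al-def} one verifies the algebraic identity
\begin{equation*}
\langle\bs\al^-_{\lam_j}\mid\bs g\rangle^2 + \langle\bs\al^+_{\lam_j}\mid\bs g\rangle^2 = \tfrac12\bigl((\kappa/\lam_j)^2\langle\calY_{\U{\lam_j}}\mid g\rangle^2 + \langle\calY_{\U{\lam_j}}\mid\dot g\rangle^2\bigr),
\end{equation*}
so $\lam_j^{-2}\langle\calY_{\U{\lam_j}}\mid g\rangle^2$ is controlled by the $\bs\al^\pm_{\lam_j}$-pairings plus $\|\dot g\|_{L^2}^2$ (using $\|\calY_{\U{\lam_j}}\|_{L^2}=1$). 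Summing over $j$, recombining $\sum_j\|\psi_j g\|_H^2$ with the gap-annulus terms to recover $\|g\|_H^2$, and absorbing the $\|\dot g\|_{L^2}^2$ contributions into the $\tfrac12\|\dot g\|_{L^2}^2$ inside $\tfrac12\langle\uD^2E(\bs\calW)\bs g\mid\bs g\rangle$, yields the coercivity with some $c_0>0$ for $\eta$ small enough.

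\textbf{Main obstacle.} The most delicate step is the quantitative bound $f'(\calW(\vec\iota,\vec\lam)) = f'(W_{\lam_j}) + O(\eta^\delta)$, integrated against $(\psi_j g)^2$. In dimensions $D\ge 5$, $f'(z) = \tfrac{D+2}{D-2}|z|^{4/(D-2)}$ is only H\"older with exponent $4/(D-2)<1$, so one needs the inequality $|f'(a+b)-f'(a)|\lesssim |a|^{4/(D-2)-1}|b| + |b|^{4/(D-2)}$ together with the precise asymptotics \eqref{eq:W-leading} of $W$ at $0$ and $\infty$, Hardy, and Sobolev embedding, to extract a positive power of $\eta$ from the scale separation uniformly in $\vec\lam$. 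This is the heart of the argument and mirrors the two-bubble analysis in \cite[Proof of Lemma 3.9]{JJ-AJM}.
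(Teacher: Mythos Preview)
Your proposal is correct and follows exactly the approach the paper intends: the paper does not write out a proof but merely cites \cite[Proof of Lemma 3.9]{JJ-AJM} for the two-bubble case and states the multi-bubble version as a consequence, and your partition-of-unity reduction to $M$ applications of the localized single-bubble coercivity Lemma~\ref{l:loc-coerce} is precisely that argument. One small remark: from your algebraic identity you in fact get $\lam_j^{-2}\langle\calY_{\U{\lam_j}}\mid g\rangle^2 \le \tfrac{2}{\kappa^2}\big(\langle\bs\al^-_{\lam_j}\mid\bs g\rangle^2 + \langle\bs\al^+_{\lam_j}\mid\bs g\rangle^2\big)$ directly, so the extra $\|\dot g\|_{L^2}^2$ term you mention is not needed.
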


\begin{lem}  \label{lem:M-bub-energy} Fix $D\ge4,  M \in \N$. 
For any $\te>0$, there exists $\eta>0$ with the following property. Consider the subset of $M$-bubble  configurations $\bs \calW(\iota, \vec \lam)$
such that 
\EQ{
\sum_{j =1}^{M-1} \Big( \frac{ \lam_{j}}{\lam_{j+1}} \Big)^{\frac{D-2}{2}} \le \eta.
}
Then, 
\EQ{
  \Big|  E( \bs\calW( \vec \iota, \vec \lam))  - M E( \bs W) +  \frac{(D(D-2))^{\frac{D}{2}}}{D} \sum_{j =1}^{M-1} \iota_j \iota_{j+1}  \Big( \frac{ \lam_{j}}{\lam_{j+1}} \Big)^{\frac{D-2}{2}}  \Big| \le \te \sum_{j =1}^{M-1} \Big( \frac{ \lam_{j}}{\lam_{j+1}} \Big)^{\frac{D-2}{2}} .
}
Moreover, there exists a uniform constant $C>0$ such that for any $\bs g = (g, 0) \in \E$, 
\EQ{
\abs{\ang{ D E_{\bfp}( \calW(m, \vec \iota, \vec \lam)) \mid g} } \le C \| (g, 0) \|_{\E} \sum_{j =1}^M \Big( \frac{\lam_{j}}{\lam_{j+1}} \Big)^{\frac{D-2}{2}} . 
}
\end{lem}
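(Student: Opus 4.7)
The plan is to expand $E(\bs\calW(\vec\iota,\vec\lam))$ as the sum of single-bubble energies plus pairwise interactions, identify the leading order of each interaction by the asymptotics \eqref{eq:W-leading}, and then show that non-adjacent pairs are subleading under the smallness assumption on the ratios $\lam_j/\lam_{j+1}$.

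\textbf{Kinetic term.} Expanding the square and scale invariance give
$\tfrac12\int (\p_r\calW)^2 r^{D-1}\,\ud r = \tfrac{M}{2}\int (\p_r W)^2 r^{D-1}\,\ud r + \sum_{j<k}\iota_j\iota_k \int (\p_r W_{\lam_j})(\p_r W_{\lam_k}) r^{D-1}\,\ud r$,
and integrating by parts using $-\De W_{\lam_j}=f(W_{\lam_j})$ yields, for each cross pair, $\int (\p_r W_{\lam_j})(\p_r W_{\lam_k}) r^{D-1}\,\ud r = \int f(W_{\lam_j}) W_{\lam_k} r^{D-1}\,\ud r = \int f(W_{\lam_k}) W_{\lam_j} r^{D-1}\,\ud r$ (the last equality will be crucial below).

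\textbf{Potential term.} I would use a dyadic partition of unity $\{\chi_j\}_{j=1}^M$ associated to the scales $\lam_1\ll\cdots\ll\lam_M$, with $\chi_j\equiv 1$ on an annulus around $r\sim\lam_j$; the separation of scales is guaranteed by the smallness hypothesis. On $\supp\chi_j$ the bubble $\iota_j W_{\lam_j}$ dominates pointwise, so writing $p=\tfrac{2D}{D-2}$, Taylor's theorem gives pointwise
$|\calW|^p = W_{\lam_j}^p + p\,\iota_j W_{\lam_j}^{p-1}\sum_{k\neq j}\iota_k W_{\lam_k} + R_j$,
where the remainder satisfies the pointwise bound $|R_j|\lesssim \sum_{k\neq j}\min\!\big(W_{\lam_j}^{p-2}W_{\lam_k}^2,\,W_{\lam_k}^p\big)$ (which is the correct substitute for $D\ge 6$, where $p-2\in(0,1)$ and the naive quadratic remainder fails). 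Summing and using $W^{p-1}=f(W)$ together with the symmetry $\int f(W_{\lam_j})W_{\lam_k}=\int f(W_{\lam_k})W_{\lam_j}$ produces \emph{two} copies of each interaction, so
$\tfrac{D-2}{2D}\int |\calW|^p r^{D-1}\,\ud r = M\cdot\tfrac{D-2}{2D}\int W^p r^{D-1}\,\ud r + 2\sum_{j<k}\iota_j\iota_k \int f(W_{\lam_j}) W_{\lam_k} r^{D-1}\,\ud r + \mathrm{err}$.
Combining with the kinetic expansion yields $E(\bs\calW)-ME(\bs W) = -\sum_{j<k}\iota_j\iota_k\int f(W_{\lam_j})W_{\lam_k} r^{D-1}\,\ud r + \mathrm{err}$.

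\textbf{Leading order and adjacency reduction.} For $\lam_j<\lam_k$, the mass $W_{\lam_j}^{(D+2)/(D-2)}$ lives at scale $r\sim\lam_j\ll\lam_k$, where \eqref{eq:W-leading} gives $W_{\lam_k}(r)=\lam_k^{-(D-2)/2}(1+O((r/\lam_k)^2))$. Combined with the identity $\int W^{(D+2)/(D-2)}r^{D-1}\,\ud r = \tfrac{(D(D-2))^{D/2}}{D}$, which follows from integrating $-\De W=f(W)$ and using $\p_r W(r)\sim -\tfrac{1}{D}(D(D-2))^{D/2}r^{-D+1}$ at infinity, this gives
$\int f(W_{\lam_j})W_{\lam_k} r^{D-1}\,\ud r = \tfrac{(D(D-2))^{D/2}}{D}(\lam_j/\lam_k)^{(D-2)/2} + O\!\big((\lam_j/\lam_k)^{(D-2)/2+2}\big)$.
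For non-adjacent $j<k$, the bound $(\lam_j/\lam_k)^{(D-2)/2}\le \prod_{\ell=j}^{k-1}(\lam_\ell/\lam_{\ell+1})^{(D-2)/2}\le \eta^{k-j-1}\cdot(\lam_j/\lam_{j+1})^{(D-2)/2}$ shows all such pairs are $o\!\big(\sum_i(\lam_i/\lam_{i+1})^{(D-2)/2}\big)$ as $\eta\to 0$, giving the stated expansion.

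\textbf{Second bound and main obstacle.} Since $f$ is odd and $-\De W_{\lam_j}=f(W_{\lam_j})$, the first component of $\uD E(\bs\calW)$ is $\sum_j\iota_j f(W_{\lam_j})-f(\calW)$. The pointwise estimate $|f(\sum_j v_j)-\sum_j f(v_j)|\lesssim \sum_{j\neq k}W_{\lam_j}^{4/(D-2)}W_{\lam_k}$ combined with H\"older in $L^{2D/(D+2)}\times L^{2D/(D-2)}$ and Sobolev $\|g\|_{L^{2D/(D-2)}}\lesssim \|g\|_H$ reduces matters to computing $\|W_{\lam_j}^{4/(D-2)}W_{\lam_k}\|_{L^{2D/(D+2)}}$, which by the same dyadic scale analysis (splitting $r\lesssim \lam_j$, $\lam_j\lesssim r\lesssim \lam_k$, $r\gtrsim \lam_k$) is $\lesssim (\min(\lam_j,\lam_k)/\max(\lam_j,\lam_k))^{(D-2)/2}$, again dominated by adjacent ratios. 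The main technical obstacle throughout is the low regularity of $f$ in dimensions $D\ge 6$, which forbids the naive quadratic Taylor remainder and forces the use of the min-type substitute above together with a careful dyadic split between bubble-dominated and transition regions.
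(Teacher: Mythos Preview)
Your proposal is correct and is precisely the explicit computation the paper has in mind: the paper's own proof reads in full ``This is an explicit computation analogous to \cite[Lemma 2.22]{JL6}'', and your outline (expand the Dirichlet form, integrate by parts using $-\Delta W_{\lambda_j}=f(W_{\lambda_j})$, Taylor-expand $|\calW|^{2D/(D-2)}$ around the dominant bubble on a dyadic partition, evaluate the pairwise interaction via $\int f(W)\,r^{D-1}\ud r=(D(D-2))^{D/2}/D$, and discard non-adjacent pairs by the product bound on ratios) is exactly that computation. Your bookkeeping of the factor $2$ in the potential expansion versus the single copy from the kinetic cross terms, yielding the net coefficient $-1$ on $\sum_{j<k}\iota_j\iota_k\int f(W_{\lambda_j})W_{\lambda_k}$, is correct, as is your identification of the constant and your handling of the $L^{2D/(D+2)}$ bound on $f(\calW)-\sum_j\iota_j f(W_{\lambda_j})$ for the second estimate.
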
 

\begin{proof} 
This is an explicit computation analogous to~\cite[Lemma 2.22]{JL6}.   
\end{proof}

%
%

The following modulation lemma plays an important role in our analysis. Before stating it, we define a proximity function to $M$-bubble configurations. 


 \begin{defn} \label{def-d} Fix $M$ as in Definition~\ref{def:multi-bubble} and let $\bs v \in \E$.  Define, 
\EQ{ \label{eq:d-def} 
\bfd( \bs v) := \inf_{\vec \iota, \vec \lam}  \Big( \| \bs v - \bs \calW( \vec \iota, \vec \lam) \|_{\E}^2 + \sum_{j =1}^{M-1} \Big( \frac{\lam_{j}}{\lam_{j+1}} \Big)^{\frac{D-2}{2}} \Big)^{\frac{1}{2}}.
}
where the infimum is taken over all vectors $\vec \lam = (\lam_1, \dots, \lam_M) \in (0, \infty)^M$ and all $\vec \iota = \{ \iota_1, \dots, \iota_M\} \in \{-1, 1\}^M$. 
\end{defn}


\begin{lem}[Static modulation lemma] \label{lem:mod-static} Let $D \ge 4$ and $M \in \N$. 
There exists $\eta, C>0$ with the following properties.  
Let $\te>0$,  and let $\bs v \in  \calE$   be such that 
\EQ{ \label{eq:v-M-bub} 
\bfd( \bs v)  \le \eta, \mand E( \bs v) \le ME( \bs Q) + \te^2, 
}
Then, there exists a unique choice of $\vec \lam = ( \lam_1, \dots, \lam_M) \in  (0, \infty)^M$, $\vec\iota \in \{-1, 1\}^M$, and $g \in  H$, such that setting $\bs g = (g, \dot v)$, we have 
\EQ{ \label{eq:v-decomp} 
   \bs v &=  \bs \calQ( m, \vec \iota, \vec \lam) + \bs g, \quad 
   0  = \La \calZ_{\U{\lam_j}} \mid g\Ra , \quad \forall j = 1, \dots, M, 
   }
   along with the estimates, 
\EQ{  \label{eq:g-bound-0}
\bfd( \bs v)^2 &\le \|  \bs g \|_{\E}^2  + \sum_{j =1}^{M-1} \Big( \frac{\lam_{j}}{\lam_{j+1}} \Big)^{\frac{D-2}{2}}  \le C \bfd( \bs v)^2,
}
Defining the unstable/stable components of $\bs g$  by, 
\EQ{
a_{j}^{\pm} := \La \bs \al_{\lam_j}^\pm \mid \bs g\Ra
}
we additionally have the estimates, 
\EQ{\label{eq:g-bound-A} 
   \|  \bs g \|_{\E}^2 + \sum_{j \not \in \calS}   \Big( \frac{ \lam_j}{ \lam_{j+1} }\Big)^{\frac{D-2}{2}} & \le C  \max_{ j \in \calS} \Big( \frac{ \lam_j}{ \lam_{j+1} }\Big)^{\frac{D-2}{2}} +  \max_{i \in \{1, \dots, M\}, \pm} | a_i^{\pm}|^2 + \te^2, 
}
where $\calS  := \{ j \in \{ 1, \dots, M-1\} \, : \, \iota_j  = \iota_{j+1} \}$. 
\end{lem}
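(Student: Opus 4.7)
The proof divides naturally into three stages: (i) existence and uniqueness of the modulated decomposition via the implicit function theorem; (ii) the two-sided bound \eqref{eq:g-bound-0}; and (iii) the refined lower bound \eqref{eq:g-bound-A}, obtained from a second-order energy expansion combined with the coercivity in Lemma~\ref{lem:D2E-coerce}.

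For step (i), I start from an approximate minimizer of the infimum defining $\bfd(\bs v)$, which by taking $\eta$ small fixes the signs $\vec\iota \in \{-1,1\}^M$ unambiguously (two distinct sign patterns cannot produce nearby $M$-bubble configurations when the ratios of consecutive scales are small) and provides initial scales $\vec\lam^{(0)}$. With $\vec\iota$ frozen, consider the map
\[
\Phi: (0,\infty)^M \to \R^M,\qquad \Phi_j(\vec\lam) := \Big\langle \calZ_{\U{\lam_j}} \,\Big|\, v - \textstyle\sum_{i=1}^M \iota_i W_{\lam_i}\Big\rangle.
\]
Using $\partial_{\lam_i} W_{\lam_i} = -\lam_i^{-1}\Lam W_{\lam_i}$, the Jacobian of $\Phi$ at $\vec\lam^{(0)}$, after rescaling via $\lam_j \mapsto \log\lam_j$, equals the diagonal matrix with entries $\iota_j\langle\calZ\mid\Lam W\rangle$ plus a matrix whose entries are $O(\|\bs v - \bs\calW\|_\E + \max_j(\lam_j/\lam_{j+1})^{(D-2)/2})$ on the diagonal and $O((\lam_i/\lam_j)^{(D-2)/2} + (\lam_j/\lam_i)^{(D-2)/2})$ off the diagonal, by the decay/support properties of $\calZ$ and the asymptotics \eqref{eq:LamW-leading}. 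Since $\langle\calZ\mid\Lam W\rangle\neq 0$ by \eqref{eq:ZQ}, the Jacobian is invertible when $\eta$ is small, and the implicit function theorem delivers a unique $\vec\lam$ near $\vec\lam^{(0)}$ with $\Phi(\vec\lam)=0$, giving exactly the orthogonality conditions in \eqref{eq:v-decomp}.

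Step (ii) is short: the lower bound in \eqref{eq:g-bound-0} is immediate from the definition \eqref{eq:d-def} of $\bfd(\bs v)$, while the upper bound follows because the IFT produces $(\vec\iota,\vec\lam)$ as a small perturbation of the near-minimizer, so $\|\bs g\|_\E^2+\sum(\lam_j/\lam_{j+1})^{(D-2)/2}$ is within a fixed constant factor of the infimum. For step (iii), I Taylor-expand the energy about the multi-bubble:
\[
E(\bs v) - E(\bs\calW(\vec\iota,\vec\lam)) = \langle D E(\bs\calW(\vec\iota,\vec\lam))\mid\bs g\rangle + \tfrac 12\langle D^2 E(\bs\calW(\vec\iota,\vec\lam))\bs g\mid\bs g\rangle + \calR(\bs g),
\]
where $\calR(\bs g) = o(\|\bs g\|_\E^2)$ thanks to Sobolev embedding combined with $\|\bs g\|_\E \le C\eta$. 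Substituting the expansion of $E(\bs\calW) - ME(\bs W)$ from Lemma~\ref{lem:M-bub-energy}, the assumption $E(\bs v)\le ME(\bs W)+\te^2$, and the bound on the linear term from the same lemma, and splitting the interaction sum by whether $\iota_j\iota_{j+1}=\pm 1$, gives
\[
c_1\sum_{j\notin\calS}\!\!\Big(\tfrac{\lam_j}{\lam_{j+1}}\Big)^{\!\frac{D-2}{2}} + \tfrac 12\langle D^2 E(\bs\calW)\bs g\mid\bs g\rangle \le c_1\sum_{j\in\calS}\!\!\Big(\tfrac{\lam_j}{\lam_{j+1}}\Big)^{\!\frac{D-2}{2}} + \te^2 + C\|\bs g\|_\E \sum_{j}\!\Big(\tfrac{\lam_j}{\lam_{j+1}}\Big)^{\!\frac{D-2}{2}},
\]
with $c_1 = (D(D-2))^{D/2}/D > 0$. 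The cross term is absorbed via Young's inequality (using smallness of the scale ratios). By construction the orthogonality hypotheses of Lemma~\ref{lem:D2E-coerce} are satisfied, so adding $2\sum_{i,\pm}|a_i^\pm|^2$ to both sides and invoking \eqref{eq:coercive} converts the quadratic form into $c_0\|\bs g\|_\E^2$, yielding the sum version of \eqref{eq:g-bound-A}. The passage from sum to maximum is free since the number of terms is bounded by $M$.

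The main obstacle is the non-degeneracy of the Jacobian in step (i), where the concrete estimates depend on whether $D\le 6$ (so $\calZ$ is compactly supported and the off-diagonal terms are essentially trivial) or $D\ge 7$ (where $\calZ = \Lam W$ and one relies on the power decay \eqref{eq:LamW-leading}); in both cases the smallness is controlled by powers of consecutive scale ratios, which are $O(\eta)$. A subsidiary point is controlling the cubic-and-higher remainder $\calR(\bs g)$ uniformly in dimension: for low $D$ the nonlinearity has limited smoothness, but $\calR(\bs g) = o(\|\bs g\|_\E^2)$ follows from Hardy's and Sobolev's inequalities once $\|\bs g\|_\E\le\eta$.
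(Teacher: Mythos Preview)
Your proposal is correct and follows essentially the same approach as the paper: implicit function theorem for the existence/uniqueness of the orthogonal decomposition (the paper cites \cite{JL1,JL6} for this standard part and uses the quantitative IFT from Remark~\ref{rem:IFT}), followed by the second-order energy expansion combined with Lemma~\ref{lem:M-bub-energy} and the coercivity Lemma~\ref{lem:D2E-coerce} to obtain~\eqref{eq:g-bound-A}. Your treatment of the higher-order remainder as $o(\|\bs g\|_\E^2)$ is in fact more careful than the paper's $O(\|\bs g\|_\E^3)$, which is only literally correct for $D\le 6$.
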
  

\begin{rem} 
Note that the scaling in the definition $\bs \al_{\lam_j}^{\pm}$ is chosen so that $| a_j^\pm| \lesssim \| \bs g\|_{\E}$, see~\eqref{eq:al-def}. 
\end{rem} 

\begin{rem} \label{rem:IFT}  We  use the following, less standard, version of the implicit function theorem in the proof of Lemma~\ref{lem:mod-static}. 

 \emph{
Let $X, Y, Z$ be Banach spaces,  $(x_0, y_0) \in X \times Y$,  and $\de_1, \de_2>0$. Consider a mapping  $G: B(x_0, \de_1) \times B(y_0, \de_2) \to Z$,  continuous in $x$ and $C^1$ in $y$. Assume $G(x_0, y_0) = 0$,  $(D_y G)(x_0, y_0)=: L_0$ has bounded inverse $L_0^{-1}$, and  
\EQ{ \label{eq:IFT-cond} 
&\| L_0 - D_y G(x, y) \|_{\LL(Y, Z)} \le \frac{1}{3 \| L_0^{-1} \|_{\LL(Z, Y)}}  \\ 
& \| G(x, y_0) \|_Z \le \frac{\de_2}{ 3 \| L_0^{-1} \|_{\LL(Z, Y)}},  
}
for all $\| x - x_0 \|_{X} \le \de_1$ and $\| y - y_0 \|_Y \le \de_2$. 
Then, there exists a continuous function $\si: B(x_0, \de_1)  \to B(y_0, \de_2)$ such that for all $x \in B(x_0, \de_1)$, $y = \si(x)$ is the unique solution of $G(x, \si(x)) = 0$ in $B(y_0, \de_2)$. 
}

This is proved in the same way as the usual implicit function theorem, see, e.g.,~\cite[Section 2.2]{ChowHale}. The essential point is that the bounds~\eqref{eq:IFT-cond} give uniform control on the size of the open set where the Banach contraction mapping theorem is applied. 
\end{rem} 

\begin{proof}[Proof of Lemma~\ref{lem:mod-static}] The proof is a standard argument and is very similar to~\cite[Proof of Lemma 3.1]{JL1} and~\cite[Proof of Lemma 2.24]{JL6}. We refer to those papers for details and here only sketch the distinction in the estimate~\eqref{eq:g-bound-A} where the stable/unstable directions enter. 
To prove the estimate~\eqref{eq:g-bound-A} we expand the nonlinear energy of $\bs v$, 
\EQ{
M E( \bs Q) &+ \te^2 \ge E( \bs v) = E( \bs \calQ( m, \vec \iota, \vec \lam) + \bs g) \\
& = E( \bs \calQ( m, \vec \iota, \vec \lam) ) + \ang{ \uD E( \bs \calQ(m, \vec \iota, \vec \lam)) \mid \bs g}  + \frac{1}{2} \ang{ \uD^2 E( \bs \calQ(m, \vec \iota, \vec \lam))\bs g  \mid \bs g} + O(\| \bs g \|_{\E}^3) 
}
and apply the conclusions of Lemma~\ref{lem:D2E-coerce}, in particular the estimate~\eqref{eq:coercive} and Lemma~\ref{lem:M-bub-energy}. This completes the proof. 
\end{proof} 

\begin{lem}  \label{lem:bub-config} Let $D \ge 4$. 
 There exists $\eta>0$ sufficiently small with the following property. Let  $M, L \in \N$,  $\vec\iota \in \{-1, 1\}^M, \vec \sigma \in \{-1, 1\}^L$, $\vec \lam \in (0, \infty)^M, \vec \mu \in (0, \infty)^L$,   and $\bs w = (w, 0)$ be such that $\| \bs w \|_{\E}  < \infty$ and, 
 \begin{align} 
 \|\bs w  - \bs \calW(  \vec \iota, \vec \lam)\|_{\E}^2  + \sum_{j =1}^{M-1} \Big(\frac{\lam_j}{\lam_{j+1}} \Big)^{k} &\le \eta,  \label{eq:M-bub} \\
 \|\bs w  - \bs \calW( \vec \sigma , \vec \mu)\|_{\E}^2 +  \sum_{j =1}^{L-1} \Big(\frac{\mu_j}{\mu_{j+1}} \Big)^{k} &\le \eta. \label{eq:L-bub} 
 \end{align} 
 Then, $M = L$, $\vec \iota = \vec \sigma$. Moreover, for every $\te>0$ the number $\eta>0$ above can be chosen small enough so that 
 \EQ{ \label{eq:lam-mu-close} 
\max_{j = 1, \dots M} | \frac{\lam_j}{\mu_j} - 1 | \le  \te.
 }
\end{lem}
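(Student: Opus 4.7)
\medskip

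\noindent\textbf{Proof plan for Lemma~\ref{lem:bub-config}.}
I would argue by contradiction and compactness. Suppose the claim fails, so there exist a sequence $\eta_n \to 0$ and sequences $\bs w_n, M_n, L_n, \vec\iota_n, \vec\sigma_n, \vec\lam_n, \vec\mu_n$ satisfying \eqref{eq:M-bub} and \eqref{eq:L-bub} with $\eta = \eta_n$ but violating the conclusion. Since $M_n, L_n \leq \|\bs w_n\|_{\E}^2 / \|\bs W\|_{\E}^2 + O(1)$ by the $\eta_n$-approximate Pythagorean expansion, we may pass to a subsequence and assume $M_n \equiv M$, $L_n \equiv L$, $\vec\iota_n \equiv \vec\iota$, $\vec\sigma_n \equiv \vec\sigma$ are fixed. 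The triangle inequality then gives
\begin{equation}
\|\bs\calW(\vec\iota, \vec\lam_n) - \bs\calW(\vec\sigma, \vec\mu_n)\|_{\E} \;\leq\; 2\sqrt{\eta_n}\;\longrightarrow\; 0.
\end{equation}

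\smallskip

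\noindent I would then proceed by induction on $\max(M,L)$. For the base case $M = 0$ (or $L = 0$), the bound above forces $\bs\calW(\vec\sigma, \vec\mu_n) \to 0$ in $\E$. Using that distinct bubbles are quasi-orthogonal (the cross-inner-products $\langle \nabla W_{\mu_{n,j}} \mid \nabla W_{\mu_{n,\ell}}\rangle$ are $O((\mu_{n,j}/\mu_{n,\ell})^{(D-2)/2}) \to 0$ by the ratio smallness), we get $\|\bs\calW(\vec\sigma, \vec\mu_n)\|_{\E}^2 \to L\,\|\bs W\|_{\E}^2$, so $L = 0$ and the conclusion holds trivially. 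This same quasi-orthogonality also rules out $M \neq L$ later on.

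\smallskip

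\noindent For the inductive step with $M, L \ge 1$, up to a relabeling symmetry I may assume $\lam_{n,1} \le \mu_{n,1}$. Rescale everything (replace $\bs w_n$ by $(\bs w_n)_{\lam_{n,1}^{-1}}$ etc.) so that $\lam_{n,1} = 1$; then $\mu_{n,1} \ge 1$, and on a further subsequence $\mu_{n,1} \to \mu_1^* \in [1, \infty]$. On any fixed annulus $r \in [R^{-1}, R]$, the ratio smallness forces $\bs W_{\lam_{n,j}} \to 0$ in $\cE(R^{-1}, R)$ for $j \ge 2$ (since $\lam_{n,j} \to \infty$), and similarly for the $\mu$'s except possibly $\bs W_{\mu_{n,1}} \to \bs W_{\mu_1^*}$ (with the convention $\bs W_\infty = 0$). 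Therefore
\begin{equation}
\iota_1 \bs W \;=\; \lim_n \bs\calW(\vec\iota, \vec\lam_n) \;=\; \lim_n \bs\calW(\vec\sigma, \vec\mu_n) \;=\; \sigma_1 \bs W_{\mu_1^*}
\end{equation}
in $\cE(R^{-1}, R)$. Since $\bs W \not\equiv 0$ on such annuli, this forces $\mu_1^* = 1$ and $\iota_1 = \sigma_1$. This matches the innermost bubbles.

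\smallskip

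\noindent To close the induction I would now subtract this common innermost bubble. Define $\tld{\bs w}_n := \bs w_n - \iota_1 \bs W_{\lam_{n,1}}$. Since $\iota_1 = \sigma_1$, $\mu_{n,1}/\lam_{n,1} \to 1$, and $\bs W_\lam$ depends continuously on $\lam$ in $\E$, we have $\|\iota_1 \bs W_{\lam_{n,1}} - \sigma_1 \bs W_{\mu_{n,1}}\|_{\E} \to 0$, so $\tld{\bs w}_n$ is $o(1)$-close in $\E$ to both $(M-1)$- and $(L-1)$-bubble configurations, with all ratio smallness parameters still controlled. The inductive hypothesis then gives $M - 1 = L - 1$, matches the remaining signs, and yields $\lam_{n,j}/\mu_{n,j} \to 1$ for $j \ge 2$; together with the already-established $\mu_{n,1}/\lam_{n,1} \to 1$, this contradicts the assumption that the conclusion fails. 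The main technical obstacle is the rigorous extraction of the local limit $\bs W_{\mu_1^*}$ when $\mu_1^*$ could be any value in $[1,\infty]$; in the two boundary cases ($\mu_1^* = 1$ or $\mu_1^* = \infty$) one must be careful that the local $\E$-convergence on annuli is strong enough to identify the limit pointwise with $\iota_1 \bs W$, for which the explicit decay \eqref{eq:W-leading}--\eqref{eq:LamW-leading} of $W$ and the bounded-energy normalization are the key inputs.
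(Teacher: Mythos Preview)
Your approach is correct but takes a genuinely different route from the paper. The paper first establishes $M = L$ separately by expanding the nonlinear potential energy $E_{\bfp}(w)$ about each configuration via Lemma~\ref{lem:M-bub-energy}, obtaining $M E(\bs W) - C\eta \le E_{\bfp}(w) \le M E(\bs W) + C\eta$ and likewise for $L$. It then handles the signs by contradiction working from the \emph{outermost} bubble inward: if $j_0$ is the largest index where $\iota_{j_0} \neq \sigma_{j_0}$, an explicit pointwise lower bound $|\calW(\vec\iota_n,\vec\lam_n) - \calW(\vec\sigma_n,\vec\mu_n)| \gtrsim \lam_{n,j_0}^{-(D-2)/2}$ on the annulus $[\lam_{n,j_0}, 2\lam_{n,j_0}]$ contradicts the triangle-inequality bound, and the scale closeness \eqref{eq:lam-mu-close} follows by the same mechanism. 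Your induction instead peels bubbles off from the \emph{innermost} outward via local $\cE$-convergence on fixed annuli after rescaling, and recovers $M = L$ as a byproduct of the induction rather than a preliminary step. The paper's argument is more elementary in that it uses only direct pointwise inequalities and no passage to limits; yours is softer and packages all three conclusions into a single uniform mechanism. Your stated ``main technical obstacle'' is not a real one: the local convergence $\bs W_{\mu_{n,1}} \to \bs W_{\mu_1^*}$ in $\cE(R^{-1},R)$ (with the convention $\bs W_\infty = 0$) is immediate from the scaling relation $\|W_\lam\|_{\cE(R^{-1},R)} = \|W\|_{\cE(R^{-1}/\lam,\, R/\lam)}$ and the explicit form of $W$.
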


\begin{proof}[Proof of Lemma~\ref{lem:bub-config}] 


Let $g_{\lam} := w - \calW(  \vec \iota, \vec \lam)$ and $g_\mu := w - \calW(  \vec \s, \vec \mu)$. By expanding the nonlinear potential energy we have, 
\EQ{
E_{\bfp}(w) = E_{\bfp}( \calW(\vec \iota, \vec \lam) ) + \La DE_{\bfp}(\calW(\vec \iota, \vec \lam)) \mid g_\lam\Ra + O(\| (g_\lam, 0) \|_{\E}^2) . 
}
Choosing $\eta>0$ small enough so that Lemma~\ref{lem:M-bub-energy} applies, we see that 
\EQ{
M E( \bs Q) - C \eta \le E_{\bfp}( w) \le M E( \bs Q) +C \eta,
}
for some $C>0$. By an identical argument, 
\EQ{
L E( \bs Q) - C \eta \le E_{\bfp}( w) \le L E( \bs Q) +C \eta.
}
It follows that $M=L$. Next, we prove that $\eta>0$ can be chosen small enough to ensure that $\vec \iota = \vec \sigma$. Suppose not, then we can find a sequence $\bs w_n = (w_n, 0)$ with $\| \bs w_n\|_{\E} \le C$, and sequences $\vec \iota_n, \vec \s_n, \vec \lam_n, \vec \mu_n$ so that, 
\EQ{
\| \bs w_n  - \bs \calW(  \vec \iota_n, \vec \lam_n)\|_{\E}^2  + \sum_{j =1}^{M-1} \Big(\frac{\lam_{n,j}}{\lam_{n, j+1}} \Big)^{k} & = o_n(1)  \mas n \to \infty,\\
 \|\bs w_n  - \bs \calW( \vec \sigma_n , \vec \mu_n)\|_{\E}^2 +  \sum_{j =1}^{M-1} \Big(\frac{\mu_{n,j}}{\mu_{n, j+1}} \Big)^{k} &= o_n(1) \mas n \to \infty,
}
but with $\vec \iota_n \neq \vec  \s_n$ for every $n$. 
 Passing to a subsequence we may assume that there exists an index $j_0  \in \{ 1, \dots, M\}$ such that $ \iota_{j, n} = \sigma_{j, n}$ for every $j > j_0$ and every $n$ and $\iota_{j_0, n} \neq \s_{j_0, n}$ for every $n$. We first observe that, 
\EQ{ \label{eq:diff-sign} 
\| \bs\calW(  \vec \iota_n, \vec \lam_n) - \bs\calW( \vec \sigma_n , \vec \mu_n) \|_{\E} \le \|\bs w_n  - \bs \calW(  \vec \iota_n, \vec \lam_n)\|_{\E} +   \|\bs w_n  - \bs \calW( \vec \sigma_n , \vec \mu_n)\|_{\E}  = o_n(1) .
}
We first show that $j_0<M$. 
Assume for contradiction that $j_0 = M$. Then, we may assume that $\iota_{n, M} = 1$,  $\sigma_{n, M} = -1$ and $\lam_{n, M} > \mu_{n, M}$ for all $n$. It follows that there exists a constant $c>0$ for which 
\EQ{
 \, \, \Big|\calW(  \vec \iota_n, \vec \lam_n) - \calW( \vec \sigma_n , \vec \mu_n) \Big| \ge \frac{c}{\lam_{n, M}^{\frac{D-2}{2}}}  \quad \forall r \in [\lam_{n, M}, 2 \lam_{n, M}],
}
for all $n$ large enough. 
But then, 
\EQ{
\| \bs \calW(  \vec \iota_n, \vec \lam_n) - \bs  \calW( \vec \sigma_n , \vec \mu_n) \|_{\E}^2 \ge  \int_{\lam_{n, M}}^{2 \lam_{n, M}} \frac{c^2}{\lam_{n, M}^{D-2}}r^{D-2}  \, \frac{\ud r}{r}  \ge \frac{c^2}{D-2}( 2^{D-2} -1),
}
for all sufficiently large $n$, which contradicts~\eqref{eq:diff-sign}. So $\iota_{1, n} = \sigma_{n, 1}$ for all $n$. Thus $j_0<M$. By a nearly identical argument we can show that we must have $|\lam_{n, j} / \mu_{n, j}- 1| = o_n(1)$ for all $j > j_0$.  Next, again we may assume (after passing to a subsequence) that  $\lam_{n, j_0} > \mu_{n, j_0}$.  It follows again that for all sufficiently large $n$ we have, 
\EQ{
\abs{\calW(\vec \iota_n, \vec \lam_n) - \calW(\vec \sigma_n , \vec \mu_n) } \ge \frac{c}{\lam_{n, j_0}^{\frac{D-2}{2}}}  \quad \forall r \in [\lam_{n, j_0}, 2 \lam_{n, j_0}],
}
which again yields a contradiction. Hence we must have $\vec \iota = \vec \sigma$. 

Finally, we prove~\eqref{eq:lam-mu-close}. Suppose ~\eqref{eq:lam-mu-close} fails. Then there exists $\te_0>0$ and sequences $\vec \lam_{n}, \vec \mu_n$ such that 
\EQ{
\| \calW(  \vec \iota_n, \vec \lam_n) - \calW( \vec \iota_n , \vec \mu_n) \|_H = o_n(1),
} 
but 
\EQ{ \label{eq:diff-scale} 
\sup_{j =1, \dots, M} | \lam_{n, j}/ \mu_{n, j} - 1|\ge \te_0, 
}
 for all $n$. We arrive at a contradiction following  the same logic as before. 
\end{proof}

We require the following lemma, which gives the nonlinear interaction force between bubbles.  Given an $M$-bubble configuration, $\calW(\vec \iota, \vec \lam)$, we set 
\EQ{ \label{eq:fi-def} 
f_{\bfi}( \vec \iota, \vec \lam) :=  f( \calW( \vec \iota, \vec \lam)) - \sum_{j=1}^M \iota_j f( W_{\lam_j})  
}

\begin{lem} \label{lem:interaction} Let $D \ge 4$, $M \in\N$. For any $\theta>0$ there exists $\eta>0$ with the following property. Let $\bs \calW( \vec \iota, \vec \lam)$ be an $M$-bubble  configuration with 
\EQ{
 \sum_{j =0}^{M} \Big( \frac{ \lam_{j}}{\lam_{j+1}} \Big)^{\frac{D-2}{2}} \le \eta, 
 }
under the convention that $\lam_0 = 0$, $\lam_{M+1} = \infty$. Then, 
we have, 
\begin{multline} 
\Big|  \ang{ \Lam W_{\lam_j} \mid f_{\bfi}( \vec \iota, \vec \lam)}  - \iota_{j-1}\frac{D-2}{2D} (D(D-2))^{\frac{D}{2}} \Big( \frac{ \lam_{j-1}}{\lam_j} \Big)^{\frac{D-2}{2}}  + \iota_{j+1} \frac{D-2}{2D} (D(D-2))^{\frac{D}{2}}\Big( \frac{ \lam_{j}}{\lam_{j+1}} \Big)^{\frac{D-2}{2}}  \Big| \\
\le \theta \Big( \Big( \frac{ \lam_{j-1}}{\lam_j} \Big)^{\frac{D-2}{2}}  +   \Big( \frac{ \lam_{j}}{\lam_{j+1}} \Big)^{\frac{D-2}{2}}\Big) 
\end{multline} 
where here $f_{\bfi}( \vec \iota, \vec \lam) $ is defined in~\eqref{eq:fi-def}.
\end{lem}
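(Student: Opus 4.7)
By scaling invariance of the pairing (which rescales both $\Lambda W_{\lambda_j}$ and $f_{\mathbf{i}}$ in compatible ways), it suffices to treat the case $\lambda_j = 1$. Set $\mu_k := \lambda_k/\lambda_j$ and $\eta := \sum_{k \neq j}\iota_k W_{\mu_k}$, so that $\mathcal{W}(\vec\iota,\vec\lambda) = \iota_j W + \eta$ and (using $f(\iota u) = \iota f(u)$, $f'(\iota u) = f'(u)$) the Taylor expansion around $W$ reads
\[
f(\iota_j W + \eta) \;=\; \iota_j f(W) + f'(W)\eta + \iota_j R,\qquad R := f(W+\iota_j\eta) - f(W) - f'(W)\iota_j\eta.
\]
Thus
\[
\langle \Lambda W \mid f_{\mathbf{i}}\rangle \;=\; \langle \Lambda W \mid f'(W)\eta\rangle - \sum_{k\neq j}\iota_k\langle \Lambda W \mid f(W_{\mu_k})\rangle + \iota_j\langle \Lambda W\mid R\rangle.
\]

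The plan is to compute each of the first two terms directly by asymptotic analysis, and to absorb the remainder $R$ into the error. For a fixed $k\neq j$, the substitution $r = \mu_k s$ gives the exact identity
\[
\langle \Lambda W \mid f(W_{\mu_k})\rangle \;=\; \mu_k^{(D-2)/2}\int_0^\infty \Lambda W(\mu_k s)\, W(s)^{(D+2)/(D-2)}\, s^{D-1}\,\vd s.
\]
Using the linearized equation $\mathcal{L}(\Lambda W) = 0$ together with $-\Delta W_{\mu_k} = f(W_{\mu_k})$, an integration by parts (whose boundary terms vanish thanks to the decay $\Lambda W \sim r^{-(D-2)}$ and the mild decay/boundedness of $W_{\mu_k}$ and its derivative) yields the key identity $\langle \Lambda W \mid f'(W) W_{\mu_k}\rangle = \langle \Lambda W \mid f(W_{\mu_k})\rangle$. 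Consequently the first two terms above cancel and $\langle \Lambda W\mid f_{\mathbf{i}}\rangle = \iota_j \langle \Lambda W \mid R\rangle$. To extract the leading order it is therefore enough to compute the $k=j\pm 1$ contributions from either the linear expression or (equivalently) from the $\iota_k \langle \Lambda W\mid f(W_{\mu_k})\rangle$ sum; we use the latter form, which is more convenient.

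For $k = j-1$, i.e.\ $\mu_- := \mu_{j-1} \ll 1$: when $s$ lies in the essential support of $W(s)^{(D+2)/(D-2)}s^{D-1}$, one has $\mu_- s \ll 1$ and hence $\Lambda W(\mu_- s) = \frac{D-2}{2} + O((\mu_- s)^2)$. The identity $\int_0^\infty W(s)^{(D+2)/(D-2)} s^{D-1}\vd s = (D-2)(D(D-2))^{(D-2)/2}$ (derived from $-\Delta W = W^{(D+2)/(D-2)}$ and the asymptotics of $W$) then gives
\[
\langle \Lambda W \mid f(W_{\mu_-})\rangle \;=\; \tfrac{D-2}{2D}(D(D-2))^{D/2}\,\mu_-^{(D-2)/2} \;+\; O(\mu_-^{(D+2)/2}),
\]
where the coefficient is rewritten using $\tfrac{(D-2)^2}{2}(D(D-2))^{(D-2)/2} = \tfrac{D-2}{2D}(D(D-2))^{D/2}$. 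For $k = j+1$, i.e.\ $\mu_+ := \mu_{j+1} \gg 1$: in the essential support of $W(s)^{(D+2)/(D-2)}s^{D-1}$ we have $\mu_+ s \gg 1$ and $\Lambda W(\mu_+ s) = -\tfrac{(D(D-2))^{D/2}}{2D}(\mu_+ s)^{-(D-2)} + O((\mu_+s)^{-D})$. The formula $\int_0^\infty s\, W(s)^{(D+2)/(D-2)}\vd s = D-2$ (computed explicitly via the substitution $u = 1 + s^2/(D(D-2))$) then yields
\[
\langle \Lambda W \mid f(W_{\mu_+})\rangle \;=\; -\tfrac{D-2}{2D}(D(D-2))^{D/2}\,\mu_+^{-(D-2)/2} \;+\; O(\mu_+^{-(D+2)/2}).
\]
Combining these, multiplying by $\iota_{j-1}$ and $\iota_{j+1}$ respectively, and rescaling back to $\lambda_j \neq 1$ gives precisely the stated leading terms.

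The main obstacle is controlling the various errors so they fit within the prefactor $\theta\bigl((\lambda_{j-1}/\lambda_j)^{(D-2)/2}+(\lambda_j/\lambda_{j+1})^{(D-2)/2}\bigr)$. The contributions from distant bubbles $|k-j|\geq 2$ contain an additional factor $\mu_{k}/\mu_{k+1}$ (or its reciprocal), hence are small by the smallness of $\eta$. The subleading terms in the asymptotic expansions above contribute with an extra factor $\mu_{\pm}^{\mp 2}$ (equivalently $(\lambda_{j\mp 1}/\lambda_{j\pm 1})$), which is absorbed into $\theta$ by shrinking $\eta$. The genuinely delicate piece is bounding $\iota_j\langle \Lambda W\mid R\rangle$, since the Taylor remainder $R$ is only well behaved when $|\iota_j\eta|\lesssim W$: in the inner region $r\sim \mu_-$ one has $|\eta|\gg W$, and in the outer region $r\sim \mu_+$ similarly. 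One therefore splits the radial axis into core ($r\sim 1$), inner, outer, and transition regions, and uses the appropriate bound $|R|\lesssim W^{(6-D)_+/(D-2)}|\eta|^2 \wedge |\eta|^{(D+2)/(D-2)}$ (understood correctly in each dimension, in particular for $D\geq 7$ where $f$ is only $C^{1,4/(D-2)}$). This parallels the analogous analysis in \cite[Lemma 2.22]{JL6}.
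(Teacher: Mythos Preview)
Your integration-by-parts identity $\langle \Lambda W \mid f'(W)W_{\mu_k}\rangle = \langle \Lambda W \mid f(W_{\mu_k})\rangle$ is correct, and so is the consequence $\langle \Lambda W \mid f_{\bfi}\rangle = \iota_j\langle \Lambda W \mid R\rangle$. But this immediately wrecks your stated plan. You then compute $\langle \Lambda W \mid f(W_{\mu_\pm})\rangle$ and declare these to be the leading terms, while in your last paragraph you propose to bound $\iota_j\langle \Lambda W \mid R\rangle$ by the error $\theta\big((\lambda_{j-1}/\lambda_j)^{(D-2)/2}+(\lambda_j/\lambda_{j+1})^{(D-2)/2}\big)$. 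These two claims are mutually contradictory: since $\iota_j\langle \Lambda W \mid R\rangle$ \emph{equals} $\langle \Lambda W \mid f_{\bfi}\rangle$, bounding it by the error term would force the leading constants in the lemma to vanish. The linear and $f(W_{\mu_k})$ terms have cancelled exactly; they cannot furnish the leading order.

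The leading order must be extracted from $R$ itself, and it lives precisely in the regions where your pointwise remainder bound fails, namely $r\sim\mu_{j\pm 1}$ where $|\eta|\gg W$. There one has $f(W+\iota_j\eta)\approx \iota_j f(\eta)\approx \iota_j\iota_{j\pm 1}f(W_{\mu_{j\pm 1}})$ while $f(W)$ and $f'(W)\iota_j\eta$ are lower order, so $R\approx \iota_j\iota_{j\pm 1}f(W_{\mu_{j\pm 1}})$ and $\iota_j\langle \Lambda W\mid R\rangle$ picks up exactly $\iota_{j\pm 1}\langle \Lambda W\mid f(W_{\mu_{j\pm 1}})\rangle$ --- the very integrals you computed, but attributed to the wrong term. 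The paper (following \cite[Lemma 2.27]{JL6}) avoids this trap by never introducing the global cancellation: one partitions $(0,\infty)$ into regions according to which bubble dominates, Taylor expands $f(\calW)$ about $\iota_k W_{\lambda_k}$ in the region $r\sim\lambda_k$, and reads off the nearest-neighbor contribution from $f'(W_{\lambda_j})\iota_{j\pm 1}W_{\lambda_{j\pm 1}}$ in the core region $r\sim\lambda_j$. Either route works, but yours needs the analysis of $R$ reorganized so that the inner/outer regions supply the main term rather than an error.
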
 
\begin{proof}
This is an explicit computation analogous to the one in~\cite[Lemma 2.27]{JL6}. 
\end{proof}


\section{Localized sequential bubbling} \label{sec:compact} 

The goal of this section is to prove a localized sequential bubbling lemma for sequences of solutions to~\eqref{eq:nlw}  with vanishing averaged kinetic energy on a (relatively) expanding region of space. The main result, and the arguments used to prove it are in the spirit of the main theorems of Duyckaerts, Kenig, and Merle in~\cite{DKM3}. To prove the compactness lemma in all space dimensions via a unified approach, we use the virial inequalities of Jia and Kenig to obtain vanishing of the error instead of the channels-of-energy type arguments from~\cite{DKM3, CKLS3, Rod16-adv}, which in those works was limited to either odd space dimensions or the subset of even space dimensions that satisfy $D = 0\mod 4$.

To state the compactness lemma, we define a localized distance function, 
\EQ{ \label{eq:delta-def} 
\bs \de_R( \bs u) :=  \inf_{ M,  \vec \iota, \vec \lam}  \Big( \| (u - \calW( \vec \iota, \vec \lam), \dot u) \|_{\E( r \le R)}^2  + \sum_{j = 1}^{M} \Big(\frac{ \lam_j}{ \lam_{j+1}}\Big)^{\frac{D-2}{2}} \Big)^{\frac{1}{2}}. 
}
where  the infimum above is taken over all $M \in\{0, 1, 2, \dots\}$,  and all  vectors $\vec \iota\in \{-1, 1\}^M, \vec \lam \in (0, \infty)^M$, and here we use the convention that the last scale $\lam_{M+1} = R$.

\begin{lem}[Compactness Lemma] \label{lem:compact}  Let $\rho_n>0$ be a sequence of positive numbers and let $\bs u_n(t) \in\E$ be a sequence of solutions to~\eqref{eq:nlw}  on the time intervals $[0, \rho_n]$ such that 
\EQ{\label{eq:II-compact} 
\limsup_{n \to \infty} \sup_{t \in [0, \rho_0]}\|\bs u_n(t)\|_{\E}< \infty.
} 
Suppose there exists a sequence $R_n \to \infty$  such that,
\EQ{
\lim_{n \to \infty} \frac{1}{ \rho_n} \int_0^{\rho_n} \int_0^{ \rho_n R_n} \abs{\p_t u_n(t, r)}^2 \, r^{D-1} \, \ud r\,  \ud t
 = 0.
}
Then, up to passing to a subsequence of the $\bs u_n$,  there exists a time sequence $t_n \in [0, \rho_n]$ and a sequence $r_n \le R_n$ with $r_n \to \infty$ such that 
\EQ{
\lim_{n \to \infty} \bs \de_{r_n\rho_n}( \bs u_n(t_n))  = 0.
}

\end{lem}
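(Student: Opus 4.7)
The plan is to follow the virial-based scheme of Jia and Kenig, avoiding channels of energy so as to get a dimension-uniform argument.

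\emph{Step 1: A well-chosen time and scale.} By the hypothesis and Markov's inequality, the set of $t \in [0,\rho_n]$ with $\int_0^{\rho_n R_n}|\partial_t u_n(t,r)|^2 r^{D-1}\,dr = o_n(1)$ has relative measure tending to $1$. I would combine this with the Jia--Kenig virial identity \eqref{eq:jk}, integrating over $[0,\rho_n]$ with cut-off at a dyadic radius $r\rho_n$, $1 \ll r \ll R_n$. The endpoint boundary terms are controlled by $r\rho_n\|\bs u_n\|_\cE \|\partial_t u_n(t)\|_{L^2(r' \le 2r\rho_n)}$, which is $o(r\rho_n)$ at endpoints chosen in the good-time set; the annular error terms $\Omega_{i,\rho}$ in \eqref{eq:Om-rho-def} are controlled in $L^1_t$ by the local energy in the annulus, and a pigeonhole in $r$ makes these $o(r\rho_n)$ as well. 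This yields a time $t_n \in [0,\rho_n]$ and a scale $\tilde r_n$ with $1 \ll \tilde r_n \ll R_n$ satisfying simultaneously
\begin{equation}\label{eq:mypf-1}
\|\partial_t u_n(t_n)\|_{L^2(r \le \tilde r_n\rho_n)}^2 + \Bigl|\int_0^\infty \bigl(|\partial_r u_n(t_n,r)|^2 - |u_n(t_n,r)|^{\frac{2D}{D-2}}\bigr)\chi_{\tilde r_n\rho_n}(r)\,r^{D-1}\,dr\Bigr| = o_n(1).
\end{equation}

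\emph{Step 2: Profile decomposition and bubble identification.} Apply Lemma~\ref{lem:pd} to $\bs u_n(t_n)$ truncated at scale $\tilde r_n\rho_n$, which is bounded in $\cE$ by \eqref{eq:II-compact}. The first smallness in \eqref{eq:mypf-1}, together with the asymptotic $L^2$-orthogonality of dispersive profiles, rules out incoming/outgoing profiles. Centered profiles with $\lambda_{n,j} \gg \tilde r_n\rho_n$ contribute vanishing $\cE$-mass on $\{r \le r_n\rho_n\}$ for any $r_n \ll \tilde r_n$, since their $\dot H^1$ mass on shrinking balls vanishes. For each remaining centered profile $v^j$, rescaling by $\lambda_{n,j}$ and passing to the weak limit turns the second smallness in \eqref{eq:mypf-1} into the Pohozaev identity for the limit; coupled with vanishing kinetic energy this forces $v^j$ to be a finite-energy radial solution of $-\Delta v^j = |v^j|^{4/(D-2)}v^j$. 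By uniqueness of $W$ up to sign and scale, $v^j = \iota_j W$ for some $\iota_j \in \{-1,1\}$.

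\emph{Step 3: Strong vanishing of the remainder and conclusion.} Let $\bs w_n^J$ be the profile-decomposition remainder. The Pythagorean identity \eqref{eq:pyth} combined with the vanishing kinetic energy in $\{r \le \tilde r_n\rho_n\}$ reduces the task to showing that the $H$-norm of $w_n^J$ on $\{r \le r_n\rho_n\}$ vanishes once $J$ is large. This is the delicate step: applying \eqref{eq:jk} to $\bs u_n$, subtracting the identically zero contribution of the stationary bubbles, and invoking the coercivity of $D^2E$ around the multi-bubble configuration (Lemma~\ref{lem:D2E-coerce}), the Pohozaev-defect part of \eqref{eq:mypf-1} controls $\|w_n^J\|_H$ on $\{r \le r_n\rho_n\}$. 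Finally, choosing $r_n \le \tilde r_n$ with $r_n \to \infty$ slowly enough to ensure $\lambda_{n,M} \ll r_n\rho_n$, and relabelling the scales in increasing order, gives $\bs\delta_{r_n\rho_n}(\bs u_n(t_n)) \to 0$.

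The \emph{main obstacle} will be Step~3: the passage from weak to strong convergence of the remainder in $\cE$. In odd dimensions and in dimensions divisible by $4$ this is traditionally handled via linear energy channels outside light cones; here, the dimension-uniform argument demands that the virial defect of Step~1 and the coercivity of $D^2E$ around the multi-bubble be combined sharply enough to rule out energy leakage at intermediate scales between consecutive bubbles and just below the outer cutoff $r_n\rho_n$.
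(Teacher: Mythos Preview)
Your high-level strategy matches the paper's: use the Jia--Kenig virial identity to produce a good time, apply a profile decomposition, identify the profiles as ground states, and upgrade the remainder to strong vanishing without energy channels. However, there is a genuine gap in Step~2 that your plan does not address, and your Step~3 mechanism is not the one that actually works.

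\textbf{The real gap is the identification of centered profiles, not the remainder.} Your argument that the Pohozaev defect ``forces $v^j$ to be a finite-energy radial solution of $-\Delta v^j = |v^j|^{4/(D-2)}v^j$'' is incorrect: the Pohozaev identity $\int|\nabla v|^2 = \int|v|^{2D/(D-2)}$ is a single scalar constraint and is far from sufficient to conclude $v$ solves the elliptic equation. Moreover, the Pohozaev functional does not decouple cleanly across profiles (the nonlinear term only satisfies a Br\'ezis--Lieb type expansion), so you cannot isolate individual profiles from the single smallness in your display~\eqref{eq:mypf-1}. The paper instead invokes the nonlinear profile argument from the DKM3 erratum (their Corollaries~4.1--4.2): if a centered nonlinear profile $V^j$ were not $\pm W$, it would either scatter or blow up, and in either case the nonlinear profile decomposition evolved to nearby times $t_n + s\lambda_{n,j}$ would produce kinetic energy bounded below, contradicting the \emph{time-averaged} vanishing of kinetic energy on all subintervals containing $t_n$. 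This is why the paper's Step~2 uses Lemma~3.2 to extract a time $t_n$ where the \emph{maximal function} $Mf_n(t_n)$ of the local kinetic energy vanishes, not merely the pointwise value. Your Step~1 produces only a single good time, which is insufficient both here and for ruling out the incoming/outgoing profiles with bounded $|t_{n,j}|$ (Step~6 in the paper, also via the DKM3 erratum).

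\textbf{Step 3 works by a simpler mechanism.} Once the profiles are known to be $\pm W_{\lambda_{n,j}}$, the paper does \emph{not} use coercivity of $D^2E$. Instead: the virial gives the one-sided bound $\limsup_n \int(|\partial_r u_n(t_n)|^2 - |u_n(t_n)|^{2D/(D-2)})\chi_{r_n} \le 0$; each $W$ contributes exactly zero to this functional; by orthogonality the remainder $\tilde w_{n,0}$ then satisfies $\limsup_n\int|\partial_r\tilde w_{n,0}|^2\chi_{r_n} \le \limsup_n\int|\tilde w_{n,0}|^{2D/(D-2)}\chi_{r_n}$; and the right side vanishes by the $L^\infty_t L^{2D/(D-2)}_x$ vanishing built into the profile decomposition. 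The Lemma~\ref{lem:D2E-coerce} route you propose would require orthogonality conditions you have not imposed and does not obviously control the Pohozaev functional. Also note that the virial argument naturally gives only the one-sided inequality $\le 0$; the absolute value in your~\eqref{eq:mypf-1} is not what comes out.
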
 

\begin{rem}\label{rem:seq} 
In fact, the prove provides a sequence $t_n \in [0, \rho_n]$, $r_n \le R_n$ with $r_n \to \infty$, a non-negative  integer $M$ independent of $n$, scales $\vec\lam_n \in (0, \infty)^M$,  and a vector of signs $\vec \iota \in \{-1, 1\}^M$ (also independent of $n$), such that 
\EQ{
\lim_{n \to \infty} \Big( \| (\bs u(t_n) - \bs \calW( \vec \iota, \vec \lam_n) \|_{\E( r \le r_n \rho_n)}^2  + \sum_{j = 1}^{M} \Big(\frac{ \lam_{n, j}}{ \lam_{n, j+1}}\Big)^{\frac{D-2}{2}} \Big)^{\frac{1}{2}} = 0. 
}
\end{rem} 


\subsection{Technical lemmas} 

The proof of Lemma~\ref{lem:compact} requires two Real Analysis results, which we address first.  
\begin{lem}
\label{lem:sequences}
If $a_{k, n}$ are positive numbers such that $\lim_{n\to \infty}a_{k, n} = \infty$ for all $k \in \bN$,
then there exists a sequence of positive numbers $b_n$ such that $\lim_{n\to \infty} b_n = \infty$
and $\lim_{n\to \infty} a_{k, n} / b_n = \infty$ for all $k \in \bN$.
\end{lem}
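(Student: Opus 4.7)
The plan is a standard diagonal/interlacing construction that makes $b_n$ grow just slowly enough to be dominated by every $a_{k,\cdot}$. First, for each fixed $k \in \bN$, since $\lim_{n\to\infty} a_{j,n} = \infty$ for every $j \in \{1, \ldots, k\}$, I can pick an integer $N_k$ such that $a_{j,n} \geq k^2$ for all $n \geq N_k$ and all $j \in \{1, \ldots, k\}$ (this uses only that we are taking a minimum over finitely many sequences, each tending to infinity). By replacing $N_k$ with $\max(N_k, N_{k-1}+1)$ recursively, I may assume the sequence $(N_k)_{k\geq 1}$ is strictly increasing, and in particular $N_k \to \infty$.

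Next, I define the candidate sequence by $b_n := k$ whenever $N_k \leq n < N_{k+1}$, and $b_n := 1$ for $n < N_1$. Then $b_n \to \infty$, since for any $K$, all $n \geq N_K$ satisfy $b_n \geq K$. To check the domination, fix an arbitrary $k \in \bN$ and let $n$ be large; write $n \in [N_m, N_{m+1})$ with $m \geq k$. By construction $b_n = m$ and $a_{k,n} \geq m^2$ (since $k \leq m$ and $n \geq N_m$), so $a_{k,n}/b_n \geq m$. As $n \to \infty$ we have $m \to \infty$, hence $a_{k,n}/b_n \to \infty$, as required.

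There is essentially no obstacle here; the only thing to be careful about is the order of quantifiers in the choice of $N_k$ — it is crucial that $N_k$ bounds the tail behavior of \emph{all} sequences indexed by $j \leq k$ simultaneously, which is possible precisely because finitely many divergent sequences admit a common threshold. The bound $a_{j,n} \geq k^2$ is chosen so that when divided by $b_n = m \geq k$ it still grows like $m$, yielding divergence; any function of $k$ growing faster than $k$ would work equally well.
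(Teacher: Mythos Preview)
Your proof is correct and follows essentially the same approach as the paper's: both choose thresholds $N_k$ (the paper calls them $n_k$) so that $\min_{j\leq k} a_{j,n} \geq k^2$ for $n \geq N_k$, then set $b_n$ to be the index $m$ with $N_m \leq n < N_{m+1}$. The paper writes the minimum explicitly as an auxiliary sequence $\tilde a_{k,n}$, but the argument is identical.
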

\begin{proof}
For each $k$ and each $n$ define $\ti a_{k, n} = \min\{ a_{1, n}, \dots, a_{k, n}\}$. Then the sequences $\ti a_{k, n} \to \infty$ as $n \to \infty$ for each $k$, but also satisfy $\ti a_{k, n} \le a_{k, n}$ for each $k, n$, as well as $\ti a_{j, n} \le \ti a_{k, n}$ if $j>k$. Next, choose a strictly increasing sequence $\{n_k \}_k \subset \N$ such that $\ti a_{k, n} \ge k^2$ as long as $n \ge n_k$. For $n$ large enough, let $b_n \in \bN$ be determined by the condition
$n_{b_n} \leq n < n_{b_n + 1}$. Observe that $b_n \to \infty$ as $n \to \infty$. Now fix any $\ell \in \N$ and let $n$ be such that $b_n > \ell$. We then have
\begin{equation}
a_{\ell, n} \geq \ti a_{\ell, n} \ge \ti a_{b_n, n} \ge  b_n^2  \gg b_n.
\end{equation}
Thus the sequence $b_n$ has the desired properties. 
\end{proof}
If $f: [0, 1] \to [0, +\infty]$ is a measurable function, we denote by 
\EQ{
Mf(\tau) := \sup_{I \ni \tau; I \subset [0, 1]} \frac{1}{\abs{I}} \int_I f(t) \, \ud t 
}
its Hardy-Littlewood maximal function. Recall the weak-$L^1$ boundedness estimate
\begin{equation}
\label{eq:weak-max}
|\{\tau \in [0, 1]: Mf(\tau) > \alpha\}| \leq \frac{3}{\alpha}\int_0^1f(t)\ud t, \qquad\text{for all }\alpha > 0,
\end{equation}
see \cite[Section 2.3]{Muscalu-Schlag}.

\begin{lem}
\label{lem:maximal}
Let $f_n$ be a sequence of continuous positive functions defined on $[0, 1]$ such that $\lim_{n \to \infty}\int_0^1 f_n(t)\ud t = 0$ and let $g_n$ be a uniformly bounded sequence of real-valued continuous functions on $[0, 1]$ such that
$\limsup_{n\to\infty} \int_0^1 g_n(t)\ud t \leq 0$.
Then there exists a sequence $t_n \in [0, 1]$ such that
\begin{equation}
\lim_{n\to\infty} Mf_n(t_n) = 0, \qquad \limsup_{n\to \infty}g_n(t_n) \leq 0.
\end{equation}
\end{lem}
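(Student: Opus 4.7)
The plan is to combine two standard facts: the weak-$L^1$ bound \eqref{eq:weak-max} forces $Mf_n$ to be small off a set of vanishing measure, while a measurable function on a set of positive measure must attain a value at most its average. Intersecting the region where $Mf_n$ is small with a point where $g_n$ is close to its average on that region will produce the desired $t_n$.

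First, I would choose $\alpha_n \to 0$ slowly enough that $\alpha_n^{-1}\int_0^1 f_n(t)\ud t \to 0$; such a sequence exists because $\int_0^1 f_n\to 0$ (for instance one may take $\alpha_n := \max(1/n, \sqrt{\int_0^1 f_n\ud t})$). Setting $A_n := \{t\in [0,1] : Mf_n(t) \le \alpha_n\}$, the bound \eqref{eq:weak-max} applied with $\alpha = \alpha_n$ gives
\begin{equation*}
|[0,1] \setminus A_n| \le 3\alpha_n^{-1}\int_0^1 f_n(t)\ud t \to 0,
\end{equation*}
so $|A_n| \to 1$ and in particular $|A_n|>0$ for all large $n$.

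Next, let $C := \sup_n \|g_n\|_{L^\infty([0,1])} < \infty$ and set $m_n := |A_n|^{-1}\int_{A_n} g_n(t)\ud t$. A standard averaging argument produces $t_n \in A_n$ with $g_n(t_n) \le m_n$: if one had $g_n(t) > m_n$ for every $t \in A_n$, then since $|A_n|>0$ we would get $\int_{A_n} g_n > m_n |A_n|$, contradicting the definition of $m_n$. Now
\begin{equation*}
m_n = \frac{1}{|A_n|}\bigg(\int_0^1 g_n(t)\ud t - \int_{[0,1]\setminus A_n}g_n(t)\ud t\bigg) \le \frac{1}{|A_n|}\bigg(\int_0^1 g_n(t)\ud t + C(1-|A_n|)\bigg),
\end{equation*}
so the assumption $\limsup_n \int_0^1 g_n \le 0$ together with $|A_n| \to 1$ yields $\limsup_n g_n(t_n) \le \limsup_n m_n \le 0$. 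Simultaneously $Mf_n(t_n) \le \alpha_n \to 0$ by the definition of $A_n$, completing the argument. The proof is essentially bookkeeping; no substantive obstacle is expected beyond calibrating $\alpha_n$ to ensure the weak-$L^1$ bound gives $|A_n|\to 1$.
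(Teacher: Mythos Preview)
Your proof is correct and follows essentially the same approach as the paper: choose $\alpha_n$ with $\int_0^1 f_n \ll \alpha_n \ll 1$, use the weak-$L^1$ maximal bound to get $|A_n|\to 1$ for $A_n=\{Mf_n\le\alpha_n\}$, then pick $t_n\in A_n$ with $g_n(t_n)\le |A_n|^{-1}\int_{A_n}g_n$. Your write-up is in fact slightly more explicit than the paper's in spelling out the choice of $\alpha_n$ and the averaging step.
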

\begin{proof}
Let $\alpha_n$ be a sequence such that $\int_0^1 f_n(t)\ud t \ll \alpha_n \ll 1$. Let $A_n :=  \{t \in [0, 1]: Mf_n(t) \le \alpha_n\}$. 
By \eqref{eq:weak-max}, $\lim_{n\to\infty}|A_n| = 1$. Since $g_n$ is uniformly bounded, we have
\begin{equation}
\int_{[0, 1]\setminus A_n}|g_n(t)|\ud t \lesssim |[0, 1] \setminus A_n| \to 0,
\end{equation}
which implies
\begin{equation}
\limsup_{n\to\infty}\int_{A_n}g_n(t)\ud t \leq 0.
\end{equation}
It suffices to take $t_n \in A_n$ such that $g_n(t_n) \leq |A_n|^{-1}\int_{A_n}g_n(t)\ud t$.
\end{proof}

\subsection{Proof of the compactness lemma} 

\begin{proof}[Proof of Lemma~\ref{lem:compact}] 
Rescaling we may assume that $\rho_n = 1$ for each $n$.

\textbf{Step 1.}
We claim that there exist $\sigma_n \in [0, \frac 13]$, $\tau_n \in [\frac 23, 1]$ , and a sequence $R_{1,n} \le R_n$ with $R_{1, n} \to \infty$ as $n \to \infty$  such that
\EQ{
\label{eq:jia-kenig-0}
\lim_{n\to \infty}\int_{\sigma_n}^{\tau_n} \Bigg[\int_0^\infty\Big( \p_r^2 u_n  +& \frac{D-1}{r} \p_r u_n - \abs{u_n}^{\frac{4}{D-2}} u_n \Big)\Big( r \p_r u_n + \frac{D}{2} u_n\Big) \chi_{R_{1, n}} \, r^{D-1} \, \ud r  \\
& \qquad \qquad+ \int_0^\infty \p_t u_n \Big( r \p_r \p_t u_n +  \frac{D}{2} \p_t u_n \Big) \chi_{R_{1, n}} \, r^{D-1} \, \ud r \Bigg] \ud t = 0,
}
where $\chi$ is a smooth cut-off function equal $1$ on $[0, \frac 12]$, with support in $[0, 1]$. Here and later in the argument the second term in the integrand in~\eqref{eq:jia-kenig-0} is to be interpreted as the expression obtained after integration by parts, which is well defined due to the finiteness of the energy. 

Since 
\EQ{
\lim_{n\to \infty}\int_0^\frac 13 \int_0^{R_n}(\partial_t u_n)^2\,r\vd r = 0 \mand \lim_{n\to \infty}\int_\frac 23^1 \int_0^{R_n}(\partial_t u_n)^2\,r\vd r = 0
}
 there exist $\sigma_n \in [0, \frac 13]$,  $\tau_n \in [\frac 23, 1]$ and a sequence $R_{1, n} \to \infty$ such that, 
\EQ{ \label{eq:sigma-tau} 
\lim_{n\to\infty}R_{1, n}  \int_0^{R_n}(\partial_t u(\sigma_n))^2\,r\vd r = 0 \mand 
\lim_{n\to\infty}  R_{1, n} \int_0^{R_n}(\partial_t u(\tau_n))^2\,r\vd r = 0
}
For $t \in [\sigma_n, \tau_n]$, we have the following Jia-Kenig virial identity; see~\cite[Lemma 2.2 and Lemma 2.6]{JK}.
\EQ{\label{eq:jia-kenig-1}
\frac{\ud}{\ud t} \La \p_t u_n & \mid (r \p_r u_n + \frac{D}{2} u_n) \chi_{R_{1, n}}\Ra = \int_0^\infty \p_t u_n \Big( r \p_r \p_t u_n +  \frac{D}{2} \p_t u_n \Big) \chi_{R_{1, n}} \, r^{D-1} \, \ud r  \\
& + \int_0^\infty\Big( \p_r^2 u_n  + \frac{D-1}{r} \p_r u_n - \abs{u_n}^{\frac{4}{D-2}} u_n \Big)\Big( r \p_r u_n + \frac{D}{2} u_n\Big) \chi_{R_{1, n}} \, r^{D-1} \, \ud r 
}
By the Cauchy-Schwarz inequality, the assumption~\eqref{eq:II-compact}  and~\eqref{eq:sigma-tau}, we see that 
\begin{equation}
\lim_{n\to \infty}\int_0^\infty \big(|\partial_t u_n(\sigma_n)|| r \p_r u_n(\sigma_n)+ \frac{D}{2}u_n(\sigma_n)|
+ |\partial_t u_n(\tau_n)|| r \p_r u_n(\tau_n) + \frac{D}{2}u(\tau_n))|\big)\chi(\cdot/R_{1, n})\,r\vd r = 0.
\end{equation}
Integrating~\eqref{eq:jia-kenig-1} between $\sigma_n$ and $\tau_n$, and using the above, we obtain \eqref{eq:jia-kenig-0}.

\textbf{Step 2.}
We rescale again so that $[\sigma_n, \tau_n]$ becomes $[0, 1]$. We apply 
Lemma~\ref{lem:maximal}, to 
\EQ{
f_n(t)&:= \int_0^{R_n}\abs{\p_t u_n(t, r)}^2 \, r^{D-1} \, \ud r, \\
g_n(t) &:= - \int_0^\infty\Big( \p_r^2 u_n(t)  + \frac{D-1}{r} \p_r u_n(t) - \abs{u_n(t)}^{\frac{4}{D-2}} u_n(t) \Big)\Big( r \p_r u_n(t) + \frac{D}{2} u_n(t)\Big) \chi_{R_{1, n}} \, r^{D-1} \, \ud r \\
&\quad - \int_0^\infty \p_t u_n(t) \Big( r \p_r \p_t u_n(t) +  \frac{D}{2} \p_t u_n(t) \Big) \chi_{R_{1, n}} \, r^{D-1} \, \ud r 
}
(integrating by parts we see that $g_n$ is a uniformly bounded sequence of continuous functions)
and we find a sequence $\{t_n\} \in [0, 1]$ such that we have vanishing of the maximal function of the local kinetic energy,
\EQ{ \label{eq:maximal} 
&\lim_{ n \to \infty} \sup_{I \ni t_n;  I \subset [0, 1]}  \frac{1}{\abs{I}} \int_I \int_0^{R_n} | \p_t u_n(t, r)|^2 \, r \, \ud r \, \ud t = 0,  
\\
\textrm{and} \, \,\,  & \lim_{n \to \infty} \int_0^{R_n} | \p_t u_n(t_n, r)|^2 \, r \, \ud r \, \ud t = 0, 
}
and also pointwise vanishing of a localized Jia-Kenig virial functional, 
\EQ{
\label{eq:jia-kenig}
\limsup_{n\to \infty}\Bigg( - \int_0^\infty\Bigg[\Big( \p_r^2 u_n(t_n)  &+ \frac{D-1}{r} \p_r u_n(t_n) - \abs{u_n(t)}^{\frac{4}{D-2}} u_n(t_n) \Big)\Big( r \p_r u_n(t_n) + \frac{D}{2} u_n(t_n)\Big) \\
&\quad + \p_t u_n(t_n) \Big( r \p_r \p_t u_n(t_n) +  \frac{D}{2} \p_t u_n(t_n) \Big)\Bigg] \chi_{\ti R_{ n}} \, r^{D-1} \, \ud r  \Bigg) \leq 0. 
}
for any sequence $\ti R_{n} \le R_{1, n} \le R_n$ with $\ti R_n \to \infty$ as $n \to \infty$. 
We emphasize the  conclusion from the first steps is the existence of the sequence $t_n$ such that~\eqref{eq:maximal} and~\eqref{eq:jia-kenig} hold.   

\textbf{Step 3.} 
Now that we have chosen the sequence $t_n \in[0, 1]$, we may, after passing to a subsequence, assume that $t_n \to t_0 \in [0, 1]$. 


We apply Lemma~\ref{lem:pd} to the sequence $\bs u_n(t_n)$, obtaining profiles $(\bs v^j\lin, t_{n, j}, \lambda_{n, j})$, and $\bs w_{n, 0}^J$,  so that,  using the notation, 
\EQ{
\bs  v_{\Lin, n}^j(0) :=  \big( \lambda_{n, j}^{-\frac{D-2}{2}} v_{\Lin}^i (  \frac{-t_{n, i}}{\lam_{n, j}}, \frac{ \cdot}{\lam_{n, j}} ) , \lambda_{n, j}^{-\frac{D}{2}} \p_t v_{\Lin}^j (  \frac{-t_{n, j}}{\s_{n, j}}, \frac{ \cdot}{\s_{n, j}} )\big), 
}
 we have
\EQ{ \label{eq:profiles1} 
\bs u_n(t_n) &=  \sum_{j =1}^J \bs  v_{\Lin, n}^j(0) + \bs w_{n, 0}^J
}
satisfying the conclusions of Lemma~\ref{lem:pd}. 
We refer to the profiles 
$(\bs v\lin^j(0), t_{n, j}, \s_{n, j})$ with $t_{n, j} = 0$ for all $n$ as \emph{centered} profiles (here the subscript $\lin$ on $\bs v\lin^j$ is superfluous).  We refer to  the profiles $(\bs v\lin^j(0), t_{n, j}, \s_{n, j})$ with $-t_{n, j}/ \s_{n, j} \to \pm \infty$ as \emph{outgoing/incoming} profiles. 

\textbf{Step 4.}(Centered profiles at large scales) 
At each step, we will impose conditions on the choice of the ultimate choice of sequence $r_n \to \infty$. We divide the indices associated to centered profiles into two sets, 
\EQ{
\calJ_{c, 0}&:= 
 \{ j \in \N \mid t_{n, j} = 0 \, \, \forall n, \mand \lim_{n \to \infty}\lambda_{n, j} < \infty \},\\
\calJ_{c, \infty}&:= 
 \{ j \in \N \mid t_{n, j} = 0 \, \, \forall n, \mand \lim_{n \to \infty}\lambda_{n, j} = \infty \}.
}
Using Lemma~\ref{lem:sequences} we choose a sequence $ r_{ 0, n} \to \infty$ so that $r_{ 0, n} \ll R_n, \lam_{n, j}$ for each $\lam_{n, j}$ with $j \in \calJ_{c, \infty}$.
By construction we have, 
\EQ{\label{eq:Jc-infty}
\lim_{n \to \infty} \| (\lambda_{n, i}^{-\frac{D-2}{2}}v\lin^j( 0, \cdot/\lambda_{n, j}), \lambda_{n, i}^{-\frac{D}{2}} \dot v^j \lin (\cdot/ \lambda_{n, j})\|_{\E(0, r_{0, n})} = 0
}
for any of the indices $j\in \calJ_{c, \infty}$. 

%
%


\textbf{Step 5.}(Incoming/outgoing profiles  with $\lim_{n \to \infty}\abs{t_{n, j}} = \infty$) We next treat profiles $(\bs v^j\lin, t_{n, j}, \lambda_{n,i})$ that satisfy, 
\EQ{
-\frac{t_{n, j}}{\s_{n, j}} \to \pm \infty. 
}
Up to passing to a subsequence of $\bs u_n(t_n)$ we may assume that $-t_{n, j}  \to t_{\infty} \in [- \infty, \infty]$. Consider again two sets of indices, 
\EQ{
\calJ_{\Lin, 0} &:= \{ j \in \N \mid -\frac{t_{n, j}}{\s_{n, j}} \to \pm \infty \mand \abs{ t_{n, j}} \to t_{\infty, j} \in \R \}, \\ 
\calJ_{\Lin, \infty} &:= \{ j \in \N \mid -\frac{t_{n, j}}{\s_{n, j}} \to \pm \infty \mand \abs{ t_{n, j}} \to \infty \}.
}
We impose additional restrictions on the sequence $r_n$. We require that $r_n \le \frac{1}{2} \abs{t_{n, j}}$ for each sequence $t_{n, j}$ in $\calJ_{\Lin, \infty}$. So at this stage, we again use Lemma~\ref{lem:sequences} to choose a sequence $r_{1, n} \to \infty$ such that $r_{1, n} \le r_{0, n}$ and $r_{1, n} \le \frac{1}{2} \abs{t_{n, j}}$ for each sequence $t_{n, j}$ in $\calJ_{\Lin, \infty}$. 

Since $\bs v^j\lin$ is a free wave we know that it asymptotically concentrates all of its energy near the light-cone. In fact, 
\EQ{
\lim_{s \to \pm \infty}  \| \bs v^i\lin(s) \|_{\E(r \le \frac{1}{2} \abs{s})} = 0 .
} 
which is proved in~\cite[Lemma 4.1]{DKM1} in odd space dimensions and is a direct consequence of~\cite[Theorem 5] {CKS} in even space dimensions. 

Thus, if $j\in \calJ_{\Lin, \infty}$ and as long as $ r_{1, n} \le \frac{1}{2} \abs{t_{n, j}}$ for $n$ large enough,  we see that $\lambda_{n, j}^{-1} r_{1, n} \le \frac{1}{2} \lambda_{n,j}^{-1} \abs{t_{n, j}}$ and thus 
\EQ{\label{eq:Jl-infty} 
\| \bs v^j_{\Lin}(-t_{n, j}/ \lambda_{n, j}) \|_{\E( r \le r_{1, n} \s_{n, j}^{-1})} \to 0 \mas n \to \infty. 
}
by the above. 

\textbf{Step 6.}(Incoming/outgoing profiles with $\lim_{n \to \infty}\abs{t_{n, j}} < \infty$) Next, we consider profiles $(\bs v^j\lin, t_{n, j}, \lambda_{n,j})$ such that 
\EQ{
-\frac{t_{n, j}}{\lambda_{n, i}} \to \pm \infty \mand  -t_{n, j} \to t_{\infty, j} \in \R
}
that is, those in  $\calJ_{\Lin, 0}$. We  note that $\lambda_{n, i} \to 0$ as $n \to \infty$ for each $i \in \calJ_{\Lin, 0}$. We claim that any such profile must satisfy $\bs v^i\lin  \equiv 0$. We use the argument given in~\cite[Erratum]{DKM3erratum}, modulo a few technicalities which reduce our situation to the one considered there. 


We claim that there exists a new sequence $\sqrt{r_{1, n}} \leq  r_{2, n} \leq r_{1, n}$ such that
\begin{equation}\label{eq:r2} 
\lim_{n\to\infty}\sup_{t \in [0, 1]}\| \bs u_n(t)\|_{\E( A_n^{-1}r_{2, n},  A_n  r_{2, n})} = 0
\end{equation}
for some $1 \ll A_n \ll  r_{2, n}$. 
By Lemma~\ref{lem:prop-small-E-local} it suffices to have
\begin{equation}
\lim_{n\to\infty} \| \bs u_n(0)\|_{\E( A_n^{-1} r_{2, n},   A_n  r_{2, n})} = 0, 
\end{equation}
and then replace $A_n$ by its quarter, for example.

Let $A_n$ be the largest integer such that $A_n^{2A_n} \leq \sqrt{r_{1,n}}$. Obviously, $1 \ll A_n \ll \sqrt{r_{1,n}}$.
For $l \in \{0, 1, \ldots, A_n - 1\}$, set $R_n^{(l)} := A_n^{2l}\sqrt{r_{1, n}}$, so that $A_n^{-1}R_n^{(l+1)} = A_nR_n^{(l)}$, thus
\begin{equation}
\sum_{l=0}^{A_n - 1}\| \bs u_n(0)\|_{\E( A_n^{-1}R_n^{(l)}, A_nR_n^{(l)})} \leq \|\bs u_n(0)\|_{\E}.
\end{equation}
Since all the terms of the sum are positive, there exists $l_0 \in \{0, 1, \ldots, A_n - 1\}$ such that $r_{2, n} := R_n^{(l_0)}$ satisfies
\begin{equation}
\|\bs u_n(0)\|_{\E( A_n^{-1}r_{2, n},  A_n r_{2, n})} \leq A_n^{-1}\| \bs u_n(0)\|_{\E} \to 0.
\end{equation}
proving~\eqref{eq:r2} 

 We now pass to a new sequence of maps $\bs{\ti u}_n$ with vanishing average kinetic energy on the whole space. 
Indeed, define 
\EQ{
\bs{\ti u}_n(0) = \chi_{2r_{2, n}} \bs u_n(0)
}
Denoting by $\bs{\ti u}_n(t)$ the solutions to~\eqref{eq:nlw} with data $\bs{\ti u}_n(0)$ on the interval $t \in [0, 1]$ we can use the finite speed of propagation, the local Cauchy theory, and ~\eqref{eq:r2} to deduce that 
\EQ{ \label{eq:tiu-fsp} 
\bs{\ti u}_n(t, r) = \bs u_n(t, r) \mif r \le r_{2, n}, \mand  \limsup_{n \to \infty} \sup_{t \in [0, 1]} \| \bs {\ti u}_n(t) \|_{\E( r_{2, n}, \infty)} = 0. 
}
From~\eqref{eq:profiles1} we have, 
\EQ{
\bs{\ti u}_n(t_n) &=  \sum_{ 1 \le j \le J; j \in \calJ_{c, 0} \cup \calJ_{\Lin, 0}} \bs v_{\Lin, n}^j(0)    + \chi_{2 r_{2, n}}\bs{ w}_{n, 0}^J \\
&\quad + \sum_{ 1 \le j \le J; j \in \calJ_{c, 0} \cup \calJ_{\Lin, 0}} (\chi_{2 r_{2, n}} -1) \bs v_{\Lin, n}^j(0) + \sum_{ 1 \le j \le J; j \in \calJ_{c, \infty} \cup \calJ_{\Lin, \infty}} \chi_{2 r_{2, n}}  \bs v_{\Lin, n}^j(0) 
}
We claim that in fact $\bs{\ti u}_n(t_n)$ admits a profile decomposition in the sense of Lemma~\ref{lem:pd} of the form, 
\EQ{ \label{eq:profiles2}
\bs{\ti u}_n(t_n) = \sum_{ 1 \le j \le J; j \in \calJ_{c, 0} \cup \calJ_{\Lin, 0}} \bs v_{\Lin, n}^j(0)    + \bs{\ti w}_{n, 0}^J 
}
with the same profiles $(\bs v_{\Lin}^j, \lambda_{n, j}, t_{n, j})$ as in the decomposition for $\bs u_n(t_n)$ and where the error above $\bs{\ti w}_{n, 0}^J $ satisfies, 
\EQ{
\bs{\ti w}_{n, 0}^J  = \chi_{2 r_{2, n}} \bs w_{n, 0}^J + o_n(1) \mas n \to \infty
}
The expansion~\eqref{eq:profiles2} and the $o_n(1)$ above is justified as follows: let $\eps>0$ and use~\eqref{eq:pyth} to find $J_0 >0$ such that 
\EQ{
\sum_{j >J_0} \| \bs v_{\Lin, n}^j(0) \|_{\E}  \le \eps
}
Using~\eqref{eq:Jc-infty}, ~\eqref{eq:Jl-infty} we see that, 
\EQ{
\sum_{j \le J_0, j \in \calJ_{c, \infty} \cup \calJ_{\Lin, \infty}} \| \chi_{2r_{2, n}} \bs v_{\Lin, n}^j(0)  \|_{\E}^2 \to 0 \mas n \to \infty. 
}
Using the same logic used to deduce~\eqref{eq:Jc-infty}, ~\eqref{eq:Jl-infty} we have (since $r_{2, n} \to \infty$), 
\EQ{
 \sum_{ 1 \le j \le J_0; j \in \calJ_{c, 0} \cup \calJ_{\Lin, 0}}\| (1- \chi_{2 r_{2, n}}) \bs v_{\Lin, n}^j(0) \|_{\E}^2 \to 0 \mas n \to \infty.
}  
from which the vanishing of the $o_n(1)$ term follows. It remains to deduce the vanishing properties of the error $\bs{\ti w}_{n, 0}^J$, which follow directly from~\cite[Claim A.1 and Lemma 2.1]{DKM1} in the odd dimensional case and~\cite[Lemma 10 and 11]{CKS} in the case of even dimensions.  

Finally, we can use
use~\eqref{eq:maximal} and~\eqref{eq:tiu-fsp} to see that,  
\EQ{\label{eq:maximal-ti}
&\lim_{ n \to \infty} \sup_{I \ni t_n;  I \subset [0,  1]}  \frac{1}{\abs{I}} \int_I \int_0^{\infty} | \p_t \ti u_n(t, r)|^2 \, r \, \ud r \, \ud t = 0,\\  
&\| \p_t {\ti u}_n(t_n) \|_{L^2} \to 0 \mas n \to \infty. 
}
Then following the exact argument in~\cite[Erratum]{DKM3erratum}, but applied to $\bs{\ti u}_n(t_n)$ we conclude that the set $\calJ_{\Lin, 0}$ is empty, i.e., all of the profiles $(\bs v^j\lin, \lambda_{n, j}, t_{n, j})$ with $j \in \calJ_{\Lin, 0}$ satisfy, $\bs v\lin^j \equiv 0$.

\textbf{Step 7.}(Centered profiles at bounded  scales)  
To recap, we now have 
\EQ{ \label{eq:profiles3}
\bs{\ti u}_n(t_n) = \sum_{ 1 \le j \le J; j \in \calJ_{c, 0} } \bs v_{\Lin, n}^j(0)    + \bs{\ti w}_{n, 0}^J 
}
where $\bs{\ti u}_n(t_n)$ satisfies~\eqref{eq:tiu-fsp} and all of the profiles $(\bs v_{\Lin}^j,  \lambda_{n, j}, 0) $ have $t_{n, j} = 0$ and $\lam_{n, j} \lesssim 1$ for all $n, j$.  Moreover, we have the vanishing in~\eqref{eq:maximal-ti}.  We can now apply the exact same arguments of Duyckaerts, Kenig, and Merle~\cite[Proofs of Corollary 4.1 and Corollary 4.2]{DKM3erratum} (see also the identical arguments by Rodriguez in~\cite[Proof of ]{Rod16-adv} and Jia and Kenig~\cite[Proof of ]{JK}) to deduce that in fact either 
\EQ{
\bs v_{\Lin, n}^j(0) =  \iota_j \bs W_{\lam_{n, j}} \mor \bs v_{\Lin, n}^j(0) =  \bs 0
}
for $\iota_j \in \{-1, 1\}$ for each $j \in \calJ_{c, 0}$. By~\eqref{eq:pyth} there can only be finitely many of these profiles that are non-trivial, and thus after reordering the indices we can find $K_0 \ge 0$ and $\lam_{n, 1} \ll \lam_{n, 2} \ll \dots \ll \lam_{n, K_0} \lesssim 1$  such that 
\EQ{ \label{eq:profiles4} 
\bs{\ti u}_n(t_n) = \sum_{ 1 \le j \le K_0}\iota_j \bs W_{\lam_{n, j}}  + \bs{\ti w}_{n, 0} 
}
where $\bs{\ti w}_{n, 0}  := \bs{\ti w}_{n, 0}^{K_0}$. We note that the error $\bs{\ti w}_{n, 0}$ satisfies, 
\EQ{ \label{eq:tiw-vanishing} 
\limsup_{n \to \infty} \Big( \| \ti w_{n, 0} \|_{L^{\frac{2D}{D-2}}} + \|\dot{\ti w}_{n, 0} \|_{L^2} \Big) = 0 
}
where the $L^2$ vanishing of $\dot{\ti w}_{n, 0}$ follows from~\eqref{eq:maximal-ti} and the decomposition~\eqref{eq:profiles4}.

\textbf{Step 8.}(Vanishing properties of the error)  
We now select the final sequence by choosing $r_n \to \infty$ so that 
\EQ{ \label{eq:r_n-def} 
r_n \le\frac{1}{2} \min\{ r_{2, n},  R_{1, n}\}, \quad \lim_{n \to \infty} \sup_{t \in [0, 1]}\| \bs{\ti u}_n(t) \|_{\E( A_n^{-1} r_{n}, A_n r_n)} = 0
}
where $R_{1, n}$ is as in Steps 1, 2 and where $1 \ll A_n \ll r_n$. The existence of such a sequence follows from the same logic as in Step 6. By~\eqref{eq:jia-kenig} we have 
\EQ{
\label{eq:jia-kenig1}
\limsup_{n\to \infty}\int_0^\infty\Bigg[\Big( \p_r^2 u_n(t_n)  &+ \frac{D-1}{r} \p_r u_n(t_n) - \abs{u_n(t)}^{\frac{4}{D-2}} u_n(t_n) \Big)\Big( r \p_r u_n(t_n) + \frac{D}{2} u_n(t_n)\Big) \\
&\quad + \p_t u_n(t_n) \Big( r \p_r \p_t u_n(t_n) +  \frac{D}{2} \p_t u_n(t_n) \Big)\Bigg] \chi_{r_{ n}} \, r^{D-1} \, \ud r  \leq 0. 
}
Integration by parts above, we obtain, 
\EQ{
\limsup_{n \to \infty} \Bigg(\int_0^\infty \Big[(\p_r u(t_n))^2 - \abs{u(t_n)}^{\frac{2D}{D-2}} \Big]\chi_{r_n} \, r^{D-1} \, \ud r  - \Om_{1, r_n}(\bs u(t_n)) - \frac{D}{2} \Om_{2, r_n}(\bs u(t_n)) \Bigg) \le 0
  }
  where $\Om_{1, r_n}, \Om_{2, r_n}$ are defined in~\eqref{eq:Om-rho-def}. Using~\eqref{eq:r_n-def} along with Sobolev embedding and Hardy's inequality we obtain the vanishing of the errors terms $\Om_{j, r_n}(\bs{\ti u}(t_n))$ above and we conclude that, 
  \EQ{
  \limsup_{n \to \infty} \int_0^\infty \Big[(\p_r u(t_n))^2 - \abs{u(t_n)}^{\frac{2D}{D-2}} \Big]\chi_{r_n} \, r^{D-1} \, \ud r  \le 0 
  }
Due to~\eqref{eq:profiles4},  the orthogonality of the profiles (i.e., $\lam_{n, 1} \ll \lam_{n, 2} \ll \dots \lam_{n, K_0}$), the fact that $\lam_{n, j}/r_n \to 0$ for each $j$ (if $K_0 >0$), and  the fact that the Jia-Kenig functional vanishes at $W$, i.e., 
\EQ{
\int_0^\infty \Big[( \p_r W)^2 - \abs{W}^{\frac{2D}{D-2}}\Big] \, r^{D-1} \, \ud r = 0
}
we can conclude that 
\EQ{
\limsup_{n \to \infty} \int_0^\infty \Big[(\p_r \ti w_{0, n})^2 - \abs{\ti w_{0, n}}^{\frac{2D}{D-2}} \Big]\chi_{r_n} \, r^{D-1} \, \ud r  \le 0
}
But then we may use~\eqref{eq:tiw-vanishing} to see that in fact
\EQ{
\lim_{n \to \infty} \int_0^\infty \abs{\ti w_{0, n}}^{\frac{2D}{D-2}} \chi_{r_n} \, r^{D-1} \, \ud r  = 0
}
Using this estimate in the previous line we conclude that 
\EQ{
\lim_{n \to \infty} \int_0^\infty (\p_r \ti w_{0, n})^2\chi_{r_n} \, r^{D-1} \, \ud r  = 0.  
}
Lastly, by~\eqref{eq:profiles4} and the fact that $\bs u_n(t_n, r) = \bs{\ti u}_n(t_n, r)$ if $r \le r_n$, we see that $\lim_{n \to \infty} \bs \de_{r_n}( \bs u_n(t_n))  = 0$, completing the proof. 
\end{proof} 

\section{The sequential decomposition }

In this section we sketch the proof Theorem~\ref{thm:seq}, the sequential decomposition. We view this result as the consequence of three main ingredients: (1) the identification of the linear radiation $\bs u^*$, (2) a proof that no energy can concentrate in the self-similar region of the light cone, and (3) the compactness lemma proved in the previous section.

\subsection{Identification  of the radiation} \label{ssec:rad} 

The results in this subsection were proved by Duyckaerts, Kenig, and Merle in~\cite{DKM1, DKM3} in the case $D=3$. Following their approach, analogous results were obtained in~\cite{CKLS3} in dimension $D=4$,~\cite{JK} in dimension $D=6$, and in~\cite{Rod16-adv} for all odd $D \ge 5$. The case of even  dimensions $D>6$ follows from an identical argument as the one used in~\cite{Rod16-adv}.  

\begin{prop}(Radiation in case of finite time blow-up)\emph{\cite[Theorem 3.2]{DKM1}}  \label{prop:rad-blowup} Let $\bs u(t) \in \E$ be a solution to~\eqref{eq:nlw} defined on the time  interval $I = (0, T]$ for some $T >0$ and blowing up in the type-II sense as $t \searrow 0$, that is, such that   
\EQ{
\sup_{t \in (0, T]} \| \bs u(t) \|_{\E} < \infty. 
}
Then, there exists $\bs u_0^* \in \E$ such that 
\EQ{
\bs u(t) &\rightharpoonup \bs u_0^* \quad \textrm{weakly in} \, \,  \E \mas t \to 0,\\ 
\| \fy( \bs u(t) - \bs u_0^* )\|_{\E} &\to 0 \mas t \to 0 , 
}
where the latter limit holds for any $\fy \in C_0^{\infty}(0, \infty)$. Moreover, the solution  $\bs u^*(t) \in \E$  of~\eqref{eq:nlw} with initial data $\bs u^*_0$ is defined on the interval $[0, T_0]$ and satisfies, 
\EQ{
\bs u(t, r) &= \bs u^*(t, r)  \mif r \ge t , \quad \forall t \in (0, T_0],   \\
\lim_{t \to 0} E( \bs u(t) - \bs u^*(t)) &= E( \bs u) - E(\bs u^*). 
}

\end{prop}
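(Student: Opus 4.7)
The strategy is to construct the radiation $\bs u^*$ by ``solving from outside'' the singularity at $(t,r)=(0,0)$, exploiting finite speed of propagation together with the uniform bound $\sup_{t\in (0,T]}\|\bs u(t)\|_\E < \infty$. Once $\bs u^*$ is built, the weak convergence $\bs u(t) \rightharpoonup \bs u_0^* := \bs u^*(0)$ in $\E$, the strong convergence $\|\varphi(\bs u(t)-\bs u_0^*)\|_\E\to 0$ for $\varphi \in C_0^\infty((0,\infty))$, and the Pythagorean-type energy splitting all follow from finite speed of propagation and strong continuity of the nonlinear flow away from $r=0$.

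\textbf{Construction of $\bs u^*$.} Fix $r_0>0$ small and $t_1 \in (0, r_0/4)$, and let $\chi_{r_0}$ be a smooth radial cutoff with $\chi_{r_0}(r)=0$ for $r\le r_0/2$ and $\chi_{r_0}(r)=1$ for $r\ge r_0$. Let $\bs v^{r_0,t_1}$ denote the solution of \eqref{eq:nlw} with data $\chi_{r_0}\bs u(t_1)$ at time $t_1$. By finite speed of propagation, $\bs v^{r_0,t_1}(t,r)=\bs u(t,r)$ for $r > r_0+|t-t_1|$ on the common interval of existence. Using the uniform $\E$-bound and Lemma~\ref{lem:Cauchy}, together with Lemma~\ref{lem:prop-small-E-local} applied to a sufficiently large sphere, I would show that $\bs v^{r_0,t_1}$ exists on a time interval $[0, \tau_{r_0}]$ with $\tau_{r_0}>0$ independent of $t_1$. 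Letting $t_1\to 0$ and extracting a subsequential weak $\E$-limit of $\chi_{r_0}\bs u(t_1)$ yields $\bs w^{r_0}_0$, and Lemma~\ref{lem:Cauchy} produces an associated solution $\bs w^{r_0}(t)$ on $[0,\tau_{r_0}]$. Applying finite speed of propagation to the pairs $(\bs w^{r_0}, \bs u)$ and $(\bs w^{r_0}, \bs w^{r_0'})$ for $r_0'<r_0$, one concludes first that $\bs w^{r_0}(t,r)=\bs u(t,r)$ for $r>r_0+t$, and second that the family $\{\bs w^{r_0}\}_{r_0>0}$ is consistent on overlapping regions. Patching as $r_0\to 0$ produces a radial datum $\bs u^*_0$ on $\{r>0\}$ with $\|\bs u^*_0\|_\E < \infty$ by weak lower semicontinuity, and the NLW evolution $\bs u^*(t)$ with data $\bs u_0^*$ satisfies $\bs u(t,r)=\bs u^*(t,r)$ for every $r>t$, $t\in (0,T_0]$.

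\textbf{Convergence and energy identity.} For weak convergence, given any sequence $\tau_n\to 0$, by $\E$-boundedness extract a subsequence with $\bs u(\tau_n)\rightharpoonup \tilde{\bs u}$. Since $\bs u(\tau_n, r) = \bs u^*(\tau_n, r)$ for $r>\tau_n$, and $\bs u^*(\tau_n) \to \bs u_0^*$ strongly in $\cE(r_0,\infty)$ for every $r_0>0$ by continuity of the NLW flow at $t=0$ (Lemma~\ref{lem:Cauchy}), the limit $\tilde{\bs u}$ agrees with $\bs u_0^*$ on $\{r>r_0\}$ for every $r_0>0$; hence $\tilde{\bs u}=\bs u_0^*$ and the full weak limit exists. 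The local strong statement $\|\varphi(\bs u(t)-\bs u_0^*)\|_\E\to 0$ for $\varphi\in C_0^\infty((0,\infty))$ is then immediate: if $\mathrm{supp}\,\varphi \subset [r_0, 2r_0]$ and $t<r_0/2$, then $\bs u(t)=\bs u^*(t)$ on $\mathrm{supp}\,\varphi$, and continuity of $\bs u^*$ at $t=0$ gives the claim. For the energy identity, the key observation is that $\bs u(t)-\bs u^*(t)$ is supported in $\{r\le t\}$ and $\|\bs u^*(t)\|_{\cE(0,t)}\to 0$ as $t\to 0$ by absolute continuity of the energy of $\bs u^*$ at $r=0$. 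Expanding the quadratic and nonlinear parts of $E(\bs u(t)-\bs u^*(t))$ and estimating the cross-terms by Cauchy--Schwarz together with the Sobolev--Hardy embedding, all such cross-terms vanish as $t\to 0$, leaving $E(\bs u(t)-\bs u^*(t))\to E(\bs u)-E(\bs u^*)$ after using conservation of $E(\bs u(t))$ and $E(\bs u^*(t))$.

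\textbf{Main obstacle.} The principal technical difficulty is ensuring that the ``outside'' flows $\bs v^{r_0, t_1}$ and their limits $\bs w^{r_0}$ exist on a time interval $[0,\tau]$ with $\tau>0$ that does \emph{not} collapse as $r_0\to 0$. The cut data $\chi_{r_0}\bs u(t_1)$ can carry $\E$-norm comparable to the full $\|\bs u(t_1)\|_\E$ even when the energy outside $r_0$ is small, so small-data scattering does not apply directly. Overcoming this requires combining Lemma~\ref{lem:prop-small-E-local} on the far exterior, where energy is genuinely small, with a perturbation argument on an intermediate annulus; it is here that the type-II assumption is used in an essential way rather than just smallness of the initial energy.
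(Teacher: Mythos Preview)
The paper does not give its own proof of this proposition; it is stated with a citation to \cite[Theorem~3.2]{DKM1}, and the remark following Proposition~\ref{prop:rad-global} only notes that the argument there (carried out for $D=3,4,5$) extends to all $D\ge 4$ via the local Cauchy theory of Lemma~\ref{lem:Cauchy}. Your proposal is a faithful outline of that argument.

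Two remarks. First, the existence and \emph{uniqueness} of the weak limit come more cheaply than through your cutoff--patching scheme: since $\partial_t \bs u=(\dot u,\Delta u+f(u))$ is uniformly bounded in $L^2\times(\dot H^{-1}+L^{2D/(D+2)})$ by the type-II hypothesis and Sobolev, the map $t\mapsto \bs u(t)$ is Lipschitz into that weaker space and hence has a unique limit in $\mathcal D'$ as $t\to 0^+$; combined with the uniform $\cE$-bound this already gives $\bs u(t)\rightharpoonup \bs u_0^*$ in $\cE$. Once $\bs u_0^*$ is in hand, $\bs u^*$ exists on some $[0,T_0]$ by Lemma~\ref{lem:Cauchy}, so the lifespan of the patched solution as $r_0\to 0$ is not actually an obstacle.

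Second, your sketch has a genuine gap at the step ``applying finite speed of propagation to the pair $(\bs w^{r_0},\bs u)$'': these two solutions never share a time slice on which they agree ($\bs w^{r_0}$ has data at $t=0$ where $\bs u$ is undefined), and \emph{weak} convergence of data does not propagate under the nonlinear flow. What is needed is \emph{strong} convergence $\chi_{r_0}\bs u(t_1)\to \chi_{r_0}\bs u_0^*$ in $\cE$, which is exactly the local strong convergence asserted in the proposition. This is where the real work lies, and it is here---not in the construction of $\bs u^*$---that one must show the auxiliary solutions $\bs v^{r_0,t_1}$ (data $\chi_{r_0}\bs u(t_1)$ at time $t_1$) exist backward to $t=0$ for $t_1<r_0/8$. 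On that backward interval $\bs v^{r_0,t_1}$ is supported in $\{r>r_0/4\}$ and coincides with $\bs u$ on $\{r>2r_0\}$; the finite Strichartz norm of $\bs u$ on compact subintervals of $(0,T]$ then transfers to $\bs v^{r_0,t_1}$ on the exterior, while on the remaining annulus one uses the radial Sobolev bound $|v(r)|\lesssim r^{-(D-2)/2}\|v\|_{\dot H^1}$ together with the support property to control the nonlinearity and close a bootstrap. Your proposed fix (``Lemma~\ref{lem:prop-small-E-local} on the far exterior plus a perturbation on an intermediate annulus'') is in the right spirit but underspecified; the annulus carries no smallness in general, so Lemma~\ref{lem:prop-small-E-local} alone does not suffice there.
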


\begin{prop} (Radiation for a global-in-time solution)\emph{\cite[Corollary 3.9]{DKM3}, \cite[Proposition 4.1]{CKLS3}}  \label{prop:rad-global} Let $\bs u(t) \in \E$ be a solution to~\eqref{eq:nlw} defined on the time  interval $I = [T, \infty)$ for some $T\ge 0$ and such that 
\EQ{
\sup_{t \in [T, \infty)]} \| \bs u(t) \|_{\E} < \infty. 
}
Then, there exists a free wave $\bs u^*\lin(t) \in \E$ such that 
\EQ{
S(-t) \bs u(t) &\rightharpoonup \bs u^*\lin(0) \quad \textrm{weakly in}\, \,  \E  \mas t \to \infty,  \\
\textrm{and} \, \, \forall R \in \R, \quad \lim_{t \to \infty} \int_{t-R}^\infty &\Big[( \p_t( u(t)- u^*\lin(t)))^2 + (\p_r (u(t) -  u^*\lin(t)))^2 + \frac{ (u(t) - u^*\lin(t) )^2}{r^2} \Big] \, r^{D-1} \, \ud r  = 0. 
}
Moreover, there exists $T_0>0$ such that if we denote by $\bs u^*(t) \in \E$ the unique solution to~\eqref{eq:nlw} such that 
\EQ{
\| \bs u^*(t) - \bs u^*\lin(t) \|_{\E} \to 0 \mas t \to \infty, 
}
then, 
\EQ{
\lim_{t \to \infty}E( \bs u(t) - \bs u^*(t)) &= E( \bs u) - E( \bs u^*). 
}
\end{prop}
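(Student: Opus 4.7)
The plan is to extract $\bs u^*\lin$ as a weak limit of $S(-t)\bs u(t)$, establish strong convergence of $\bs u(t) - \bs u^*\lin(t)$ on exterior cones, then produce $\bs u^*$ via the wave operator theory of Lemma~\ref{lem:Cauchy} and derive the energy orthogonality. First I would observe that since $S(-t)$ is an isometry on $\E$ and $\sup_{t \geq T} \|\bs u(t)\|_{\E} < \infty$ by hypothesis, the family $\{S(-t)\bs u(t)\}_{t \geq T}$ is bounded in $\E$. Weak compactness produces a subsequence $t_n \to \infty$ and some $\bs V_0^* \in \E$ with $S(-t_n)\bs u(t_n) \rightharpoonup \bs V_0^*$; I define $\bs u^*\lin(t) := S(t)\bs V_0^*$.

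The core and most delicate step is the strong exterior convergence
\begin{equation}
\label{eq:strong-ext-plan}
\lim_{t \to \infty}\|\bs u(t) - \bs u^*\lin(t)\|_{\cE(t - R, \infty)} = 0 \qquad \text{for every } R \in \bR.
\end{equation}
This is established by Duhamel analysis localized to the exterior cone $\{r > t - R,\ t \geq T\}$, following the Duyckaerts--Kenig--Merle scheme implemented in \cite{DKM1, DKM3, CKLS3, JK, Rod16-adv}. The key quantitative input is that the nonlinear forcing $f(u)$ has vanishing exterior Strichartz norm on $\{r > t - R,\ t \geq T\}$ as $T \to \infty$: this is obtained by a dyadic decomposition of the exterior region together with small-data local Cauchy theory (Lemma~\ref{lem:Cauchy}) applied on each piece, using that the smallness on the dyadic annuli at time $T$ follows from boundedness of the critical norm and summability of the total energy. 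Passing to the weak limit in Duhamel then identifies $\bs V_0^*$ independently of the extracting subsequence, because \eqref{eq:strong-ext-plan} along $t_n$ determines $\bs V_0^*$ on every exterior region $\{r > -R\}$; this simultaneously upgrades the weak convergence to hold along full time.

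Having pinned down $\bs u^*\lin$, I would invoke the existence of wave operators in Lemma~\ref{lem:Cauchy} to produce the nonlinear solution $\bs u^*(t) \in \E$ on $[T_0, \infty)$ with $\|\bs u^*(t) - \bs u^*\lin(t)\|_{\E} \to 0$, which determines $T_0$. For the energy identity I would expand
\begin{equation}
E(\bs u(t) - \bs u^*(t)) - E(\bs u(t)) + E(\bs u^*(t))
\end{equation}
into a sum of $\E$-bilinear cross terms in $(\bs u, \bs u^*)$ plus a cross-nonlinear remainder, and split every integral into the exterior region $\{r > t - R\}$ and the interior region $\{r < t - R\}$. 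On the exterior, \eqref{eq:strong-ext-plan} together with scattering of $\bs u^*$ to $\bs u^*\lin$ gives $\|\bs u(t) - \bs u^*(t)\|_{\cE(t - R, \infty)} \to 0$, killing all cross terms localized there. On the interior, the standard dispersive property $\|\bs u^*\lin(t)\|_{\cE(0, t - R)} \to 0$ as $t \to \infty$ (free-wave energy concentrates near the light cone) transfers to $\bs u^*$ via scattering, eliminating the remaining cross terms involving $\bs u^*$; the nonlinear cross remainder is handled analogously using Sobolev embedding. Sending $t \to \infty$ followed by $R \to \infty$ yields the claimed identity. The principal obstacle throughout is the exterior estimate \eqref{eq:strong-ext-plan}: it is the only genuinely nonlinear step, and it must control the Duhamel forcing on an exterior cone even though the full Strichartz norm of $u$ on $[T, \infty)$ is \emph{not} a priori finite under the type-II hypothesis. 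At this point my writeup would rely on the already-established exterior Strichartz analyses of \cite{DKM3, CKLS3, JK, Rod16-adv}, which between them cover all $D \geq 4$.
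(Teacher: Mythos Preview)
Your outline matches the standard Duyckaerts--Kenig--Merle scheme that the paper invokes; note that the paper does not actually supply its own proof of this proposition but cites \cite{DKM3, CKLS3, Rod16-adv} and remarks that those arguments, together with the even-dimensional profile-decomposition tools of \cite{CKS}, cover all $D \geq 4$. Your sketch is faithful to that literature, so there is nothing to correct.
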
 

\begin{rem}
We note that the proof of  Proposition~\ref{prop:rad-blowup} given in~\cite{DKM1} was given only in dimensions $D= 3, 4, 5$ (and for non-radially symmetric data), but it generalizes in a straight-forward way to higher spaces dimensions using the local Cauchy theory from Lemma~\ref{lem:Cauchy}. The proof of Proposition~\ref{prop:rad-global} is given in dimension $D=3$ in~\cite{DKM3} and was generalized to dimension $D=4$ in~\cite{CKLS3} using  technical tools related to profile decompositions in even space dimensions proved by C\^ote, Kenig, and Schlag in~\cite{CKS}. It was proved in all odd space dimensions in~\cite{Rod16-adv}. Again the proofs given in those references generalize to all even space dimensions, using~\cite{CKS} . 
\end{rem}

\subsection{Non-concentration of self-similar energy }\label{ssec:ssim} 

In this section we sketch the proof that finite energy solutions cannot concentrate linear energy in the self-similar region of the cone. As a consequence of this fact and virial identities, we deduce the vanishing of the averaged kinetic energy in the cone. The proof in this section closely follows the arguments given in~\cite{CKLS3} and~\cite{JK}, which in turn follow the scheme developed by Christodoulou and Tahvildar-Zadeh~\cite{CTZcpam} and Shatah and Tahvildar-Zadeh~\cite{STZ92} in the context of equivariant wave maps. We make one minor observation here, namely that the reductions performed in~\cite{CKLS3, JK} from the $D$-dimensional radially symmetric NLW~\eqref{eq:nlw} to a wave maps-type equation in two-space dimensions work equally well in all space dimensions $D \ge 3$, and thus the arguments from~\cite{JK} (which generalized the Shatah, Tahvildar-Zadeh arguments to cover all finite energy solutions) apply directly\footnote{We note that the published version of~\cite{CKLS3} contained a gap in the proof of the corresponding results, as the arguments used to deduce Proposition~\ref{prop:stz-global} and Proposition~\ref{prop:stz-global} in that paper were performed only for smooth solutions. This gap was closed by an argument of the first author and was included in an appendix to~\cite{CKLS1-erratum}. An earlier argument by Jia and Kenig from~\cite{JK} can also be used to close the gap in~\cite{CKLS3}, and we refer to their approach here.}.

\begin{prop}[No self-similar concentration for blow-up solutions] \label{prop:stz-blowup} \emph{\cite[Theorem 2.1]{JK}} 
Let $\bs u(t) \in \E$ be a solution to~\eqref{eq:nlw} defined on the time  interval $I = (0, T]$ for some $T >0$ and blowing up in the type-II sense as $t \searrow 0$, that is, such that   
\EQ{
\sup_{t \in (0, T]} \| \bs u(t) \|_{\E} < \infty. 
}
Then, for any $\al \in (0, 1)$, 
\EQ{
\lim_{t\searrow 0} \int_{\al t}^t \Big[ ( \p_t u(t, r))^2 + (\p_r u(t, r))^2 + \frac{(u(t, r))^2}{r^2} \Big] \, r^{D-1} \, \ud r = 0. 
}
\end{prop}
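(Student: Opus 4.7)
The plan is to adapt the Shatah--Tahvildar-Zadeh scheme, as extended by Jia and Kenig, following the observation noted in the excerpt that the reduction from the radial $D$-dimensional NLW to a $2$D wave maps-type equation works in every $D \ge 3$. All analysis takes place inside the backward light cone $\mathcal{C}=\{(t,r):0<t\le T,\ 0\le r\le t\}$ emanating from the putative singular point $(0,0)$. I will first establish monotonicity of a local energy inside the cone: by the divergence-free property of the energy-momentum tensor for \eqref{eq:nlw}, applied to the truncated frustum $\mathcal{C}(t_1,t_2)=\mathcal{C}\cap\{t_1\le t\le t_2\}$, the quantity
\[
\mathcal{E}(t):=\int_0^t\Big[\tfrac12(\partial_t u)^2+\tfrac12(\partial_r u)^2-\tfrac{D-2}{2D}|u|^{\frac{2D}{D-2}}\Big]r^{D-1}\,\ud r
\]
is non-decreasing in $t$, with derivative equal to a non-negative null flux across the lateral null boundary. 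The type-II hypothesis gives uniform boundedness, hence the limit $\lim_{t\searrow 0}\mathcal{E}(t)$ exists, and the total null flux on $\partial\mathcal{C}\cap\{0<t\le T_0\}$ is finite; concentrating to the tip, this flux vanishes.

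Next I will run a conformal-scaling multiplier identity. Pairing \eqref{eq:nlw} with $Xu:=t\,\partial_t u+r\,\partial_r u+\tfrac{D-2}{2}u$ (the generator of the $\dot H^1$-invariant dilations composed with time scaling) and integrating over $\mathcal{C}(t_1,t_2)$, the nonlinear bulk term cancels by the critical scaling of $|u|^{\frac{4}{D-2}}u$, and integration by parts produces boundary contributions on the two flat time slices, a null-boundary flux term, and a sign-definite bulk quantity of the form $\iint_{\mathcal{C}(t_1,t_2)} t\,|Nu|^2\,r^{D-1}\,\ud r\,\ud t$, where $Nu$ is the null derivative $\partial_t u+\partial_r u+\tfrac{D-2}{2t}u$ adapted to the cone. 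Combining this identity with the finiteness of the energy and the vanishing of the null flux, and pairing with the dual identity for $\partial_t u-\partial_r u$ plus a Hardy inequality for the zero-order piece, yields the self-similar spacetime bound
\[
\int_0^{T_0}\!\int_{\alpha t}^{t}\Big[(\partial_t u)^2+(\partial_r u)^2+\tfrac{u^2}{r^2}\Big]r^{D-1}\,\ud r\,\frac{\ud t}{t}<\infty
\]
for every $\alpha\in(0,1)$.

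To upgrade this time-integrability to the pointwise limit, I will invoke Lemma~\ref{lem:prop-small-E-local}. If the conclusion failed, one could extract a sequence $t_n\searrow 0$ along which the self-similar energy on $[\alpha t_n,t_n]$ stayed bounded below by some $\eta_0>0$; propagating smallness backward and forward in time from intermediate slices where the integral is small (which exist by the finiteness above) via Lemma~\ref{lem:prop-small-E-local}, combined with the monotonicity of $\mathcal{E}(t)$ and the fact that the flux through the cone vanishes to the tip, contradicts the persistence of the lower bound. Thus the localized energy on $[\alpha t,t]$ must vanish as $t\searrow 0$.

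The principal obstacle will be the rigorous justification of the multiplier identity for merely finite-energy solutions, since $Xu$ is not a priori a well-defined function for $\bs u\in\mathcal{E}$. Following Jia--Kenig, this is handled by a density argument: mollify $\bs u(T_0)$ to obtain smooth data, use the Cauchy theory of Lemma~\ref{lem:Cauchy} to produce smooth approximants converging in $C^0([t_0,T_0];\mathcal{E})$ for each $t_0>0$, establish the identity for these smooth solutions by direct computation, and pass to the limit on each region bounded away from the tip. The finiteness of the total null flux established in the first step then provides the uniform control needed to take $t_0\searrow 0$. A secondary technicality is controlling the potential-energy term $|u|^{\frac{2D}{D-2}}$ inside the self-similar region, which is handled by the localized Sobolev embedding $\dot H^1\hookrightarrow L^{\frac{2D}{D-2}}$ bootstrapped against the smallness obtained inside the self-similar slab itself.
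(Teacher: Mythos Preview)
Your first step contains a genuine gap. For the \emph{focusing} equation \eqref{eq:nlw} the derivative of your localized energy is
\[
\frac{\ud}{\ud t}\mathcal{E}(t)=t^{D-1}\Big[\tfrac12(\partial_t u+\partial_r u)^2-\tfrac{D-2}{2D}|u|^{\frac{2D}{D-2}}\Big]\Big|_{r=t},
\]
and the potential term enters with the wrong sign, so the null flux is \emph{not} non-negative and $\mathcal{E}(t)$ need not be monotone. Everything downstream---finiteness and vanishing of the total flux toward the tip, the conformal multiplier identity, and the upgrade to a pointwise limit via Lemma~\ref{lem:prop-small-E-local}---rests on this monotonicity, so the argument as written does not close.

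The paper's route (following \cite{CKLS3, JK}) is designed precisely to repair this sign. One introduces $\psi(t,r):=r^{k}u(t,r)$ with $k=\tfrac{D-2}{2}$, which solves the two-dimensional wave-maps-type equation
\[
\partial_t^2\psi-\partial_r^2\psi-\tfrac{1}{r}\partial_r\psi+\tfrac{k^2-|\psi|^{2/k}}{r^2}\,\psi=0.
\]
For this equation the flux density on the lateral null surface is $\tfrac12(\partial_t\psi-\partial_r\psi)^2+r^{-2}F_k(\psi)$ with $F_k(\psi)=\tfrac12|\psi|^2\big(k^2-\tfrac{k}{k+1}|\psi|^{2/k}\big)$, and this \emph{is} positive once $|\psi|$ is below a fixed threshold; the radial pointwise bound $|\psi(t,r)|=r^{k}|u(t,r)|\lesssim\|\bs u(t)\|_{\cE}$ together with a preliminary flux argument then feeds into the Shatah--Tahvildar-Zadeh scheme in \cite{JK}. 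You mention this reduction in your opening sentence but then carry out the entire analysis in the variable $u$; the substitution $u\mapsto\psi$ is not cosmetic---it is exactly what manufactures the sign needed for the flux monotonicity on which the rest of the argument stands.
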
 

\begin{prop}[No self-similar concentration for global solutions] \emph{\cite[Theorem 2.4]{JK}}  \label{prop:stz-global} Let $\bs u(t) \in \E$ be a solution to~\eqref{eq:nlw} defined on the time  interval $I = [T, \infty)$ for some $T\ge 0$ and such that 
\EQ{
\sup_{t \in [T, \infty)]} \| \bs u(t) \|_{\E} < \infty. 
}
Then, for any $\al \in (0, 1)$, 
\EQ{
\lim_{R \to \infty} \limsup_{t \to \infty} \int_{\al t}^{t -R} \Big[ ( \p_t u(t, r))^2 + (\p_r u(t, r))^2 + \frac{(u(t, r))^2}{r^2} \Big] \, r^{D-1} \, \ud r = 0.
}

\end{prop}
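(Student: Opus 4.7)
\textbf{Plan for Proposition~\ref{prop:stz-global}.} My proof would mirror the Shatah--Tahvildar-Zadeh / Christodoulou--Tahvildar-Zadeh non-concentration scheme, in the form adapted to radial NLW by Jia--Kenig~\cite{JK}. The first step is to use Proposition~\ref{prop:rad-global} to subtract the radiation: set $\bs h(t) := \bs u(t) - \bs u^*(t)$. On any strip $\{r \geq t - R\}$, the proposition guarantees $\|\bs h(t)\|_{\cE(t-R,\infty)} \to 0$ as $t \to \infty$, and $\bs u^*$ itself is a fixed finite-energy nonlinear wave whose energy in the interior region $\{r \leq \alpha t\} \cup \{r \leq t - R\}$ is known (the analogue of \eqref{eq:rad-rho} in the global case). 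So the task reduces to proving vanishing of the localized energy of $\bs h$ on $[\alpha t, t - R]$.

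The core tool is a Morawetz-type virial identity generated by a conformal-dilation multiplier. Concretely, fix a future apex $T^\star > t_2$ and work in the truncated backward cone
\[
\mathcal{K}^{t_1,t_2}_{\alpha,R} := \{(t,r):\, t_1 \leq t \leq t_2,\ \alpha t \leq r \leq t - R\}.
\]
I would multiply $\Box u = -|u|^{4/(D-2)}u$ by $X u := (T^\star - t)\partial_t u - r\,\partial_r u - \tfrac{D-2}{2} u$ (a variant of the conformal Killing multiplier that collapses gracefully at the cone $\{r = t\}$), integrate over $\mathcal{K}^{t_1,t_2}_{\alpha,R}$, and read off the divergence identity. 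In self-similar coordinates $(s,y) = (\log t,\, r/t)$, the bulk term reorganizes into the standard Christodoulou--Tahvildar-Zadeh coercive density, essentially a positive multiple of $(1-y^2)\bigl[(\partial_t u - y \partial_r u)^2 + \text{lower order}\bigr]$ against $y^{D-1}\,\mathrm{d}y\,\mathrm{d}s$. Energy-criticality ensures that the nonlinear boundary contributions from $|u|^{2D/(D-2)}$ can be absorbed or handled by Hardy/Sobolev once the reduction to $\bs h$ is in place.

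The four boundary fluxes are treated as follows. The terms at $t = t_1,\, t = t_2$ are uniformly bounded by the type-II hypothesis. The inner flux at $r = \alpha t$ is bounded by the total energy, uniformly in $t_1,t_2$. The outer flux at $r = t - R$ is where the real work lies: the multiplier degenerates there, but one exploits the strong convergence of $\bs h(t)$ to $0$ in $\cE(t-R,\infty)$ from Step~1, which drives this flux to $0$ as $R \to \infty$ uniformly in $t$. Combining these bounds with the coercive bulk term, one concludes that the integrated self-similar density over $[t_1,\infty) \times [\alpha t, t-R]$ is finite; letting $t_1\to\infty$ forces it to zero, and the monotonicity built into the identity yields vanishing for every (not merely subsequential) $t\to\infty$.

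Finally, the coercive density controls a weighted combination dominating $(1 - r/t)\bigl[(\partial_t u)^2 + (\partial_r u)^2\bigr] r^{D-1}$; on the region $[\alpha t, t-R]$ the weight $(1-r/t)$ is bounded below by $R/t$ on the outer part and by $1-\alpha$ on the inner part, and together with Hardy's inequality for the $u^2/r^2$ piece this upgrades the conclusion to the full energy norm stated in the proposition. \textbf{The main obstacle} is the outer boundary at $r = t - R$: the multiplier degenerates, the solution is only asymptotically close to the free radiation there, and one must quantitatively tie the flux at $r = t - R$ to the energy of $\bs h$ on $\{r \geq t - R\}$ and exploit $R \to \infty$ at the right stage of the argument. (This is the step where the global-in-time case genuinely differs from the blow-up case of Proposition~\ref{prop:stz-blowup}, where the outer boundary lives in the exterior region on which $\bs u = \bs u^*$ exactly.)
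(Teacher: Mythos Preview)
Your outline has the right general shape (multiplier identity, self-similar frame, boundary fluxes), but it misses the mechanism that the paper actually relies on, and that gap is where the focusing sign bites.

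The paper does \emph{not} run a Morawetz/conformal identity directly on $u$. It first makes the substitution $\psi(t,r) := r^{(D-2)/2}u(t,r)$, which converts the $D$-dimensional radial equation into a two-dimensional wave-maps-type equation
\[
\partial_t^2\psi - \partial_r^2\psi - \tfrac{1}{r}\partial_r\psi + \tfrac{k^2 - |\psi|^{2/k}}{r^2}\psi = 0,\qquad k=\tfrac{D-2}{2}.
\]
The entire Shatah--Tahvildar-Zadeh machinery is then applied to $\psi$, and the crucial input is that the \emph{flux density} on null cones,
\[
\tfrac{1}{2}(\partial_t\psi - \partial_r\psi)^2 + \tfrac{1}{r^2}F_k(\psi),\qquad F_k(\psi)=\tfrac{1}{2}|\psi|^2\Big(k^2 - \tfrac{k}{k+1}|\psi|^{2/k}\Big),
\]
is nonnegative once $|\psi|$ is sufficiently small. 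This is the replacement for the automatic flux positivity that genuine wave maps enjoy, and it is what makes the STZ/CTZ argument survive the focusing sign. The Jia--Kenig proof establishes the required smallness of $|\psi|$ near the cone as a preliminary step and then runs the standard monotonicity argument; the paper observes that this works verbatim in every dimension $D\ge 4$.

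Your proposal bypasses this transformation and asserts that the nonlinear contributions ``can be absorbed or handled by Hardy/Sobolev once the reduction to $\bs h$ is in place.'' That is the gap. For the focusing equation, the bulk and flux terms coming from $|u|^{2D/(D-2)}$ carry the wrong sign in a direct conformal/Morawetz computation, and energy-criticality alone does not fix this; it is precisely why one passes to $\psi$ and invokes smallness. Moreover, your reduction is inconsistent: you subtract $\bs u^*$ to form $\bs h$, but then multiply the equation for $u$, so the fluxes you must estimate involve $u$, not $h$, and $u^*$ is \emph{not} small at $r=t-R$ (the radiation lives there). Finally, your last paragraph does not close: on the outer portion of $[\alpha t, t-R]$ the weight $(1-r/t)$ is only $\gtrsim R/t$, which tends to $0$ as $t\to\infty$, so the weighted estimate does not upgrade to the unweighted energy as written.
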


\begin{cor}[Time-averaged vanishing of kinetic energy for blow-up solutions] \emph{\cite[Lemma 2.2]{JK}} \label{cor:kin-blowup}   Let $\bs u(t) \in \E$ be a solution to~\eqref{eq:nlw} satisfying the hypothesis of Proposition~\ref{prop:stz-blowup}.  Then,
\EQ{
\lim_{\tau \searrow 0} \frac{1}{\tau} \int_0^\tau \int_0^t (\p_t u(t, r))^2 \, r^{D-1} \, \ud r \, \ud t  = 0
}
\end{cor}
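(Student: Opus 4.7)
The strategy is to exploit the localized virial identity~\eqref{eq:virial} with cut-off scale $\rho(t) = \lambda t$ for a small parameter $\lambda \in (0, 1/4)$, combined with the non-concentration result Proposition~\ref{prop:stz-blowup} to absorb the boundary errors produced by the moving cut-off.  Define
\[
V_\lambda(t) := \La \partial_t u(t) \mid \chi_{\lambda t}\big(r \partial_r u(t) + \tfrac{D-2}{2} u(t)\big) \Ra.
\]
Since $\chi_{\lambda t}$ is supported in $r \le 2\lambda t$, Cauchy--Schwarz together with Hardy's inequality give $|V_\lambda(t)| \lesssim \lambda t \|\bs u(t)\|_\E^2$, and the type-II hypothesis then yields $|V_\lambda(t)| \le C_1 \lambda t$ uniformly in $t \in (0, T]$.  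By~\eqref{eq:virial} with $\rho(t) = \lambda t$,
\[
V_\lambda'(t) = -\int_0^\infty (\partial_t u(t,r))^2 \chi_{\lambda t}(r)\, r^{D-1}\,\ud r + \Omega_\lambda(t),
\]
where $\Omega_\lambda(t) := \Omega_{1, \lambda t}(\bs u(t)) + \tfrac{D-2}{2}\Omega_{2, \lambda t}(\bs u(t))$ is supported on the annulus $r \in [\lambda t, 2\lambda t]$.  A localized Sobolev inequality (needed only to handle the $|u|^{\frac{2D}{D-2}}$ piece) yields
\[
|\Omega_\lambda(t)| \lesssim \|\bs u(t)\|_{\E(\lambda t/2,\, 4\lambda t)}^2 + \|\bs u(t)\|_{\E(\lambda t/2,\, 4\lambda t)}^{\frac{2D}{D-2}}.
\]
For $\lambda < 1/4$ we have $[\lambda t/2,\, 4\lambda t] \subset [\lambda t/2,\, t]$, so Proposition~\ref{prop:stz-blowup} applied with $\alpha = \lambda/2$ forces $\Omega_\lambda(t) \to 0$ as $t \to 0$.

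Next, integrate $V_\lambda'$ from $\epsilon > 0$ to $\tau$ and send $\epsilon \to 0^+$.  Since $|V_\lambda(\epsilon)| \le C_1 \lambda \epsilon \to 0$ and both integrands are absolutely integrable down to $0$,
\[
\int_0^\tau \int_0^\infty (\partial_t u(t,r))^2 \chi_{\lambda t}(r)\, r^{D-1}\,\ud r \, \ud t = -V_\lambda(\tau) + \int_0^\tau \Omega_\lambda(t)\,\ud t.
\]
Because $\chi_{\lambda t}(r) = 1$ for $r \le \lambda t$ the left-hand side dominates the non-negative quantity $\int_0^\tau\int_0^{\lambda t}(\partial_t u)^2 r^{D-1}\,\ud r\,\ud t$, so taking absolute values on the right and dividing by $\tau$,
\[
\frac{1}{\tau} \int_0^\tau \int_0^{\lambda t} (\partial_t u)^2\, r^{D-1}\,\ud r\,\ud t \le \frac{|V_\lambda(\tau)|}{\tau} + \frac{1}{\tau}\int_0^\tau |\Omega_\lambda(t)|\,\ud t \le C_1 \lambda + o_{\tau \to 0}(1),
\]
since the $[0,\tau]$-average of a continuous function vanishing at $0$ also tends to $0$.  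For the complementary region $r \in [\lambda t, t]$, Proposition~\ref{prop:stz-blowup} gives directly $\int_{\lambda t}^{t}(\partial_t u)^2 r^{D-1}\,\ud r \le \|\bs u(t)\|_{\E(\lambda t, t)}^2 \to 0$ as $t \to 0$, and again its time-average over $[0, \tau]$ vanishes.

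Adding the two contributions yields $\limsup_{\tau \searrow 0} \frac{1}{\tau}\int_0^\tau \int_0^{t} (\partial_t u(t,r))^2 r^{D-1}\,\ud r\,\ud t \le C_1 \lambda$, and since $\lambda \in (0, 1/4)$ is arbitrary the limit must be $0$, which is the claim.  The main technical point is the localized Sobolev embedding controlling the nonlinear part of $\Omega_\lambda$ by the energy on an annulus sitting strictly inside the light cone---this is exactly what permits Proposition~\ref{prop:stz-blowup} to kill the boundary error, and explains why the cut-off scale must be chosen as a strict fraction of $t$ rather than equal to $t$.
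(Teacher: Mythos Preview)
Your proof is correct and follows essentially the same approach as the paper's, which in its remark simply says the result follows from the virial identity~\eqref{eq:virial} with cutoff $\rho(t)=t/2$ together with Proposition~\ref{prop:stz-blowup}, citing~\cite[Lemma~2.2]{JK}. Your variant of taking the cutoff at scale $\rho(t)=\lambda t$ with $\lambda\in(0,\tfrac14)$ and then sending $\lambda\to 0$ is a clean way to dispose of the boundary term $|V_\lambda(\tau)|/\tau\le C_1\lambda$, and the splitting of the cone into $\{r\le\lambda t\}$ and $\{\lambda t\le r\le t\}$ with Proposition~\ref{prop:stz-blowup} handling the outer shell is exactly the intended mechanism.
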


\begin{cor}[Time-averaged vanishing of kinetic energy for global solutions] \emph{\cite[Lemma 2.6]{JK}} \label{cor:kin-global}  Let $\bs u(t) \in \E$ be a solution to~\eqref{eq:nlw} satisfying the hypothesis of Proposition~\ref{prop:stz-global}.  Then,
\EQ{
 \lim_{\tau \to \infty} \frac{1}{\tau} \int_0^\tau \int_0^{\frac{t}{2}} (\p_t u(t, r))^2 \, r^{D-1} \, \ud r \, \ud t  = 0
}
\end{cor}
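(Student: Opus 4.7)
The idea is to apply the localized virial identity~\eqref{eq:virial} from Lemma~\ref{lem:vir} with a cutoff sitting at a small fraction of the self-similar scale, and then to bridge the resulting estimate to the desired region of integration using the non-concentration result. Fix $\alpha \in (0, \tfrac12)$ to be sent to zero at the end, set $\rho(t) := \alpha t$ (which is Lipschitz), and define
\[
I_\alpha(t) := \Big\langle \p_t u(t) \,\Big|\, \chi_{\alpha t}\bigl(r\p_r u(t) + \tfrac{D-2}{2}u(t)\bigr)\Big\rangle.
\]
By~\eqref{eq:virial},
\[
I_\alpha'(t) = -\int_0^\infty (\p_t u(t,r))^2 \chi_{\alpha t}(r)\, r^{D-1}\,\ud r + \Om_{1,\alpha t}(\bs u(t)) + \tfrac{D-2}{2}\Om_{2,\alpha t}(\bs u(t)).
\]
The plan is to integrate this identity from $T$ to $\tau$, divide by $\tau$, and let $\tau \to \infty$, showing that both the boundary and the error contributions are negligible.

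The cutoff derivative $(r\p_r\chi)(r/(\alpha t))$ appearing in~\eqref{eq:Om-rho-def} is supported on the annulus $r \in [\alpha t, 2\alpha t]$. Since $\alpha < 1/2$, for any fixed $R$ and all sufficiently large $t$ this annulus is contained in the self-similar strip $[\alpha t, t-R]$, and Proposition~\ref{prop:stz-global} gives $\|\bs u(t)\|_{\cE(\alpha t, 2\alpha t)} \to 0$ as $t \to \infty$. Together with the radial Sobolev embedding $|u(r)| \lesssim r^{-(D-2)/2}\|\bs u(t)\|_{\cE(r,\infty)}$ to dominate the $|u|^{2D/(D-2)}$ term in $\Om_{1,\alpha t}$ and Hardy's inequality for the $u$-terms in $\Om_{2,\alpha t}$, this yields the pointwise vanishing
\(
|\Om_{1,\alpha t}(\bs u(t))| + |\Om_{2,\alpha t}(\bs u(t))| \to 0.
\)
Since these quantities are also uniformly bounded by the type-II hypothesis, dominated convergence gives $\tau^{-1}\int_T^\tau |\Om_{1,\alpha t} + \tfrac{D-2}{2}\Om_{2,\alpha t}|\,\ud t \to 0$. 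For the boundary term, using $r \leq 2\alpha\tau$ on the support of $\chi_{\alpha\tau}$, the pointwise bound $|u| \leq r\cdot|u/r|$, Cauchy--Schwarz, and Hardy's inequality together with the uniform $\cE$-bound gives
\[
|I_\alpha(\tau)| \leq C\alpha\tau\,\|\bs u(\tau)\|_\cE^2 \leq C\alpha\tau,
\]
so $|I_\alpha(\tau)|/\tau \leq C\alpha$, while $|I_\alpha(T)|/\tau = o(1)$.

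Combining these estimates and using $\chi_{\alpha t} \geq \mathbf{1}_{[0,\alpha t]}$,
\[
\limsup_{\tau \to \infty}\frac{1}{\tau}\int_T^\tau\int_0^{\alpha t}(\p_t u(t,r))^2 r^{D-1}\,\ud r\,\ud t \leq C\alpha.
\]
To upgrade the inner integration from $[0,\alpha t]$ to $[0, t/2]$, I invoke Proposition~\ref{prop:stz-global} once more on the residual strip $[\alpha t, t/2]$: for any $\eta>0$ there exists $R$ such that $\limsup_{t\to\infty}\int_{\alpha t}^{t-R}(\ldots)\,r^{D-1}\,\ud r \leq \eta$, and for $t > 2R$ the inclusion $[\alpha t, t/2]\subset[\alpha t, t-R]$ then implies $\int_{\alpha t}^{t/2}(\p_t u)^2 r^{D-1}\,\ud r \to 0$ as $t\to\infty$, so its time average vanishes. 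Adding this to the previous bound (and noting that the $[0,T]$-piece contributes $O(1)/\tau$) yields
\(
\limsup_{\tau\to\infty}\tau^{-1}\int_0^\tau\int_0^{t/2}(\p_t u)^2 r^{D-1}\,\ud r\,\ud t \leq C\alpha,
\)
and sending $\alpha \downarrow 0$ proves the corollary. The argument is essentially routine once Proposition~\ref{prop:stz-global} is in hand; the only conceptual point is that the natural virial bound $|I_\alpha(\tau)|/\tau \leq C\alpha$ reaches only the subcone $r\le \alpha t$, so the non-concentration result is doing double duty -- once to annihilate the virial error, and a second time to cover the gap strip $[\alpha t, t/2]$.
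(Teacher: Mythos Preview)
Your proof is correct and follows essentially the same approach the paper indicates: the virial identity~\eqref{eq:virial} with a self-similar cutoff, combined with Proposition~\ref{prop:stz-global} to kill the error terms $\Om_{1,\rho(t)}, \Om_{2,\rho(t)}$. Your introduction of the auxiliary parameter $\alpha$ with $\rho(t)=\alpha t$ and the limit $\alpha\downarrow 0$ is a clean way to dispose of the boundary contribution $|I_\alpha(\tau)|/\tau\le C\alpha$; the paper simply defers to the argument in Jia--Kenig, which is in the same spirit.
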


\begin{rem} 
The proofs of Proposition~\ref{prop:stz-blowup} and Proposition~\ref{prop:stz-global} in~\cite{CKLS3, JK} are done for the cases $D=4, 6$ and are based on the following reduction, which we generalize to cover all dimensions $D \ge 3$.  Let  
$$
k:= \frac{D-2}{2}.$$ Given $\bs u(t) \in \E$, set, 
\EQ{ \label{eq:psi-def} 
\bs \psi(t, r):= (r^k u(t, r), r^k\p_t u(t, r)). 
}
We see that $\bs u(t)$ solves~\eqref{eq:nlw} if and only if $\bs \psi(t, r)$ solves, 
\EQ{
\p_t^2 \psi - \p_r^2 \psi - \frac{1}{r} \p_r \psi + \frac{ k^2 - \abs{ \psi}^{\frac{2}{k}} }{r^2} \psi  = 0 
}
which bears enough structural similarities to the equivariant wave maps equation that the main elements of the arguments from~\cite{CTZcpam, STZ92} carry over. The key feature for our purposes, is that 
\EQ{
F_k(\psi)&:= \frac{1}{2}\abs{\psi}^2 \Big( k^2 - \frac{k}{k+1} \abs{ \psi}^{\frac{2}{k}} \Big)
}
is positive when $\abs{\psi(t, r)}$ sufficiently small and hence so is the flux density, 
\EQ{
\frac{1}{2} ( \p_t \psi(t, r) - \p_r \psi(t, r))^2 + \frac{F_k(\psi(t, r))}{r^2}  > 0. 
}
Up to changing the values of some constants, the line-by-line arguments in~\cite[Proof of Theorem 2.1]{JK} and~\cite[Proof of Theorem 2.4]{JK} are valid in any dimension $D \ge 4$ with $\bs \psi$ defined as in~\eqref{eq:psi-def}. 
\end{rem} 

\begin{rem} 
The proof of Corollary~\ref{cor:kin-blowup} follows from the virial identity~\eqref{eq:virial} with the cutoff  at $\rho(t) = t/2$ together with Proposition~\ref{prop:stz-blowup}. The exact argument in~\cite[Proof of Lemma 2.2, in particular Eq. (2.66)]{JK} applies in our setting as well. The proof of Corollary~\ref{cor:kin-global} is similar, using now Proposition~\ref{prop:stz-global}, and follows from the exact argument in~\cite[Proof of Lemma 2.4, second displayed equation on page 1552]{JK}. 

\end{rem} 

\subsection{The sequential decomposition}\label{ssec:seq} 

In this section we deduce Theorem~\ref{thm:seq}, the sequential decomposition as a consequence of the Compactness Lemma~\ref{lem:compact} and the collection of results from earlier in this section. 

In the remainder of the paper we unify the blow-up and global-in-time settings by making the following conventions. Consider a finite solution  $\bs u(t) \in \E$ on its maximal time of existence. We assume that either $\bs u(t)$ blows up in backwards time at $T_-=0$ and is defined on an interval $I_*:=(0, T_0]$, or $\bs u(t)$ is global in forward time and defined on the  interval $I_* := [T_0, \infty)$ where in both cases $T_0>0$. We let $T_* := 0$ in the blow-up case and $T_* := \infty$ in the global case. We assume that $\bs u(t)$ exhibits type II behavior in that, 
\EQ{ \label{eq:typeII-2} 
\lim_{t \to T_*} \| \bs u(t) \|_{\E} < \infty. 
}

First we complete the proof of Theorem~\ref{thm:radiation}. 

\begin{proof}[Proof of Theorem~\ref{thm:radiation}]
We let $\bs u^*(t)$ be defined as in Proposition~\ref{prop:rad-blowup} in the case $T_* = 0$ and as in Propostiion~\ref{prop:rad-global} in the case $T_* = \infty$. 
If $T_* = 0$ the conclusions of Theorem~\ref{thm:radiation} are a direct consequence of Proposition~\ref{prop:rad-blowup} and Proposition~\ref{prop:stz-blowup}. If $T_* = +\infty$ we first note that for any $\al \in (0, 1)$, 
 \EQ{
\| \bs u^*(t) \|_{\E(0, \al t)} \le \| \bs u^*(t) - \bs u^*\lin(t) \|_{\E} + \| \bs u^*\lin(t) \|_{\E( 0, \al t)} \to 0 \mas t \to T_*
}
where the vanishing of the last term above is due the asymptotic concentration of free waves near the light cone; see~\cite[Lemma 4.1]{DKM1} for odd $D$ and~\cite[Theorem 5]{CKS} for even $D$. Now apply Proposition~\ref{prop:rad-global} and Proposition~\ref{prop:stz-global}. 
\end{proof}

\begin{proof}[Proof of Theorem~\ref{thm:seq}]
By Corollary~\ref{cor:kin-blowup} if $T_* = 0$ or Corollary~\ref{cor:kin-global} if $T_* = \infty$ we have, 
\EQ{
\lim_{\tau \to T_*} \frac{1}{\tau} \int_0^\tau \int_0^{\frac{t}{2}} (\p_t u(t, r))^2 \, r^{D-1} \, \ud r \, \ud t  = 0.
}
We claim there exists a sequence $\tau_n \to T_*$ such that, 
\EQ{ \label{eq:tau_n} 
\lim_{ n \to \infty} \sup_{0< \sigma < \tau_n} \frac{1}{\sigma} \int_{\tau_n}^{\tau_n + \sigma}\int_0^{\frac{t}{2}} (\p_t u(t, r))^2 \, r^{D-1} \, \ud r \, \ud t  = 0.
}
We show that the above is a consequence of the classical maximal function estimate~\eqref{eq:weak-max}. Indeed, define
\EQ{
\phi(t) =  \int_0^{\frac{t}{2}} (\p_t u(t, r))^2 \, r^{D-1} \, \ud r , \quad \Psi(\tau): = \frac{1}{\tau} \int_0^\tau \phi(t) \, \ud t.
}
Then~\eqref{eq:tau_n} reduces to the following claim: If $\Psi(\tau)  \to 0$ as $\tau \to T_*$, then there exists at least one sequence of times $\tau_n \to T_*$ such that $M\phi(\tau_n) \to 0$ as $n \to \infty$. Now considering intervals $J_n = (0, 1/n]$ if $T_* = 0$ or $J_n = [n/2, n]$ if $T_+ = \infty$ apply the maximal function estimate~\eqref{eq:weak-max} with $\al_n = 6\Psi(n^{-1})$ if $T_* = 0$ or $\al_n = 6\Psi(n)$ if $T_* = \infty$, noting that in both cases $\al_n \to 0$ as $n \to \infty$. 
\EQ{ 
\Big| \{ t \in J_n \, : \, M\phi(t) > \al_n \} \Big| \le \frac{3}{\al_n} \int_{J_n} \phi(t) \, \ud t  \le \frac{1}{2} | J_n| . 
}
This means that $M\phi(t) \le \al_n  \to 0$ for half of the points in $J_n$, from which we select the sequence $\tau_n\to T_*$.  

Next, let $\rho(t)$ be as in Theorem~\ref{thm:radiation}. With $\tau_n$ as in~\eqref{eq:tau_n} we set 
\EQ{
\rho_n := \sup_{t \in [\tau_n, \tau_n + \rho(\tau_n)]}\rho(t) \ll  \tau_n. 
}
It follows from~\eqref{eq:tau_n} that 
\EQ{
\lim_{n \to \infty} \frac{1}{\rho_n} \int_{\tau_n}^{\tau_n + \rho_n} \int_0^{\frac{\tau_n}{2}} (\p_t u(t, r))^2 \, r^{D-1} \, \ud r \, \ud t  = 0.
}
Next defining $
\bs u_n(s, r):= \bs u(\tau_n + s, r)$ and changing variables above we obtain a sequence of solutions $\bs u_n$ defined on intervals $[0, \rho_n]$ such that, 
\EQ{
\lim_{n \to \infty} \frac{1}{\rho_n} \int_{0}^{\rho_n} \int_0^{\rho_n \frac{\tau_n}{2\rho_n}} (\p_s u_n(s, r))^2 \, r^{D-1} \, \ud r \, \ud s  = 0.
}
We can now apply the Compactness Lemma~\ref{lem:compact}  since  $R_n:= \frac{\tau_n}{2 \rho_n} \to  \infty$  as $n \to \infty$. We obtain sequences $s_n \in [0, \rho_n]$ and $1 \ll r_n \ll \frac{\tau_n}{2 \rho_n}$ for which $\bs \delta_{\rho_n r_n}(\bs u_n(s_n)) \to 0$ as $n \to \infty$. Passing back to the original variables we set $t_n = \tau_n + s_n$ and we have, 
$\bs \de_{r_n \rho_n}( \bs u( t_n)) \to 0$ as $n \to \infty$. From~\eqref{eq:delta-def} (and examining the proof of The Compactness Lemma~\ref{lem:compact}, see Remark~\ref{rem:seq}) we obtain an integer $K_0 \ge 0$, and scales $\lam_{n, 1} \ll \lam_{n, 2}  \ll \dots \ll \lam_{n, K_0} \lesssim \rho_n \ll  t_n$, and a vector of signs $\vec \iota \in \{-1, 1\}^{K_0}$  such that 
\EQ{
\| \bs u(t_n) - \bs \calW(\vec \iota, \vec \lam_n) \|_{\E( 0, r_n \rho_n)}  \to 0 \mas n \to \infty 
}
Note that by construction $\rho(t_n) \ll r_n \rho(t_n) \ll  r_n \rho_n \ll t_n$. Thus, from Theorem~\ref{thm:radiation} we have, 
\EQ{
\| \bs u(t_n) - \bs u^*(t_n) \|_{\E( \rho_n)} \to 0 \mand \| \bs u^*(t_n) \|_{\E(0, r_n \rho_n)}  \to 0 \mas n \to \infty
}
Combining the two last displayed equations completes the proof. 
\end{proof}

\section{Decomposition of the solution and collision intervals} \label{sec:decomposition} 

In the final two sections we prove Theorem~\ref{thm:main} for equivariance classes $D \ge 6$. We reserve the cases $D=4, 5$ for the appendix, as these low dimensions require a few technical modifications stemming from the slower decay of $W(r)$ as $r \to \infty$.  

\subsection{Proximity to a multi-bubble and collisions}
\label{ssec:proximity}
For the remainder of the paper we fix a solution
$\bs u(t) \in \E$ of \eqref{eq:nlw},
defined on the time interval $I_*=(0, T_0]$
in the blow-up case and on $I_*=[T_0, \infty)$ in the global case, for some $T_0 > 0$. 
We set $T_* := \infty$ in the global case and $T_* := 0$ in the blow-up case and we assume, 
\EQ{
\lim_{t \to T_*} \| \bs u(t) \|_{\E} < \infty.
}

Let $\bs u^*(t)$ be the radiation as defined Proposition~\ref{prop:rad-blowup} and Proposition~\ref{prop:rad-global}.  We will use crucially the fact that the radiation is given in continuous time. 
Note that combining the results of Proposition~\ref{prop:rad-blowup} and Proposition~\ref{prop:stz-blowup} in the blow-up case and Proposition~\ref{prop:rad-global} together with Proposition~\ref{prop:stz-global} give a function $\rho: I_* \to (0, \infty)$ such that
\EQ{ \label{eq:rho-def} 
&\lim_{t \to T_*} \big((\rho(t) / t)^{\frac{D-2}{2}} + \|\bs u(t) -  \bs u^*(t) \|_{\cE(\rho(t), \infty)}^2 \big) = 0.
}
We also note that
\EQ{ \label{eq:u*-interior} 
\lim_{t \to T_*} \|  \bs u^*(t)\|_{\E( 0, \al t)}  = 0,
}
for any $\al \in (0, 1)$.

By Theorem~\ref{thm:seq} there exists a time sequence $t_n \to T_*$ and an integer $N \ge 0$, which we now fix,  such that $\bs u(t_n) - \bs u^*(t_n)$ approaches an $N$-bubble as $n \to \infty$.  Roughly, our goal is to show that on the region $r \in (0, \rho(t))$, the solution $\bs u(t)$ approaches a continuously modulated $N$-bubble, noting that the radiation $\bs u^*(t)$ is negligible in this region. By convention, we will set $\lambda_{N+1}(t) := t$ to be the ``scale'' of the radiation and $\lambda_0(t) := 0$. Our argument requires the following localized version of the distance function to a multi-bubble.

\begin{defn}[Proximity to a multi-bubble]
\label{def:proximity}
For all $t \in I$, $\rho \in (0, \infty)$, and $K \in \{0, 1, \ldots, N\}$, we define
the \emph{localized multi-bubble proximity function} as
\begin{equation}
\bfd_K(t; \rho) := \inf_{\vec \iota, \vec\lam}\bigg( \| \bs u(t) - \bs u^*(t) - \bs\calW( \vec\iota, \vec\lambda) \|_{\cE(\rho, \infty)}^2 + \sum_{j=K}^{N}\Big(\frac{ \lam_{j}}{\lam_{j+1}}\Big)^{\frac{D-2}{2}} \bigg)^{\frac{1}{2}},
\end{equation}
where $\vec\iota := (\iota_{K+1}, \ldots, \iota_N) \in \{-1, 1\}^{N-K}$, $\vec\lambda := (\lambda_{K+1}, \ldots, \lambda_N) \in (0, \infty)^{N-K}$, $\lambda_K := \rho$ and $\lambda_{N+1} := t$.

The \emph{multi-bubble proximity function} is defined by $\bfd(t) := \bfd_0(t; 0)$.
\end{defn}

\begin{rem} 
We emphasize that if $\bfd_K(t; \rho)$ is small, this means that $ \bs u(t) - \bs u^*(t)$ is close to $N-K$ bubbles in the exterior region  $r \in (\rho, \infty)$. 
\end{rem} 
We can now rephrase Theorem~\ref{thm:seq} in this notation: there exists a monotone sequence $t_n \to T_*$ such that
\begin{equation}
\label{eq:dtn-conv}
\lim_{n \to \infty} \bfd(t_n) = 0.
\end{equation}
Even though this fact is certainly a starting point of our analysis,
it will turn out that we cannot use it as a black box. Rather, we need to examine the proof
and use more precise information provided by the analysis in~\cite{DKM3} (this is done in Section~\ref{sec:compact}). 

We state and prove some simple consequences of the set-up above.
We always assume $N \geq 1$, since the pure radiation case $N = 0$ (in fact,
also the case $N = 1$) is a consequence of the sequential decomposition (as observed by Duyckaerts, Kenig, and Merle in~\cite[Theorem 2, Theorem 5, Corollary 6]{DKM3}). 

Next, a direct consequence of~\eqref{eq:rho-def} is that $\bs u(t)- \bs u^*(t)$ always approaches a $0$-bubble in some exterior region. With $\rho_N(t) = \rho(t)$ given by~\eqref{eq:rho-def} the following lemma is  immediate from the conventions of Definition~\ref{def:proximity} 
\begin{lem}
\label{lem:conv-rhoN}
There exists a function $\rho_N: I \to (0, \infty)$ such that
\begin{equation}
\label{eq:conv-rhoN}
\lim_{t\to T_*}\bfd_N(t; \rho_N(t)) = 0.
\end{equation}
\end{lem}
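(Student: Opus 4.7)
The plan is to simply unfold the definition of $\bfd_N(t;\rho)$ and observe that the statement reduces to the content of \eqref{eq:rho-def}. Indeed, taking $K=N$ in Definition~\ref{def:proximity}, the vectors $\vec\iota \in \{-1,1\}^{N-N}$ and $\vec\lambda \in (0,\infty)^{N-N}$ are empty sequences, so the multi-bubble configuration $\bs\calW(\vec\iota,\vec\lambda)$ is identically zero, the infimum is over an empty set of parameters, and the sum $\sum_{j=K}^{N}(\lambda_j/\lambda_{j+1})^{(D-2)/2}$ collapses to its single term $j=N$. Applying the boundary conventions $\lambda_N=\rho$ and $\lambda_{N+1}=t$ stated in the definition, this gives the explicit formula
\begin{equation}
\bfd_N(t;\rho)^2 \;=\; \|\bs u(t) - \bs u^*(t)\|_{\cE(\rho,\infty)}^{2} \;+\; \Big(\tfrac{\rho}{t}\Big)^{\frac{D-2}{2}}.
\end{equation}

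Thus it suffices to set $\rho_N(t) := \rho(t)$, where $\rho : I_* \to (0,\infty)$ is the function provided by \eqref{eq:rho-def} (itself obtained by combining Proposition~\ref{prop:rad-blowup} with Proposition~\ref{prop:stz-blowup} in the finite-time blow-up case, and Proposition~\ref{prop:rad-global} with Proposition~\ref{prop:stz-global} in the global case). The defining property \eqref{eq:rho-def} then reads exactly
\begin{equation}
\lim_{t\to T_*}\bfd_N(t;\rho_N(t))^2 = \lim_{t\to T_*}\Big(\|\bs u(t) - \bs u^*(t)\|_{\cE(\rho(t),\infty)}^{2} + (\rho(t)/t)^{\frac{D-2}{2}}\Big) = 0,
\end{equation}
which is the desired conclusion. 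There is no real obstacle here; the lemma is a bookkeeping statement isolating the ``trivial'' case $K=N$ (zero exterior bubbles) of the localized proximity function, and its content is a direct reformulation of the radiation identification from Theorem~\ref{thm:radiation}.
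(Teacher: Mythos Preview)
Your proof is correct and takes essentially the same approach as the paper, which simply states that with $\rho_N(t)=\rho(t)$ given by \eqref{eq:rho-def} the lemma is immediate from the conventions of Definition~\ref{def:proximity}. You have merely unpacked those conventions explicitly; there is nothing to add.
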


Theorem~\ref{thm:main} will be a quick consequence of showing that,
\begin{equation}
\label{eq:dt-conv}
\lim_{t \to T_*} \bfd(t) = 0.
\end{equation}
The approach which we adopt in order to prove~\eqref{eq:dt-conv} it is to study colliding bubbles.
A collision is defined as follows.
\begin{defn}[Collision interval]
\label{def:collision}
Let $K \in \{0, 1, \ldots, N\}$. A compact time interval $[a, b] \subset I_*$ is a \emph{collision interval}
with parameters $0 < \epsilon < \eta$ and $N - K$ exterior bubbles if
\begin{itemize}
\item $\bfd(a) \leq \epsilon$ and $\bfd(b) \leq \epsilon$,
\item there exists $c \in (a, b)$ such that $\bfd(c) \geq \eta$,
\item there exists a function $\rho_K: [a, b] \to (0, \infty)$ such that $\bfd_K(t; \rho_K(t)) \leq \epsilon$
for all $t \in [a, b]$.
\end{itemize}
In this case, we write $[a, b] \in \calC_K(\epsilon, \eta)$.
\end{defn}
\begin{defn}[Choice of $K$]
\label{def:K-choice}
We define $K$ as the \emph{smallest} nonnegative integer having the following property.
There exist $\eta > 0$, a decreasing sequence $\epsilon_n \to 0$
and sequences $(a_n), (b_n)$ such that $[a_n, b_n] \in \calC_K(\epsilon_n, \eta)$ for all $n \in \{1, 2, \ldots\}$.
\end{defn}
\begin{lem}[Existence of $K \ge 1$]
\label{lem:K-exist}
If \eqref{eq:dt-conv} is false, then $K$ is well defined and $K \in \{1, \ldots, N\}$.
\end{lem}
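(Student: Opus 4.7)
\emph{Constructing collision intervals and $K \leq N$.} The failure of \eqref{eq:dt-conv} provides $\eta > 0$ and a sequence $c_n \to T_*$ with $\bfd(c_n) \geq \eta$, while \eqref{eq:dtn-conv} (Theorem~\ref{thm:seq}) gives a monotone sequence $t_n \to T_*$ with $\bfd(t_n) \to 0$. I would sandwich each $c_n$ between two terms of $(t_n)$, producing $a_n < c_n < b_n$ with $a_n, b_n \to T_*$ and $\epsilon_n := \max\{\bfd(a_n), \bfd(b_n)\} \to 0$. Lemma~\ref{lem:conv-rhoN} supplies $\rho_N$ with $\bfd_N(t; \rho_N(t)) \leq \epsilon_n$ on $[a_n, b_n]$ for $n$ large (after possibly enlarging $\epsilon_n$ to absorb the residual convergence). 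Hence $[a_n, b_n] \in \calC_N(\epsilon_n, \eta)$, so $K$ is well-defined and $K \leq N$.

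\emph{Reduction when $K = 0$.} I argue by contradiction, assuming $K = 0$. Then there are $[a_n, b_n] \in \calC_0(\epsilon_n, \eta)$ with an associated function $\rho_0(t)$ verifying $\bfd_0(t; \rho_0(t)) \leq \epsilon_n$ throughout, together with $c_n \in (a_n, b_n)$ satisfying $\bfd(c_n) \geq \eta$. Picking a near-optimum configuration $(\vec\iota^*, \vec\lambda^*)$ in $\bfd_0(c_n; \rho_0(c_n))$, the ratio constraint forces $\rho_0(c_n) \ll \lambda^*_1(c_n) \ll \cdots \ll \lambda^*_N(c_n) \ll c_n$. Using this same configuration as a competitor in $\bfd(c_n) = \bfd_0(c_n; 0)$ yields
\begin{equation*}
\bfd(c_n)^2 \leq \|\bs u(c_n) - \bs u^*(c_n) - \bs\calW(\vec\iota^*, \vec\lambda^*)\|_{\cE(0, \rho_0(c_n))}^2 + 2\epsilon_n^2.
\end{equation*}
A triangle inequality splits the interior norm into bubble, radiation, and solution pieces. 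The bubble piece is $\lesssim (\rho_0(c_n)/\lambda^*_1(c_n))^{(D-2)/2} \leq \epsilon_n$ thanks to the near-origin asymptotics of $W$ and the scale separation, while the radiation piece vanishes by \eqref{eq:u*-interior} since $\rho_0(c_n) \ll c_n$. The contradiction therefore reduces to showing $\|\bs u(c_n)\|_{\cE(0, \rho_0(c_n))} \to 0$ as $n \to \infty$.

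\emph{Interior smallness, the main obstacle.} This is the heart of the argument, and my plan relies on conservation of energy combined with a continuity/bootstrap argument. At $t = a_n$, the bound $\bfd(a_n) \leq \epsilon_n$ together with Lemma~\ref{lem:M-bub-energy} and the quasi-orthogonality of $\bs u^*(a_n)$ (concentrated near the light cone at scale $\sim a_n$) with the bubbles (at scales $\ll a_n$) yields $E(\bs u(a_n)) = E(\bs u^*(a_n)) + N E(\bs W) + o_n(1)$. The same expansion applied to the exterior decomposition at $t = c_n$, combined with $E(\bs u^*(c_n)) = E(\bs u^*(a_n))$ and \eqref{eq:u*-interior}, gives $E(\bs u(c_n); \rho_0(c_n), \infty) = E(\bs u^*(a_n)) + N E(\bs W) + o_n(1)$. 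Subtracting from the conserved total $E(\bs u(c_n)) = E(\bs u(a_n))$ and using additivity of the localized nonlinear energy yields $E(\bs u(c_n); 0, \rho_0(c_n)) = o_n(1)$. To upgrade this to smallness of the $\cE$-norm, define $t^*_n \in [a_n, c_n]$ as the supremum of times where $\|\bs u(s)\|_{\cE(0, \rho_0(s))}$ stays below a small absolute threshold; starting from $\|\bs u(a_n)\|_{\cE(0, \rho_0(a_n))} = o_n(1)$ (by the same reasoning as at $c_n$), a localized Sobolev/coercivity estimate of the form $\|\bs u(s)\|_{\cE(0, \rho_0(s))}^2 \leq C \, E(\bs u(s); 0, \rho_0(s))$ on this set and the continuity of $s \mapsto \|\bs u(s)\|_{\cE(0, \rho_0(s))}$ force $t^*_n = c_n$. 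Combining all pieces, $\bfd(c_n) = o_n(1)$, contradicting $\bfd(c_n) \geq \eta$ and completing the proof.
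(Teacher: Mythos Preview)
Your construction of collision intervals with $K \leq N$ and the energy computation showing $E(\bs u(c_n); 0, \rho_0(c_n)) = o_n(1)$ are correct and match the paper's argument.

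The gap is in upgrading this to $\cE$-smallness via the proposed bootstrap. Two issues are not addressed. First, the continuity of $s \mapsto \|\bs u(s)\|_{\cE(0, \rho_0(s))}$ requires $\rho_0$ to be continuous, but Definition~\ref{def:collision} imposes no regularity whatsoever on $\rho_K$. Second, the claimed localized coercivity $\|\bs u(s)\|_{\cE(0,\rho_0(s))}^2 \leq C\, E(\bs u(s); 0, \rho_0(s))$ is delicate: the localized $\cE$-norm contains $\int_0^R u^2/r^2$, and Hardy on $(0,R)$ produces a boundary contribution of order $|u(s,R)|^2 R^{D-2}$ that does not appear in the localized energy. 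One can control this boundary term using the exterior decomposition, and one can presumably regularize $\rho_0$, but neither step is in your proposal.

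The paper sidesteps any propagation entirely. The key observation you are missing is that, without loss of generality, one can take $c_n$ to satisfy a \emph{two-sided} bound $\eta \leq \bfd(c_n) \leq 2\eta$, with $\eta$ small relative to $\|\bs W\|_{\cE}$. The upper bound produces a global near-$N$-bubble decomposition of $\bs u(c_n) - \bs u^*(c_n)$ with scales $\vec\mu_n$ and error $\simeq \eta$. Lemma~\ref{lem:bub-config}, applied to compare $\vec\mu_n$ with the exterior scales $\vec\lambda_n$ coming from $\bfd_0(c_n;\rho_n) \leq \epsilon_n$, forces $\rho_n \ll \mu_{n,1}$; consequently the bubbles and the radiation are negligible on $(0,\rho_n)$, so $\|\bs v_n\|_{\cE} \simeq \eta$ where $\bs v_n := \chi_{\rho_n/2}\bs u(c_n)$. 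Combined with $E(\bs v_n) = o_n(1)$ and the \emph{global} critical Sobolev inequality (no boundary terms), this gives the contradiction at the single time $c_n$.
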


\begin{rem} 
The fact that $K \ge 1$ means that at least one bubble must lose its shape if~\eqref{eq:dt-conv} is false.
\end{rem} 
\begin{proof}[Proof of Lemma~\ref{lem:K-exist}]
Assume \eqref{eq:dt-conv} does not hold, so that there exist $\eta > 0$ and a monotone sequence $s_n \to T_*$ such that
\begin{equation}
\bfd(s_n) \geq \eta, \qquad\text{for all }n.
\end{equation}
We claim that there exist sequences $(\epsilon_n), (a_n), (b_n)$ such that $[a_n, b_n] \in \calC_N(\epsilon_n, \eta)$.
Indeed, \eqref{eq:dtn-conv} implies that there exist $\epsilon_n \to 0$, $a_n \leq s_n$ and $b_n \geq s_n$
such that $\bfd(a_n) \leq \epsilon_n$ and $\bfd(b_n) \leq \epsilon_n$. Note that $a_n \to T_*$ and $b_n \to T_*$.
Let $\rho_N: [a_n, b_n] \to (0, \infty)$ be the function given by Lemma~\ref{lem:conv-rhoN},
restricted to the time interval $[a_n, b_n]$.
Then \eqref{eq:conv-rhoN} yields
\begin{equation}
\lim_{n\to\infty}\sup_{t\in[a_n, b_n]}\bfd_N(t; \rho_N(t)) = 0.
\end{equation}
Upon adjusting the sequence $\epsilon_n$, we obtain that all the requirements of Definition~\ref{def:collision}
are satisfied for $K = N$.

We now prove that $K \geq 1$. Suppose $K = 0$. The definition of a collision interval yields
$\bfd_0(c_n;  \rho_n) \leq \epsilon_n$ for some sequence $\rho_n \ge 0$, and at the same time $\bfd(c_n) \geq \eta$ for some $\eta>0$. Without loss of generality we may assume that $c_n$ is a time at which $\eta \le \bfd(c_n)  \le 2 \eta$  for each $n$, and we may assume further that $\eta>0$ is small relative to $\| \bs W \|_{\E}$. 
We show that this is impossible. 

First, by Theorem~\ref{thm:seq} we know that 
\EQ{\label{eq:en-id} 
E(\bs u) = N E( \bs W) + E( \bs u^*).
}
On the other hand, since $\bs d_0(c_n, \rho_n) \le \eps_n$ we can find parameters, $\rho_n \ll  \lam_{n, 1} \ll \dots \ll \lam_{n, N} \ll  \rho(c_n) \ll c_n$ and signs $\vec{\iota}_n$ such that 
\EQ{ \label{eq:tig-small} 
 \| \bs u(c_n) - \bs u^*(c_n) - \bs \calW( \vec{ \iota}_n, \vec{ \lam}_n) \|_{\E( \rho_n, \infty)}^2 + \sum_{j =0}^N\Big( \frac{  \lam_{n, j}}{\lam_{n, j+1}}\Big)^{\frac{D-2}{2}} \lesssim \eps_n^2. 
}
Using the above along with~\eqref{eq:u*-interior}, Lemma~\ref{lem:M-bub-energy}, and the asymptotic orthogonality of the various parameters we have, 
\EQ{
E( \bs u(c_n); \rho_n, \infty) = N E( \bs W) + E( \bs u^*) + o_n (1) \mas n \to \infty
}
Using the above along with~\eqref{eq:en-id} we conclude that,
\EQ{
E( \bs u(c_n);0,  \rho_n) = o_n (1)  \mas n \to \infty
}
Now, let $\bs v_n:= \bs u(c_n) \chi_{\frac{1}{2} \rho_n}$. We have shown that $E( \bs v_n) = o_n(1)$. We claim that we must have $ \| \bs v_n \|_{\E} \simeq \eta$, which would give a contradiction with the critical Sobolev inequality, since $\eta>0$ can be chosen small. To prove the claim, find parameters $\vec \s_n, \vec \mu_n$ such that 
\EQ{
\eta \simeq \bfd(c_n) \simeq  \Big(\| \bs u(c_n) - \bs u^*(c_n) - \bs \calW( \vec \s_n, \vec \mu_n) \|_{\E}^2 + \sum_{j=0}^N \Big( \frac{  \mu_{n, j}}{\mu_{n, j+1}}\Big)^{\frac{D-2}{2}} \Big)^{\frac{1}{2}}
} 
An application of Lemma~\ref{lem:bub-config} (taking $\eta>0$ smaller if needed) together with the above and~\eqref{eq:tig-small} yields that $\vec \s_n = \vec \iota_n$ and moreover that $\rho_n \ll \mu_{n, 1} \ll \dots \mu_{n, N} \ll \rho(c_n) \ll c_n$. In fact, we have $| \lam_{n, j}/ \mu_{n, j} - 1| \lesssim \theta(\eta)$ for $\theta(\eta)$ as in Lemma~\ref{lem:bub-config}  and thus by~\eqref{eq:tig-small} we have,  
\EQ{
\| \bs u(c_n) - \bs u^*(c_n) - \bs \calW( \vec{ \s}_n, \vec{ \mu}_n) \|_{\E( \rho_n, \infty)}^2 + \sum_{j =0}^N\Big( \frac{  \mu_{n, j}}{\mu_{n, j+1}}\Big)^{\frac{D-2}{2}} = o_n(1) 
}
Since $\rho_n \ll \mu_{n, 1}$ and $\mu_{n, N} \ll \rho(c_n)$ we also have, 
\EQ{
\| \bs u^*(c_n) + \bs \calW( \vec{ \iota}_n, \vec{ \mu}_n) \|_{\E(0, \rho_n)} = o_n(1), 
}
From the previous three displayed equations we can conclude that $ \| \bs v_n \|_{\E} \simeq \eta$, proving the claim, and establishing the contradiction. 
\end{proof}

In the remaining part of the paper, we argue by contradiction, fixing $K$ to be the number provided by Lemma~\ref{lem:K-exist}.
We also let $\eta, \epsilon_n, a_n$ and $b_n$ be some choice of objects satisfying the requirements of Definition~\ref{def:K-choice}.
We fix choices of signs and scales for the $N-K$ ``exterior'' bubbles provided by Definition~\ref{def:proximity} in the following lemma.  

\begin{rem} \label{rem:collision} 
For each collision interval there exists a time $c_n \in [a_n, b_n]$ with $\bfd(c_n) \ge \eta$ and we may assume without loss of generality that $\bfd(a_n) = \bfd(b_n) = \eps_n$ and $\bfd(t) \ge \eps_n$ for each $t \in [a_n, b_n]$. Indeed, given some initial choice of $[a_n, b_n] \in \calC_K(\eta, \eps_n)$, we can find $a_n \le \ti a_n < c_n$ and $c_n < \ti b_n \le b_n$ so that $\bfd(a_n) = \bfd(b_n) = \eps_n$ and $\bfd(t) \ge \eps_n$ for each $t \in [\ti a_n, \ti b_n]$.  Just set $a_n \le \ti a_n := \inf\{t \le c_n \mid \bfd(t) \ge \eps_n \}$ and similarly for $\ti b_n$. 

Similarly, give some initial choice $\eps_n \to 0, \eta>0$ and intervals $[a_n, b_n] \in \calC_K( \eta, \eps_n)$ we are free to ``enlarge'' $\eps_n$ by choosing some other sequence $\eps_n \le \ti \eps_n  \to 0$, and new collision subintervals $[\ti a_n, \ti b_n]  \subset [a_n, b_n] \cap \calC_{K}(\eta, \ti \eps_n)$ as in the previous paragraph. We will enlarge our initial choice of $\eps_n$ in this fashion several times over the course of the proof. 
\end{rem} 




\begin{lem}\label{lem:ext-sign} 
Let $K \ge 1$ be the number given by Lemma~\ref{lem:K-exist}, and let $\eta, \epsilon_n, a_n$ and $b_n$ be some choice of objects satisfying the requirements of Definition~\ref{def:K-choice}. Then there exists a sequence $\vec\sigma_n \in \{-1, 1\}^{N-K}$,  a  function $\vec \mu = (\mu_{K+1}, \dots, \mu_N)  \in C^1(\cup_{n \in \N} [a_n, b_n] ;  (0, \infty)^{N-K})$, a sequence $\nu_n \to 0$, and a sequence $m_n \in \Z$, so that defining the function, 
\EQ{ \label{eq:nu-def} 
\nu:\cup_{n \in \N} [a_n, b_n] \to (0, \infty), \quad  \nu(t):= \nu_n \mu_{K+1}(t),  
}
we have, 
\EQ{ \label{eq:nu-prop} 
\lim_{n \to \infty}\sup_{t \in [a_n, b_n]} \Big(\bfd_K(t; \nu(t)) + \| \bs u(t)\|_{\E( \nu(t) \le r \le  2 \nu(t))} \Big) = 0, 
}
and defining  $\bs w(t), \bs h(t)$ for  $t \in \cup_n [a_n, b_n]$ by 
\EQ{ \label{eq:w_n} 
\bs w(t)= (1 - \chi_{\nu(t)})( \bs u(t) - \bs u^*(t))   &=   \sum_{j = K+1}^N  \s_{n, j} \bs W_{\mu_{ j}(t)}  + \bs h(t), 
}
we have, $\bs w(t) ,\bs h(t) \in \E$, and 
\EQ{ \label{eq:mu_n} 
\lim_{n \to \infty} \sup_{t \in [a_n, b_n]} \Big(\| \bs h(t) \|_{\E}^2 + \Big(\frac{\nu(t)}{ \mu_{K+1}(t)} \Big)^{\frac{D-2}{2}} +  \sum_{j = K+1}^{N} \Big( \frac{\mu_{ j}(t)}{\mu_{j+1}(t)} \Big)^{\frac{D-2}{2}} \Big) = 0,   
}
with the convention that $\mu_{N+1}(t) = t$. Finally, $\nu(t)$ satisfies the estimate, 
\EQ{ \label{eq:nu'} 
\lim_{n \to \infty} \sup_{t \in [a_n, b_n]}\abs{\nu'(t) } = 0.
}

\end{lem}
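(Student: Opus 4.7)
The plan is to construct, on each collision interval $[a_n, b_n]$, continuous modulation parameters $\vec\mu(t) = (\mu_{K+1}(t),\ldots,\mu_N(t))$ and a constant sign vector $\vec\sigma_n$ for the $N-K$ ``exterior'' bubbles via the static modulation Lemma~\ref{lem:mod-static}; then to select the intermediate cutoff scale $\nu(t) = \nu_n\mu_{K+1}(t)$ by a diagonal extraction on dyadic annuli using Lemma~\ref{lem:sequences}; and finally to verify $\nu'(t)\to 0$ from the ODE satisfied by the modulation parameters.

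\textbf{Exterior modulation and constant signs.} Fix a smooth radial cutoff $\phi$ with $\phi(r)=0$ for $r\le 1/2$ and $\phi(r)=1$ for $r\ge 1$, and define $\bs v(t) := \phi(\cdot/\rho_K(t))(\bs u(t)-\bs u^*(t))$. The condition $\bfd_K(t;\rho_K(t))\le\epsilon_n$ yields signs $\vec\iota(t)$ and scales $\vec\lambda(t)$ with $\rho_K(t)\ll\lambda_{K+1}(t)\ll\cdots\ll\lambda_N(t)$ realizing (up to a constant) the infimum in Definition~\ref{def:proximity}. Multiplying by the cutoff changes the distance to these bubbles by only $o(1)$ in $\cE$, and the energy of $\bs v(t)$ is within $o(1)$ of $(N-K)E(\bs W)$ (using Theorem~\ref{thm:seq} together with the decomposition of $\bs u-\bs u^*$ at nearby sequential-resolution times). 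Thus both hypotheses of Lemma~\ref{lem:mod-static} hold with $M=N-K$, producing unique $\vec\sigma(t)\in\{-1,1\}^{N-K}$, $\vec\mu(t)\in(0,\infty)^{N-K}$, and an error $\bs h(t)$ satisfying orthogonality and the bound $\|\bs h(t)\|_{\cE}^2 + \sum_j (\mu_j(t)/\mu_{j+1}(t))^{(D-2)/2} \le C\epsilon_n^2$. Continuity (indeed $C^1$ regularity) of $\vec\mu(t)$ follows from the implicit function theorem variant of Remark~\ref{rem:IFT}. Connectedness of $[a_n,b_n]$, discreteness of $\{-1,1\}^{N-K}$, and the uniqueness clause of Lemma~\ref{lem:mod-static} force $\vec\sigma(t)$ to be constant on $[a_n,b_n]$; we denote this common value by $\vec\sigma_n$.

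\textbf{Pigeonhole choice of $\nu_n$ and derivative bound.} For each fixed $s>0$, the model bubble satisfies $\|\bs W_\mu\|_{\cE(s\mu,2s\mu)}\le\omega(s)$ uniformly in $\mu$, with $\omega(s)\to 0$ as $s\to 0$. Combining this with the vanishing of $\|\bs h(t)\|_{\cE}$, the vanishing of $\|\bs u^*(t)\|_{\cE(0,t/2)}$ from \eqref{eq:u*-interior}, and the growth $\mu_j(t)/\mu_{K+1}(t)\to\infty$ for $j>K+1$, one obtains, for every fixed $s>0$,
\EQ{
\limsup_{n\to\infty}\sup_{t\in[a_n,b_n]}\|\bs u(t)\|_{\cE(s\mu_{K+1}(t),2s\mu_{K+1}(t))} \le \omega'(s),
}
with $\omega'(s)\to 0$. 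Applying Lemma~\ref{lem:sequences} to the doubly-indexed sequence in $(s_k,n)$ for $s_k\to 0$ extracts $\nu_n\to 0$ such that the above norm vanishes uniformly in $t\in[a_n,b_n]$; we can simultaneously arrange $\nu_n\gg\rho_K(t)/\mu_{K+1}(t)$ since the latter ratio tends to $0$ uniformly by Step~1. Setting $\nu(t):=\nu_n\mu_{K+1}(t)$, the bound $\bfd_K(t;\nu(t))\to 0$ follows by using $(\vec\sigma_n,\vec\mu(t))$ as candidate parameters in Definition~\ref{def:proximity}, noting that the new scale ratio $(\nu(t)/\mu_{K+1}(t))^{(D-2)/2}=\nu_n^{(D-2)/2}\to 0$. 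For the derivative: differentiating the orthogonality conditions in $t$ and using the evolution equation \eqref{eq:nlw} gives a linear system for $\vec\mu'(t)$ whose coefficient matrix is uniformly invertible (dominated by $\langle\ULam\calZ_{\U{\mu_j}}\mid\Lambda W_{\mu_j}\rangle$) and whose right-hand side is uniformly bounded; standard modulation estimates yield $|\mu_{K+1}'(t)|\le C$, so $|\nu'(t)|\le C\nu_n\to 0$.

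\textbf{Main obstacle.} The principal technical hurdle is the diagonal extraction: one needs a \emph{single} sequence $\nu_n\to 0$ producing uniform vanishing of the annular energy across the entire time interval $[a_n,b_n]$, while simultaneously satisfying $\nu(t)\gg\rho_K(t)$, despite the time-varying reference scale $\mu_{K+1}(t)$. Lemma~\ref{lem:sequences} is tailor-made for this purpose, but care is required so that the rate at which $\nu_n\to 0$ is slow enough to keep $\nu(t)$ above $\rho_K(t)$ uniformly on the entire interval; one exploits the quantitative bound $\rho_K(t)/\mu_{K+1}(t)\le C\epsilon_n^{2/(D-2)}$ coming from Step~1 to arrange this compatibly with the pigeonhole selection.
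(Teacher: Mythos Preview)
Your overall strategy matches the paper's, but there is a genuine gap in the $C^1$ regularity step. You apply Lemma~\ref{lem:mod-static} to $\bs v(t) := \phi(\cdot/\rho_K(t))(\bs u(t)-\bs u^*(t))$ and then assert that $\vec\mu \in C^1$ follows from Remark~\ref{rem:IFT} together with differentiating the orthogonality conditions. But Remark~\ref{rem:IFT} only gives continuity, and differentiating the orthogonality conditions for $\bs v(t)$ produces a term $\rho_K'(t)$ coming from the time derivative of the cutoff. The function $\rho_K$ supplied by Definition~\ref{def:collision} is merely \emph{some} function with $\bfd_K(t;\rho_K(t)) \le \epsilon_n$; it carries no regularity whatsoever, so your ODE system for $\vec\mu'$ is not well defined. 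Relatedly, the lemma asks for $\bs w(t) = (1-\chi_{\nu(t)})(\bs u - \bs u^*)$ with $\nu(t) = \nu_n\mu_{K+1}(t)$, which is circular: the cutoff depends on $\mu_{K+1}$, which is defined through the cutoff.

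The paper resolves both issues simultaneously by a two-pass construction. First it extracts rough scales $\tilde\mu_j(t)$ from $\bfd_K(t;\rho_K(t))$ and uses them only to choose $\nu_n$ and to provide a base point for the IFT. Then it sets up the IFT so that the cutoff scale $\nu_n\mu_{K+1}$ is \emph{part of the unknown}: one defines $\bs w(t,\vec\mu) := (1-\chi_{\nu_n\mu_{K+1}})(\bs u(t)-\bs u^*(t))$ and solves $\langle \calZ_{\U{\mu_j}} \mid w(t,\vec\mu) - \calW(\vec\sigma_n,\vec\mu)\rangle = 0$ for $\vec\mu$ near $\tilde\mu$. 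Differentiating these orthogonality conditions now yields a diagonally dominant first-order system for $\vec\mu'$ in which the cutoff contributes a term $\frac{\mu_{K+1}'}{\mu_{K+1}}\langle\cdots\rangle$ rather than $\rho_K'$, so the system is self-contained and has coefficients depending only on $\bs u, \bs u^*, \vec\mu$. Local ODE existence plus the uniqueness in the IFT then give $\vec\mu \in C^1$, and inverting the system yields $|\mu_j'| \lesssim \|\dot h\|_{L^2} \to 0$, hence $|\nu'| = \nu_n|\mu_{K+1}'| \to 0$. Your ``main obstacle'' (the diagonal extraction of $\nu_n$) is in fact routine; the real subtlety is this circular dependence of the cutoff on the modulation parameter it is meant to isolate.
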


\begin{rem} 
One should think of $\nu(t)$ as the scale that separates the $N-K$ ``exterior'' bubbles, which are defined continuously on the union of the collision intervals $[a_n, b_n]$  from the $K$ ``interior'' bubbles that are coherent at the endpoints of $[a_n, b_n]$, but come into collision somewhere inside the interval and lose their shape. In the case $K =N$, there are no exterior bubbles,  $\mu_{K+1}(t) = t$ and $\nu_n \to 0$ is chosen using~\eqref{eq:rho-def}. 
\end{rem}

\begin{proof}
By Definition~\ref{def:proximity} for each $n$ we can find scales $\rho_K(t) \ll  \ti \mu_{K+1}(t)  \ll \dots \ll \ti  \mu_{N}(t) \ll t $ and signs $\vec \s(t) \in \{-1, 1\}^{N-k}$  for $t \in [a_n, b_n]$, such that defining $\bs {h}_{\rho_K}(t)$ for $r \in ( \rho_K(t), \infty)$ by 
\EQ{
\bs u(t) - \bs u^*(t) = \bs \calW ( \vec \s(t), \vec{\ti  \mu}(t)) + \bs{h}_{\rho_K}(t) 
}
we have, 
\EQ{ \label{eq:timu-small} 
\bfd(t; \rho_K(t)) \simeq \| \bs{ h}_{\rho_K}(t) \|_{\E( \rho_K(t), \infty)}^2 + \sum_{j =K}^N \Big( \frac{\ti \mu_{j}(t)}{ \ti \mu_{j+1}(t)} \Big)^{\frac{D-2}{2}}  \lesssim \eps_n^2 , 
}
keeping the convention $\ti \mu_K(t) := \rho_K(t), \ti \mu_{N+1}(t) := t$.   
Using $\lim_{n \to \infty} \sup_{t \in [a_n, b_n]}\bfd_K( t; \rho_K(t)) = 0$ and the fact that 
\EQ{ \label{eq:ext-bubble-en} 
\lim_{n \to \infty} \sup_{t \in [a_n, b_n]} \| \bs \calW (\vec \s(t), \vec{ \ti \mu}(t)) \|_{\E(\al_n \ti \mu_{K+1}(t) \le r \le  \be_n \ti \mu_{K+1}(t))} = 0, 
}
for any two sequence $\al_n \ll \be_n \ll 1$, 
we can choose a sequence $\nu_n \to 0$ with 
\EQ{\label{eq:timu-vanish} 
\rho_{K}(t) \le \nu_n \ti \mu_{K+1}(t), \mand  \lim_{n \to \infty}\sup_{t \in [a_n, b_n]} \| \bs u(t) - \bs u^*(t)\|_{\E( \frac{1}{4}\nu_n \ti \mu_{K+1}(t) \le  4 \nu_n \ti   \mu_{K+1}(t))} =  0, 
}
and define $\ti \nu(t)= \nu_n \ti \mu_{K+1}(t)$. 
Thus, defining $\bs {\ti w}(t) , \bs{\ti h}(t) \in \E$  for $t \in \cup_n [a_n, b_n]$, by   
\EQ{ \label{eq:tiw-def} 
\bs{\ti w}(t)&:= (1 - \chi_{\ti \nu(t)})( \bs u(t) - \bs u^*(t))   =   \sum_{j = K+1}^N  \s_{ j}(t) \bs W_{\ti \mu_{ j}(t)}  + \bs{\ti h}(t)  
}
we have using~\eqref{eq:timu-small}, 
\EQ{\label{eq:tih-eps} 
\sup_{t \in [a_n,  b_n]} \Big(\| \bs{\ti  h}(t)\|_{\E}^2 + \sum_{j =K}^N \Big( \frac{\ti \mu_{j}(t)}{ \ti \mu_{j+1}(t)} \Big)^k \Big)\le \theta_n^2. 
}
for some sequence $\te_n \to 0$.  
We invoke Lemma~\ref{lem:bub-config} and continuity of the flow to conclude that for each $n$, the sign vector $\vec \s(t) = \vec \s_n$ is independent of $t \in [a_n, b_n]$,   and the functions $\ti \mu_{K+1}(t), \dots,\ti  \mu_{N}(t)$ can be adjusted to be continuous functions of $t$. However, in the next sections we require differentiability of the function $\ti \mu_{K+1}(t)$, so we must modify it slightly. 

Given a vector $\vec \mu(t) = (\mu_{K+1}(t), \dots \mu_N(t))$, set, 
\EQ{
\bs w( t, \vec \mu(t)) := (1- \chi_{\nu_n\mu_{K+1}(t)})( \bs u(t) - \bs u^*(t)) 
}
Fixing $t$ and  suppressing it in the notation, and setting up for an argument as in the proof of Lemma~\ref{lem:mod-static}, define 
\EQ{
F(h, \vec \mu) := h  - ( w( \cdot,  \vec{\ti \mu}) -  \calW( \vec \s_n, \vec{\ti \mu}) ) + w( \cdot, \vec \mu) - \calW( \vec \s_n, \vec \mu) 
}
and note that $F(0, \vec{\ti \mu}) = 0$. Moreover, 
\EQ{
\| F(h, \vec \mu)\|_H \lesssim \| h \|_{H} + \sum_{j= K+1}^N \abs{ \frac{\mu_j}{\ti \mu_j} - 1} 
}
Define, 
\EQ{
G( h, \vec \mu) := \Big( \frac{1}{\mu_{K+1}} \ang{ \calZ_{\U{\mu_{K+1}}} \mid F(h, \vec \mu)}, \dots , \frac{1}{\mu_N} \ang{ \calZ_{\U{\mu_N}} \mid F(h, \vec \mu)} \Big)
}
and thus $G(0, \vec{\ti \mu}) = (0, \dots, 0)$. Following the same scheme as the proof of Lemma~\ref{lem:mod-static} we obtain via Remark~\ref{rem:IFT} a mapping  $\varsigma: B_H(0; C_0 \te_n) \to (0, \infty)^{N-K}$ such that for each $h \in B_H(0; C_0 \te_n)$ we have 
\EQ{
\abs{ \varsigma_j(h)/ \ti \mu_j - 1} \lesssim \te_n
}
and such that
\EQ{
G( h, \vec \mu) = 0 \Longleftrightarrow \vec \mu = \varsigma(h)
}
Using~\eqref{eq:tih-eps} we define
\EQ{
h:= F(\ti h, \varsigma(\ti h)), \quad \vec \mu:= \varsigma(\ti h)
}
By construction we then have, 
\EQ{
\bs w(t, \vec \mu(t)) &= (1- \chi_{\nu(t)})( \bs u(t) - \bs u^*(t))   = \bs \calW(  \vec \s_n, \vec \mu(t)) + \bs h(t) 
}
for $\nu(t):= \nu_n \mu_{K+1}(t)$, 
and for each $j = K+1, \dots, N$, 
\EQ{ \label{eq:h-small} 
\sup_{t \in [a_n, b_n]} \Big(\| \bs h(t) \|_{\E}^2 + \sum_{j=K}^N \Big( \frac{\mu_{j}(t)}{\mu_{j+1}(t)} \Big)^k\Big) \lesssim  \te_n^2 , \quad 
0 = \ang{ \calZ_{\U{\mu_j(t)}} \mid h(t)} 
}
Note that~\eqref{eq:nu-prop} follows from the above and from~\eqref{eq:rho-def}. 
The point is that we can now use orthogonality conditions above to deduce the differentiability of $\mu(t)$.  Indeed, noting the identity, 
\EQ{
\p_t h(t) &= \p_t w(t, \vec \mu(t)) - \p_t \calW( \vec \s_n, \vec \mu(t)) \\
& = \frac{\mu_{K+1}'(t)}{\mu_{K+1}(t)}  \Lam \chi(\cdot/ \nu(t)) \big( u(t) - u^*(t)) + \dot h(t) + \sum_{j=K+1}^N \s_{n, j} \mu_j'(t) \Lam W_{\U{\mu_j(t)}} , 
}
differentiation of the $j$th orthogonality condition for $h(t)$ gives for each $j = K+1, \dots, N$
\EQ{ \label{eq:mu-sys} 
&\s_{n, j} \mu_j'(t) \ang{ \calZ \mid \Lam W} + \sum_{i \neq j} \s_{n, i} \mu_i'(t) \ang{ \calZ_{\U{\mu_j(t)}} \mid \Lam Q_{\U {\mu_i(t)}}}  \\
&+  \frac{\mu_{K+1}'(t)}{\mu_{K+1}(t)} \ang{ \calZ_{\U{ \mu_j(t)}}\mid \Lam \chi(\cdot/\nu(t)) \big( u(t) - u^*(t)\big)} - \mu_j'(t) \ang{ [r \Lam \calZ]_{\U{\mu_j(t)}} \mid r^{-1} h} \\
&\quad = - \ang{ \calZ_{\U {\mu_j(t)}} \mid \dot h(t)}, 
}
which, using~\eqref{eq:timu-vanish} and ~\eqref{eq:h-small}, is a diagonally dominant first order differential system for $\vec \mu(t)$. Fix any $t_0 \in \cup_n [a_n, b_n]$ so that~\eqref{eq:h-small} holds at  the initial data $\vec \mu(t_0)$. The existence and uniqueness theorem  gives a unique solution $\vec \mu_{\textrm{ode}} \in C^1(J)$ for $J \ni t_0$ a sufficiently small neighborhood. As the scales were uniquely defined using the implicit function theorem at each fixed $t$ and the solution of the ODE preserves the orthogonality conditions, we must have $\vec \mu(t) = \vec \mu_{\textrm{ode}}(t)$ must agree.  Hence $\vec \mu(t) \in C^1$. 
Finally, inverting~\eqref{eq:mu-sys} we obtain the estimates, 
\EQ{
\abs{ \mu_j'(t)} \lesssim \| \dot h \|_{L^2} \lesssim \te_n
}
Using the above with $j = K+1$ yields~\eqref{eq:nu'}. 
This completes the proof. 
\end{proof}

We require a few additional facts related to the scale $\nu(t)$. 
Observe that if $t_n \in [a_n, b_n]$ and $\nu(t_n) \leq R_n \ll \mu_{K+1}(t_n)$, then
\begin{equation}
\label{eq:en-R-2R}
\lim_{n\to \infty}\|\bs u(t_n)\|_{\cE(R_n, 2R_n)} = 0.
\end{equation}
Also, if $\mu_n$ is a positive sequence such that $\lim_{n \to \infty}\bs \delta_{\mu_n}(t_n) = 0$, then
\begin{equation}
\label{eq:en-mu-2mu}
\lim_{n \to \infty}\|\bs u(t_n)\|_{\cE(\frac 12 \mu_n, \mu_n)} = 0.
\end{equation}

Importantly, this choice of $\nu(t)$ give us a way of relating the localized distance $\bs \delta_R$ from Section~\ref{sec:compact} with the global distance $\bfd$ on collision intervals. 
\begin{lem}
\label{lem:trapping-of-d}
There exists a constant $\eta_0 > 0$ having the following property.
Let $t_n \in [a_n, b_n]$ and let $\mu_n$ be a positive sequence satisfying the conditions:
\begin{enumerate}[(i)]
\item $\lim_{n\to\infty}\frac{\mu_n}{\mu_{K+1}(t_n)} = 0$,
\item $\mu_n \geq \nu(t_n)$ or $\|\bs u(t_n)\|_{\cE(\mu_n, \nu(t_n))} \leq \eta_0$,
\item $\lim_{n \to \infty}\bs\delta_{\mu_n}(t_n) = 0$.
\end{enumerate}
Then $\lim_{n\to\infty}\bfd(t_n) = 0$.
\end{lem}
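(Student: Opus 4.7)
The plan is to test $\bfd(t_n)$ against a global $N$-bubble configuration obtained by concatenating an ``interior'' decomposition coming from condition~(iii) with the ``exterior'' decomposition supplied by Lemma~\ref{lem:ext-sign}, using the energy identity $E(\bs u) = N E(\bs W) + E(\bs u^*)$ from Theorem~\ref{thm:seq} as the bookkeeping device to identify the number of interior bubbles as exactly $K$. After passing to a subsequence, condition~(iii) together with the definition~\eqref{eq:delta-def} of $\bs\delta_R$ yields an integer $M \ge 0$, a sign vector $\vec\iota \in \{-1, 1\}^M$ (constant in $n$ by finiteness of the codomain), and scales $\lambda_{n, 1} \ll \cdots \ll \lambda_{n, M} \ll \mu_n$ such that $\|(u(t_n) - \calW(\vec\iota, \vec\lambda_n), \dot u(t_n))\|_{\cE(0, \mu_n)}^2 + \sum_{j=1}^{M}(\lambda_{n,j}/\lambda_{n, j+1})^{(D-2)/2} \to 0$, with the convention $\lambda_{n, M+1} := \mu_n$. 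Simultaneously, Lemma~\ref{lem:ext-sign} supplies $\vec\sigma_n \in \{-1, 1\}^{N-K}$ and scales $\nu(t_n) \ll \mu_{K+1}(t_n) \ll \cdots \ll \mu_N(t_n) \ll t_n$ controlling $\bs u(t_n) - \bs u^*(t_n)$ on $\cE(\nu(t_n), \infty)$.

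To identify $M = K$, Lemma~\ref{lem:M-bub-energy} converts the two decompositions into $E(\bs u(t_n); 0, \mu_n) = M E(\bs W) + o_n(1)$ and, using~\eqref{eq:u*-interior}, $E(\bs u(t_n); \nu(t_n), \infty) = (N - K) E(\bs W) + E(\bs u^*) + o_n(1)$. In the first alternative of~(ii) the intermediate annulus is empty and $M = K$ is immediate; in the second alternative the hypothesis $\|\bs u(t_n)\|_{\cE(\mu_n, \nu(t_n))} \le \eta_0$ together with Sobolev embedding gives $|E(\bs u(t_n); \mu_n, \nu(t_n))| \le C \eta_0^2$, and comparing with the total energy yields $|(M - K) E(\bs W)| \le C \eta_0^2 + o_n(1)$; we therefore fix $\eta_0$ small enough relative to $E(\bs W)$ to force $M = K$. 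With this, the candidate tested against $\bfd(t_n)$ is the $N$-bubble configuration $\bs\calW(\vec\iota \oplus \vec\sigma_n;\, \lambda_{n, 1}, \ldots, \lambda_{n, K}, \mu_{K+1}(t_n), \ldots, \mu_N(t_n))$, whose scales are correctly ordered because $\lambda_{n, K} \ll \mu_n \ll \mu_{K+1}(t_n)$ by~(i), with interface ratio $\lambda_{n, K}/\mu_{K+1}(t_n) \le \mu_n/\mu_{K+1}(t_n) \to 0$ and the remaining ratios vanishing by the two component decompositions. The $\cE$-norm contribution from $(0, \mu_n)$ vanishes by the interior decomposition, by~\eqref{eq:u*-interior} (valid since $\mu_n \ll t_n$), and by the exterior bubble-tail estimate $\|W_{\mu_j(t_n)}\|_{\cE(0, \mu_n)} \lesssim (\mu_n/\mu_j(t_n))^{(D-2)/2} \to 0$ coming from~\eqref{eq:W-leading}; the contribution from $(\nu(t_n), \infty)$ is handled symmetrically.

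The hard part is the annulus $(\mu_n, \nu(t_n))$ in the second alternative: the raw bound $\|\bs u(t_n)\|_{\cE(\mu_n, \nu(t_n))} \le \eta_0$ alone would only give a fixed $O(\eta_0)$ contribution to $\bfd(t_n)$, which does not vanish as $n \to \infty$. The key is to upgrade this bound once $M = K$ is known: subtracting the interior and exterior contributions from $E(\bs u) = N E(\bs W) + E(\bs u^*)$ now yields the improved estimate $E(\bs u(t_n); \mu_n, \nu(t_n)) = o_n(1)$. Combining the vanishing of the energy with the small $\cE$-norm hypothesis, we dominate the nonlinear term by $C \eta_0^{4/(D-2)} \|\bs u(t_n)\|_{\cE(\mu_n, \nu(t_n))}^2$ via Sobolev (absorbable for small $\eta_0$), and control the $u^2/r^2$ part of the $\cE$-norm by a Hardy inequality applied on the slightly enlarged annulus $(\mu_n/2, 2\nu(t_n))$, whose two end-collars contribute $o_n(1)$ by~\eqref{eq:en-R-2R} and the interior bubble tail estimate. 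This upgrades the annular bound to $\|\bs u(t_n)\|_{\cE(\mu_n, \nu(t_n))}^2 = o_n(1)$. Since the interior and exterior bubble tails also contribute only $o_n(1)$ to the annular $\cE$-norm by~\eqref{eq:W-leading}, the annular piece of $\bfd(t_n)$ vanishes, and combining with the vanishing contributions from $(0, \mu_n)$ and $(\nu(t_n), \infty)$ gives $\lim_{n\to\infty}\bfd(t_n) = 0$.
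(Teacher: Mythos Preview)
Your argument is correct and follows essentially the same route as the paper's proof: use the energy identity $E(\bs u)=NE(\bs W)+E(\bs u^*)$ to identify $M=K$, then exploit the resulting vanishing of the annular energy together with the a priori smallness $\leq\eta_0$ to upgrade to $\cE$-smallness on the middle region. The only cosmetic difference is that the paper introduces smooth global cutoffs $\bs u_n^{(i)}=\chi_{\mu_n/2}\bs u(t_n)$, $\bs u_n^{(o)}=(1-\chi_{R_n})\bs u(t_n)$ (with an auxiliary $R_n$ satisfying $\mu_n\ll R_n\ll\mu_{K+1}(t_n)$ and $R_n\ge\nu(t_n)$) from the outset, so that the Hardy and Sobolev steps for the middle piece $\bs u_n^{(m)}$ are automatic; you instead work with local $\cE$-norms and recover the same control by passing to the enlarged annulus $(\mu_n/2,2\nu(t_n))$, which is equivalent.
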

\begin{proof}
Let $R_n$ be a sequence such that $\mu_n \ll R_n \ll \mu_{K+1}(t_n)$.
Without loss of generality, we can assume $R_n \geq \nu(t_n)$, since it suffices to replace $R_n$ by $\nu(t_n)$
for all $n$ such that $R_n < \nu(t_n)$.
Let $M_n, \vec\iota_n, \vec\lambda_n$ be parameters such that
\begin{equation}
\label{eq:conv-delta-iii}
\| u(t_n) - \calW( \vec \iota_n, \vec \lam_n) \|_{H( r \le \mu_n)}^2 + \| \dot u(t_n) \|_{L^2(r \le \mu_n)}^2 + \sum_{j = 1}^{M_n-1} \Big(\frac{ \lam_{n, j}}{ \lam_{n, j+1}}\Big)^k \to 0,
\end{equation}
which exist by the definition of the localized distance function \eqref{eq:delta-def}. Set
\begin{align}
\bs u_n^{(i)} &:= \chi_{\frac 12 \mu_n}\bs u(t_n), \\
\bs u_n^{(o)} &:= (1 - \chi_{R_n})\bs u(t_n), \\
\bs u_n^{(m)} &:= \bs u(t_n) - \bs u_n^{(i)} - \bs u_n^{(o)}.
\end{align}
Invoking \eqref{eq:en-mu-2mu}, we have from \eqref{eq:conv-delta-iii} that
\begin{equation}
\label{eq:conv-uni}
\lim_{n\to \infty}\|\bs u_n^{(i)} - \bs\calW(\vec \iota_n, \vec \lam_n)\|_\cE = 0.
\end{equation}
Assumption (ii), together with \eqref{eq:en-mu-2mu} and \eqref{eq:en-R-2R}, yields
\begin{equation}
\label{eq:small-unm}
\|\bs u_n^{(m)}\|_\cE \leq 2\eta_0, \qquad\text{for all }n\text{ large enough.}
\end{equation}
We also have, again using \eqref{eq:en-mu-2mu} and \eqref{eq:en-R-2R},
\begin{equation}
\label{eq:pyth-unimo}
\limsup_{n\to \infty} \big|E(\bs u(t_n)) - E(\bs u_n^{(i)})  - E(\bs u_n^{(m)}) - E(\bs u_n^{(o)})\big| = 0.
\end{equation}
Since $\lim_{n\to\infty}E(\bs u_n^{(o)}) = (N-K)E(\bs W) + E(\bs u^*)$ and $0 \leq E(\bs u_n^{(m)}) \leq 2\eta_0$, the convergence above yields $M_n = K$ and $\lim_{n\to \infty}E(\bs u_n^{(m)}) = 0$.
Using Sobolev embedding,  we get $\lim_{n\to \infty}\|\bs u_n^{(m)}\|_{\cE} = 0$,
and the result follows.
\end{proof}

\subsection{Basic modulation}
\label{ssec:mod}
On some subintervals of the collision interval $[a_n, b_n]$, mutual interactions between the bubbles
dominate the evolution of the solution. We justify the \emph{modulation inequalities}
allowing to obtain explicit information on the solution on such time intervals.
We stress that in our current approach the modulation concerns only the bubbles from $1$ to $K$.


\begin{lem}[Basic modulation, $D \geq 6$] \label{lem:mod-1}
There exist $C_0, \eta_0 > 0$ and a sequence $\zeta_n \to 0$ such that the following is true.

Let $J \subset [a_n, b_n]$ be an open time interval
such that $\bfd(t) \leq \eta_0$ for all $t \in J$.
Then, there exist $\vec\iota \in \{-1, 1\}^K$ (independent of $t \in J$), modulation parameters $\vec\lam \in C^1(J; (0, \infty)^K)$, 
and $\bs g(t) \in \cE$ satisfying, for all $t \in J$,
\begin{align}  \label{eq:u-decomp} 
\chi( \cdot/ \nu(t))\bs u(t)   &= \bs \calW( \vec\iota, \vec\lambda(t)) + \bs g(t), \\
0 &= \big\la \calZ_{\U{\lam_j(t)}} \mid g(t) \big\ra,  \label{eq:g-ortho} 
\end{align} 
where $\nu(t)$ is as in~\eqref{eq:nu-def}. Define the stable/unstable  components $a_j^-(t)$, $a_j^+(t)$ of $\bs g(t)$ by 
\begin{equation}
a_j^\pm(t) := \big\la \bs\alpha_{\lambda_j(t)}^\pm \mid  \bs g(t) \big\ra,
\end{equation}
where $\bs\alpha_\lambda^\pm$ is as in~\eqref{eq:al-def}. 

The estimates, 
\begin{align}  
C_0^{-1}\bfd(t) - \zeta_n &\leq \|\bs g(t) \|_{\cE} +  \sum_{j=1}^{K-1} \Big( \frac{ \lam_{j}(t)}{\lam_{j+1}(t)} \Big)^{\frac{D-2}{4}} 
\leq C_0\bfd(t) + \zeta_n,  \label{eq:d-g-lam} 
\end{align} 
and
\begin{align} 
\|\bs g(t) \|_{\cE} +  \sum_{j \not \in \calS} \Big( \frac{ \lam_{j}(t)}{\lam_{j+1}(t)} \Big)^{\frac{D-2}{4}} &\leq C_0 \max_{j \in \calS} \left( \frac{ \lam_{j}(t)}{\lam_{j+1}(t)}\right)^{\frac{D-2}{4}}  + \max_{1 \leq i \leq K}|a_i^\pm(t)|+ \zeta_n, \label{eq:g-upper} 
\end{align}
hold, where 
\EQ{\label{eq:calA-def} 
\calS := \big\{j \in \{1, \ldots, K-1\}: \iota_j = \iota_{j+1}\big\}. 
} 
Moreover,  for all $j \in \{1, \ldots, K\}$ and $t \in J$, 
\EQ{ 
\abs{ \lam_j'(t)} &\leq  C_0\|\dot g(t) \|_{L^2} + \zeta_n.   \label{eq:lam'} 
}
and, 
\begin{multline}
\Big| \iota_j \lam_j'(t) +  \frac{1}{ \ang{ \calZ \mid W}} \La \calZ_{\U{\lam_j(t)}} \mid \dot g(t)\Ra\Big| \\ \leq C_0\| \bs g(t) \|_{\cE}^2 
+ C_0 \bigg(\Big( \frac{\lam_{j}(t)}{\lam_{j+1}(t)}\Big)^{\frac{D-4}{2}} + \Big(\frac{ \lam_{j-1}(t)}{\lam_{j}(t)}\Big)^{\frac{D-4}{2}}\bigg) \| \dot g(t) \|_{L^2} + \zeta_n,  \label{eq:lam'-refined}
\end{multline} 
where, by convention, $\lambda_0(t) = 0, \lam_{K+1}(t) = \infty$ for all $t \in J$. Finally,  we have 
 \begin{equation}
\label{eq:dta}
\Big| \dd t a_j^\pm(t) \mp \frac{\kappa}{\lambda_j(t)}a_j^\pm(t) \Big| \leq \frac{C_0}{\lambda_j(t)}\bfd(t)^2 + \frac{\zeta_n}{\lam_j}. 
\end{equation}
\end{lem}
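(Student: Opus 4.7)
\textbf{Proof plan for Lemma~\ref{lem:mod-1}.}

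The plan is to reduce to the static modulation lemma applied to the interior truncation and then derive the dynamical inequalities by differentiating the orthogonality conditions and pairing the Hamiltonian equation~\eqref{eq:u-ham} with the eigenfunctionals $\bs\alpha^{\pm}_{\lambda_j}$.

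\emph{Step 1 (decomposition).} Set $\bs v(t) := \chi(\cdot/\nu(t))\bs u(t)$, where $\nu(t)$ is provided by Lemma~\ref{lem:ext-sign}. Using~\eqref{eq:nu-prop},~\eqref{eq:mu_n} and the assumption $\bfd(t)\le \eta_0$, I would argue that $\bfd(\bs v(t))\le C(\bfd(t)+\zeta_n)$ for some sequence $\zeta_n\to 0$ that absorbs the errors coming from the exterior $(N-K)$-bubble, the radiation $\bs u^*(t)$ in the region $r\lesssim \nu(t)\ll t$, and the localization of the cutoff. Moreover $E(\bs v(t))\le K E(\bs W)+o_n(1)$ by Theorem~\ref{thm:seq} and the asymptotic orthogonality of the scales $\nu(t)\ll \mu_{K+1}(t)\ll\dots\ll\mu_N(t)\ll t$. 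Applying the static modulation Lemma~\ref{lem:mod-static} (with $M=K$) to $\bs v(t)$ for $\eta_0$ small enough produces $\vec\iota(t), \vec\lambda(t)\in(0,\infty)^K$ and $\bs g(t)\in \cE$ satisfying~\eqref{eq:u-decomp} and~\eqref{eq:g-ortho}. Continuity of $t\mapsto \bs v(t)$ and connectedness of $J$, together with the uniqueness from the implicit function theorem, force $\vec\iota$ to be $t$-independent. The two-sided bound~\eqref{eq:d-g-lam} and the coercive bound~\eqref{eq:g-upper} follow respectively from~\eqref{eq:g-bound-0} and~\eqref{eq:g-bound-A} of Lemma~\ref{lem:mod-static}, upon identifying $\calS$ as in~\eqref{eq:calA-def} and absorbing the errors coming from $\bs u^*$ and the cutoff into $\zeta_n$.

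\emph{Step 2 (smoothness and crude bound on $\lambda_j'$).} Differentiability of $\vec\lambda$ is obtained exactly as in the proof of Lemma~\ref{lem:ext-sign}: differentiate each orthogonality condition $\langle \calZ_{\U{\lambda_j(t)}}\mid g(t)\rangle = 0$ in $t$ and use $\partial_t g = \dot g + \partial_t[\chi(\cdot/\nu(t))]u + \sum_i \iota_i (\lambda_i'/\lambda_i)[\Lambda W]_{\lambda_i}$. This yields the first order ODE system
\begin{equation}
\iota_j\lambda_j'(t)\,\langle\calZ\mid\Lambda W\rangle + \sum_{i\ne j}\iota_i\frac{\lambda_i'(t)}{\lambda_i(t)}\langle\calZ_{\U{\lambda_j(t)}}\mid[\Lambda W]_{\lambda_i(t)}\rangle = -\langle\calZ_{\U{\lambda_j(t)}}\mid\dot g(t)\rangle + \mathcal{R}_j(t),
\end{equation}
where $\mathcal R_j(t)$ collects the contributions of $\partial_t\chi_{\nu(t)}\cdot u$ (controlled by $|\nu'(t)|\to 0$ from~\eqref{eq:nu'} and by the local smallness~\eqref{eq:nu-prop}) and a term coming from $\partial_t\calZ_{\U{\lambda_j(t)}}$, which is $O(|\lambda_j'|\cdot\|g\|_H)$. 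The off-diagonal pairings $\langle \calZ_{\U{\lambda_j}}\mid[\Lambda W]_{\lambda_i}\rangle$ are of size $(\lambda_j/\lambda_i)^{(D-4)/2}$ (for $i<j$) or $(\lambda_i/\lambda_j)^{(D-2)/2}$ (for $i>j$), which are small by~\eqref{eq:d-g-lam}, so the system is diagonally dominant. Inverting it yields~\eqref{eq:lam'}.

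\emph{Step 3 (refined estimate on $\lambda_j'$).} For the refined bound~\eqref{eq:lam'-refined} I would insert $\dot g = \chi_\nu\dot u$ and use the Hamiltonian form $\partial_t\dot u = \Delta u + f(u)$ together with the nonlinear decomposition $f(u) = \sum_i\iota_i f(W_{\lambda_i}) + f_{\bfi}(\vec\iota,\vec\lambda) + \text{remainder}$ from~\eqref{eq:fi-def}. The dominant cross term in the off-diagonal pairings is computed by Lemma~\ref{lem:interaction}, contributing the factors $(\lambda_{j-1}/\lambda_j)^{(D-4)/2}$ and $(\lambda_j/\lambda_{j+1})^{(D-4)/2}$ multiplied by $\|\dot g\|_{L^2}$, while the quadratic-in-$g$ nonlinearity produces the $\|\bs g\|_\cE^2$ remainder via Sobolev embedding. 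The $\zeta_n$ error absorbs the contribution of $\bs u^*$ and the time derivative of the cutoff.

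\emph{Step 4 (equation for $a_j^\pm$).} Differentiating $a_j^\pm(t) = \langle\bs\alpha^\pm_{\lambda_j(t)}\mid\bs g(t)\rangle$, the time derivative of $\bs\alpha^\pm_{\lambda_j}$ contributes a term $O((\lambda_j'/\lambda_j)\|\bs g\|_\cE)$ bounded by $(1/\lambda_j)\bfd(t)^2+\zeta_n/\lambda_j$ using~\eqref{eq:lam'}. The pairing with $\partial_t\bs g$ is handled by the Hamiltonian equation~\eqref{eq:u-ham}: writing $\partial_t\bs g = J\,\vD E(\bs u) - \partial_t\bs\calW(\vec\iota,\vec\lambda)$ plus cutoff contributions, and linearizing about $\bs W_{\lambda_j}$ (whose linearization $J\circ\vD^2 E(\bs W_{\lambda_j})$ has $\bs\alpha^\pm_{\lambda_j}$ as eigenfunctionals with eigenvalues $\pm\kappa/\lambda_j$ by~\eqref{eq:alpha-pair}), yields the linear part $\pm(\kappa/\lambda_j)a_j^\pm$. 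The remaining nonlinear and bubble-interaction errors are $O(\|\bs g\|_\cE^2/\lambda_j)$ and $O(\bfd(t)^2/\lambda_j)$ after invoking Lemma~\ref{lem:interaction} and Sobolev embedding, proving~\eqref{eq:dta}.

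\emph{Main obstacle.} The delicate step is the refined estimate~\eqref{eq:lam'-refined}: isolating the leading-order contribution of the bubble interactions requires the precise asymptotic~\eqref{eq:W-leading}--\eqref{eq:LamW-leading} and Lemma~\ref{lem:interaction}, and one must correctly handle the fact that $\calZ$ differs from $\Lambda W$ for $D\in\{4,5,6\}$ (so $\calZ$ is compactly supported there, forcing extra care in the computation of $\langle\calZ_{\U{\lambda_j}}\mid[\Lambda W]_{\lambda_i}\rangle$). A secondary technical point is uniformly absorbing all errors due to the time-dependent cutoff $\chi(\cdot/\nu(t))$ into $\zeta_n$, which relies crucially on $|\nu'(t)|\to 0$ from~\eqref{eq:nu'} and on the vanishing of the local energy of $\bs u(t)$ near $r\sim\nu(t)$ from~\eqref{eq:nu-prop}.
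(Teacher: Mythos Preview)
Your overall strategy matches the paper's: truncate to $\bs v(t) = \chi_{\nu(t)}\bs u(t)$, apply the static modulation Lemma~\ref{lem:mod-static}, then differentiate the orthogonality conditions for~\eqref{eq:lam'} and pair the Hamiltonian equation with $\bs\alpha^\pm_{\lambda_j}$ for~\eqref{eq:dta}. Steps~1, 2, and 4 are essentially what the paper does.

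However, Step 3 is misconceived. The refined estimate~\eqref{eq:lam'-refined} does \emph{not} require the second-order equation $\partial_t\dot u = \Delta u + f(u)$, the decomposition of $f(u)$, or Lemma~\ref{lem:interaction}; those are the tools for the estimate on $\beta_j'$ in Lemma~\ref{cor:modul}, not for $\lambda_j'$ here. In the paper,~\eqref{eq:lam'-refined} is an immediate consequence of the first-order ODE system you already wrote down in Step~2: once the crude bound~\eqref{eq:lam'} is established, one simply feeds it back into the off-diagonal terms $\sum_{i\ne j}\iota_i\lambda_i'\,\langle\calZ_{\U{\lambda_j}}\mid\Lambda W_{\U{\lambda_i}}\rangle$ and into the diagonal correction $\lambda_j'\,\lambda_j^{-1}\langle\ULam\calZ_{\U{\lambda_j}}\mid g\rangle$. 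The exponents $(D-4)/2$ in~\eqref{eq:lam'-refined} come from the direct $L^2$ pairing estimate
\[
\big|\langle\calZ_{\U{\lambda_j}}\mid\Lambda W_{\U{\lambda_i}}\rangle\big|\ \lesssim\ (\lambda_i/\lambda_j)^{(D-4)/2}\quad (i<j),\qquad
\big|\langle\calZ_{\U{\lambda_j}}\mid\Lambda W_{\U{\lambda_i}}\rangle\big|\ \lesssim\ (\lambda_j/\lambda_i)^{D/2}\quad (i>j),
\]
using the decay $|\Lambda W(r)|\lesssim r^{-(D-2)}$, and \emph{not} from the nonlinear bubble interaction $f_{\bfi}$. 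Likewise, the $\|\bs g\|_\cE^2$ term on the right of~\eqref{eq:lam'-refined} arises from the product $|\lambda_j'|\cdot\lambda_j^{-1}|\langle\ULam\calZ_{\U{\lambda_j}}\mid g\rangle|\lesssim \|\dot g\|_{L^2}\|g\|_H$, not from any quadratic-in-$g$ nonlinearity in the PDE. (Incidentally, the off-diagonal bounds you quote in Step~2 have the ratios inverted; as written they are $>1$.)

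A minor point on Step~4: the paper linearizes about the full multi-bubble $\bs\calW(\vec\iota,\vec\lambda)$ rather than the single bubble $\bs W_{\lambda_j}$, and then uses~\eqref{eq:alpha-pair} together with the exponential localization of $\calY$ at scale $\lambda_j$ to absorb the difference $f'(\calW)-f'(W_{\lambda_j})$ into the $\bfd(t)^2/\lambda_j$ error. Your version would also work, but you should say where the cross-bubble potential terms go.
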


\begin{rem} 
We stress that the scaling in $\bs \al^{\pm}_\lam$ is $\dot H^{-1} \times L^2$-invariant so as to ensure that 
\EQ{ \label{eq:al_j-est}
|a_j^{\pm}(t)| \lesssim \| \bs g(t) \|_{\E}. 
}
We also remark that everything in Lemma~\ref{lem:mod-1}, except for the estimates~\eqref{eq:lam'-refined} and~\eqref{eq:dta}, holds without change in the lower dimensions $D = 4, 5$, and we refer to the appendix for suitable modifications of $\lam_j(t), a_{j}^{\pm}(t)$ in those cases. 
\end{rem}

\begin{proof}[Proof of Lemma~\ref{lem:mod-1}]

\textbf{Step 1:}(The decomposition~\eqref{eq:u-decomp} and the estimates~\eqref{eq:d-g-lam} and~\eqref{eq:g-upper})

First, observe that by Lemma~\ref{lem:ext-sign}, 
\EQ{ \label{eq:E>nu} 
 \sup_{t \in [a_n, b_n]} |E( \bs u(t) - \bs u^*(t) ; \nu(t),  \infty)  - (N-K) E( \bs W)|  = o_n(1) \mas n\to \infty 
}
Since $E( \bs u) = E( \bs u^*) + N E( \bs Q)$ it follows from the above along with~\eqref{eq:nu-prop}, ~\eqref{eq:rho-def}, and~\eqref{eq:u*-interior} that 
\EQ{ \label{eq:E<nu} 
\sup_{t \in [a_n, b_n]}  |E(\bs u(t); 0, 2\nu(t)) - K E( \bs Q)| = o_n(1)  \mas n \to \infty
}
Using continuity of the flow, the fact that $\bfd(t) \le \eta_0$ on $J$,  Lemma~\ref{lem:bub-config}, and by taking $\eta_0>0$ small enough,  we obtain continuous functions $\vec{\ti  \lam}(t) = ( \ti \lam_1(t), \dots, \ti \lam_N(t))$  and signs $\vec {\iota}$ independent of $t \in J$, so that 
\EQ{
\bs u(t) - \bs u^*(t) &=  \bs \calW( \vec {\iota},  \vec{\ti \lam}(t)) + \ti{\bs g}(t),
} 
and, 
\EQ{ \label{eq:tilam} 
\bfd(t)^2 \le \| \ti {\bs g}(t)  \|_{\E}^2 + \sum_{j =1}^{N} \Big(\frac{  \ti \lam_j(t)}{\ti \lam_{j+1}(t)}\Big)^{\frac{D-2}{2}}  \le 4 \bfd(t)^2.
}
with as usual the convention that $\ti \lam_{N+1}(t) = t$. Recalling the properties of $\bs w(t) := (1-\chi_{\nu(t)})( \bs u(t) - \bs u^*(t))$ from Lemma~\ref{lem:ext-sign}, in particular~\eqref{eq:nu-def} and~\eqref{eq:mu_n}, and using Lemma~\ref{lem:bub-config} we see from the above that we must have, 
\EQ{ \label{eq:ti-lam-K+1} 
\Big( \frac{\nu(t)}{ \ti \lam_{K+1}(t)} \Big)^{\frac{D-2}{2}}  \lesssim \bfd(t)^2 + o_n(1) \mas n \to \infty,
} 
Using similar logic along with~\eqref{eq:nu-prop} we see that we also have, 
\EQ{\label{eq:ti-lam-K} 
\Big(\frac{ \ti \lam_K(t)}{ \nu(t)}\Big)^{\frac{D-2}{2}}\lesssim \bfd(t)^2 + o_n(1) \mas n \to \infty
}
Together, the previous two lines mean, roughly speaking, that there are $K$ bubbles to the left of the curve $\nu(t)$ and $N-K$ bubbles to the right of the curve $\nu(t)$. 

 
 For the purposes of this argument we denote by 
 \EQ{ \label{eq:v_n-w_n-def} 
 \bs v(t) &:= \bs u(t) \chi_{\nu(t)}, \quad 
  \bs w(t) := ( \bs u(t) - \bs u^*(t)) ( 1- \chi_{\nu(t)}) ,
 }
We may express $\bs v(t)$  on $J \subset [a_n, b_n]$ as follows, 
 \EQ{
 \bs v (t)  &=  \sum_{j=1}^K \iota_j \bs W_{\ti \lam_j(t)}   
- ( 1- \chi_{\nu(t)}) \sum_{j=1}^K  \iota_j  \bs W_{\ti \lam_j(t)}     + \chi_{\nu(t)} \sum_{j=K+1}^N  \iota_j  \bs W_{\ti \lam_j(t)}  + \chi_{\nu(t)} \bs u^*(t) + \chi_{\nu(t)} \ti {\bs g} (t).
 }
 Using~\eqref{eq:rho-def} along with~\eqref{eq:tilam} and ~\eqref{eq:ti-lam-K}  we see that, 
 \EQ{ \label{eq:v_n-upper} 
 \|  \bs v (t)  - \sum_{j=1}^K  \iota_j  \bs W_{\ti \lam_j(t)}  \|_{\E}^2 + \sum_{j =1}^K  \Big(\frac{  \ti \lam_j(t)}{\ti \lam_{j+1}(t)}\Big)^{\frac{D-2}{2}}  \lesssim \bfd(t)^2 + o_n(1) \mas n \to \infty.
 }
 This means that 
 \EQ{
 \bfd( \bs v(t)) \lesssim \bfd(t) + o_n(1) \mas n \to \infty
 }
 where $\bfd(\bs v)$ is as in the notation of Lemma~\ref{lem:mod-static}. By taking $\eta_0>0$ small enough, and $n$ large enough, we may apply Lemma~\ref{lem:mod-static}, (as well as Lemma~\ref{lem:bub-config}, which ensures the signs $\vec \iota$ stays fixed) at each $t \in J$, to obtain unique $\bs g(t) \in \E$, $\vec \lam (t) \in (0, \infty)^K$ so that 
 \EQ{ \label{eq:lam-def} 
 \bs v(t) &=  \bs \calW( \vec \iota,  \vec \lam(t)) + \bs g(t),  , \quad
 0 = \La\calZ_{\U{\lam_j(t)}} \mid g(t)\Ra, \quad \forall j = 1, \dots, K, 
 }
 where in this formula $\vec \iota, \vec\lam$ are $K$-vectors, i.e., $\vec \iota = ( \iota_1, \dots, \iota_K)$, $\vec \lam(t) = ( \lam_1(t), \dots, \lam_K(t))$.  We note the estimate, 
 \EQ{ \label{eq:d_K} 
 \bfd( \bs v_n(t))^2 \le \| \bs g(t)\|_{\E}^2 + \sum_{j =1}^{K-1} \Big( \frac{ \lam_{j}(t)}{\lam_{j+1}(t)} \Big)^{\frac{D-2}{2}}  +\Big( \frac{\lam_K(t)}{ \nu(t)} \Big)^{\frac{D-2}{2}} &\le 4 \bfd( \bs v(t))^2 + o_n(1)  \\
&  \lesssim \bfd(t)^2 + o_n(1), 
 }
 as $n \to \infty$. Next, using~\eqref{eq:E<nu} we see that 
 \EQ{
 E( \bs v)  \le K E( \bs Q) + o_n(1).
 }
 Therefore, the estimate~\eqref{eq:g-bound-A} from Lemma~\ref{lem:mod-static} applied here yields, 
 \EQ{
 \| \bs g(t)  \|_{\E}^2 \lesssim  \sup_{j \in \calS} \Big( \frac{ \lam_{j}(t)}{\lam_{j+1}(t)} \Big)^{\frac{D-2}{2}}  + \max_{1 \le i \le K} | a_j^\pm(t)| + o_n(1) 
 }
 where $\calS = \{ j \in \{1, \dots, K-1\} \, : \, \iota_j = \iota_{j+1} \}$,  proving~\eqref{eq:g-upper}. 
 
Next, we prove the lower bound in~\eqref{eq:d-g-lam}. Note the identity, 
 \EQ{ \label{eq:u-u^*-1} 
\bs u(t) - \bs u^*(t) &=  \bs v(t)   +  \bs w(t)  - \chi_{\nu(t)} \bs u^*(t) \\
&=     \sum_{j=1}^K \iota_j  W_{\lam_j(t)}  + \sum_{j= K+1}^N \s_{n , j}\bs W_{\mu_{ j}(t)}   + \bs g(t) + \bs h(t) -  \chi_{\nu_n(t)} \bs u^*(t) 
 }
 which follows from~\eqref{eq:v_n-w_n-def} and~\eqref{eq:w_n}.
 First we prove that $(\iota_{K+1}, \dots, \iota_N) = ( \sigma_{K+1}, \dots, \sigma_N)$. From~\eqref{eq:w_n} and~\eqref{eq:mu_n} we see that 
\EQ{
\| \bs w(t) - \sum_{j= K+1}^N \s_{n , j}\bs W_{\mu_{j}(t)}  \|_{\E}^2 + \Big(\frac{ \nu(t)}{ \mu_{ K+1}(t)}\Big)^{\frac{D-2}{2}} + \sum_{j = K+1}^N \Big( \frac{ \mu_{ j}(t)}{\mu_{j+1}(t)} \Big)^{\frac{D-2}{2}}  = o_n(1) \mas n \to \infty .
}
On the other hand, we see from~\eqref{eq:ti-lam-K+1} that, 
\EQ{
 \| \bs w(t)  - \sum_{j= K+1}^N \iota_j \bs W_{\ti \lam_{j}(t)}  \|_{\E}^2 + \Big(\frac{ \nu(t)}{ \ti \lam_{ K+1}(t)}\Big)^{\frac{D-2}{2}} + \sum_{j = K+1}^N \Big( \frac{ \ti \lam_{j}(t)}{\ti \lam_{ j+1}(t)} \Big)^{\frac{D-2}{2}}  \lesssim \bfd(t)^2 + o_n(1) .
}
Hence, using Lemma~\ref{lem:bub-config} we see that for any $\te_0>0$ we may take $\eta_0>0$ small enough so that $(\iota_{K+1}, \dots, \iota_N) = ( \sigma_{K+1}, \dots, \sigma_N)$, and in addition we have 
\EQ{ \label{eq:tilam-mu} 
\abs{\frac{ \ti \lam_{j}(t)}{\mu_{n, j}(t)} - 1} \le \te_0 \quad \forall j = K+1, \dots, N.
}
The above, together with~\eqref{eq:mu_n}  implies that 
\EQ{
\sum_{j=K+1}^{N} \Big(\frac{  \ti \lam_j(t)}{\ti \lam_{j+1}(t)}\Big)^{\frac{D-2}{2}}  =  o_n(1) \mas n \to \infty .
}
We may thus rewrite~\eqref{eq:u-u^*-1} as 
 \EQ{ \label{eq:u-u*-1} 
\bs u(t) - \bs u^*(t) 
&=      \sum_{j=1}^K \iota_j \bs W_{\lam_j(t)}  + \sum_{j= K+1}^N \iota_ j\bs W_{\mu_{ j}(t)}   + \bs g(t) + \bs h(t) -  \chi_{\nu(t)} \bs u^*(t) 
 }
Noting that    
\EQ{
\sup_{t \in[a_n, b_n]}\|  \bs u^*(t) \chi_{\nu(t)} \|_{\E} = o_n(1) \mas n \to \infty, 
}
the previous line together with~\eqref{eq:d_K} and~\eqref{eq:mu_n} imply 
that, 
\EQ{
\bfd(t)^2  \lesssim \bfd( \bs v(t))^2 + o_n(1) \lesssim \| \bs g(t)\|_{\E}^2 + \sum_{j =1}^{K-1} \Big( \frac{ \lam_{j}(t)}{\lam_{j+1}(t)} \Big)^{\frac{D-2}{2}}  + o_n(1) \mas n \to \infty, 
}
which proves the lower bound in~\eqref{eq:d-g-lam}. 

\textbf{Step 2:}(The dynamical estimates~\eqref{eq:lam'}, ~\eqref{eq:lam'-refined}, and~\eqref{eq:dta}) Momentarily assuming that $\vec \lam \in C^1(J)$ (we will justify this assumption below)  we record the computations, 
\EQ{
\p_t v(t) &=  \dot g(t)   - \frac{\nu'(t)}{\nu(t)}  (r \p_r \chi)( \cdot/ \nu(t))u(t)  ,   \quad 
\p_t \calW( \vec \iota, \vec \lam(t)) =   - \sum_{j=1}^K \iota_j \lam_{j}'(t) \Lam W_{\U{\lam_j(t)}} , 
}
which lead to the expression, 
\EQ{ \label{eq:dt-g-1} 
\p_t g(t) = \dot g(t)  + \sum_{j=1}^K \iota_j \lam_{j}'(t) \Lam W_{\U{\lam_j(t)}}  - u(t) \frac{\nu'(t)}{\nu(t)}  (r \p_r \chi)( \cdot/ \nu(t)) . 
}
We differentiate the orthogonality conditions~\eqref{eq:g-ortho} for each $j = 1, \dots, K$,  
\EQ{
0 &= -\frac{\lam_j'}{\lam_j} \ang{ \ULam \calZ_{\U{\lam_j}} \mid g} + \ang{ \calZ_{\U{\lam_j}} \mid \p_t g}  \\
& = -\frac{\lam_j'}{\lam_j} \ang{ \ULam \calZ_{\U{\lam_j}} \mid g} + \ang{ \calZ_{\U{\lam_j}} \mid \dot g} + \sum_{\ell=1}^K  \iota_{\ell} \lam_{\ell}' \ang{ \calZ_{\U{\lam_{j}}} \mid \Lam W_{\U{\lam_{\ell}} }}  - \frac{\nu'}{\nu} \ang{ \calZ_{\U{\lam_j}} \mid  u  (r \p_r \chi)( \cdot/ \nu) }, 
}
which we rearrange into the system, 
\begin{multline}  \label{eq:lam'-system} 
\iota_j \lam_j' \Big( \ang{\calZ \mid \Lam W }  -  \lam_j^{-1}\La  \ULam \calZ_{\U{\lam_j}} \mid   g\Ra \Big) + \sum_{i \neq j}  \iota_{i} \lam_{i}' \ang{ \calZ_{\U{\lam_{j}}} \mid \Lam W_{\U{\lam_{i}} }}  \\
=  - \ang{ \calZ_{\U{\lam_j}} \mid \dot  g} +  \frac{\nu'}{\nu} \ang{ \calZ_{\U{\lam_j}} \mid  u  (r \p_r \chi)( \cdot/ \nu) }. 
\end{multline} 
This is a diagonally dominant system, hence invertible, and we arrive at the estimate, 
\EQ{ \label{eq:lam_j'} 
\abs{ \lam_j'} & \lesssim \| \dot g \|_{L^2} + o_n(1)  \quad j = 1, \dots, K, 
}
after noting the estimates, 
\EQ{
\abs{ \ang{ \calZ_{\U{\lam_j}} \mid \dot g}} & \lesssim \|\dot g \|_{L^2}  \\
\abs{ \frac{\nu'(t)}{\nu(t)} \ang{ \calZ_{\U{\lam_j}} \mid  u(t)  (r \p_r \chi)( \cdot/ \nu(t)) }} &\lesssim \abs{ \nu'}\frac{\lam_j}{\nu} \| r^{-1}  u(t) (r \p_r \chi)( \cdot/\nu(t)) \|_{L^2}  = o_n(1) , 
}
where the last line follows from ~\eqref{eq:nu'}. Lastly, we note that the system~\eqref{eq:lam'-system} implies that $\vec \lam(t)$ is a $C^1$ function on $J$. Indeed, arguing as in the end of the proof of Lemma~\ref{lem:ext-sign}, let $t_0 \in J$ be any time and let $\vec \lam(t_0)$ be defined as in~\eqref{eq:lam-def}. Using the smallness~\eqref{eq:d_K} at time $t_0$, the system~\eqref{eq:lam'-system} admits a unique $C^1$ solution $\vec\lam_{\textrm{ode}}(t)$ in a neighborhood of $t_0$. Due to the way the system~\eqref{eq:lam'-system} was derived, the orthogonality conditions in~\eqref{eq:lam-def} hold with $\vec \lam_{\textrm{ode}}(t)$. Since $\vec\lam(t)$ was obtained uniquely via the implicit function theorem, we must have $\vec \lam(t) = \vec \lam_{\textrm{ode}}(t)$, which means that $\vec \lam(t)$ is $C^1$. 

The estimates~\eqref{eq:lam'-refined} are immediate from~\eqref{eq:lam'-system} using~\eqref{eq:lam_j'} along with the estimates, 
\EQ{
 \ang{ \calZ_{\U{\lam_{j}}} \mid \Lam Q_{\U{\lam_{i}} }}  & \lesssim \begin{cases} \Big( \frac{\lam_{j}}{\lam_i} \Big)^{\frac{D}{2}} \mif  j < i \\ \Big( \frac{\lam_{i}}{\lam_j} \Big)^{\frac{D-4}{2}} \mif j>i \end{cases}  \\
  \Big| \lam_j^{-1}\ang{ \U\Lam \calZ_{\U{\lam_j}} \mid  g} \Big|  &\lesssim  \| g \|_H. 
}
using here that $D -4 \ge \frac{D-2}{2}$ as long as $D \ge 6$. 

Lastly, we consider the estimates~\eqref{eq:dta}. We first write the equation for $\bs g(t)$.  
\EQ{
\p_t \bs g = \p_t \bs v  - \p_t  \bs \calW( \vec \iota, \vec \lam) 
}
Noting that 
\EQ{
\p_t \bs v &= \chi( \cdot/ \nu) \p_t \bs u - \frac{\nu'}{\nu} (r \p_r \chi)( \cdot/ \nu) \bs u  \\
& = \chi( \cdot/ \nu)  J \circ \uD E( \bs u) - \frac{\nu'}{\nu} (r \p_r \chi)( \cdot/ \nu) \bs u \\
& =  J \circ \uD E( \chi(\cdot/ \nu) \bs u) + \Big(\chi( \cdot/ \nu)  J \circ \uD E( \bs u) - J \circ \uD E( \chi(\cdot/ \nu) \bs u)\Big) - \frac{\nu'}{\nu} (r \p_r \chi)( \cdot/ \nu) \bs u 
}
we arrive at, 
\EQ{ \label{eq:g-eq-ham} 
\p_t \bs g &= J \circ \uD E( \bs  \calW(\vec \iota, \vec \lam) + \bs g) - \p_t  \bs \calW( \vec \iota, \vec \lam) \\
&\quad + \Big(\chi( \cdot/ \nu)  J \circ \uD E( \bs u) - J \circ \uD E( \chi(\cdot/ \nu) \bs u)\Big) - \frac{\nu'}{\nu} (r \p_r \chi)( \cdot/ \nu) \bs u 
}
 We compute, 
\EQ{
\frac{\ud}{\ud t} a_{j}^- = \La \p_t  \bs \al_{\lam_j}^- \mid \bs g\Ra + \La \bs \al_{\lam_j}^- \mid \p_t \bs g \Ra
}
Expanding the first term on the right gives, 
\EQ{
\La \p_t  \bs \al_{\lam_j}^- \mid \bs g\Ra  &= \frac{\kappa}{2}\La \p_t( \lam_j^{-1} \calY_{\U{\lam_j}}) \mid g \Ra + \frac{1}{2} \La \p_t( \calY_{\U{\lam_j}}) \mid \dot g \Ra \\
& = -\frac{\kappa}{2}\frac{\lam_j'}{\lam_j}\La  \lam_j^{-1} \calY_{\U{\lam_j}} + \frac{1}{\lam_j} ( \ULam \calY)_{\U{\lam_j}} \mid g \Ra - \frac{1}{2} \frac{\lam_j'}{\lam_j} \La  (\ULam  \calY)_{\U{\lam_j}}) \mid \dot g \Ra 
}
and thus, 
\EQ{
\Big|\La \p_t  \bs \al_{\lam_j}^- \mid \bs g\Ra \Big| \lesssim \frac{1}{\lam_j}( \bfd(t)^2 + o_n(1)) . 
}
We use~\eqref{eq:g-eq-ham} to expand the second term, 
\EQ{ \label{eq:a'-exp} 
\La \bs \al_{\lam_j}^- \mid \p_t \bs g \Ra &= \La \bs \al_{\lam_j}^- \mid J \circ \uD^2 E( \bs  \calW(\vec \iota, \vec \lam)) \bs g  \Ra  \\
&\quad +\La \bs \al_{\lam_j}^- \mid J \circ \Big( \uD E( \bs  \calW(\vec \iota, \vec \lam) + \bs g)  - \uD^2 E( \bs  \calW(\vec \iota, \vec \lam)) \bs g\Big)\Ra \\
&\quad -  \La \bs \al_{\lam_j}^- \mid \p_t \bs \calW( \vec \iota, \vec \lam) \Ra \\
&\quad + \La \bs \al_{\lam_j}^- \mid  \Big(\chi( \cdot/ \nu)  J \circ \uD E( \bs u) - J \circ \uD E( \chi(\cdot/ \nu) \bs u)\Big)  \Ra \\
&\quad - \frac{\nu'}{\nu}\La \bs \al_{\lam_j}^- \mid  (r \p_r \chi)( \cdot/ \nu) \bs u\Ra
}
By~\eqref{eq:alpha-pair} the first term on the right gives the leading order, 
\EQ{
\La \bs \al_{\lam_j}^- \mid J \circ \uD^2 E( \bs  \calW(\vec \iota, \vec \lam)) \bs g  \Ra = -\frac{\kappa}{\lam_j} a_j^-.
}
Next, we expand, 
\EQ{
\La \bs \al_{\lam_j}^- \mid J \circ \Big( \uD E( \bs  \calW(\vec \iota, \vec \lam) + \bs g)  &- \uD^2 E( \bs  \calW(\vec \iota, \vec \lam)) \bs g\Big)\Ra  \\
&= -\frac{1}{2} \La \calY_{\U \lam_j} \mid f( \calW( \vec \iota, \vec \lam) + g) - f( \calW( \vec \iota, \vec \lam))  - f'( \calW( \vec \iota, \vec \lam) )g \Ra \\
&\quad - \frac{1}{2} \La \calY_{\U \lam_j} \mid f( \calW( \vec \iota, \vec \lam))  - \sum_{i =1}^K \iota_i f(W_{\lam_i}) \Ra.
}
The first line satisfies, 
\EQ{
\Big| \La \calY_{\U \lam_j} \mid f( \calW( \vec \iota, \vec \lam) + g) - f( \calW( \vec \iota, \vec \lam))  - f'( \calW( \vec \iota, \vec \lam) )g \Ra \Big| \lesssim \frac{1}{\lam_j} (\bfd(t)^2 + o_n(1)).
}
Noting that $f( \calW( \vec \iota, \vec \lam))  - \sum_{i =1}^K \iota_i f(W_{\lam_i}) = f_{\bfi}(\vec \iota, \vec \lam)$, the same argument used to prove Lemma~\ref{lem:interaction} gives, 
\EQ{
\Big| \La \calY_{\U \lam_j} \mid  f_{\bfi}(\vec \iota, \vec \lam) \Ra\Big|  \lesssim \frac{1}{\lam_j} \Big( \Big(\frac{\lam_{j}}{\lam_{j+1}} \Big)^{\frac{D-2}{2}} + \Big(\frac{\lam_{j-1}}{\lam_{j}} \Big)^{\frac{D-2}{2}} \Big) \lesssim \frac{1}{\lam_j} (\bfd(t)^2 + o_n(1)).
}
Consider now the third line in~\eqref{eq:a'-exp}. 
 \EQ{
 -  \La \bs \al_{\lam_j}^- \mid \p_t \bs \calW( \vec \iota, \vec \lam) \Ra &=  \frac{\kappa}{2} \iota_j \frac{ \lam_j' }{\lam_j} \La \calY_{\U{\lam_j}}  \mid  \Lam W_{\U{\lam_j}} \Ra  + \sum_{i \neq j}  \iota_i \frac{\kappa}{2} \frac{\lam_i' }{\lam_j} \La \calY_{\U{\lam_j}}  \mid \Lam W_{\U{\lam_i}} \Ra  \\
 &= \sum_{i \neq j}  \iota_i \frac{\kappa}{2} \frac{\lam_i' }{\lam_j} \La \calY_{\U{\lam_j}}  \mid \Lam W_{\U{\lam_i}} \Ra 
 }
where in the last equality we used the vanishing $\La \calY \mid \Lam W \Ra$. Noting the estimates, 
\EQ{
\Big| \La \calY_{\U{\lam_j}}  \mid \Lam W_{\U{\lam_i}} \Ra  \Big| \lesssim \begin{cases} \Big(\frac{\lam_{i}}{\lam_j} \Big)^{\frac{D-4}{2} } \mif i < j \\\Big(\frac{\lam_{i}}{\lam_j} \Big)^{\frac{D}{2} } \mif i>j 
\end{cases} 
}
Using here the fact that $D \ge 6$, we obtain, 
\EQ{
\Big| \La \bs \al_{\lam_j}^- \mid \p_t \bs \calW( \vec \iota, \vec \lam) \Ra \Big| \lesssim  \frac{1}{\lam_j} (\bfd(t)^2 + o_n(1)).
}
Finally, using~\eqref{eq:nu-prop} and~\eqref{eq:nu'} we see that the last two lines of~\eqref{eq:a'-exp} satisfy, 
\EQ{
\Big| \La \bs \al_{\lam_j}^- \mid  \Big(\chi( \cdot/ \nu)  J \circ \uD E( \bs u) - J \circ \uD E( \chi(\cdot/ \nu) \bs u)\Big)  \Ra  \Big|  \lesssim  \frac{1}{\lam_j} o_n(1),  \\
\Big| \frac{\nu'}{\nu}\La \bs \al_{\lam_j}^- \mid  (r \p_r \chi)( \cdot/ \nu) \bs u\Ra \Big| \lesssim \frac{1}{\lam_j} o_n(1). 
}
This completes the proof. 
%
%
%
\end{proof}

\subsection{Refined modulation}  \label{ssec:ref-mod} 

Next, our goal is to gain precise dynamical control of the modulation parameters in the spirit of \cite{JJ-APDE, JL1}.
The idea is to construct a virial correction to the modulation parameters  (see~\eqref{eq:beta-def}). The idea of adding a correction term based on underlying symmetries (in our case scaling) to modulation parameters originates in Rapha\"el and Szeftel~\cite[Proposition 4.3]{RS}. 
We start by  finding suitable truncation of the function $\frac{1}{2} r^2$, 
similar to~\cite[Lemma 3.10]{JJ-AJM}. Since here we may have arbitrary number of bubbles, we need to localize this function both away from $r =0$ and away from $r =\infty$. 


\begin{lem} \label{lem:q} 
For any $c > 0$ and $R > 1$ there exists a function $q = q_{c, R} \in C^4((0, \infty))$
having the following properties:
\begin{enumerate}[(P1)]
\item $q(r) = \frac 12 r^2$ for all $x \in \bR^D$ such that $r \in [R^{-1}, R]$,
\item there exists $\wt R > 0$ (depending on $c$ and $R$)
such that $q(r) = \tx{const}$ for $r \geq \wt R$ and $q(r) = \tx{const}$ for $r \leq \wt R^{-1}$,
\item $|q'(r)| \lesssim r$ and $|q''(r)| \lesssim 1$ for all $r \in(0, \infty)$,
with constants independent of $c$ and $R$,
\item $ q''(r) + \frac{D-1}{r} q'(r) \ge - c$ for all $r \in (0, \infty)$. 
\item $|\Delta^2 q(r)| \leq c r^{-2}$.
\item $\Big| \Big( \frac{q'(r)}{r} \Big)' \Big| \le c r^{-1}$ for all $r>0$. 
\end{enumerate}
\end{lem}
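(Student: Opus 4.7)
My plan is to construct $q$ from a smooth radial cutoff $\phi:(0,\infty)\to[0,1]$ by setting $q'(r)=r\phi(r)$, so that
\begin{equation*}
q(r)=c_0+\int_0^r s\phi(s)\,\ud s,
\end{equation*}
with $c_0$ chosen to enforce $q(r)=\tfrac12 r^2$ on $[R^{-1},R]$ in (P1). The key observation is that each quantity appearing in (P3)--(P6) reduces to an estimate on $\phi$ and its first few derivatives:
\begin{equation*}
q''=\phi+r\phi',\qquad q''+\tfrac{D-1}{r}q'=D\phi+r\phi',\qquad \left(\tfrac{q'}{r}\right)'=\phi',
\end{equation*}
and a short calculation gives $\Delta^2 q=(2D+1)\phi''+r\phi'''+(2D-2)r^{-1}\phi'$. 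Thus it suffices to produce $\phi$ which equals $1$ on $[R^{-1},R]$, is compactly supported in $[\wt R^{-1},\wt R]$, and obeys $|r^k\phi^{(k)}(r)|\leq c$ on its transition region for $k=1,2,3$.

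\textbf{Construction of $\phi$ via a logarithmic transition.} I fix, once and for all, a smooth function $\psi\in C^\infty(\bR;[0,1])$ with $\psi\equiv 1$ on $(-\infty,0]$ and $\psi\equiv 0$ on $[1,\infty)$, and set, for $\wt R>R$ to be chosen below,
\begin{equation*}
\phi(r):=\psi\!\left(\tfrac{\log(r/R)}{\log(\wt R/R)}\right)\,\psi\!\left(\tfrac{\log(R^{-1}/r)}{\log(\wt R/R)}\right),\qquad r\in(0,\infty).
\end{equation*}
Each $r$-derivative of the argument of $\psi$ contributes a factor of $1/\bigl(r\log(\wt R/R)\bigr)$, so the chain rule yields, for $k=1,2,3$,
\begin{equation*}
|r^k\phi^{(k)}(r)|\leq \frac{C_\psi}{\log(\wt R/R)},\qquad r\in(0,\infty),
\end{equation*}
with a constant $C_\psi$ depending only on the fixed profile $\psi$. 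Choosing $\wt R$ so large that $\log(\wt R/R)\geq C_\psi/c$ then yields every estimate claimed: (P3) follows from $|q'|\leq r$ and $|q''|\leq 1+c$, and (P4), (P5), (P6) follow from the explicit formulas above. Properties (P1) and (P2) are immediate from the definition of $\phi$; the additive constant $c_0$ merely shifts $q$ without affecting any derivative.

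\textbf{The main obstacle} is the realization that a polynomial-speed cutoff \emph{cannot} work: if one transitioned $\phi$ from $1$ to $0$ across a fixed ratio $\wt R/R$, then $|r\phi'|$ would have a fixed positive lower bound, obstructing the bound $\Delta q\geq -c$ for arbitrarily small $c$. The logarithmic rescaling above lets the transition region, and hence $\wt R$, grow as $c\to 0$ while simultaneously driving $|r^k\phi^{(k)}|$ to zero, which is exactly the trade-off the lemma requires. Beyond recognizing this, the proof amounts to a direct bookkeeping of constants and a final choice of $\wt R$ in terms of $c$, $R$, and the fixed $\psi$.
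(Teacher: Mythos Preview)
Your construction is correct and is precisely the standard logarithmic-cutoff argument; the paper does not give a proof but refers to \cite[Proof of Lemma 4.13]{JL6}, where the same function is used. One harmless slip: in your formula for $\Delta^2 q$ the coefficient of $r^{-1}\phi'$ should be $(D-1)(D+1)=D^2-1$ rather than $2D-2$, but this does not affect the argument since only the bound $|r^k\phi^{(k)}|\le C_\psi/\log(\wt R/R)$ is used and you absorb the $D$-dependent constant by enlarging $\wt R$.
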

\begin{proof} See~\cite[Proof of Lemma 4.13]{JL6}. The exact same function can be used here. 
\end{proof}

\begin{defn}[Localized virial operator]
For each $\lam>0$ we set
\begin{align}
A(\lambda)g(r) &:= q'\big(\frac{r}{\lambda}\big)\cdot \p_r g(r) + \frac{D-2}{2D}\frac{1}{\lam} \De q \big(\frac{r}{ \lam}\big) g(r), \label{eq:opA}\\
\uln A(\lambda)g(r) &:=
 q'\big(\frac{r}{\lambda}\big)\cdot\p_r g(r) + \frac{1}{2}\frac{1}{\lam} \De q \big(\frac{r}{ \lam}\big) g(r) \label{eq:opA0}
\end{align}
where here $\De = \p_r^2 + \frac{D-1}{r}\p_r $. These operators depend on $c$ and $R$ as in Lemma~\ref{lem:q}. 
\end{defn}
Note the similarity between $A$ and $\frac{1}{\lambda} \Lam$ and between $\U A$ and $\frac{1}{\lam} \ULam$. For technical reasons we introduce the space 
\EQ{
X:= \{ \bs g \in \E \mid  (\p_r g, \dot g) \in \E\}.
}

\begin{lem}[Localized virial estimates]  \emph{\cite[Lemma 3.12]{JJ-AJM}} \label{lem:A} 
For any $c_0>0$ there exist $c_1, R_1>0$, so that for all $c, R$ as Lemma~\ref{lem:q} with $c< c_1$, $R> R_1$ the operators $A(\lambda)$ and $A_0(\lambda)$ defined in~\eqref{eq:opA} and~\eqref{eq:opA0} have the following properties:

  \begin{itemize}[leftmargin=0.5cm]
    \item the families $\{A(\lambda): \lam > 0\}$, $\{\U A(\lambda): \lam> 0\}$, $\{\lambda\partial_\lambda A(\lambda): \lam > 0\}$
      and $\{\lambda\partial_\lambda \U A(\lambda): \lambda > 0\}$ are bounded in $\mathscr{L}(\dot H^1; L^2)$, with the bound depending only on the choice of the function $q(r)$,
    \item  
   Let $\bs g_1 = \bs \calW(  \vec \iota, \vec \lam)$ be an $M$-bubble configuration and let $\bs g_2 \in X$. Then, for each $\lam_j$, 
       $j \in \{1, \dots, M\}$ we have 
      \EQ{ \label{eq:A-by-parts} 
      \La A( \lam_j) g_1 \mid (f( g_1 + g_2) - f(g_1) - f'(g_1) g_2 \Ra = - \La A( \lam_j) g_2 \mid f( g_1 + g_2) - f(g_1) \Ra
      }
    \item For all $\bs g \in X$ we have  
\EQ{
        \label{eq:pohozaev}
        \ang{\U A(\lambda)g \mid -\De g} \ge  - \frac{c_0}{\lambda}\| \bs g\|_{\E}^2 + \frac{1}{\lambda}\int_{R^{-1} \lam}^{R\lambda}  (\partial_r g)^2 \, r^{D-1} \, \ud r, 
        }
        \item  For $\lam, \mu >0$ with either $\lam/\mu \ll 1$ or $\mu/\lam \ll 1$, 
\begin{align} 
      \label{eq:L0-A0}
      \|\ULam \Lambda W_\uln{\lambda} - \U A(\lambda)\Lambda W_{\lambda}\|_{L^2} &\leq c_0, \\
      \label{eq:L-A}
      \|\big(\frac{1}{\lam} \Lambda  - A(\lambda)\big) W_\lambda\|_{L^{\frac{2D}{D-2}}} &\leq \frac{c_0}{\lambda},  \\
    \| A(\lam) W_\mu \|_{L^{\frac{2D}{D-2}}} + \| \U A(\lam) W_\mu \|_{L^{\frac{2D}{D-2}}}  &\lesssim  \frac{1}{\lam}  \min \{ (\lam/ \mu)^{\frac{D-2}{2}},  (\mu/ \lam)^{\frac{D-2}{2}} \}  \label{eq:A-mismatch} \\
     \| A(\lam) \Lam W_\mu \|_{L^2} + \| \U  A(\lam) \Lam W_\mu \|_{L^2} & \lesssim \min \{ (\lam/ \mu)^{\frac{D-2}{2}},  (\mu/ \lam)^{\frac{D-2}{2}} \} \label{eq:A-mismatch-2}
     \end{align} 
     
     \item  Lastly, the following localized coercivity estimate holds. Fix any smooth function $\calZ \in L^2 \cap X$ such that $\ang{\calZ \mid \Lam W} >0$ and $\ang{ \calZ \mid \calY} = 0$. For any $\bs g \in \E, \lam>0$ with $\ang{g \mid \calZ_{\U \lam}} = 0$, 
     \EQ{ \label{eq:coercive-virial}
    \frac{1}{\lambda}\int_{R^{-1} \lam}^{R\lambda}  (\partial_r g)^2 r^{D-1} \, \ud r -    \frac{1}{\lam} \int_{0}^\infty  \frac{1}{D} \De q\big( \frac{r}{\lam} \big)  f'(W_{\lam}) g^2  \, r^{D-1}\, \ud r  
   & \ge - \frac{c_0}{\lam} \| \bs g  \|_{\E}^2 \\
    &\quad -\frac{C_0}{\lam} \La\frac{1}{\lam}\calY_{\U \lam}  \mid  g\Ra^2 .
     }
  \end{itemize}
\end{lem}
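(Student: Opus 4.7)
My plan is to follow closely the proof of \cite[Lemma 3.12]{JJ-AJM}, which establishes the analogous statement for two bubbles in dimension $D = 6$; the arguments adapt to any $D \geq 4$ with the present definitions. The boundedness assertions in $\mathscr{L}(\dot H^1; L^2)$ are immediate from (P3): since $|q'(r/\lambda)|$ is bounded uniformly in $r, \lambda$, the first-order part of $A(\lambda)$ (and of $\uln A(\lambda)$, and of the $\lambda\partial_\lambda$ derivatives, which have the same structure) maps $\dot H^1 \to L^2$, while the zeroth-order term is, by (P2), supported in radii $\simeq \lambda$, where Hardy's inequality gives $\tfrac{1}{\lambda}\|g\|_{L^2(r\simeq\lambda)}\lesssim \|g\|_{\dot H^1}$.

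The integration-by-parts identity \eqref{eq:A-by-parts} reduces to the pointwise fact that $\langle A(\lambda) u \mid f(u)\rangle = 0$ for every radial $u$. Indeed, letting $\Phi(u) := \tfrac{D-2}{2D}|u|^{2D/(D-2)}$ so that $\Phi'(u) = f(u)$, integration by parts in $\int q'(r/\lambda) f(u)\partial_r u \, r^{D-1}\,\ud r$ converts the first-order part of $\langle A(\lambda) u\mid f(u)\rangle$ into the negative of the zeroth-order part -- the coefficient $\tfrac{D-2}{2D}$ in the definition of $A$ is placed precisely for this cancellation. Then \eqref{eq:A-by-parts} follows by applying this identity to $u = g_1 + g_2$ and subtracting the analog with $u = g_1$, or equivalently by differentiating $\langle A(g_1 + s g_2)\mid f(g_1 + sg_2)\rangle \equiv 0$ in $s$ at $s = 0$.

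The Pohozaev bound \eqref{eq:pohozaev} is the main technical step. Two rounds of integration by parts, using the $L^2$-skew-symmetry of $\uln A(\lambda)$, yield
\begin{equation*}
\langle \uln A(\lambda) g \mid -\De g\rangle = \frac{1}{\lambda}\int_0^\infty q''(r/\lambda)(\partial_r g)^2 r^{D-1}\,\ud r - \frac{1}{4\lambda^3}\int_0^\infty \De^2 q(r/\lambda)\, g^2 \, r^{D-1}\,\ud r.
\end{equation*}
On $[R^{-1}\lambda, R\lambda]$, property (P1) gives $q'' = 1$, producing the main positive contribution $\tfrac{1}{\lambda}\int_{R^{-1}\lambda}^{R\lambda}(\partial_r g)^2 r^{D-1}\,\ud r$. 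Away from this annulus, the function $q$ is constructed so that $q'' \geq -c$ pointwise, so the remainder of the first integral is bounded below by $-\tfrac{c}{\lambda}\|g\|_{\dot H^1}^2$; for the second integral, (P5) gives $|\De^2 q(r/\lambda)| \leq c\lambda^2/r^2$, and Hardy's inequality converts this to an $\tfrac{c}{\lambda}\|g\|_{\dot H^1}^2$ bound. Both errors are absorbed into $\tfrac{c_0}{\lambda}\|\bs g\|_{\cE}^2$ by taking $c$ sufficiently small.

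The asymptotic formulas \eqref{eq:L0-A0}--\eqref{eq:A-mismatch-2} are direct computations using $|W(r)|, |\Lam W(r)| \simeq r^{-(D-2)}$ as $r\to\infty$ together with (P1), (P2): on $[R^{-1}\lambda, R\lambda]$, $\uln A(\lambda)$ agrees with $\tfrac{1}{\lambda}\uln\Lam$ and $A(\lambda)$ with $\tfrac{1}{\lambda}\Lam$, so the mismatches come from the region where $W$ and $\Lam W$ decay rapidly; when $\mu \not\simeq \lambda$, the operators $A(\lambda), \uln A(\lambda)$ effectively localize to radii $r\simeq\lambda$, so applying them to $W_\mu$ or $\Lam W_\mu$ produces integrals decaying like $\min((\lambda/\mu)^{(D-2)/2}, (\mu/\lambda)^{(D-2)/2})$. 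Finally, the coercivity \eqref{eq:coercive-virial} follows from Lemma~\ref{l:loc-coerce} by viewing the quadratic form as a weighted version of $\langle \LL g \mid g\rangle$ with weight $\tfrac{1}{D}\De q(r/\lambda)$, which equals $1$ on $[R^{-1}\lambda, R\lambda]$ by (P1) and is bounded elsewhere while $f'(W_\lambda)$ decays rapidly outside this annulus; the orthogonality $\langle g\mid \calZ_{\U\lambda}\rangle = 0$ kills the $\calZ_{\U\lambda}$ projection in the output of Lemma~\ref{l:loc-coerce}, leaving only $\calY_{\U\lambda}$ on the right-hand side. The main obstacle is the careful interleaving of constants: $c$ in Lemma~\ref{lem:q} must be chosen small enough that both errors in the Pohozaev identity are absorbed, which then determines the threshold $R$ required for Lemma~\ref{l:loc-coerce} to apply in the coercivity estimate.
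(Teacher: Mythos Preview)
Your sketch is correct and follows the same approach the paper points to: the paper's own proof is simply a reference to \cite[Lemma 3.12]{JJ-AJM} and \cite[Lemma 4.13]{JL6}, and you have outlined precisely those arguments (boundedness from the pointwise bounds on $q$, the cancellation $\langle A(\lambda)u\mid f(u)\rangle=0$ via $\Phi'=f$, the standard Morawetz identity for \eqref{eq:pohozaev}, direct decay estimates for \eqref{eq:L0-A0}--\eqref{eq:A-mismatch-2}, and reduction of \eqref{eq:coercive-virial} to Lemma~\ref{l:loc-coerce}). One small point: the lower bound $q''\geq -c$ you invoke is not listed verbatim among (P1)--(P6), but it follows from (P6) together with $q'\geq 0$ (which the construction in \cite{JL6} ensures), since $(q'/r)'=q''/r - q'/r^2$ gives $q''\geq q'/r - c\geq -c$.
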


\begin{proof}
See~\cite[Lemma 3.12]{JJ-AJM} for the proof for $D =6$ and~\cite[Lemma 4.13]{JL6} for the case of $k$-equivariant wave maps.  That argument generalizes in a straightforward way to all $D \ge 4$. 
\end{proof}

The modulation parameters $\vec \lam(t)$ defined in Lemma~\ref{lem:mod-1} are imprecise proxies for the dynamics in the cases $3 \le D \le 6$  due to the fact that the orthogonality conditions were imposed relative to $\calZ \neq \Lam Q$ (note that we will treat the cases $D=3, 4, 5$ in the appendix). Indeed, we use~\ref{eq:g-ortho} primarily to ensure coercivity, and thus the estimate~\eqref{eq:g-upper}, as well as the differentiability of $\vec\lam(t)$. To access the dynamics of~\eqref{eq:nlw} we  introduce a correction $\vec \xi(t)$ defined as follows. For each $t \in J \subset [a_n, b_n]$ as in Lemma~\ref{lem:mod-1} set, 
\EQ{\label{eq:xi-def} 
\xi_j(t) := \begin{cases} \lam_j(t) \mif D \ge 7 \\ \lam_j(t) - \frac{\iota_j}{\| \Lam W \|_{L^2}^2} \La \chi(  \cdot/ L\lam_j(t)) \Lam W_{\U{\lam_j(t)}} \mid g(t) \Ra  \mif  D= 6\end{cases} 
}
for each $j = 1, \dots, K-1$, and where $L>0$ is a large parameter to be determined below. (Note that for $j =K$ we only require the brutal estimate~\eqref{eq:lam'}). We require yet another modification, since the dynamics  of~\eqref{eq:nlw} truly enter after taking two derivatives of the modulation parameters and it is not clear how to derive useful estimates from the expression for $\xi_j''(t)$. So we introduce a refined modulation parameter, which we view as a subtle correction to $\xi_j'(t)$.  For each $t \in J \subset [a_n, b_n]$ as in Lemma~\ref{lem:mod-1} and for each $j \in \{1, \dots, K\}$ define, 
\begin{equation} \label{eq:beta-def} 
\beta_j(t) :=- \frac{\iota_j}{ \| \Lam W \|_{L^2}^2} \La \Lam W_{\U{\lam_j(t)}} \mid \dot g(t)\Ra  -   \frac{1}{ \| \Lam W \|_{L^2}^2} \ang{ \uln A( \lam_j(t)) g(t) \mid \dot g(t)}.
\end{equation}
The function $\be_j(t)$ is identical to the function $\beta_j(t)$ in~\cite{JL6} and similar to the function called $b(t)$ in~\cite{JL1}. 

%
%

\begin{lem}[Refined modulation] 
\label{cor:modul}
Let $D \ge 6$ and $c_0\in (0 ,1)$.  There exist $\eta_0=\eta(c_0)>0$, $L_0 = L_0(c_0)$, as well as $c= c(c_0)>0, R=R(c_0)>1$ as in Lemma~\ref{lem:q}, a constant $C_0>0$, and a decreasing sequence $ \de_n \to 0$ so that the following is true. 
Let  $J \subset [a_n, b_n]$ be an open time interval with 
\EQ{
\de_n \le \bfd(t) \le \eta_0 
}
for all $t \in J$. Let $\calS :=  \{ j \in \{1, \dots, K-1\} \mid \iota_{j}  = \iota_{j+1} \}$.  Then, for all $t \in J$, 
\EQ{\label{eq:g-bound} 
\| \bs g(t) \|_{\E} + \sum_{i \not \in \calS}  \big(\lambda_i(t) / \lambda_{i+1}(t)\big)^{\frac{D-2}{4}} \le C_0 \max_{ i  \in \calS}  \big(\lambda_i(t) / \lambda_{i+1}(t)\big)^{\frac{D-2}{4}} + \max_{1 \leq i \leq K}|a_i^\pm(t)|, 
}
and, 
\begin{equation}
\label{eq:d-bound-nlw}
\frac{1}{C_0}\bfd(t) \leq \max_{i \in \cS}\big(\lambda_i(t) / \lambda_{i+1}(t)\big)^{\frac{D-2}{4}}
+ \max_{1 \leq i \leq K}|a_i^\pm(t)| \leq C_0 \bfd(t).
\end{equation}
Moreover, for all $j \in \{1, \dots, K-1 \}$,  $t \in J$, and $L \ge L_0$, 
\begin{equation}\label{eq:xi_j-lambda_j} 
|\xi_j(t) / \lambda_j(t) - 1| \leq c_0, 
\end{equation}
\begin{equation}\label{eq:lam_j'-beta_j} 
|\xi_j'(t) - \beta_j(t)| \leq c_0 \bfd(t) 
\end{equation}
and,  
\EQ{ \label{eq:beta_j'} 
 \beta_{j}'(t) &\ge  \Big( \iota_j \iota_{j+1}\om^2 -   c_0\Big) \frac{1}{\lam_{j}(t)} \left( \frac{\lam_{j}(t)}{\lam_{j+1}(t)} \right)^{\frac{D-2}{2}} +    \Big(-\iota_j \iota_{j-1}\om^2 -  c_0\Big) \frac{1}{\lam_{j}(t)} \left( \frac{\lam_{j-1}(t)}{\lam_{j}(t)} \right)^{\frac{D-2}{2}}   \\
&\quad  -  \frac{c_0}{\lam_j(t)}  \bfd(t)^2 - \frac{C_0}{\lam_j(t)} \Big( (a_j^+(t))^2 + (a_j^-(t))^2 \Big),   
}
where, by convention, $\lambda_0(t) = 0, \lam_{K+1}(t) = \infty$ for all $t \in J$, and $\om^2>0$ is defined by 
\EQ{\label{eq:omega-def} 
\omega^2 = \om^2(D): = \frac{D-2}{2D} (D(D-2))^{\frac{D}{2}}\| \Lam W \|_{L^2}^{-2}>0.
}
Finally, for each $j \in \{1, \dots, K\}$, 
\EQ{ \label{eq:dta-D} 
\Big| \dd t a_j^\pm(t) \mp \frac{\kappa}{\lambda_j(t)}a_j^\pm(t) \Big| \leq \frac{C_0}{\lambda_j(t)}\bfd(t)^2. 
}
\end{lem}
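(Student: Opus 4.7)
I would fix $c_0 \in (0,1)$ and select constants in the order $c_0 \leadsto (c, R) \leadsto L \leadsto \eta_0 \leadsto (\delta_n)$, with each parameter small (or large) relative to the previous, and with $\zeta_n \ll \delta_n \to 0$. The estimates \eqref{eq:g-bound}, \eqref{eq:d-bound-nlw} and \eqref{eq:dta-D} then follow at once from Lemma~\ref{lem:mod-1}: on $J$ the lower bound $\bfd(t) \ge \delta_n$ absorbs each $\zeta_n$-error on the right-hand sides of \eqref{eq:d-g-lam}, \eqref{eq:g-upper} and \eqref{eq:dta} into the main terms, up to enlarging $C_0$. For \eqref{eq:xi_j-lambda_j} there is nothing to prove when $D \ge 7$, while in $D=6$ the correction in \eqref{eq:xi-def} is $O(\|\bs g\|_\cE \cdot \lambda_j)$ by Cauchy--Schwarz (with an $L$-dependent constant), so \eqref{eq:g-bound} and $\bfd(t) \le \eta_0$ give $|\xi_j/\lambda_j - 1| \le C_L \eta_0 \le c_0$ once $\eta_0$ is small relative to $L$.

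For the identity \eqref{eq:lam_j'-beta_j}, I would differentiate \eqref{eq:xi-def} in time. The main piece is $\lambda_j'(t)$, which by the refined inversion \eqref{eq:lam'-refined} of the orthogonality system equals $-\iota_j\|\Lambda W\|_{L^2}^{-2}\langle\Lambda W_{\underline{\lambda_j}}\mid\dot g\rangle$ up to errors of size $\|\bs g\|_\cE^2 + (\lambda_{j\pm}/\lambda_{j\mp})^{(D-4)/2}\|\dot g\|_{L^2}$. Using \eqref{eq:g-bound} and $D\ge 6$, both errors are $\le c_0\bfd(t)$. The derivative of the $D=6$ correction in \eqref{eq:xi-def} produces, on the one hand, $\langle\chi(\cdot/L\lambda_j)\Lambda W_{\underline{\lambda_j}}\mid\dot g\rangle$ — which combined with the cutoff tail differs from $\langle\Lambda W_{\underline{\lambda_j}}\mid\dot g\rangle$ by $o_L(1)\|\dot g\|_{L^2}$ — and a rescaling-in-$\lambda_j$ term controlled by $|\lambda_j'|\|\bs g\|_\cE \lesssim\bfd^2$. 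Comparing with \eqref{eq:beta-def}, what remains is the virial correction $-\|\Lambda W\|^{-2}\langle\uln A(\lambda_j)g\mid\dot g\rangle$, which by Cauchy--Schwarz is $O(\|\bs g\|_\cE^2) = O(\bfd^2)\le c_0\bfd$.

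The heart of the proof is \eqref{eq:beta_j'}. Differentiating \eqref{eq:beta-def} and substituting the Hamiltonian equation $\partial_t \dot g = \Delta g + f(\calW(\vec\iota,\vec\lambda)+g) - \partial_t \dot{\calW} + \text{(cutoff/radiation error)}$ derived from \eqref{eq:g-eq-ham}, the cutoff/radiation error contributes $o_n(1)/\lambda_j$ (by \eqref{eq:nu-prop}, \eqref{eq:nu'}), which is absorbed in $c_0\bfd^2/\lambda_j$ since $\bfd \ge \delta_n$. I would split $f(\calW+g) = \sum_i \iota_i f(W_{\lambda_i}) + f_{\bfi}(\vec\iota,\vec\lambda) + f'(\calW)g + R(g)$, where $R(g) = O(|g|^2 |\calW|^{(6-D)/(D-2)} + |g|^{(D+2)/(D-2)})$ is quadratic-plus-higher in $g$. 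The interaction term $-\iota_j\|\Lambda W\|^{-2}\langle \Lambda W_{\underline{\lambda_j}}\mid f_{\bfi}(\vec\iota,\vec\lambda)\rangle$ is computed exactly by Lemma~\ref{lem:interaction} and, by the definition \eqref{eq:omega-def} of $\omega^2$, produces the leading terms $\iota_j\iota_{j\pm 1}\omega^2\lambda_j^{-1}(\lambda_\pm/\lambda_\mp)^{(D-2)/2}$ of \eqref{eq:beta_j'} up to errors $\theta\bfd^2/\lambda_j$, with $\theta$ arbitrarily small by tightening $\eta_0$. The virial quadratic form $-\|\Lambda W\|^{-2}\langle\uln A(\lambda_j)g\mid -\Delta g - f'(\calW)g\rangle$ is handled by integration by parts (Pohozaev \eqref{eq:pohozaev}), the rewriting \eqref{eq:A-by-parts}, the localized coercivity \eqref{eq:coercive-virial} using \eqref{eq:g-ortho}, and the mismatch estimates \eqref{eq:A-mismatch}, \eqref{eq:A-mismatch-2} for cross-bubble contributions; these last give contributions of the same form as the leading interactions with coefficients $\le c_0\omega^2$, hence absorbable. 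The residual $\langle \calY_{\underline{\lambda_j}}\mid g\rangle^2/\lambda_j$ from \eqref{eq:coercive-virial} is, up to the coercivity constants, bounded by $C_0\lambda_j^{-1}((a_j^+)^2 + (a_j^-)^2)$ via the definition \eqref{eq:al-def}, yielding the last error in \eqref{eq:beta_j'}. Finally, the rescaling term $\beta_j'$ acquires from $\frac{d}{dt}[\Lambda W_{\underline{\lambda_j}}]$ and $\frac{d}{dt}[\uln A(\lambda_j)g]$ are $O(|\lambda_j'|\|\bs g\|_\cE/\lambda_j) = O(\bfd^2/\lambda_j)$.

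The main obstacle is the bookkeeping of the cross-bubble contributions in the virial quadratic form: the operator $\uln A(\lambda_j)$ interacts with every $W_{\lambda_i}$, and one must verify that all such cross terms are dominated by the leading interaction constants $\omega^2$ with a small multiplicative loss, which forces the particular nested ordering of $c_0 \leadsto (c,R) \leadsto L \leadsto \eta_0$ described above and relies critically on the localized estimates \eqref{eq:A-mismatch}--\eqref{eq:A-mismatch-2}. Estimate \eqref{eq:dta-D} is then immediate from \eqref{eq:dta} by the same $\zeta_n$-absorption as in the first paragraph.
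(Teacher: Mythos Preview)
Your overall architecture matches the paper's proof, and the handling of \eqref{eq:g-bound}, \eqref{eq:d-bound-nlw}, \eqref{eq:xi_j-lambda_j}, \eqref{eq:dta-D} by $\zeta_n$-absorption is exactly right. There are, however, two places where your sketch does not survive contact with the computation.

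\textbf{The $D=6$ case of \eqref{eq:lam_j'-beta_j}.} You write that ``the main piece is $\lambda_j'(t)$, which by \eqref{eq:lam'-refined} equals $-\iota_j\|\Lambda W\|_{L^2}^{-2}\langle\Lambda W_{\underline{\lambda_j}}\mid\dot g\rangle$''. This is true only when $\calZ=\Lambda W$, i.e.\ $D\ge 7$. In $D=6$ one has $\calZ\neq\Lambda W$, and the very purpose of the correction in \eqref{eq:xi-def} is to compensate for this. The actual mechanism (see the paper's derivation of \eqref{eq:xi'-est}) is that the bare $\lambda_j'$ in $\xi_j'$ is \emph{cancelled} by the piece $\iota_j\lambda_j'\langle\chi_{L\lambda_j}\Lambda W_{\underline{\lambda_j}}\mid\Lambda W_{\underline{\lambda_j}}\rangle/\|\Lambda W\|^2$ coming from the $\sum_i\iota_i\lambda_i'\Lambda W_{\underline{\lambda_i}}$ term of $\partial_t g$ inside the correction's derivative; what survives is the $\dot g$ pairing. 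Your description would double-count the main term.

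\textbf{The sub-quadratic remainder in \eqref{eq:beta_j'} for $D\ge 7$.} This is the more serious omission. After the (eq:A-by-parts) cancellation, one is left with the term
\[
\iota_j\big\langle\big(A(\lambda_j)-\tfrac{1}{\lambda_j}\Lambda\big)W_{\lambda_j}\ \big|\ f_{\bfq}(\vec\iota,\vec\lambda,g)\big\rangle,
\]
and you need this to be $\le c_0\bfd(t)^2/\lambda_j$. Combining \eqref{eq:L-A} with the global bound \eqref{eq:fq-quadratic} gives only $c_0\lambda_j^{-1}\big(\|\bs g\|_\cE^2+\|\bs g\|_\cE^{(D+2)/(D-2)}\big)$, and for $D\ge 7$ the second term is $\simeq\bfd^{(D+2)/(D-2)}\gg\bfd^2$, so it cannot be absorbed. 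Calling $R(g)$ ``quadratic-plus-higher'' hides this: the nonlinearity is genuinely sub-quadratic in high dimensions. The paper resolves this (see the estimate \eqref{eq:8-term}) by noting that $(A(\lambda_j)-\tfrac{1}{\lambda_j}\Lambda)W_{\lambda_j}$ is supported on $\{r\le R^{-1}\lambda_j\}\cup\{r\ge R\lambda_j\}$, splitting each piece at the geometric mean $L\sqrt{\lambda_{j\mp 1}\lambda_j}$, using the crude bound $|f_{\bfq}|\lesssim|g|^{(D+2)/(D-2)}$ on the thin pieces, and on the thick middle pieces exploiting the lower bound $|\calW(\vec\iota,\vec\lambda;r)|\gtrsim\lambda_j^{-(D-2)/2}$ (resp.\ $\lambda_j^{(D-2)/2}r^{-(D-2)}$) to invoke the \emph{weighted} pointwise estimate $|f_{\bfq}|\lesssim|\calW|^{-(D-6)/(D-2)}|g|^2$ from \eqref{eq:fq-ineq}. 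This yields a genuine $\bfd^2/\lambda_j$ bound with a small prefactor coming from $R^{-2}$ and $L^{(D-2)/2}\eta_0^{?}$. Your sketch never isolates this term, and the argument you outline would fail at exactly this point for $D\ge 7$.
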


\begin{rem}
\label{rem:deltan}
Without loss of generality (upon enlarging $\eps_n$) we can assume that $\eps_n \ge \de_n$ 
so that Lemma~\ref{cor:modul} can always be applied on the time intervals
$J \subset [a_n, b_n]$ as long as $\bfd(t) \leq \eta_0$ on $J$.  
\end{rem}

Before proving Lemma~\ref{cor:modul} we rewrite the equation satisfied by $\bs g(t)$ in~\eqref{eq:g-eq-ham} in components as follows, 
\EQ{ \label{eq:g-eq} 
\p_t g(t) &= \dot g(t)  + \sum_{j=1}^K \iota_j \lam_{j}'(t) \Lam W_{\U{\lam_j(t)}}  + \phi( u(t), \nu(t))   \\
\p_t \dot g(t) &=  - \LL_{\calW} g +f_{\bfi}( \iota, \vec \lam) + f_{\bfq}( \vec \iota,  \vec \lam, g) + \dot \phi( u(t), \nu(t)), 
}
%
where 
\EQ{\label{eq:h-def} 
\phi(u, \nu) &= - u \frac{\nu'}{\nu} ( r \p_r \chi)( \cdot/ \nu) \\
\dot \phi( u, \nu) &= - \p_t u \frac{\nu'}{\nu} ( r \p_r \chi)( \cdot / \nu) - (r^2 \De \chi)( \cdot/ \nu)  r^{-2} u  \\
&\quad - 2 \frac{1}{r} (r \p_r \chi)( \cdot/ \nu)  \p_r u + \chi( \cdot/ \nu) f(u) - f( \chi( \cdot/ \nu) u)
}
which we note are supported in $r  \in ( \nu, \infty)$, 
\EQ{
f_{\bfi}(\vec \iota,  \vec \lam) &:= f \Big( \sum_{i = 1}^K \iota_i W_{\lam_i}\Big) - \sum_{i =1}^K \iota_i f(W_{\lam_i})\\
f_{\bfq}( \vec \iota, \vec \lam, g)& = f\Big( \sum_{i = 1}^K \iota_i W_{\lam_i} + g\Big)  - f\Big( \sum_{i = 1}^K \iota_i W_{\lam_i} \Big)  - f'\Big( \sum_{i = 1}^K \iota_i W_{\lam_i}\Big) g  . 
}
The subscript  $\bfi$ above stands for ``interaction'' and $\bfq$ stands for ``quadratic.'' 
For $\|\bs g \|_{\E} \le 1$, the term $f_{\bfq}( \vec \iota, \vec \lam)$ satisfies, 
\EQ{ \label{eq:fq-quadratic} 
\| f_{\bfq}( \vec \iota, \vec \lam) \|_{L^{\frac{2D}{D+2}}} \lesssim \| \bs g \|_{\E}^2 + \|\bs g\|_{\cE}^{\frac{D+2}{D-2}}. 
}
The proof of~\eqref{eq:fq-quadratic} follows the fact that the pointwise estimates, 
\EQ{
| f(x_1 + x_2) - f(x_1) - f'(x_1) x_2| \le \abs{z_2}^2 \mif D =6
}
and if $D \ge 7$, 
\EQ{ \label{eq:fq-ineq} 
| f(x_1 + x_2) - f(x_1) - f'(x_1) x_2| \le  \begin{cases}  \abs{x_2}^{\frac{D+2}{D-2}} \quad \forall x_1, x_2 \in \R  \\  \abs{x_1}^{-\frac{D-6}{D-2}} \abs{x_2}^2 \mif x_1 \neq 0 \end{cases} 
}
See~\cite[Lemma 2.1]{JJ-APDE} for the proof of the previous two estimates. 

In one instance it will be convenient to write the equation for $\dot g$ as follows, 
\EQ{ \label{eq:g-eq-alt} 
\p_t \dot g = \De g + f_{\bfi}(\vec \iota,  \vec \lam)  + \ti  f_{\bfq}( \vec \iota, \vec \lam, g)  + \dot \phi(u, \nu) 
}
with, 
\EQ{\label{eq:ti-fq-def} 
\ti f_{\bfq}( \vec \iota, \vec \lam, \vec g) := f\Big( \sum_{i = 1}^K \iota_i W_{\lam_i} + g\Big)  - f\Big( \sum_{i = 1}^K \iota_i W_{\lam_i} \Big) 
} 

\begin{proof}[Proof of Lemma~\ref{cor:modul} ]
First, we prove the estimates~\eqref{eq:g-bound} and~\eqref{eq:d-bound-nlw}.  Let $\zeta_n$ be the sequence given by Lemma~\ref{lem:mod-1} and let $\de_n$ be any sequence such that $\zeta_n/ \de_n \to 0$ as $n \to \infty$. Using Lemma~\ref{lem:mod-1}, estimate~\eqref{eq:g-bound} follows from~\eqref{eq:g-upper}  and the estimate~\eqref{eq:d-bound-nlw} follows from~\eqref{eq:d-g-lam}. 

Note also that with this choice of $\de_n$ and ~\eqref{eq:g-bound}, the estimate~\eqref{eq:lam'} leads to, 
\EQ{ \label{eq:lam'-new} 
\abs{\lam_j'(t)} \lesssim \bfd(t) 
}

Next, we treat~\eqref{eq:xi_j-lambda_j}, which is only relevant in the case $D =6$. From~\eqref{eq:xi-def} we see that, 
\EQ{
|\xi_j / \lam_j -1| &\lesssim |  \lam_j^{-1}\La \chi( \cdot/ L\lam_j)\Lam W_{\U{\lam_j}} \mid g \Ra | \\
& \lesssim \| g \|_{L^3}  \lam_j^{-1} \| \chi(L \cdot/ \lam_j) \Lam W_{\U{\lam_j}}  \|_{L^{\frac{3}{2}}}   \lesssim  (\log L)^{\frac{2}{3}}  \| \bs g \|_{\E}   
}
which is small by taking $\eta_0$ sufficiently small (after $L$ is fixed below).  

Next we compute $\xi_j'(t)$. For $D=6$, from~\eqref{eq:xi-def} we have 
\EQ{ \label{eq:xi'-exp} 
\xi_j' &=  \lam_j'  - \frac{\iota_j}{\| \Lam W \|_{L^2}^2}  \La \chi( \cdot/ L\lam_j ) \Lam W_{\U{\lam_j}} \mid \p_t g  \Ra
  \\
  &\quad + \frac{\iota_j}{\| \Lam W \|_{L^2}^2}\frac{ \lam_j' }{\lam_j} \La   (r \p_r \chi)( \cdot/  L\lam_j)  \Lam W_{\U{\lam_j}} \mid g\Ra + \frac{\iota_j}{\| \Lam W \|_{L^2}^2} \frac{\lam_{j}'}{\lam_j} \La \chi( \cdot/ L\lam_j) \ULam \Lam W_{\U{\lam_j}} \mid g\Ra . 
}
We examine each of the terms on the right above. The last two terms are negligible. Indeed, using $\| g \|_{L^3(\R^6)} \lesssim \| \bs g \|_{\E}$, 
\EQ{
\Big| \frac{ \lam_j'}{\lam_j} \La  ( r\p_r \chi)( \cdot/ L\lam_j)  \Lam W_{\U{\lam_j}} \mid g\Ra \Big| &\lesssim \abs{ \lam_j'}   \| g \|_{L^{3}}  \Big(\int_{2^{-1}L}^{2L} |\Lam W(r)|^{\frac{3}{2}} \,r^5 \, \ud r\Big)^{\frac{2}{3}} \\
& \lesssim  \bfd(t)^2 , 
}
and, 
\EQ{
\Big|\frac{\lam_{j}'}{\lam_j} \La \chi( \cdot/ L\lam_j) \ULam \Lam W_{\U{\lam_j}} \mid g\Ra \Big| & \lesssim \abs{ \lam_j'} \|g \|_{L^3} \int_0^{2L} |\ULam\Lam W(r)|^{\frac{3}{2}}\, r^5 \, \ud r  \lesssim (1+ \log( L)) \bfd(t)^2.  
}
Using~\eqref{eq:g-eq} in the second term in~\eqref{eq:xi'-exp} gives 
\EQ{
- \frac{\iota_j}{\| \Lam W \|_{L^2}^2}  \La \chi( \cdot/ L\lam_j ) \Lam W_{\U{\lam_j}} \mid \p_t g  \Ra&= - \frac{\iota_j}{\| \Lam W \|_{L^2}^2}  \La \chi( \cdot/ L\lam_j ) \Lam W_{\U{\lam_j}} \mid \dot g  \Ra  \\
&\quad -  \frac{\iota_j}{\| \Lam W \|_{L^2}^2}  \La\chi( \cdot/ L\lam_j ) \Lam W_{\U{\lam_j}} \mid \sum_{i=1}^K \iota_i \lam_{i}' \Lam W_{\U{\lam_i}} \Ra\\
&\quad - \frac{\iota_j}{\| \Lam W \|_{L^2}^2}  \La\chi( \cdot/ L\lam_j ) \Lam W_{\U{\lam_j}} \mid \phi( u, \nu)  \Ra. 
}
The first term on the right satisfies, 
\EQ{
- \frac{\iota_j}{\| \Lam W \|_{L^2}^2}  \La \chi( \cdot/ L\lam_j ) \Lam W_{\U{\lam_j}} \mid \dot g  \Ra   &=  -\frac{\iota_j}{\| \Lam W \|_{L^2}^2}  \La  \Lam W_{\U{\lam_j}} \mid \dot g  \Ra  +  \frac{\iota_j}{\| \Lam W \|_{L^2}^2}  \La (1-\chi( \cdot /L\lam_j )) \Lam W_{\U{\lam_j}} \mid \dot g  \Ra  \\
& =  -\frac{\iota_j}{\| \Lam W \|_{L^2}^2}  \La  \Lam W_{\U{\lam_j}} \mid \dot g  \Ra  + o_L(1)\|\bs g \|_{\E}. 
}
where the $o_L(1)$ term can be made as small as we like by taking $L>0$ large. 
Using~\eqref{eq:lam'-new}, the second term yields, 
\EQ{
& -  \frac{\iota_j}{\| \Lam W \|_{L^2}^2}  \La \chi( \cdot/ L\lam_j ) \Lam W_{\U{\lam_j}} \mid \sum_{i=1}^K \iota_i \lam_{i}' \Lam W_{\U{\lam_i}} \Ra 
 = - \lam_j'  \\  
 - \sum_{i \neq j} \frac{\iota_j\iota_i \lam_{i}'}{\| \Lam W \|_{L^2}^2} &  \La\chi( \cdot/ L\lam_j ) \Lam W_{\U{\lam_j}} \mid   \Lam W_{\U{\lam_i}} \Ra
  +   \frac{  \lam_{j}' }{\| \Lam W \|_{L^2}^2}  \La (1-  \chi( \cdot/ L\lam_j )) \Lam W_{\U{\lam_j}} \mid \Lam W_{\U{\lam_j}} \Ra \\
 & = - \lam_j' + O(( \lam_{j-1}/ \lam_j) + (\lam_j/ \lam_{j+1}) + o_L(1))  \bfd(t). 
} 
Finally, the third term vanishes due to the fact that for each $j< K$,  $L\lam_j  \ll \lam_K \ll \nu$, and hence
\EQ{
  \La \chi( \cdot/ L\lam_j ) \Lam W_{\U{\lam_j}} \mid \phi( u, \nu)  \Ra  = 0. 
}
Plugging all of this back into~\eqref{eq:xi'-exp} we obtain, 
\EQ{ \label{eq:xi'-est} 
\Big|\xi_j'(t) + \frac{\iota_j}{\| \Lam W \|_{L^2}^2}  \La  \Lam W_{\U{\lam_j}} \mid \dot g  \Ra  \Big| \le c_0 \bfd(t) . 
}
for $D =2$, after fixing $L>0$ sufficiently large. The same estimate for $D \ge 7$, i.e., when $\xi_j'(t) = \lam_j'(t)$, is immediate from~\eqref{eq:lam'-refined} since in this case we take $\calZ = \Lam W$. Thus~\eqref{eq:xi'-est} holds for all $D \ge 6$. 
The estimate~\eqref{eq:lam_j'-beta_j} is then immediate from~\eqref{eq:xi'-est}, the definition of $\be_j$,  and the estimate, 
\EQ{
\Big| \frac{1}{ \| \Lam W \|_{L^2}^2} \ang{ \uln A( \lam_j) g \mid \dot g} \Big| \lesssim \| \bs g \|_{\E}^2 , 
}
which follows from the first bullet point in Lemma~\ref{lem:A}. 

We prove~\eqref{eq:beta_j'}. We compute, 
\EQ{ \label{eq:beta'-1} 
\beta_j' &=  \frac{\iota_j}{ \| \Lam W \|_{L^2}^2}  \frac{\lam_j'}{\lam_j} \La \ULam \Lam W_{\U{\lam_j}} \mid \dot g \Ra  - \frac{\iota_j}{ \| \Lam W \|_{L^2}^2} \La \Lam W_{\U{\lam_j}} \mid \p_t  \dot g \Ra\\
&\quad -   \frac{1}{ \| \Lam W \|_{L^2}^2} \frac{\lam_j'}{\lam_j} \ang{ \lam_j \p_{\lam_j} \uln A( \lam_j) g \mid \dot g} -   \frac{1}{ \| \Lam W \|_{L^2}^2} \ang{ \uln A( \lam_j) \p_t g \mid \dot g} -    \frac{1}{ \| \Lam W \|_{L^2}^2} \ang{ \uln A( \lam_j) g \mid \p_t \dot g}.
}
Using~\eqref{eq:g-eq} we arrive at the expression, 
\EQ{
-\La \Lam W_{\U{\lam_j}} \mid \p_t  \dot g \Ra &=  \ang{ \Lam W_{\U{\lam_j}} \mid  (\LL_{\calW}- \LL_{\lam_j}) g} - \ang{ \Lam W_{\U{\lam_j}} \mid f_{\bfi}( \iota, \vec \lam) }    \\
&\quad   - \ang{ \Lam W_{\U{\lam_j}} \mid f_{\bfq}(\vec \iota,  \vec\lam, g)}  - \ang{ \Lam W_{\U{\lam_j}} \mid  \dot \phi( u, \nu)}, 
}
where in the first term on the right we used that $\LL_{\lam_j} \Lam W_{\U {\lam_j} } = 0$. Using~\eqref{eq:g-eq}  we obtain, 
\EQ{
- &\ang{ \uln A( \lam_j) \p_t g \mid \dot g}  = - \ang{ \uln A( \lam_j) \dot g \mid \dot g} -   \sum_{i=1}^K \iota_i \lam_{i}'  \ang{ \uln A( \lam_j) \Lam W_{\U{\lam_i}} \mid \dot g} -  \ang{ \uln A( \lam_j) \phi( u, \nu) \mid \dot g}\\
& = - \iota_j \lam_j'  \ang{ \uln A( \lam_j) \Lam W_{\U{\lam_j}} \mid \dot g} - \sum_{i \neq j} \iota_i \lam_{i}'  \ang{ \uln A( \lam_j) \Lam W_{\U{\lam_i}} \mid \dot g} -  \ang{ \uln A( \lam_j) \phi( u, \nu) \mid \dot g}
}
where we used that $ \ang{ \uln A( \lam_j) \dot g \mid \dot g} =0$. Finally, using~\eqref{eq:g-eq-alt} we have, 
\EQ{
-\ang{ \uln A( \lam_j) g \mid \p_t \dot g} & =  -\ang{ \uln A( \lam_j) g \mid \De g } - \ang{ \uln A( \lam_j) g \mid f_{\bfi}( \iota, \vec \lam) } \\
& \quad  - \ang{ \uln A( \lam_j) g \mid \ti  f_{\bfq}( \vec \iota,  \vec \lam, g)}-  \ang{ \uln A( \lam_j) g \mid \dot  \phi( u, \nu)}.
}
Plugging these back into~\eqref{eq:beta'-1} and rearranging we have,
\EQ{ \label{eq:beta'-exp} 
\| \Lam Q \|_{L^2}^2 \beta_j' &=  -  \frac{\iota_j }{\lam_j}  \ang{ \Lam W_{\lam_j} \mid f_{\bfi}(  \iota, \vec \lam) } -  \ang{ \uln A( \lam_j) g \mid \De g }  + \ang{ (A(\lam_j) - \uln A( \lam_j)) g \mid \ti  f_{\bfq}( \vec \iota,  \vec \lam, g)} \\
&\quad + \ang{ \Lam W_{\U{\lam_j}} \mid ( \LL_{\calW} - \LL_{\lam_j}) g} + \iota_j \frac{\lam_j' }{\lam_j}\ang{ \big( \frac{1}{\lam_j} \ULam - \U{A}( \lam_j) \big) \Lam W_{\lam_j} \mid \dot g} \\
&\quad    - \ang{ {A}(\lam_j) \sum_{i =1}^K \iota_i W_{\lam_i} \mid f_{\bfq}( \vec \iota,  \vec\lam, g)}   - \ang{ A( \lam_j) g \mid \ti  f_{\bfq}( \vec \iota,  \vec \lam, g)} \\
&\quad    + \iota_j \ang{ ({A}( \lam_j) -  \frac{1}{\lam_j}\Lam) W_{\lam_j} \mid f_{\bfq}( \vec \iota,  \vec\lam, g)}  -    \frac{\lam_j'}{\lam_j} \ang{ \lam_j \p_{\lam_j} \uln A( \lam_j) g \mid \dot g} \\
&\quad + \sum_{ i \neq j} \iota_i \ang{ {A}(\lam_j) W_{\lam_i} \mid  f_{\bfq}( \vec \iota,  \vec\lam, g)} \\ 
&\quad - \sum_{i \neq j} \iota_i \lam_{i}'  \ang{ \uln A( \lam_j) \Lam W_{\U{\lam_i}} \mid \dot g}   - \ang{ \uln A( \lam_j) g \mid f_{\bfi}( \iota, \vec \lam) } \\
&\quad  - \iota_j \ang{ \Lam W_{\U{\lam_j}} \mid  \dot \phi( u, \nu)} -  \ang{ \uln A( \lam_j) \phi( u, \nu) \mid \dot g} -  \ang{ \uln A( \lam_j) g \mid \dot  \phi( u, \nu)}
}
We examine each of the terms on the right-hand side above. The leading order contribution comes from the first term, i.e., by Lemma~\ref{lem:interaction} 
\EQ{
-   \frac{\iota_j }{\lam_j\| \Lam W \|_{L^2}^{2}}  \ang{ \Lam W_{\lam_j} \mid f_{\bfi}(  \iota, \vec \lam) } = - (\om^2 + O( \eta_0^2))  \frac{ \iota_j \iota_{j+1}}{\lam_j}  \Big( \frac{\lam_j}{ \lam_{j+1}}  \Big)^{\frac{D-2}{2}} + (\om^2 + O( \eta_0^2)) \frac{ \iota_j \iota_{j-1}}{\lam_j}\Big( \frac{\lam_{j-1}}{ \lam_{j}}  \Big)^{\frac{D-2}{2}}
}
The second  and third terms together will have a sign, up to an acceptable error. First, using~\eqref{eq:pohozaev} we have, 
\EQ{
 \ang{ \uln A( \lam_j) g \mid -\De g } \ge -\frac{c_0}{\lam_j} \| g \|_{H}^2 + \frac{1}{\lam_j} \int_{R^{-1} \lam_j}^{R \lam_j}  ( \p_r g)^2  \, r^{D-1} \ud r
}
To treat the third term, we start by using the definition~\eqref{eq:ti-fq-def} to observe the identity, 
\EQ{\label{eq:tif-exp} 
\ti  f_{\bfq}( \vec \iota, \vec \lam, g) = f_{\bfq}( \vec \iota, \vec \lam, g) + \Big( f'( \calW(\vec \iota, \vec \lam)) - f'(W_{\lam_j})  \Big) g + f'(W_{\lam_j}) g .
}
The first two terms above contribute acceptable errors. Indeed, using~\eqref{eq:fq-quadratic}, 
\EQ{
\Big|\La(A(\lam_j) - A_{\frac{1}{2}}( \lam_j) )g  \mid  f_{\bfq}( \vec \iota, \vec \lam, g)\Ra\Big| \lesssim \frac{1}{\lam_j} \big(\| \bs g \|_{\E}^3 +
\|\bs g\|_{\E}^{\frac{2D}{D-2}}\big), 
}
and since $\| f'( \calW(\vec \iota, \vec \lam)) - f'(W_{\lam_j})\|_{L^{\frac{D}{2}}}  \lesssim \eta_0$, we have,  
\EQ{
\Big|\La\lam_j^{\frac{1}{2}}(A(\lam_j) - A_{\frac{1}{2}}( \lam_j) )g  \mid (f'( \calW(\vec \iota, \vec \lam)) - f'(W_{\lam_j}) )g\Ra\Big| \le \frac{c_0}{\lam_j} \| \bs g \|_{\E}^2.
}
Putting this together with the fact that $(A(\lam_j) - \U A(\lam_j))  g  = -\frac{1}{D\lam_j} \De q(\cdot/ \lam_j)   g$ we have, 
\EQ{
\La(A(\lam_j) - A_{\frac{1}{2}}( \lam_j) )g  \mid  \ti  f_{\bfq}( \vec \iota, \vec \lam, g)\Ra \ge - \frac{1}{D\lam_j} \int_0^\infty \De q( r/ \lam_j) f'(W_{\lam_j}) g^2 \, r^{D-1} \, \ud r - \frac{c_0}{\lam_j}  \| \bs g \|_{\E}^2 .
}

We show that the remaining terms contribute acceptable errors. For the fourth term a direct calculation gives,  
\EQ{
\| \Lam W_{\lam_j}( f'( \calW( \vec\iota, \vec\lam)-f'(W_{\lam_j}))\|_{L^{\frac{2D}{D+2}}}\lesssim \Big(\frac{\lam_j}{\lam_{j+1}}\Big)^{\frac{D-2}{2}} + \Big(\frac{\lam_{j-1}}{\lam_{j}}\Big)^{\frac{D-2}{2}}
}
and hence, 
\EQ{
\abs{ \ang{ \Lam W_{\U{\lam_j}} \mid ( \LL_{\calW} - \LL_{\lam_j}) g}} 
& \lesssim \frac{1}{\lam_j} \| g \|_H  \Big( \Big( \frac{ \lam_j}{\lam_{j+1}} \Big)^{\frac{D-2}{2}} + \Big( \frac{ \lam_{j-1}}{\lam_{j}} \Big)^{\frac{D-2}{2}}\Big).
}
By~\eqref{eq:L0-A0} along with~\eqref{eq:lam'} we have, 
\EQ{
\abs{\iota_j \frac{\lam_j' }{\lam_j}\ang{ \big( \frac{1}{\lam_j} \ULam - \U{A}( \lam_j) \big) \Lam W_{\lam_j} \mid \dot g}} \lesssim \frac{c_0}{\lam_j} \bfd(t) \| \bs g \|_{\E} . 
}
For the sixth term on the right-hand side of~\eqref{eq:beta'-exp} we note that 
\EQ{
A( \lam_j) \sum_{i =1}^K \iota_i W_{\lam_i} = A( \lam_j) \calW(  \vec \iota, \vec \lam), 
}
and hence we may apply~\eqref{eq:A-by-parts} with $g_1 = \calW(  \vec \iota, \vec \lam)$ and $g_2 = g$ to conclude that 
\EQ{
   \La {A}(\lam_j) \sum_{i =1}^K \iota_i W_{\lam_i} \mid f_{\bfq}(\vec \iota,  \vec\lam, g)\Ra  + \ang{ A( \lam_j) g \mid \ti  f_{\bfq}( \vec \iota,  \vec \lam, g)}  =0,  
}
which takes care of the sixth and seventh terms. 
Next we consider the eighth term. We claim the estimate, 
\EQ{ \label{eq:8-term} 
\abs{ \ang{ ({A}( \lam_j) -  \frac{1}{\lam_j}\Lam) W_{\lam_j} \mid f_{\bfq}( \vec \iota,  \vec\lam, g)} } \le \frac{c_0}{ \lam_j}   \bfd(t)^2 . 
}
When $D=6$  this follows directly from~\eqref{eq:L-A} and ~\eqref{eq:fq-quadratic}.  
For dimensions $D \ge 7$ the brutal estimate~\eqref{eq:fq-quadratic} is not sufficient and we  require a more careful analysis, based on the point-wise estimate~\eqref{eq:fq-ineq}. First, recalling the definition of $A(\lam_j)$ we note that 
\EQ{ \label{eq:fq-regions} 
\Big|\La ({A}( \lam_j) -  \frac{1}{\lam_j}\Lam) W_{\lam_j} \mid f_{\bfq}( \vec \iota,  \vec\lam, g) \Ra\Big| &\lesssim \frac{1}{\lam_j} \int_{0}^{\frac{1}{R} \lam_j} |\Lam W_{\lam_j}| |f_{\bfq}(\vec \iota, \vec\lam, g)| \, r^{D-1} \ud r  \\
&\quad + \frac{1}{\lam_j} \int_{R \lam_j}^{\infty}  |\Lam W_{\lam_j}| |f_{\bfq}(\vec \iota, \vec\lam, g)| \, r^{D-1} \ud r 
}
For the first integral on the right we introduce an auxiliary large parameter $L>0$ and divide the integral into two regions  $ r \in (0, L \sqrt{\lam_{j-1} \lam_j})$ and $r \in [   L \sqrt{\lam_{j-1} \lam_j}, R^{-1} \lam_j]$. In the first region we use the first estimate in~\eqref{eq:fq-ineq} for $x_1 = \calW(\vec\iota, \vec \lam; r)$ and $x_1 = g( \cdot, r)$ to obtain
\EQ{
\frac{1}{\lam_j} \int_{0}^{L \sqrt{\lam_{j-1}{\lam_j}}} |\Lam W_{\lam_j}| |f_{\bfq}(\vec \iota, \vec\lam, g)| \, r^{D-1} \ud r  &\lesssim  \frac{1}{\lam_j}  \int_0^{L \sqrt{\lam_{j-1}{\lam_j}}}  |\Lam W_{\lam_j}| \abs{g}^{\frac{D+2}{D-2}} \, r^{D-1} \, \ud r  \\
& \lesssim \frac{1}{\lam_j} \Big( \int_0^{L \sqrt{\lam_{j-1}{\lam_j}}}  |\Lam W_{\lam_j}|^{\frac{2D}{D-2}} \, r^{D-1} \, \ud r \Big)^{\frac{D-2}{2}} \| \bs g \|_{\E}^{\frac{D+2}{D-2}}\\
&\lesssim \frac{1}{\lam_j} L^{\frac{D-2}{2}} \Big( \frac{\lam_{j-1}}{\lam_j} \Big)^{\frac{D-2}{4}} \| \bs g \|_{\E}^{\frac{D+2}{D-2}} \le \frac{c_0}{\lam_j} \bfd(t)^2 
}
by ensuring $\eta_0$ is sufficiently small relative to $L$. Next we observe that $L>0$ can be taken sufficiently large so that the point-wise estimate, 
\EQ{ \label{eq:calW-lower} 
\Big| \calW( \vec \iota, \vec \lam; r)|  \gtrsim \lam_{j}^{-\frac{D-2}{2}} 
}
holds uniformly in $r \in [   L \sqrt{\lam_{j-1} \lam_j}, R^{-1} \lam_j]$. Using the second inequality in~\eqref{eq:fq-ineq} we then have, 
\EQ{
\frac{1}{\lam_j} \int_{L \sqrt{\lam_{j-1}{\lam_j}}}^{\frac{1}{R} \lam_j} |\Lam W_{\lam_j}| |f_{\bfq}(\vec \iota, \vec\lam, g)| \, r^{D-1} \ud r  &\lesssim\frac{1}{\lam_j}  \int_{L \sqrt{\lam_{j-1}{\lam_j}}}^{\frac{1}{R} \lam_j} |\Lam W_{\lam_j}| |\calW(\vec\iota, \vec \lam; r)|^{-\frac{D-6}{D-2}} \abs{g}^2 \, r^{D-1} \, \ud r  \\
&\lesssim \frac{1}{\lam_j} \frac{1}{R^2}  \int_0^{\frac{1}{R}\lam_j} \frac{g^2}{r^2} \, r^{D-1} \, \ud r  \le \frac{c_0}{\lam_j} \bfd(t)^2 
}
where the last line follows from taking $R$ sufficiently large. The analysis of the second integral in~\eqref{eq:fq-regions} is analogous, this time dividing the region of integration $r \in [R \lam_j, \infty]$ into the regions $r \in [R\lam_j, L^{-1} \sqrt{\lam_j \lam_{j+1}}]$ and $r \in (L^{-1} \sqrt{\lam_j \lam_{j+1}}, \infty)$, and  using the point-wise estimate 
\EQ{
\Big| \calW( \vec\iota, \vec \lam; r)| \gtrsim \lam_j^{\frac{D-2}{2}} r^{-(D-2)} 
}
 in the region $r \in  [R\lam_j, L^{-1} \sqrt{\lam_j \lam_{j+1}}]$, which holds as long as $L$ is taken sufficiently large. 
This proves~\eqref{eq:8-term}.

Using the first bullet point in Lemma~\ref{lem:A} and~\eqref{eq:lam'} we estimate the ninth term as follows, 
\EQ{
 \abs{ \frac{\lam_j'}{\lam_j} \ang{ \lam_j \p_{\lam_j} \uln A( \lam_j) g \mid \dot g}} & \lesssim \frac{1}{\lam_j} \bfd(t) \| \bs g \|_{\E}^2  . 
 } 
Next, using~\eqref{eq:A-mismatch} and~\eqref{eq:fq-quadratic} we have, 
\EQ{
 \Big| \sum_{ i \neq j} \iota_i \ang{ {A}(\lam_j) W_{\lam_i} \mid  f_{\bfq}( \vec \iota,  \vec\lam, g)}  \Big| \le \frac{c_0}{ \lam_j} \bfd(t)^2 . 
}
 An application of~\eqref{eq:A-mismatch} and~\eqref{eq:lam'} gives 
 \EQ{
 \sum_{i \neq j} \abs{\lam_{i}'  \ang{ \uln A( \lam_j) \Lam Q_{\U{\lam_i}} \mid \dot g} }  \le   \frac{c_0}{ \lam_j}  \| g \|_{H}^2 . 
}
Next, consider the twelfth term. Using the first bullet point in Lemma~\ref{lem:A}, and in particular the spatial localization of $\U A( \lam_j)$ we obtain 
\EQ{
\abs{ \ang{ \uln A( \lam_j) g \mid f_{\bfi}( \iota, \vec \lam) } } \lesssim \| g \|_H \| f_{\bfi}( \iota, \vec \lam) \|_{ L^2(\ti R^{-1} \lam_j \le r \le \ti R \lam_j)} . 
}
Using the estimate, 
\EQ{
\| f_{\bfi}( \iota, \vec \lam)  \|_{L^2(\ti R^{-1} \lam_j \le r \le \ti R \lam_j)} \lesssim \frac{1}{\lam_j}  \Big( \frac{\lam_j}{\lam_{j+1} }\Big)^{\frac{D-2}{2}} +   \frac{1}{\lam_j} \Big( \frac{\lam_{j-1}}{\lam_{j}} \Big)^{\frac{D-2}{2}}. 
}
We obtain
\EQ{
\abs{ \ang{ \uln A( \lam_j) g \mid f_{\bfi}(  \iota, \vec \lam) } } \lesssim \frac{1}{\lam_j}\| g \|_H\Big( \Big( \frac{\lam_j}{\lam_{j+1} }\Big)^{\frac{D-2}{2}} +   \frac{1}{\lam_j} \Big( \frac{\lam_{j-1}}{\lam_{j}} \Big)^{\frac{D-2}{2}} \Big). 
}
Finally, we treat the last line of~\eqref{eq:beta'-exp}. First, using Lemma~\ref{lem:ext-sign} and the definition of $\dot\phi$ in~\eqref{eq:h-def} we have
\EQ{ \label{eq:outer-error} 
\abs{ \ang{ \Lam W_{\U{\lam_j}} \mid  \dot \phi( u, \nu)}} & \lesssim  \frac{1}{\lam_j} \Big( \frac{ \lam_j}{\nu} \Big)^{\frac{D-2}{2}}  \| \bs u(t)\|_{\E(\nu(t), 2 \nu(t))}  \lesssim \frac{\theta_n}{\lam_j}\Big( \frac{ \lam_j}{\nu} \Big)^{\frac{D-2}{2}}. 
}
for some sequence $\theta_n \to 0$ as $n \to \infty$. The last two terms in~\eqref{eq:beta'-exp} vanish due to the support properties of $\U A( \lam_j), \phi(u, \nu), \dot \phi(u, \nu)$ and the fact that $\lam_j \le \lam_K \ll \nu$. 

Combining these estimates in~\eqref{eq:beta'-exp} we obtain the inequality, 
\EQ{
\beta' &\ge  \Big({-} \iota_j \iota_{j+1}\om^2 -   c_0\Big) \frac{1}{\lam_{j}} \left( \frac{\lam_{j}}{\lam_{j+1}} \right)^{\frac{D-2}{2}} +    \Big(\iota_j \iota_{j-1}\om^2 -  c_0\Big) \frac{1}{\lam_{j}} \left( \frac{\lam_{j-1}}{\lam_{j}} \right)^{\frac{D-2}{2}}  \\
&   + \frac{1}{\lam_j} \int_{R^{-1} \lam_j}^{R \lam_j}  ( \p_r g)^2  \, r \ud r - \frac{1}{D\lam_j} \int_0^\infty \De q( r/ \lam_j) f'(W_{\lam_j}) g^2 \, r^{D-1} \, \ud r   -  c_0 \frac{ \bfd(t)^2}{ \lam_{j}}  
}
Finally, we use the localized coercivity estimate~\eqref{eq:coercive-virial} on the second line above along with the estimates $\La \frac{1}{\lam_j} \calY_{\U{\lam_j}} \mid g \Ra^2 \lesssim (a_j^+)^2 + (a_j^-)^2$ to see that 
\EQ{
 \frac{1}{\lam_j} \int_{R^{-1} \lam_j}^{R \lam_j}  ( \p_r g)^2  \, r \ud r - \frac{1}{D\lam_j} \int_0^\infty \De q( r/ \lam_j) f'(W_{\lam_j}) g^2 \, r^{D-1} \, \ud r \ge -   c_0 \frac{ \| g \|_H^2}{ \lam_{j}} - \frac{C_0}{\lam_j} \Big( (a_j^+)^2 + (a_j^-)^2 \Big) 
}
 This  completes the proof of~\eqref{eq:beta_j'}. 
 
 Finally, ~\eqref{eq:dta-D} follows from~\eqref{eq:dta} and our choice of $\de_n$.
\end{proof}


Finally, we prove that, again by enlarging $\epsilon_n$, we can control
the error in the virial identity, see Lemma~\ref{lem:vir}, by $\bfd$. 
\begin{lem}
\label{lem:virial-error}
There exist $C_0, \eta_0 > 0$ depending only on $D$ and $N$
and a decreasing sequence $\eps_n \to 0$ such that
\begin{equation}
|\Om_{1, \rho(t)}(\bs u(t)) + \frac{D-2}{2} \Om_{2, \rho(t)}(\bs u(t))| \leq C_0\bfd(t) 
\end{equation}
for all $t \in [a_n, b_n]$ such that $\eps_n \le \bfd(t) \leq \eta_0$,
$\rho(t) \leq \nu(t)$ and $|\rho'(t)| \leq 1$.
\end{lem}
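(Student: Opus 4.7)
The plan is to combine the modulation decomposition from Lemma~\ref{lem:mod-1} with a Pohozaev-type cancellation for each individual static bubble: since $\bs W_\mu$ is a genuine static solution of~\eqref{eq:nlw}, the virial identity~\eqref{eq:virial} together with $\partial_t W_\mu \equiv 0$ forces $\Om_{1,\rho}(\bs W_\mu) + \tfrac{D-2}{2}\Om_{2,\rho}(\bs W_\mu) = 0$ for every $\rho > 0$ and $\mu > 0$. This kills the dangerous ``pure bubble'' part of the error and reduces what remains to pairwise bubble interactions and to terms involving $\bs g$, all of which will be $O(\bfd(t))$.

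First I would enlarge $\eps_n$ (as allowed by Remark~\ref{rem:collision}) so that $\eps_n \geq \delta_n$ from Lemma~\ref{cor:modul} and $\eps_n \geq \theta_n^2$, where $\theta_n := \sup_{t\in[a_n,b_n]}\|\bs u(t)\|_{\cE(\nu(t)/4,\,4\nu(t))}$ tends to zero by~\eqref{eq:nu-prop}. Under the hypotheses of the lemma, Lemma~\ref{lem:mod-1} then supplies the decomposition $\chi(\cdot/\nu(t))\bs u(t)=\bs\calW(\vec\iota,\vec\lam(t))+\bs g(t)$ with $\|\bs g(t)\|_\cE\lesssim\bfd(t)$ and $\sum_{j=1}^{K-1}(\lam_j(t)/\lam_{j+1}(t))^{(D-2)/2}\lesssim\bfd(t)^2$. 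On $\{r\leq\nu(t)\}$ this gives $u(t)=\calW(\vec\iota,\vec\lam(t))+g(t)$ and, crucially, $\dot u(t)=\dot g(t)$, since $\bs\calW$ has zero time component. I would then split $\Om_{1,\rho(t)}+\tfrac{D-2}{2}\Om_{2,\rho(t)}$ into its $\rho'$-driven term, the $(\dot u)^2$-integral, and the remaining spatial quadratic/superquadratic terms, and estimate each separately. Using $\dot u=\dot g$, $|\rho'|\leq 1$, and the uniform $\cE$-bound on $\bs u$, Cauchy--Schwarz together with the Hardy-type estimate $\|r\partial_r u+\tfrac{D-2}{2}u\|_{L^2(\rho,2\rho)}\lesssim\rho\|\bs u\|_{\cE(\rho,2\rho)}$ bound the first two pieces by $\bfd(t)$ and $\bfd(t)^2$ respectively. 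For the remaining spatial pieces I would substitute $u=\calW+g$ and regroup into (a) pure-$\bs\calW$, (b) linear-in-$g$ cross, and (c) higher-order-in-$g$ contributions; the contributions (b) and (c) are controlled by $\|\bs g\|_\cE\cdot\|\bs\calW\|_{\cE(\rho,2\rho)}\lesssim\bfd(t)$ and $\|\bs g\|_\cE^2+\|\bs g\|_\cE^{2D/(D-2)}\lesssim\bfd(t)^2$.

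The heart of the argument, and the step I expect to be the main obstacle, is the pure-$\bs\calW$ contribution~(a). Expanding $\calW=\sum_{j=1}^K\iota_j W_{\lam_j}$, the quadratic quantities $(\partial_r\calW)^2$ and $\partial_r\calW\cdot\calW/r$ split bilinearly into diagonal ($i=j$) and off-diagonal ($i\neq j$) pieces, while $|\calW|^{2D/(D-2)}=\sum_j W_{\lam_j}^{2D/(D-2)}+R(\vec\iota,\vec\lam)$ with a multi-bubble interaction remainder $R$. The diagonal contributions reassemble exactly into $\sum_j\bigl[\Om_{1,\rho}(\bs W_{\lam_j})+\tfrac{D-2}{2}\Om_{2,\rho}(\bs W_{\lam_j})\bigr]=0$, by the Pohozaev/virial cancellation stated at the start. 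The off-diagonal cross integrals and the contribution of $R$ are estimated by pairwise bubble interaction bounds in the spirit of Lemma~\ref{lem:interaction}, yielding $\lesssim\sum_{i<j}(\lam_i/\lam_j)^{(D-2)/2}\lesssim\sum_{j=1}^{K-1}(\lam_j/\lam_{j+1})^{(D-2)/2}\lesssim\bfd(t)^2$. The possible contribution from the transition annulus $[\nu(t),2\rho(t)]$ (in the case $2\rho(t)>\nu(t)$, where the decomposition is not directly valid) is bounded crudely by $(1+|\rho'|)\|\bs u\|_{\cE(\nu,2\nu)}^2\lesssim\theta_n^2\leq\eps_n\leq\bfd(t)$, thanks to our enlargement of $\eps_n$. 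Summing all the estimates yields $|\Om_{1,\rho(t)}+\tfrac{D-2}{2}\Om_{2,\rho(t)}|\leq C_0\bfd(t)$, completing the plan.
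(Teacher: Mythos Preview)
Your approach is correct and follows essentially the same route as the paper. Both arguments decompose $\bs u$ via Lemma~\ref{lem:mod-1}, absorb the $o_n(1)$ transition errors by enlarging $\eps_n$, reduce to estimating $\Om_{1,\rho}(\bs\calW)+\tfrac{D-2}{2}\Om_{2,\rho}(\bs\calW)$, and then use that the diagonal single-bubble contributions vanish while the cross terms are $O\bigl(\sum_{i<j}(\lam_i/\lam_j)^{(D-2)/2}\bigr)\lesssim\bfd(t)^2$. The only cosmetic difference is how the single-bubble cancellation is justified: you invoke the virial identity \eqref{eq:virial} applied to the static solution $\bs W_\mu$ (so that both sides vanish for every $\rho$), whereas the paper records the equivalent \emph{pointwise} identity
\[
\tfrac12(\partial_r W)^2+\tfrac{D-2}{2D}|W|^{\frac{2D}{D-2}}+\tfrac{D-2}{2}\,\partial_r W\,\tfrac{W}{r}=0,
\]
which makes the diagonal cancellation manifest before integrating against the cutoff. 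Your handling of the transition annulus is also a slight variant: the paper simply notes $\|\bs u-\bs\calW-\bs g\|_{\cE(0,2\nu(t))}\to 0$ (so the decomposition is valid on the whole support of $(r\partial_r\chi)(\cdot/\rho)$ up to $o_n(1)$), whereas you bound the annulus contribution directly by $\|\bs u\|_{\cE(\nu,2\nu)}^2\le\eps_n\le\bfd(t)$. Both are fine.
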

\begin{proof}
Since $\lim_{n\to\infty}\sup_{t\in[a_n, b_n]}\|\bs u(t)\|_{\cE(\nu(t), 2\nu(t))} = 0$, Lemma~\ref{lem:mod-1} yields
\begin{equation}
\|\bs u(t) - \bs\calW( \vec\iota, \vec\lambda(t)) - \bs g(t)\|_{\cE(0, 2\nu(t))} \to 0, \qquad\text{as }n \to \infty.
\end{equation}
Using Remark~\ref{rem:deltan},~\eqref{eq:g-bound} and~\eqref{eq:d-bound-nlw} we have $\|\bs g(t)\|_\cE \lesssim \bfd(t)$, hence, after choosing $\eps_n \to 0$ sufficiently large, it suffices to check that
\begin{equation}
|\Omega_{1, \rho(t)}(\bs\calW( \vec\iota, \vec\lambda(t))) +  \frac{D-2}{2}\Omega_{2, \rho(t)}(\bs\calW( \vec\iota, \vec\lambda(t)))|
\leq C_0\bfd(t),
\end{equation}
which in turn will follow from
\EQ{
\Big| \int_0^\infty  \Big( \frac{1}{2} ( \p_r \calW( \vec \iota, \vec \lam))^2 + \frac{D-2}{2D} |\calW( \vec \iota, \vec \lam)|^{\frac{2D}{D-2}} + \frac{D-2}{2} \p_r \calW(\vec\iota, \vec \lam) \frac{\calW(\vec \iota, \vec \lam)}{r}  \Big) \, (r \p_r \chi)( r/ \rho) r^{D-1} \, \ud r  \Big| \\
\le C_0 \bfd(t)
}
Noting the identity, 
\EQ{
\frac{1}{2} ( \p_r W(r))^2 + \frac{D-2}{2D} |W(r)|^{\frac{2D}{D-2}} + \frac{D-2}{2} \p_r W(r) \frac{W(r)}{r}  = 0 
}
it suffices to estimates the cross terms in the integral above, and the desired bound follows from an explicit computation. 
\end{proof} 


\section{Conclusion of the proof} \label{sec:conclusion} 

\subsection{The scale of the $K$-th bubble}

As mentioned in the Introduction, the $K$-th bubble is of particular importance.
We introduce a function $\mu$ which is well-defined on every $[a_n, b_n]$,
and of size comparable with $\lambda_K$ on time intervals where the solution approaches a multi-bubble configuration.
\begin{defn}[The scale of the $K$-th bubble] \label{def:mu} 
Fix $\kappa_1 > 0$ small enough. For all $t \in I$, we set
\begin{equation}
\label{eq:mu-def}
\mu(t) := \sup\big\{r \leq \nu(t): \|\bs u(t)\|_{\cE(r, \nu(t))} = \kappa_1\big\}.
\end{equation}
\end{defn}
Note that, if $\kappa_1$ is sufficiently small, then $K > 0$ implies
$\|\bs u(t)\|_{\cE(0, \nu(t))} \geq 2\kappa_1$,
hence $\mu(t)$ is a well-defined finite positive number for all $t \in I$.
Since in the definition of $\mu(t)$ we can restrict to rational $r$, $\mu$ is a measurable function. Even if $\mu$ is not necessarily a continuous function, still $\mu(t)$
is well-defined for each individual value of $t$.
We stress that $\mu(t) \leq \nu(t)$ for all $n$ large enough and $t \in [a_n, b_n]$,
thus $\mu(t) \ll \mu_{K+1}(t)$ as $n \to \infty$.

We also introduce a specific ``regularization'' of $\mu$.
For a given collision interval $[a_n, b_n]$, we set
\begin{equation}
\label{eq:mustar-def}
\mu_*: [a_n, b_n]: \to (0, \infty), \qquad \mu_*(t) := \inf_{s \in [a_n, b_n]}\big(4\mu(s) + |s-t|\big).
\end{equation}
We choose not to include $n$ in the notation. We stress that $\mu_*$ depends on $n$,
which will be known from the context.
\begin{lem}
\label{lem:mustar}
The function $\mu_*$ has the following properties:
\begin{enumerate}[(i)]
\item its Lipschitz constant is $\leq 1$,
\item there exist $\delta, \kappa_2 > 0$ and $n_0 \in \bN$ depending on $\kappa_1$
such that $t \in [a_n, b_n]$ with $n \geq n_0$ and $\bfd(t) \leq \delta$
imply $\kappa_2 \lambda_K(t) \leq \mu^*(t) \leq \kappa_2^{-1}\lambda_K(t)$,
where $\lambda_K(t)$ is the modulation parameter defined in Lemma~\ref{lem:mod-1},
\item if $t_n \in [a_n, b_n]$, $1 \ll r_n \ll \mu_{K+1}(t_n)/\mu_*(t_n)$ and $\lim_{n \to \infty}\bs\de_{r_n\mu_*(t_n)}(t_n) = 0$, then $\lim_{n\to \infty}\bfd(t_n) = 0$.
\end{enumerate}
\end{lem}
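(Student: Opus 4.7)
Part (i) is immediate: for each fixed $s \in [a_n, b_n]$, the function $t \mapsto 4\mu(s) + |s-t|$ is $1$-Lipschitz in $t$, so the infimum $\mu_*$ inherits this property.

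For part (ii), the plan is to first establish the two-sided comparison $\mu(t) \simeq_{\kappa_1} \lambda_K(t)$ on the set $\{\bfd(t) \leq \delta\}$, and then propagate the lower bound to times close to $t$. Given $\bfd(t) \leq \delta$, Lemma~\ref{lem:mod-1} and Lemma~\ref{lem:ext-sign} decompose $\bs u(t)$ on $r \leq \nu(t)$ as $\sum_{j=1}^{K}\iota_j \bs W_{\lambda_j(t)} + \sum_{j > K}\sigma_{n,j}\bs W_{\mu_j(t)} + o_n(1)$ in $\cE$, with scales separated by powers of $\bfd(t) + o_n(1)$. The scale-invariant computation $\|\bs W\|_{\cE(R,\infty)}^2 \simeq R^{-D}$ for $R \gg 1$ (from the explicit decay $W(r) \simeq C\, r^{-(D-2)}$), together with the negligible contribution of non-$K$-th bubbles in the annulus $(\lambda_K(t)/A, A\lambda_K(t))$ (of sizes $\lesssim(\lambda_{K-1}(t)/\lambda_K(t))^{D-2}$ and $(\lambda_K(t)/\mu_{K+1}(t))^{D-2}$), forces $r \mapsto \|\bs u(t)\|_{\cE(r, \nu(t))}$ to cross $\kappa_1$ at some $r$ with $c \lambda_K(t) \leq r \leq C(\kappa_1)\lambda_K(t)$. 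Since this map is continuous and non-increasing in $r$, we obtain $\mu(t)$ in the same interval, and hence the upper bound $\mu_*(t) \leq 4\mu(t) \leq 4C(\kappa_1)\lambda_K(t)$.

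For the lower bound $\mu_*(t) \gtrsim_{\kappa_1} \lambda_K(t)$, it suffices to show that every $s \in [a_n, b_n]$ with $|s-t| \leq c_0 \lambda_K(t)$ satisfies $\mu(s) \geq c\lambda_K(t)$, since in the complementary case $|s-t| > c_0 \lambda_K(t)$ already gives $4\mu(s)+|s-t| \geq c_0\lambda_K(t)$. The decomposition above yields
\[
\|\bs u(t) - \iota_K \bs W_{\lambda_K(t)}\|_{\cE(\lambda_K(t)/A, A\lambda_K(t))} \lesssim \delta + (\lambda_{K-1}(t)/\lambda_K(t))^{(D-2)/2} + o_n(1)
\]
for any fixed $A$, which is made arbitrarily small by shrinking $\delta$ and enlarging $n$. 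Applying Lemma~\ref{lem:evolve-W-local} to the single bubble $\iota_K\bs W_{\lambda_K(t)}$ on an interval of length $c_0\lambda_K(t)$ around $t$ (with $c_0$ small depending on $A$) propagates this closeness to every such $s$ on a slightly contracted annulus. A direct lower bound $\|\iota_K\bs W_{\lambda_K(t)}\|_{\cE(c\lambda_K(t), 2c\lambda_K(t))} \geq 2\kappa_1$ (valid for $\kappa_1$ chosen small) then gives $\|\bs u(s)\|_{\cE(c\lambda_K(t), \nu(s))} \geq \kappa_1$, hence $\mu(s) \geq c\lambda_K(t)$.

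Part (iii) reduces to Lemma~\ref{lem:trapping-of-d} applied with $\mu_n := r_n \mu_*(t_n)$. Hypothesis (i) of that lemma is given, and (iii) is the vanishing assumption $\bs\delta_{\mu_n}(t_n) \to 0$. For (ii), the trivial bound $\mu_*(t_n) \leq 4\mu(t_n)$ together with $r_n \to \infty$ gives $\mu_n \geq \mu(t_n)$ for $n$ large; then when $\mu_n < \nu(t_n)$, monotonicity of $r \mapsto \|\bs u(t_n)\|_{\cE(r, \nu(t_n))}$ and the definition of $\mu(t_n)$ yield $\|\bs u(t_n)\|_{\cE(\mu_n, \nu(t_n))} \leq \kappa_1 \leq \eta_0$, the last inequality by fixing $\kappa_1$ sufficiently small in Definition~\ref{def:mu}. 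The main obstacle is the lower bound in part (ii): the hypothesis $\bfd(t)\leq \delta$ controls a single instant, so the modulation machinery of Lemma~\ref{lem:mod-1} cannot be invoked at neighbouring $s$. The resolution is to bypass modulation at $s$ entirely and use the perturbative one-bubble Cauchy theory of Lemma~\ref{lem:evolve-W-local}, which preserves closeness to a fixed rescaled ground state over a time comparable to its scale without any hypothesis on $\bfd(s)$.
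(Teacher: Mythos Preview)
Your arguments for (i) and the upper bound in (ii) are correct and match the paper. Your approach to the lower bound in (ii) via Lemma~\ref{lem:evolve-W-local} is a valid alternative to the paper's route.

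However, your verification of hypothesis (ii) of Lemma~\ref{lem:trapping-of-d} in part (iii) contains a gap. The trivial bound $\mu_*(t_n)\le 4\mu(t_n)$ goes the wrong way: it gives $\mu_n = r_n\mu_*(t_n)\le 4r_n\mu(t_n)$, not $\mu_n\ge\mu(t_n)$. Without a lower bound on the ratio $\mu_*(t_n)/\mu(t_n)$ there is no reason why $r_n\to\infty$ should force $\mu_n\ge\mu(t_n)$; indeed $\mu_*(t_n)$ is an infimum over $s$ and can in principle be much smaller than $\mu(t_n)$ if $\mu(s)$ is tiny at some nearby time $s$.

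The paper resolves this by proving, once and for all, the universal estimate
\[
\|\bs u(t)\|_{\cE(\mu_*(t),\nu(t))}\le C\kappa_1\qquad\text{for every }t\in[a_n,b_n],
\]
which immediately gives hypothesis (ii) of Lemma~\ref{lem:trapping-of-d} since $\mu_n\ge\mu_*(t_n)$ once $r_n\ge 1$. The proof of this estimate propagates small energy \emph{from the near-infimizing $s$ back to $t$}: by the definition of $\mu(s)$ one has $\|\bs u(s)\|_{\cE(\mu(s),16\nu(s))}\le 2\kappa_1$, and Lemma~\ref{lem:prop-small-E-local} transports this to $\|\bs u(t)\|_{\cE(4\mu(s)+|t-s|,\,\nu(t))}\le C\kappa_1$; taking the infimum over $s$ gives the claim. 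The same estimate also yields the lower bound in (ii) directly (once $\bfd(t)\le\delta$, the $K$-th bubble forces $\|\bs u(t)\|_{\cE(\kappa_2\lambda_K(t),\nu(t))}>C\kappa_1$, hence $\mu_*(t)\ge\kappa_2\lambda_K(t)$), making your Lemma~\ref{lem:evolve-W-local} argument unnecessary. Note that this propagation uses only the definition of $\mu(s)$, not any modulation at $s$, so the ``obstacle'' you identify is in fact circumvented in a more uniform way than your proposal suggests.
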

\begin{proof}
Statement (i) is clear.

Recall that $4\mu(t) \leq 4\nu(t)$, hence the definition of $\mu_*(t)$ yields
\begin{equation}
\mu_*(t) = \inf\big\{4\mu(s) + |s-t|: s \in [a_n, b_n] \cap [t - 4\nu(t), t + 4\nu(t)]\big\} .
\end{equation}
Let $s \in [a_n, b_n] \cap [t - 4\nu(t), t + 4\nu(t)]$.
By the definition of $\mu$ and \eqref{eq:en-R-2R}, we have, for $n$ large enough,
\begin{equation}
\|\bs u(t)\|_{\cE(\mu(s), 16\nu(s))} \leq 2\kappa_1,
\end{equation}
hence Lemma~\ref{lem:prop-small-E-local} yields
\begin{equation}
\|\bs u(t)\|_{\cE(4\mu(s) + |t - s|, 4\nu(s) - |t - s|)} \leq C\kappa_1.
\end{equation}
From \eqref{eq:nu'} and $|s - t| \leq 4\nu(t)$, we deduce that $\nu(t) \leq 4\nu(s) - |t - s|$, thus
\begin{equation}
\|\bs u(t)\|_{\cE(4\mu(s) + |t - s|, \nu(t))} \leq C\kappa_1.
\end{equation}
Taking the supremum with respect to $s \in [a_n, b_n] \cap [t - 4\nu(t), t + 4\nu(t)]$, we obtain
\begin{equation}
\label{eq:right-to-mustar}
\|\bs u(t)\|_{\cE(\mu_*(t), \nu(t))} \leq C\kappa_1.
\end{equation}
Statement (iii) now follows from Lemma~\ref{lem:trapping-of-d} with $\mu_n = r_n\mu_*(t_n)$,
provided that we choose $\kappa_1 \leq \eta_0 / C$.

Let $\kappa_2 > 0$ be such that
\begin{equation}
\|W\|_{H(r \geq (4\kappa_2)^{-1})} \leq \frac 12\kappa_1, \qquad \|W\|_{H(r \geq \kappa_2)} \geq 2C\kappa_1,
\end{equation}
where $C$ is the constant in \eqref{eq:right-to-mustar}.
It is clear that $\frac 14\mu_*(t) \leq \mu(t)$, hence \eqref{eq:mu-def} yields
\begin{equation}
\|\bs u(t)\|_{\cE(\frac 14\mu_*(t), \nu(t))} \geq \kappa_1.
\end{equation}
Thus, in order to prove that $\mu_*(t) \leq \kappa_2^{-1}\lambda_K(t)$, it suffices to check that
\begin{equation}
\|\bs u(t)\|_{\cE((4\kappa_2)^{-1}\lambda_K(t), \nu(t))} < \kappa_1.
\end{equation}
We use \eqref{eq:u-decomp}. By \eqref{eq:d-g-lam}, $\|\bs g\|_\cE \ll 1$
when $\delta \ll 1$ and $n_0 \gg 1$. Thus, it suffices to see that
\begin{align}
\|\bs\calW(\vec\iota, \vec\lambda)\|_{\cE((4\kappa_2)^{-1}\lambda_K(t), \nu(t))} < \kappa_1,
\end{align}
whenever $\sum_{j=1}^K \lambda_j(t)/\lambda_{j+1}(t) \ll 1$,
which is obtained directly from the definitions of $\bs \calW$ and $\kappa_2$.

Similarly, using \eqref{eq:right-to-mustar}, we will have $\mu_*(t) \geq \kappa_2 \lambda_K(t)$ if we can prove that
\begin{equation}
\|\bs u(t)\|_{\cE(\kappa_2\lambda_K(t), \nu(t))} > C\kappa_1.
\end{equation}
But the last bound follows from
\begin{align}
\|\bs\calW(\vec\iota, \vec\lambda)\|_{\cE(\kappa_2\lambda_K(t), \nu(t))} > C\kappa_1
\end{align}
whenever $\sum_{j=1}^K \lambda_j(t)/\lambda_{j+1}(t) \ll 1$.
\end{proof}

Our next goal is to prove that the minimality of $K$ (see Definition~\ref{def:K-choice}) implies a lower bound
on the length of the collision intervals.

\begin{lem}
\label{lem:cd-length}
If $\eta_1 > 0$ is small enough, then for any $\eta \in (0, \eta_1]$ there exist $\epsilon \in (0, \eta)$ and $C_{\bs u} > 0$
having the following property.
If $[c, d] \subset [a_n, b_n]$, $\bfd(c) \leq \epsilon$, $\bfd(d) \leq \epsilon$ and there exists $t_0 \in [c, d]$
such that $\bfd(t_0) \geq \eta$, then
\begin{equation}
\label{eq:cd-length}
d - c \geq C_{\bs u}^{-1}\max(\mu_*(c), \mu_*(d)).
\end{equation}
\end{lem}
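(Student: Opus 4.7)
I will argue by contradiction, the plan being to produce a collision interval with $N - (K-1)$ exterior bubbles, which would contradict the minimality of $K$ from Definition~\ref{def:K-choice} (or Lemma~\ref{lem:K-exist} if $K=1$). Assume the conclusion fails for some $\eta \in (0, \eta_1]$. Then after swapping the endpoints and passing to a subsequence one can extract $\epsilon_n \to 0$, intervals $[c_n, d_n] \subset [a_n, b_n]$, and times $t_{0,n} \in (c_n, d_n)$ with $\bfd(c_n), \bfd(d_n) \le \epsilon_n$, $\bfd(t_{0,n}) \ge \eta$, $\mu_n := \mu_*(c_n) \ge \mu_*(d_n)$, and $\tau_n := d_n - c_n \le n^{-1}\mu_n$. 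Lemma~\ref{lem:mustar}(ii) then gives $\mu_n \asymp \lambda_K(c_n)$, so that $\tau_n \ll \lambda_K(c_n)$.

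Lemma~\ref{lem:mod-1} provides, at $t = c_n$, signs $\vec\iota \in \{-1,1\}^K$ and scales $\lambda_1(c_n) \ll \cdots \ll \lambda_K(c_n) \ll \nu(c_n)$, while Lemma~\ref{lem:ext-sign} supplies continuous exterior data $\vec\sigma_n$, $\mu_{K+1}(t), \ldots, \mu_N(t)$ on $[a_n, b_n]$. I take the \emph{constant} curve
\[ \rho_{K-1} := R_n^{-1}\lambda_K(c_n), \qquad t \in [c_n, d_n],\]
with $R_n \to \infty$ chosen slowly enough that $R_n\,\lambda_{K-1}(c_n) \ll \lambda_K(c_n)$ and $R_n\,\tau_n \ll \lambda_K(c_n)$; such $R_n$ exists by Lemma~\ref{lem:sequences} since both ratios tend to $0$. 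For the candidate configuration entering $\bfd_{K-1}(t;\rho_{K-1})$, I freeze the $K$-th scale at $\lambda_K^{\mathrm{new}}(t) := \lambda_K(c_n)$ and reuse $\mu_j(t)$ for $j > K$. The smallness of the ratios in $\bfd_{K-1}$ then follows from $\rho_{K-1}/\lambda_K(c_n) = R_n^{-1} \to 0$, from $\lambda_K(c_n)/\mu_{K+1}(t) \to 0$ (using~\eqref{eq:ti-lam-K} together with the fact that the system~\eqref{eq:mu-sys} forces $\mu_{K+1}$ to vary by $o(\tau_n) \ll \mu_{K+1}$ on $[c_n, d_n]$), and from Lemma~\ref{lem:ext-sign}.

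The analytic heart of the argument is to establish
\[ \sup_{t \in [c_n, d_n]}\bigl\| \bs u(t) - \bs u^*(t) - \iota_K \bs W_{\lambda_K(c_n)} - \textstyle\sum_{j=K+1}^N \sigma_{n,j} \bs W_{\mu_j(t)} \bigr\|_{\cE(\rho_{K-1}, \infty)} \to 0. \]
I split this norm at $r = \nu(t)$: the outer region is controlled by Lemma~\ref{lem:ext-sign} and~\eqref{eq:u*-interior}, while the region $\rho_{K-1} \le r \le \nu(t)$ is handled by Lemma~\ref{lem:evol-of-Q} with sign $\iota_K$, scale $\lambda_K(c_n)$, inner radius $\rho_{K-1}$, outer radius $\widetilde R_n := \nu(c_n)/2$, and time horizon $\tau_n$. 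The input hypothesis $\|\bs u(c_n) - \iota_K \bs W_{\lambda_K(c_n)}\|_{\cE(\rho_{K-1}/4,\,4\widetilde R_n)} \to 0$ is verified by expanding $\bs u(c_n)$ via Lemma~\ref{lem:mod-1}: the modulation error is $O(\bfd(c_n) + \zeta_n)$; the inner bubbles $j < K$ contribute $\lesssim (R_n\,\lambda_{K-1}(c_n)/\lambda_K(c_n))^{(D-2)/2}$ by the tail decay~\eqref{eq:W-leading}; the exterior bubbles $j > K$ contribute $\lesssim \nu_n^{(D-2)/2}$; and $\bs u^*(c_n)$ is negligible by~\eqref{eq:u*-interior}.

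The main obstacle will be the tail bookkeeping at $c_n$ feeding Lemma~\ref{lem:evol-of-Q}, in particular balancing the choice of $R_n$ so that it is large enough to kill the inner bubbles yet small enough to satisfy $R_n\tau_n \ll \lambda_K(c_n)$. The two thin boundary annuli $(\rho_{K-1}, \rho_{K-1}+\tau_n)$ and $(\nu(t)-\tau_n, \nu(t))$ not directly delivered by Lemma~\ref{lem:evol-of-Q} will be controlled separately using Lemma~\ref{lem:prop-small-E-local} and Lemma~\ref{lem:ext-sign}, exploiting $\tau_n \ll \rho_{K-1}$. Once the displayed estimate is in hand, $\bfd_{K-1}(t;\rho_{K-1}) \to 0$ uniformly on $[c_n, d_n]$, so $[c_n, d_n] \in \calC_{K-1}(\epsilon'_n, \eta)$ for some $\epsilon'_n \to 0$, yielding the required contradiction.
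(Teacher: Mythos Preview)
Your approach is the same as the paper's: argue by contradiction, extract a sequence of short subintervals $[c_n,d_n]$, and show they belong to $\calC_{K-1}$ by propagating the $K$-th bubble via Lemma~\ref{lem:evol-of-Q}, contradicting the minimality of $K$.

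The execution differs in two places, and one of them contains a slip. The paper takes the \emph{moving} curve $\rho_{K-1}(t)=r_n+(t-c_n)$ with $\lambda_{K-1}(c_n)\ll r_n\ll\lambda_K(c_n)$, and an outer radius $R_n$ with $\nu(c_n)\ll R_n\ll\mu_{K+1}(c_n)$. Because $\rho_{K-1}(t)$ is exactly the lower light-cone boundary produced by Lemma~\ref{lem:evol-of-Q}, and because $R_n-(t-c_n)\gg\nu(t)$ for all $t\in[c_n,d_n]$, a single application of that lemma already controls $\bs u(t)-\iota_K\bs W_{\lambda_K(c_n)}$ on all of $(\rho_{K-1}(t),\infty)$ that matters, with no residual annuli to patch.

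Your choice $\widetilde R_n=\nu(c_n)/2$ does not do this: Lemma~\ref{lem:evol-of-Q} only delivers control on $(\rho_{K-1}+t,\ \nu(c_n)/2-t)$, so the uncovered outer region is $(\nu(c_n)/2-\tau_n,\ \nu(t))\approx(\tfrac12\nu(c_n),\ \nu(c_n))$, which is \emph{not} the thin annulus $(\nu(t)-\tau_n,\nu(t))$ you describe. This gap is still controllable (at time $c_n$ the solution is small on $(\tfrac18\nu(c_n),4\nu(c_n))$ by modulation plus \eqref{eq:nu-prop}, and one propagates with Lemma~\ref{lem:prop-small-E-local}), but that is an extra step you have not accounted for. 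The simplest fix is the paper's: take the outer radius strictly between $\nu(c_n)$ and $\mu_{K+1}(c_n)$, and let $\rho_{K-1}$ move with the cone; then no separate annulus argument is needed at all. The inner ``thin annulus'' is likewise avoided automatically by that choice of $\rho_{K-1}(t)$.
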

\begin{proof}
We argue by contradiction. If the statement is false, then there exist $\eta > 0$, a decreasing sequence $(\epsilon_n)$
tending to $0$, an increasing sequence $(C_n)$ tending to $\infty$
and intervals $[c_n, d_n] \subset [a_n, b_n]$ (up to passing to a subsequence in the sequence of the collision intervals $[a_n, b_n]$) such that
$\bfd(c_n) \leq \epsilon_n$, $\bfd(d_n) \leq \epsilon_n$, there exists $t_n \in [c_n, d_n]$ such that $\bfd(t_n) \geq \eta$
and $d_n - c_n \leq C_n^{-1}\max(\mu_*(c_n), \mu_*(d_n))$.
We will check that, up to adjusting the sequence $\epsilon_n$, $[c_n, d_n] \in \calC_{K-1}(\epsilon_n, \eta)$ for all $n$,
contradicting Definition~\ref{def:K-choice}.

The first and second requirement in Definition~\ref{def:collision} are clearly satisfied.
It remains to construct a function $\rho_{K-1}:[c_n, d_n] \to [0, \infty)$ such that
\begin{equation}
\label{eq:dK-1conv0}
\lim_{n\to\infty}\sup_{t\in [c_n, d_n]}\bfd_{K-1}(t; \rho_{K-1}(t)) = 0.
\end{equation}
Assume $\mu_*(c_n) \geq \mu_*(d_n)$ (the proof in the opposite case is very similar).
Let $r_n$ be a sequence such that $\lambda_{K-1}(c_n) \ll r_n \ll \lambda_K(c_n)$ (recall that $\kappa_2\mu_*(c_n) \leq \lambda_K(c_n) \leq \kappa_2^{-1}\mu_*(c_n)$ and that $\lambda_0(t) = 0$ by convention).
Set $\rho_{K-1}(t) := r_n + (t - c_n)$ for $t \in [c_n, d_n]$.
Recall that $\vec \sigma_n \in \{-1, 1\}^{N-K}$ and $\vec \mu(t) \in (0, \infty)^{N-K}$ are defined in Lemma~\ref{lem:ext-sign}.
Let $\iota_{n}$ be the sign of the $K$-th bubble at time $c_n$, and set
$\wt \sigma := (\iota_n, \vec\sigma_n) \in \{-1, 1\}^{N-(K-1)}$
and $\wt\mu(t) := (\lambda_K(c_n), \vec\mu(t)) \in (0, \infty)^{N-(K-1)}$.
Let $R_n$ be a sequence such that $\nu_n(c_n) \ll R_n \ll \mu_{K+1}(c_n)$.
Applying Lemma~\ref{lem:evol-of-Q} with these sequences $r_n, R_n$ and $\bs u_n(t) := \bs u(c_n + t)$, we obtain
\begin{equation}
\lim_{n\to\infty}\sup_{t\in[c_n, d_n]}\|\bs u(t) - \bs\calW(\wt\sigma_n, \wt\mu(t))\|_{\cE(\rho_{K-1}(t), \infty)} = 0,
\end{equation}
implying \eqref{eq:dK-1conv0}
\end{proof}
\begin{rem}
We denote the constant $C_{\bs u}$ to stress that it depends on the solution $\bs u$
and is obtained in a non-constructive way as a consequence of the assumption that $\bs u$
does not satisfy the continuous time soliton resolution.
\end{rem}
\subsection{Demolition of the multi-bubble} \label{ssec:bub-dem}
This paragraph is devoted to the analysis of the ODE system satisfied by the modulation parameters.
We apply here the ``weighted sum'' trick from \cite[Section 6]{DKM9}.

\begin{lem}
\label{lem:ejection}
Let $D \geq 6$. If $\eta_0$ is small enough,
then there exists $C_0 \geq 0$ depending only on $D$ and $N$ such that the following is true.
If $[t_1, t_2] \subset I_*$ is a finite time interval such that $\bfd(t) \leq \eta_0$ for all $t \in [t_1, t_2]$, then
\begin{gather}
\label{eq:lambdaK-bd}
\sup_{t \in [t_1, t_2]} \lambda_K(t) \leq \frac 43\inf_{t \in [t_1, t_2]} \lambda_K(t), \\
\label{eq:int-d-bd}
\int_{t_1}^{t_2} \bfd(t)\ud t\leq C_0\big(\bfd(t_1)^{\frac{4}{D-2}} \lambda_K(t_1) + \bfd(t_2)^{\frac{4}{D-2}} \lambda_K(t_2)\big).
\end{gather}
\end{lem}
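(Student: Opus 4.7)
The proof combines the refined modulation of Lemma~\ref{cor:modul} with a Lyapunov-type argument based on the ``weighted sum'' trick of~\cite[Section 6]{DKM9}, and uses a bootstrap to close the bound on $\lambda_K$. Both conclusions are obtained simultaneously.

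First, I would apply Lemma~\ref{cor:modul} on $[t_1, t_2]$ (enlarging $\epsilon_n$ if needed, as in Remark~\ref{rem:deltan}) to produce the signs $\vec\iota$, modulation parameters $\vec\lambda(t)$, virial velocities $\vec\beta(t)$, and stable/unstable components $a_j^\pm(t)$, together with the differential inequalities \eqref{eq:xi_j-lambda_j}--\eqref{eq:dta-D}. The key scalar quantity is
\begin{equation*}
B(t) \;:=\; -\sum_{j=1}^{K-1}\iota_j\iota_{j+1}\,\beta_j(t),
\end{equation*}
whose sign pattern $p_j = -\iota_j\iota_{j+1}$ is chosen so that the $(j,j+1)$-interaction contributes to $B'(t)$ a positive leading term $\omega^2\lambda_j^{-1}(\lambda_j/\lambda_{j+1})^{(D-2)/2}$, independently of whether the pair is attractive or repulsive; the ``dual'' contribution of the same pair appearing in $\beta_{j+1}'$ is suppressed by $\lambda_{j+1}^{-1}\ll\lambda_j^{-1}$. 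Summing \eqref{eq:beta_j'} with these signs, using \eqref{eq:g-bound} to absorb both the $c_0\bfd^2/\lambda_j$ errors and the wrong-sign contributions at $j\not\in\calS$, yields
\begin{equation*}
B'(t) \;\geq\; \frac{c}{\lambda_K(t)}\,\bfd(t)^2 \;-\; \frac{C}{\lambda_K(t)}\sum_{j=1}^K\!\bigl((a_j^+(t))^2+(a_j^-(t))^2\bigr).
\end{equation*}

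For the modes $a_j^\pm$, the saddle ODE inequalities \eqref{eq:dta-D} give, by the trapping argument from~\cite{JL1, JL6},
\begin{equation*}
\int_{t_1}^{t_2}\!\frac{(a_j^+(t))^2+(a_j^-(t))^2}{\lambda_K(t)}\,\ud t \;\lesssim\; \bfd(t_1)^2 + \bfd(t_2)^2 + \eta_0\!\int_{t_1}^{t_2}\!\frac{\bfd(t)^2}{\lambda_K(t)}\,\ud t,
\end{equation*}
the last term of which is absorbed into the LHS of the $B'$ bound for $\eta_0$ small. Since $|B(t)|\lesssim \bfd(t)$ (from the $L^2$-boundedness of $\Lambda W$ and $\uln A(\lambda)$, which holds for $D\geq 5$), integration yields
\begin{equation*}
\int_{t_1}^{t_2}\!\frac{\bfd(t)^2}{\lambda_K(t)}\,\ud t \;\lesssim\; \bfd(t_1)+\bfd(t_2).
\end{equation*}
Combined with $|\lambda_K'|\lesssim \bfd$ from \eqref{eq:lam'} and Cauchy--Schwarz, a continuity bootstrap in the right endpoint produces \eqref{eq:lambdaK-bd}. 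For \eqref{eq:int-d-bd}, I would partition $[t_1, t_2]$ into dyadic intervals on which $\bfd$ is comparable to a constant $\bfd_*$; on such an interval $\beta_j$ grows by $O(\bfd_*)$ at the pointwise rate $\bfd_*^2/\lambda_K$, so the interval has length $\lesssim \lambda_K/\bfd_*$ and contributes $\lesssim\lambda_K$ to $\int\bfd$. Identifying the dominant pair $j_*\in\calS$ and applying \eqref{eq:d-bound-nlw} gives $\lambda_{j_*}(t_i)\sim\bfd(t_i)^{4/(D-2)}\lambda_K(t_i)$, and summing the dyadic contributions produces \eqref{eq:int-d-bd}.

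\textbf{Main obstacle.} The most delicate step is the sign bookkeeping in the weighted sum: one must verify that $p_j = -\iota_j\iota_{j+1}$ makes all leading terms of $\sum_j p_j\beta_j'$ non-negative, that the subleading $(j-1,j)$-contribution to $\beta_j'$ is damped by the scale separation $\lambda_{j-1}\ll\lambda_j$, and that the wrong-sign error terms from indices $j\not\in\calS$ are absorbed via \eqref{eq:g-bound}. A secondary difficulty is the simultaneous closure of the bootstrap on $\lambda_K$ and the dyadic conversion of the $L^2$-bound into \eqref{eq:int-d-bd}, both of which rely on $\lambda_K$ being essentially constant throughout $[t_1, t_2]$. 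The sharp exponent $4/(D-2)$ in \eqref{eq:int-d-bd} comes precisely from the relation $\lambda_{j_*}\sim \bfd^{4/(D-2)}\lambda_K$ for the slowest interacting bubble, and tracking this optimal exponent requires care in identifying $j_*$ on each dyadic subinterval.
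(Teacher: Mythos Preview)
Your proposed Lyapunov functional $B(t) = -\sum_{j=1}^{K-1}\iota_j\iota_{j+1}\beta_j(t)$ does not work, and the argument has several genuine gaps.

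\textbf{Sign and one-sidedness.} With $p_j = -\iota_j\iota_{j+1}$, the $(j,j+1)$-interaction in $p_j\beta_j'$ is $(-\iota_j\iota_{j+1})(\iota_j\iota_{j+1})\omega^2\lambda_j^{-1}(\lambda_j/\lambda_{j+1})^{(D-2)/2} = -\omega^2\lambda_j^{-1}(\lambda_j/\lambda_{j+1})^{(D-2)/2}$, which is \emph{negative}, not positive as you claim. More fundamentally, \eqref{eq:beta_j'} is a one-sided lower bound: the virial term $\langle\uln A(\lambda_j)g\mid -\Delta g\rangle$ in $\beta_j'$ contains $\lambda_j^{-1}\int_{R^{-1}\lambda_j}^{R\lambda_j}(\partial_r g)^2$, which can be arbitrarily large positive, so there is no matching upper bound on $\beta_j'$. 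Hence you cannot use \eqref{eq:beta_j'} at any index where your weight is negative. The paper avoids this by summing only over $j\in\cS$, always with positive weights $2^{-j}$; the geometric decay is what makes the telescoping cross-terms nonnegative (see the bracket computation in Step~2 of the paper's proof).

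\textbf{Missing the $\xi_j$ factor.} The paper's functional is $\phi(t)=\sum_{j\in\cS}2^{-j}\xi_j\beta_j - C_1\sum\lambda_j(a_j^-)^2 + C_1\sum\lambda_j(a_j^+)^2$, not a bare sum of $\beta_j$'s. Differentiating $\xi_j\beta_j$ produces both a $\beta_j^2$ term (automatically nonnegative) and $\lambda_j\beta_j'$, which rescales the interaction from $\lambda_j^{-1}(\lambda_j/\lambda_{j+1})^{(D-2)/2}$ to $(\lambda_j/\lambda_{j+1})^{(D-2)/2}$, comparable to $\bfd^2$. The unstable/stable modes are handled \emph{inside} the Lyapunov functional via $\pm C_1\lambda_j(a_j^\pm)^2$ and \eqref{eq:dta-D}, not by a separate trapping integral. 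This yields directly $\phi'(t)\geq c_2\bfd(t)^2$.

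\textbf{Endgame and the sharp exponent.} Your dyadic decomposition and the claimed relation $\lambda_{j_*}\sim\bfd^{4/(D-2)}\lambda_K$ are not justified: even if $(\lambda_{j_*}/\lambda_{j_*+1})^{(D-2)/4}\sim\bfd$ at the dominant index, there is no reason $\lambda_{j_*+1}\sim\lambda_K$. The paper instead exploits the structural bound $\phi(t)/\lambda_K(t)\lesssim\sum_{j\in\cS}(\lambda_j/\lambda_{j+1})|\beta_j|+\bfd^2\lesssim\bfd^{(D+2)/(D-2)}$, combines it with $\phi'\geq c_2\bfd^2$ to get $\phi'\gtrsim(\phi/\lambda_K)^{(D-2)/(D+2)}\bfd$, and integrates the resulting inequality for $\lambda_K^{(D-2)/(D+2)}\phi^{4/(D+2)}$ to obtain \eqref{eq:int-d-bd} with the correct exponent. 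The bound \eqref{eq:lambdaK-bd} then follows a posteriori from \eqref{eq:int-d-bd} and $|\lambda_K'|\lesssim\bfd$, without any bootstrap.
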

\begin{rem}
Since $\bfd(t) \leq \eta_0$ is small, Lemma~\ref{lem:mustar} (iii) yields $\lambda_K(t) \simeq \mu_*(t)$,
so in the formulation of the lemma we could just as well write $\mu_*$ instead of $\lambda_K$.
\end{rem}
\begin{proof}[Proof of Lemma~\ref{lem:ejection}]
\textbf{Step 1.}
First, we argue that \eqref{eq:lambdaK-bd} follows from \eqref{eq:int-d-bd}.
Without loss of generality, assume $\lambda_K(t_1) \geq \lambda_K(t_2)$.
Since $|\lambda_K'(t)| \lesssim \bfd(t)$, see \eqref{eq:lam'}, and $\bfd(t)^{\frac{4}{D-2}} \lesssim \eta_0^{\frac{4}{D-2}}$ is small, \eqref{eq:int-d-bd} implies
\begin{equation}
\int_{t_1}^{t_2} |\lambda_K'(t)| \ud t \leq \frac 17 \lambda_K(t_1),
\end{equation}
thus $\inf_{t \in [t_1, t_2]} \lambda_K(t) \geq \frac 67\lambda_K(t_1)$
and $\sup_{t \in [t_1, t_2]} \lambda_K(t) \leq \frac 87 \lambda_K(t_1)$, so \eqref{eq:lambdaK-bd} follows. It remains to prove \eqref{eq:int-d-bd}.

\noindent
\textbf{Step 2.}
Let $C_1 > 0$ be a large number chosen below and consider the auxiliary function
\begin{equation}
\phi(t) := \sum_{j \in \cS}2^{-j}\xi_j(t)\beta_j(t) - C_1\sum_{j=1}^K \lambda_j(t)a_j^-(t)^2
+ C_1\sum_{j=1}^K \lambda_j(t) a_j^+(t)^2,
\end{equation}
inspired by the function $A(t)$ from \cite[Section 6]{DKM9}.
We claim that for all $t \in [t_1, t_2]$
\begin{equation}
\label{eq:dtphi-lbound}
\phi'(t) \geq c_2\bfd(t)^2,
\end{equation}
with $c_2 > 0$ depending only on $D$ and $N$. The remaining part of Step 1 is devoted to proving this bound.

Using \eqref{eq:lam_j'-beta_j}, \eqref{eq:dta-D} and recalling that $|\lambda_K'(t)| \lesssim \bfd(t)$, see \eqref{eq:lam'}, we obtain
\begin{equation}
\label{eq:dtphi-lbound-3}
\begin{aligned}
\phi'(t) \geq \sum_{j \in \cS}2^{-j}\beta_j^2(t) + \sum_{j\in\cS}2^{-j}\lambda_j(t)\beta_j'(t)
+ C_1\nu\sum_{j=1}^K \big(a_j^-(t)^2 + a_j^+(t)^2\big) - c_0 \bfd(t)^2.
\end{aligned}
\end{equation}
We focus on the second term of the right hand side. Applying \eqref{eq:beta_j'}, we have
\begin{equation}
\begin{aligned}
\sum_{j\in\cS}\lambda_j(t)\beta_j'(t) &\geq \omega^2\sum_{j\in\cS}2^{-j}\bigg(\iota_j\iota_{j+1}\Big(\frac{\lambda_j(t)}{\lambda_{j+1}(t)}\Big)^\frac{D-2}{2}
-\iota_j\iota_{j-1}\Big(\frac{\lambda_{j-1}(t)}{\lambda_{j}(t)}\Big)^\frac{D-2}{2}\bigg) \\
&-C_2\sum_{k=1}^K\big(a_j^-(t)^2 + a_j^+(t)^2\big) - c_0\bfd(t)^2.
\end{aligned}
\end{equation}
Noting that $\iota_j\iota_{j+1} = 1$ if $j \in \cS$, we rewrite the first sum on the right hand side as
\begin{equation}
\sum_{j\in \cS}2^{-j}\Big(\frac{\lambda_j(t)}{\lambda_{j+1}(t)}\Big)^\frac{D-2}{2}
- \sum_{j \in \cS}2^{-j}\iota_j\iota_{j-1}\Big(\frac{\lambda_{j-1}(t)}{\lambda_{j}(t)}\Big)^\frac{D-2}{2}.
\end{equation}
Splitting the first sum into two equal terms, and shifting the index in the second sum, we obtain
\begin{equation}
\begin{aligned}
\sum_{j\in\cS}2^{-j-1}\Big(\frac{\lambda_j(t)}{\lambda_{j+1}(t)}\Big)^\frac{D-2}{2}
+ \bigg[\sum_{j\in\cS}2^{-j-1}\Big(\frac{\lambda_j(t)}{\lambda_{j+1}(t)}\Big)^\frac{D-2}{2}
- \sum_{j+1\in \cS}2^{-j-1}\iota_{j}\iota_{j+1}\Big(\frac{\lambda_j(t)}{\lambda_{j+1}(t)}\Big)^\frac{D-2}{2}\bigg].
\end{aligned}
\end{equation}
We will check that the number inside the square parenthesis is nonnegative.
To see this, we rewrite it as
\begin{equation}
\sum_{j}2^{-j-1}\Big(\frac{\lambda_j(t)}{\lambda_{j+1}(t)}\Big)^\frac{D-2}{2}
\big[\mathbbm{1}_\cS(j) - \iota_j\iota_{j+1}\mathbbm{1}_\cS(j+1) \big].
\end{equation}
If $j \notin \cS$, then $\iota_j\iota_{j+1} = -1$, hence all the terms in the above sum are nonnegative. We have thus proved that
\begin{equation}
\sum_{j\in\cS}\lambda_j(t)\beta_j'(t) \geq 2^{-N-1}\omega^2\sum_{j\in\cS}\Big(\frac{\lambda_j(t)}{\lambda_{j+1}(t)}\Big)^\frac{D-2}{2} - C_2\sum_{k=1}^K\big(a_j^-(t)^2 + a_j^+(t)^2\big) - c_0\bfd(t)^2.
\end{equation}
Taking $C_1 > C_2 / \nu$ and using \eqref{eq:dtphi-lbound-3} together with \eqref{eq:d-bound-nlw}, we get \eqref{eq:dtphi-lbound}.

\noindent
\textbf{Step 3.}
Since $\phi$ is increasing, it has at most one zero, which we denote $t_3 \in [t_1, t_2]$.
If $\phi(t) > 0$ for all $t \in [t_1, t_2]$, we set $t_3 := t_1$, and if $\phi(t) < 0$
for all $t \in [t_1, t_2]$, then we set $t_3 := t_2$. We will show that
\begin{equation}
\label{eq:int-d-bd-1}
\int_{t_3}^{t_2}\bfd(t)\ud t \leq C_0\bfd(t_2)^{\frac{4}{D-2}}\lambda_K(t_2).
\end{equation}
By the symmetry of the problem, one can similarly bound the integral over $[t_1, t_3]$,
and summing the two we get \eqref{eq:int-d-bd}. Without loss of generality,
we can assume $t_3 < t_2$, since otherwise \eqref{eq:int-d-bd-1} is trivial.

Observe that for all $t \in (t_3, t_2]$ we have
\begin{equation}
\label{eq:phi-lambdaK-bd}
\begin{aligned}
\frac{\phi(t)}{\lambda_K(t)} &\lesssim \sum_{j \in \cS}\frac{\lambda_j(t)}{\lambda_{K}(t)}|\beta_j(t)| + \sum_{j=1}^K \frac{\lambda_j(t)}{\lambda_K(t)}\big(a_j^-(t)^2 + a_j^+(t)^2\big) \\
&\lesssim \sum_{j \in \cS}\frac{\lambda_j(t)}{\lambda_{j+1}(t)}|\beta_j(t)| + \bfd(t)^2
\lesssim \bfd(t)^{\frac{D+2}{D-2}}.
\end{aligned}
\end{equation}
Combining this bound with \eqref{eq:dtphi-lbound}, for all $t \in (t_3, t_2]$ we get
\begin{equation}
\phi'(t) \geq c_2 \big(\phi(t) / \lambda_K(t)\big)^{\frac{D-2}{D+2}} \bfd(t),
\end{equation}
thus
\begin{equation}
\label{eq:dtphi-lbound-2}
\lambda_K(t)^\frac{D-2}{D+2} \big(\phi(t)^{\frac{4}{D+2}}\big)' \gtrsim \bfd(t).
\end{equation}
Using \eqref{eq:phi-lambdaK-bd} and $|\lambda_K'(t)| \lesssim \bfd(t)$, we get
\begin{equation}
\big(\lambda_K(t)^\frac{D-2}{D+2}\phi(t)^\frac{4}{D+2}\big)' - \lambda_K(t)^\frac{D-2}{D+2} \big(\phi(t)^\frac{4}{D+2}\big)' = \frac{D-2}{D+2}\lambda_K'(t)\big(\phi(t)/\lambda_K(t)\big)^\frac{4}{D+2} \gtrsim -\bfd(t)^{\frac{D+2}{D-2}}.
\end{equation}
Since $\bfd(t)^\frac{4}{D-2} \lesssim \eta_0^\frac{4}{D-2}$ is small, \eqref{eq:dtphi-lbound-2} yields
\begin{equation}
\big(\lambda_K(t)^\frac{D-2}{D+2}\phi(t)^\frac{4}{D+2}\big)' \gtrsim \bfd(t)
\end{equation}
which, integrated, gives
\begin{equation}
\int_{t_3}^{t_2}\bfd(t)\ud t \lesssim \lambda_K(t_2)^\frac{D-2}{D+2}\phi(t_2)^\frac{4}{D+2} - \lambda_K(t_3)^\frac{D-2}{D+2}\phi(t_3)^\frac{4}{D+2}
\leq \lambda_K(t_2)^\frac{D-2}{D+2}\phi(t_2)^\frac{4}{D+2}.
\end{equation}
Invoking \eqref{eq:phi-lambdaK-bd}, we obtain \eqref{eq:int-d-bd-1}.
\end{proof}

Starting from now, $\eta_0 > 0$ is fixed so that Lemma~\ref{lem:ejection} holds and Lemma~\ref{lem:cd-length}
can be applied with $\eta = \eta_0$.
We also fix $\epsilon > 0$ to be the value given by Lemma~\ref{lem:cd-length} for $\eta = \eta_0$.
%
Recall that $\bfd(a_n) = \bfd(b_n) = \epsilon_n$ and
$\bfd(t) \geq \epsilon_n$ for all $t \in [a_n, b_n]$.
\begin{lem}
\label{lem:cedf}
There exists $\theta_0 > 0$ such that for any sequence satisfying $\epsilon_n \ll \theta_n \leq \theta_0$
and for all $n$ large enough there exists a partition of the interval $[a_n, b_n]$
\begin{equation}
\begin{aligned}
a_n = e^L_{n, 0} \leq e^R_{n, 0} \leq c^R_{n, 0} \leq d^R_{n, 0} \leq f^R_{n, 0}
\leq f^L_{n, 1} \leq d^L_{n, 1} \leq c^L_{n, 1} \leq e^L_{n, 1} \leq \ldots \leq e^R_{n, N_n} = b_n,
\end{aligned}
\end{equation}
having the following properties.
\begin{enumerate}[(1)]
\item
\label{it:1}
For all $m \in \{0, 1, \ldots, N_n\}$ and $t \in [e_{n, m}^L, e_{n, m}^R]$, $\bfd(t) \leq \eta_0$, and
\begin{equation}
\label{eq:error-on-eLeR}
\int_{e_{n, m}^L}^{e_{n, m}^R}\bfd(t)\ud t \leq C_2 \theta_n^{4/(D-2)} \min(\mu_*(e_{n, m}^L), \mu_*(e_{n, m}^R)),
\end{equation}
where $C_2\geq 0$ depends only on $k$ and $N$.
\item \label{it:2}
For all $m \in \{0, 1, \ldots, N_n-1\}$ and $t \in [e_{n, m}^R, c_{n, m}^R] \cup [f_{n, m}^R, f_{n, m+1}^L] \cup [c_{n, m+1}^L, e_{n, m+1}^L]$, $\bfd(t) \geq \theta_n$.
\item \label{it:3}
For all $m \in \{0, 1, \ldots, N_n-1\}$ and $t \in [c_{n,m}^R, f_{n,m}^R]\cup [f_{n,m+1}^L, c_{n,m+1}^L]$, $\bfd(t) \geq \epsilon$.
\item \label{it:4}
For all $m \in \{0, 1, \ldots, N_n-1\}$, $\bfd(d_{n,m}^R) \geq \eta_0$
and $\bfd(d_{n,m+1}^L) \geq \eta_0$.
\item \label{it:5}
For all $m \in \{0, 1, \ldots, N_n-1\}$, $\bfd(c_{n, m}^R) = \bfd(c_{n, m+1}^L) = \epsilon$.
\item \label{it:6}
For all $m \in \{0, 1, \ldots, N_n-1\}$,
either $\bfd(t) \geq \epsilon$ for all $t \in [c_{n, m}^R, c_{n, m+1}^L]$,
or $\bfd(f_{n, m}^R) = \bfd(f_{n,m+1}^L) = \epsilon$.
\item \label{it:7}
For all $m \in \{0, 1, \ldots, N_n-1\}$,
\begin{equation}
\label{eq:mu-no-change}
\begin{aligned}
\sup_{t \in [e_{n, m}^L, c_{n, m}^R]}\mu_*(t) &\leq 2 \kappa_2^{-2}\inf_{t \in [e_{n, m}^L, c_{n, m}^R]}\mu_*(t), \\
\sup_{t \in [c_{n, m+1}^L, e_{n, m+1}^R]}\mu_*(t)  &\leq 2\kappa_2^{-2}\inf_{t \in [c_{n, m+1}^L, e_{n, m+1}^R]}\mu_*(t).
\end{aligned}
\end{equation}
\end{enumerate}
\end{lem}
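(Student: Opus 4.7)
The plan is to build the partition directly from the level sets of $\bfd$. I would choose $\theta_0 \in (0, \epsilon)$ small enough so that Lemma~\ref{lem:mustar}(ii) applies whenever $\bfd \leq \eta_0$ (enlarging the previous choice of $\eta_0$ if necessary so that it lies below the threshold $\delta$ there). Given any sequence $\theta_n$ with $\epsilon_n \ll \theta_n \leq \theta_0$, consider the closed set $S_n := \{t \in [a_n, b_n] : \bfd(t) \geq \theta_n\}$; call a connected component of $S_n$ a \emph{major} transition if it contains a point where $\bfd \geq \eta_0$, and \emph{minor} otherwise. Definition~\ref{def:K-choice} applied with $\eta = \eta_0$ provides some $c_n \in [a_n, b_n]$ with $\bfd(c_n) \geq \eta_0$, so at least one major transition exists. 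The number of major transitions is finite: picking for each such component $C_k$ a point $t_k$ with $\bfd(t_k) = \eta_0$ and extracting a convergent subsequence $t_{k_j} \to t^*$, continuity of $\bfd$ forces $\bfd(t^*) = \eta_0 > \theta_n$, so the open set $\{\bfd > \theta_n\}$ contains a neighborhood of $t^*$, which must lie in a single component of $S_n$, contradicting that the $t_{k_j}$ belong to pairwise distinct components.

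Enumerate the major transitions as $[\gamma_1, \delta_1], \ldots, [\gamma_{N_n}, \delta_{N_n}]$ from left to right, and set $e^L_{n, 0} := a_n$, $e^R_{n, N_n} := b_n$, and for $m = 0, \ldots, N_n - 1$, $e^R_{n, m} := \gamma_{m+1}$ and $e^L_{n, m+1} := \delta_{m+1}$. Then each modulation interval $[e^L_{n, m}, e^R_{n, m}]$ is trapped between two consecutive major transitions (or touches $a_n$ or $b_n$), so $\bfd \leq \eta_0$ throughout it (any interior point with $\bfd \geq \eta_0$ would lie in a major transition, contradicting the enumeration), and its endpoints satisfy $\bfd \leq \theta_n$ (equal to $\theta_n$ at interior endpoints, and to $\epsilon_n$ at $a_n$ or $b_n$). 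Inside each major transition $C_{m+1} := [\gamma_{m+1}, \delta_{m+1}]$, I define $d^R_{n, m}$ and $d^L_{n, m+1}$ to be respectively the first and last times at which $\bfd = \eta_0$; then $c^R_{n, m}$ to be the last $\epsilon$-crossing in $[\gamma_{m+1}, d^R_{n, m}]$ and $c^L_{n, m+1}$ the first $\epsilon$-crossing in $[d^L_{n, m+1}, \delta_{m+1}]$; all exist by the intermediate value theorem since $\theta_n < \epsilon < \eta_0$. For the $f$-points, I case-split: if $\bfd \geq \epsilon$ throughout $[c^R_{n, m}, c^L_{n, m+1}]$, set $f^R_{n, m} := d^R_{n, m}$ and $f^L_{n, m+1} := d^L_{n, m+1}$; otherwise $\bfd$ must drop strictly below $\epsilon$ somewhere in the open interval $(d^R_{n, m}, d^L_{n, m+1})$ (since $\bfd \geq \epsilon$ on $[c^R_{n, m}, d^R_{n, m}]$ and on $[d^L_{n, m+1}, c^L_{n, m+1}]$ by the extremal choice of $c^R, c^L$), and I take $f^R_{n, m}$ to be the first $\epsilon$-crossing after $d^R_{n, m}$ in $C_{m+1}$ and $f^L_{n, m+1}$ the last $\epsilon$-crossing before $d^L_{n, m+1}$.

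Properties~(2)--(6) would then follow immediately: the intervals in~(2) lie inside the relevant major transition where $\bfd \geq \theta_n$ by definition of $S_n$; (3) holds by the extremal choice of $c^R, c^L, f^R, f^L$; (4) and (5) are built into the definitions; (6) is exactly the case split. For~(1), Lemma~\ref{lem:ejection} applies on $[e^L_{n, m}, e^R_{n, m}]$ because $\bfd \leq \eta_0$ there, yielding $\int_{e^L_{n, m}}^{e^R_{n, m}} \bfd(t)\, \ud t \leq 2C_0 \theta_n^{4/(D-2)} \sup_{[e^L_{n, m}, e^R_{n, m}]} \lambda_K$; combining \eqref{eq:lambdaK-bd} with Lemma~\ref{lem:mustar}(ii) converts $\sup \lambda_K$ into $\frac{4}{3}\kappa_2^{-2} \inf \mu_*$, giving the bound after adjusting $C_2$. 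For~(7), the key observation is that $\bfd < \eta_0$ throughout the extended interval $[e^L_{n, m}, c^R_{n, m}]$ (since $c^R_{n, m} < d^R_{n, m}$ and, by definition of $d^R_{n, m}$ as the first $\eta_0$-crossing in $C_{m+1}$, one has $\bfd < \eta_0$ on $[e^R_{n, m}, d^R_{n, m})$). Thus Lemma~\ref{lem:ejection}~\eqref{eq:lambdaK-bd} applies on this larger interval, giving $\sup \lambda_K \leq \frac{4}{3} \inf \lambda_K$, and Lemma~\ref{lem:mustar}(ii) upgrades this to $\sup \mu_* \leq \frac{4}{3}\kappa_2^{-2} \inf \mu_* \leq 2\kappa_2^{-2}\inf \mu_*$; the analysis of $[c^L_{n, m+1}, e^R_{n, m+1}]$ is symmetric.

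The main obstacle is the topological finiteness of the number of major transitions, which drives the entire combinatorial structure of the partition; once that is secured, the case-split for the $f$-points and the monotonicity of the sequence $e^L_{n, 0} \leq e^R_{n, 0} \leq c^R_{n, 0} \leq \cdots$ are direct consequences of the extremal definitions, and the quantitative estimates~(1) and~(7) are applications of the ejection estimate on suitably chosen intervals where the hypothesis $\bfd \leq \eta_0$ holds.
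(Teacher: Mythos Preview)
Your proposal is correct and follows essentially the same approach as the paper. The only organizational difference is dual: the paper covers the compact set $A=\{\bfd\le\theta_n\}$ by finitely many maximal intervals $J(s_{n,m})\subset\{\bfd<\eta_0\}$ and then carves out $d,c,e,f$ between consecutive $s_{n,m}$, whereas you work directly with the connected components of $\{\bfd\ge\theta_n\}$, isolate the ``major'' ones and place the $e$-points at their boundaries. Both routes then define $d,c,f$ by the same extremal level-crossing recipe and invoke Lemma~\ref{lem:ejection} together with Lemma~\ref{lem:mustar}(ii) to obtain properties~(1) and~(7). One small slip: you write ``enlarging the previous choice of $\eta_0$'' when you mean \emph{shrinking} it below the threshold $\delta$ of Lemma~\ref{lem:mustar}(ii); also, the existence of a point with $\bfd\ge\eta_0$ comes from the collision-interval Definition~\ref{def:collision} (with the original $\eta\ge\eta_0$), not from Definition~\ref{def:K-choice} directly.
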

\begin{proof}
For all $t_0 \in [a_n, b_n]$ such that $\bfd(t_0) < \eta_0$, let $J(t_0) \subset [a_n, b_n]$ be the union of all the open (relatively in $[a_n, b_n]$) intervals containing $t_0$ on which $\bfd < \eta_0$. Equivalently, we have one of the following three cases:
\begin{itemize}
\item $J(t_0) = (\wt a_n, \wt b_n)$, $t_0 \in (\wt a_n, \wt b_n)$, $\bfd(\wt a_n) = \bfd(\wt b_n) = \eta_0$ and $\bfd(t) < \eta_0$ for all $t \in (\wt a_n, \wt b_n)$,
\item $J(t_0) = [a_n, \wt b_n)$, $t_0 \in [a_n, \wt b_n)$,
$\bfd(\wt b_n) = \eta_0$ and $\bfd(t) < \eta_0$ for all $t \in [a_n, \wt b_n)$,
\item $J(t_0) = (\wt a_n, b_n]$, $t_0 \in (\wt a_n, b_n]$,
$\bfd(\wt a_n) = \eta_0$ and $\bfd(t) < \eta_0$ for all $t \in (\wt a_n, b_n]$.
\end{itemize}
Note that $\theta_n \gg \epsilon_n$ implies $\wt a_n > a_n$ and $\wt b_n < b_n$.
Clearly, any two such intervals are either equal or disjoint.

Consider the set
\begin{equation}
A := \{t \in [a_n, b_n]: \bfd(t) \leq \theta_n\}.
\end{equation}
Since $A$ is a compact set, there exists a finite sequence
\begin{equation}
a_n \leq s_{n, 0} < s_{n, 1} < \ldots < s_{n, N_n} \leq b_n
\end{equation}
such that
\begin{equation}
\label{eq:cover-of-A}
s_{n, m} \in A, \qquad A \subset \bigcup_{m=0}^{N_n} J(s_{n, m}).
\end{equation}
Without loss of generality, we can assume $J(s_{n, m}) \cap J(s_{n, m'}) = \emptyset$ whenever $m \neq m'$ (it suffices to remove certain elements from the sequence).

%
Let $m \in \{0, 1, \ldots, N_n-1\}$. Since $J(s_{n, m}) \cap J(s_{n, m+1}) = \emptyset$, there exists $t \in (s_{n, m}, s_{n, m+1})$ such that
$\bfd(t) \geq \eta_0$. Let $d_{n, m}^R$ be the smallest such $t$, and $d_{n, m+1}^L$
the largest one. Let $c_{n, m}^R$ be the smallest number such that
$\bfd(t) \geq \epsilon$ for all $t \in (c_{n, m}^R, d_{n, m}^R)$.
Similarly, let $c_{n, m+1}^L$ be the biggest number such that
$\bfd(t)\geq \epsilon$ for all $t \in (d_{n, m+1}^L, c_{n, m+1}^L)$.
Next, let $e_{n, m}^R$ be the smallest number such that
$\bfd(t) \geq 2\theta_n$ for all $t \in (e_{n, m}^R, c_{n, m}^R)$.
If we take $\theta_n < \frac{\epsilon}{2}$, then we have $e_{n, m}^R < c_{n, m}^R$.
Since $\bfd(s_{n, m}) \leq \theta_n$, we have $e_{n, m}^R > s_{n, m}$.
Similarly, let $e_{n, m+1}^L$ be the biggest number such that
$\bfd(t)\geq 2\theta_n$ for all $t \in (c_{n, m+1}^L, e_{n, m+1}^L)$
(again, it follows that $e_{n, m+1}^L < s_{n, m+1}$).
Finally, if $\bfd(t) \geq \epsilon$ for all $t \in (d_{n, m}^R, d_{n, m+1}^L)$,
we set $f_{n, m}^R$ and $f_{n, m+1}^L$ arbitrarily,
for example $f_{n, m}^R := d_{n, m}^R$ and $f_{n, m+1}^L := d_{n, m+1}^L$.
If, on the contrary, there exists $t \in (d_{n, m}^R, d_{n, m+1}^L)$
such that $\bfd(t) < \epsilon$, we let $f_{n, m}^R$
be the biggest number such that $\bfd(t)\geq \epsilon$ for all $t \in (d_{n, m}^R, f_{n, m}^R)$, and $f_{n, m+1}^L$ be the smallest number such that
$\bfd(t) \geq \epsilon$ for all $t \in (f_{n, m+1}^L, d_{n, m+1}^L)$.

We check all the desired properties. For all $n \in \{0, 1, \ldots, N_n\}$, we have $e_{n, m}^L \leq s_{n, m} \leq e_{n, m}^R$.
Since $\bfd(e_{n, m}^L) \leq 2\theta_n$ and $\bfd(e_{n, m}^R) \leq 2\theta_n$, the property \ref{it:1} follows from Lemma~\ref{lem:ejection}.
The properties \ref{it:3}, \ref{it:4}, \ref{it:5} and \ref{it:6} follow directly from the construction.
The property \ref{it:2} is now equivalent to the following statement: if $\bfd(t_0) < \theta_n$, then
there exists $m \in \{0, 1, \ldots, N_n\}$ such that $t_0 \in [e_{n, m}^L, e_{n, m}^R]$.
But \eqref{eq:cover-of-A} implies that $t_0 \in J(s_{n, m})$ for some $m$
and, by construction, $\bfd(t) > \theta_n$ for all $t \in J(s_{n, m}) \setminus [e_{n, m}^L, e_{n, m}^R]$,
so we obtain $t \in [e_{n, m}^L, e_{n, m}^R]$. Finally, note that $\bfd(t) \leq \eta_0$
for all $t \in [e_{n, m}^L, c_{n, m}^R] \cup [c_{n, m+1}^L, e_{n, m+1}^R]$, hence, using again Lemma~\ref{lem:ejection},
but on the time intervals $[e_{n, m}^L, c_{n, m}^R]$ and $[c_{n, m+1}^L, e_{n, m+1}^R]$, we deduce the property \ref{it:7}
from \eqref{eq:lambdaK-bd} and Lemma~\ref{lem:mustar} (ii).

\end{proof}

\subsection{End of the proof: virial inequality with a cut-off}
In this section, we conclude the proof, by integrating the virial identity on the time interval $[a_n, b_n]$.
The radius where the cut-off is imposed has to be carefully chosen, which is the object of the next lemma.
\begin{lem}
\label{lem:rho}
There exist $\theta_0 > 0$ and a locally Lipschitz function $\rho : \cup_{n=1}^\infty [a_n, b_n] \to (0, \infty)$
having the following properties:
\begin{enumerate}
\item $\max(\rho(a_n)\|\partial_t u(a_n)\|_{L^2}, \rho(b_n)\|\partial_t u(b_n)\|_{L^2}) \ll \max(\mu_*(a_n), \mu_*(b_n))$ as $n \to \infty$,
\item $\lim_{n \to \infty}\inf_{t\in[a_n, b_n]}\big(\rho(t) / \mu_*(t)\big) = \infty$ and $\lim_{n\to\infty}\sup_{t\in[a_n, b_n]}\big(\rho(t) / \mu_{K+1}(t)\big) = 0$,
\item if $\bfd(t_0) \leq\frac 12\theta_0$, then $|\rho'(t)| \leq 1$ for almost all $t$ in a neighborhood of $t_0$,
\item $\lim_{n\to\infty}\sup_{t\in[a_n, b_n]}|\Omega_{\rho(t)}(\bs u(t))| = 0$.
\end{enumerate}
\end{lem}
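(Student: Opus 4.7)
The function $\rho$ must simultaneously lie in the safe corridor $\mu_*(t)\ll\rho(t)\ll\mu_{K+1}(t)$ where the local $\cE$-density can be made small (conditions (2), (4)), be Lipschitz with constant $\le 1$ on neighborhoods of points with $\bfd(t_0) \le \tfrac12\theta_0$ (condition (3)), and have small boundary product at $t=a_n,b_n$ (condition (1)). My approach is to build $\rho$ piecewise, guided by the partition of $[a_n,b_n]$ from Lemma~\ref{lem:cedf} applied with some $\theta_n\le \tfrac14\theta_0$. On each modulation subinterval $[e_{n,m}^L, e_{n,m}^R]$ (on which $\bfd\le 2\theta_n\le \tfrac12\theta_0$ and $\mu_*$ is comparable to a constant by Lemma~\ref{lem:cedf}(7)), I set $\rho(t)\equiv\rho_{n,m}$, which gives $\rho'\equiv 0$ and hence (3) for free. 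On the intervening compactness intervals, where (3) is vacuous since $\bfd\ge \theta_n$ there, I linearly interpolate between $\rho_{n,m}$ and $\rho_{n,m+1}$.

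The constants $\rho_{n,m}$ are selected by dyadic pigeonhole. Using Lemma~\ref{lem:sequences} (applied to the free sequences $A_n, J_n$, and possibly tightening $\nu_n$ from Lemma~\ref{lem:ext-sign} and $\eps_n$ from Remark~\ref{rem:collision}), I pick $A_n\to\infty$ and $J_n\to\infty$ such that $A_n\eps_n\to 0$ and $A_n\nu_n\to 0$. Combining the latter with $\mu_*(t)\le 4\nu_n\mu_{K+1}(t)$ (from $\mu_*\le 4\mu\le 4\nu$), the safe corridor $[A_n\mu_*(t),A_n^{-1}\mu_{K+1}(t)]$ contains the $J_n$ dyadic shells $[2^j A_n\mu_*(t^\star), 2^{j+1}A_n\mu_*(t^\star)]$ for $j=0,\ldots,J_n-1$. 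The uniform bound on $\|\bs u(t)\|_\cE$ yields, for each modulation subinterval at a reference time $t^\star$, at least one shell with annular energy $\lesssim J_n^{-1}$; I take $\rho_{n,m}$ to be the mid-radius of such a shell, and Lemma~\ref{lem:prop-small-E-local} propagates the smallness of $\|\bs u(t)\|_{\cE(\rho_{n,m},2\rho_{n,m})}$ to the whole subinterval. Conditions (1) and (2) then follow directly: (1) from $\rho(a_n)\lesssim A_n\mu_*(a_n)$ (picking the lowest shell in the first modulation subinterval), Lemma~\ref{lem:mod-1}'s bound $\|\partial_t u(a_n)\|_{L^2}\lesssim\bfd(a_n)=\eps_n$, and the choice $A_n\eps_n\to 0$; (2) from the safe-corridor placement and $A_n\to\infty$.

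Condition (4) is verified via the pointwise bound
\begin{equation*}
|\Omega_\rho(\bs u)|\lesssim (1+|\rho'|)\,\|\bs u\|_{\cE(\rho,2\rho)}^2+\|\bs u\|_{\cE(\rho,2\rho)}^{2D/(D-2)},
\end{equation*}
obtained from \eqref{eq:Om-rho-def} via Cauchy--Schwarz and Sobolev embedding. On modulation subintervals $\rho'=0$ and the pigeonhole closes the estimate. On compactness intervals, Lemma~\ref{lem:cd-length} gives length $\ell_m\gtrsim\mu_*$, so the interpolation slope satisfies $|\rho'|\lesssim A_n^{-1}\mu_{K+1}/\mu_*\lesssim 1/(A_n\nu_n)$. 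Imposing additionally $A_n\nu_n J_n\to\infty$ (compatible with $A_n\nu_n\to 0$ provided $J_n$ grows faster than $1/(A_n\nu_n)$, which is achievable inside the corridor whose width is of order $\log\nu_n^{-1}$) yields $(1+|\rho'|)\,\|\bs u\|_{\cE(\rho,2\rho)}^2\lesssim 1/(A_n\nu_n J_n)\to 0$. The smallness of $\|\bs u\|_{\cE(\rho(t),2\rho(t))}$ along the entire interpolation path is maintained by a secondary pigeonhole at intermediate dyadic scales, combined again with Lemma~\ref{lem:prop-small-E-local}.

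The main obstacle is the simultaneous balancing of the growth rates of $A_n$, $\nu_n$, $J_n$, and the necessary tightening of $\eps_n$, so that the compatibility conditions above (in particular $A_n\nu_n J_n\to\infty$ while $J_n\le\log_2(\nu_n^{-1}A_n^{-2})$ keeps the shells inside the corridor) all hold simultaneously. The freedom afforded by Lemma~\ref{lem:sequences} and by the ability to enlarge $\eps_n$ as in Remark~\ref{rem:collision}, without disrupting the collision-interval structure, provides enough flexibility; the remaining technicalities follow closely the analogous construction in \cite{JL6} for the equivariant wave maps problem.
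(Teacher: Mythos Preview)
Your approach contains a genuine gap in the treatment of the ``compactness'' subintervals $[e_{n,m}^R, e_{n,m+1}^L]$, which breaks both property (2) and property (4).

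First, the inequality you use for the slope, $A_n^{-1}\mu_{K+1}/\mu_* \lesssim 1/(A_n\nu_n)$, goes in the wrong direction: the only relation available is $\mu_*(t)\le 4\nu(t)=4\nu_n\mu_{K+1}(t)$, which gives $\mu_{K+1}/\mu_*\ge (4\nu_n)^{-1}$, a \emph{lower} bound. There is no upper bound on $\mu_{K+1}/\mu_*$ in terms of $\nu_n$, since $\mu_*$ can be arbitrarily small compared to $\nu$. So your estimate $|\rho'|\lesssim 1/(A_n\nu_n)$ is not valid, and the product $(1+|\rho'|)\|\bs u\|_{\cE(\rho,2\rho)}^2$ is not controlled.

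Second, and more fundamentally, linear interpolation between $\rho_{n,m}$ and $\rho_{n,m+1}$ does not keep $\rho(t)\gg\mu_*(t)$ throughout a compactness interval. Since $\mu_*$ is only $1$-Lipschitz and these intervals have no upper bound on their length in terms of $\mu_*$ at the endpoints, $\mu_*(t)$ can grow to size comparable to (or exceeding) $\min(\rho_{n,m},\rho_{n,m+1})$ at interior times, while your $\rho(t)$ is pinned between the two endpoint values. This destroys property (2) on such intervals. (Relatedly, your choice $\theta_n\le\frac14\theta_0$ is backwards: to make (3) vacuous where the slope is uncontrolled you need $\bfd\ge\theta_n>\frac12\theta_0$ there, i.e.\ the constant choice $\theta_n=\theta_0$ as in the paper.)

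The paper avoids both problems by a different construction: it builds a \emph{monotone} function $\rho^{(a)}$, piecewise affine with slope $1$ on modulation subintervals and slope $\delta_n^{-1/2}\gg 1$ on compactness subintervals, so that $\rho^{(a)}(t)\ge \rho^{(a)}(a_n)+(t-a_n)$ for every $t$. This places $\rho^{(a)}(t)$ permanently in the exterior of the forward light cone from $(a_n,\rho^{(a)}(a_n))$, where the annular energy is uniformly $\le\delta_n\to 0$ by a single application of finite speed of propagation from $t=a_n$. A symmetric $\rho^{(b)}$ is built backward from $b_n$, and $\rho:=\min(\rho^{(a)},\rho^{(b)},\nu)$. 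The slope $\ge 1$ everywhere guarantees that $\rho^{(a)}$ outruns $\mu_*$ (which is $1$-Lipschitz), giving (2); the pairing $|\rho'|\le\delta_n^{-1/2}$ with annular energy $\le\delta_n$ gives (4); and on modulation intervals the slope is exactly $1$, giving (3). Your local per-interval pigeonhole misses this global light-cone structure, which is what makes all four properties hold simultaneously.
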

\begin{proof}
We will define two functions $\rho^{(a)}, \rho^{(b)}$, and then set $\rho := \min(\rho^{(a)}, \rho^{(b)}, \nu)$.
First, we let
\begin{equation}
\rho^{(a)}(a_n) := \min(R_n\mu_*(a_n), \nu(a_n)),
\end{equation}
where $1 \ll R_n \ll \|\partial_t u(a_n)\|_{L^2}^{-1}$.
Consider an auxiliary sequence
\begin{equation}
\label{eq:deltan-virial-def}
\delta_n := \sup_{t \in [a_n, b_n]}
\|\bs u(t)\|_{\cE(\min(\rho^{(a)}(a_n) + t - a_n, \nu(t)); 2\nu(t))}.
\end{equation}
We claim that $\lim_{n \to \infty} \delta_n = 0$.
Indeed, if $\rho^{(a)}(a_n) + t - a_n \geq \nu(t)$, then it suffices to recall \eqref{eq:en-R-2R}.
In the opposite case, \eqref{eq:nu'} yields $t - a_n \leq \nu(t) \leq \nu(a_n) + o(t - a_n)$,
hence $t - a_n \leq 2\nu(a_n)$. Since we have
\begin{equation}
\lim_{n \to\infty}\|\bs u(t)\|_{\cE(\frac 14\rho^{(a)}(a_n); 4\nu(t))} = 0,
\end{equation}
it suffices to apply Lemma~\ref{lem:prop-small-E-local}.

Let $\theta_0 > 0$ be given by Lemma~\ref{lem:cedf}, and divide $[a_n, b_n]$ into subintervals applying this lemma
for the constant sequence $\theta_n = \theta_0$.
We let $\rho^{(a)}$ be the piecewise affine function such that
\begin{equation}
\dd t\rho^{(a)}(t) := 1\ \text{if }t \in [e_{n, m}^L, e_{n, m}^R],\qquad \dd t\rho^{(a)}(t) := \delta_n^{-\frac 12}\text{ otherwise.}
\end{equation}
We check that $\lim_{n\to\infty}\inf_{t\in[a_n, b_n]}\big(\rho^{(a)}(t) / \mu_*(t)\big) = \infty$.
First, suppose that $t \in [e_{n, m}^R, e_{n, m+1}^L]$ and $t - e_{n, m}^R \gtrsim \mu_*(e_{n, m}^R)$.
Then $\mu_*(t) \leq \mu_*(e_{n, m}^R) + (t - e_{n, m}^R) \lesssim t - e_{n, m}^R$
and $\rho^{(a)}(t) \geq \delta_n^{-\frac 12}(t - e_{n, m}^R)$, so $\rho^{(a)}(t) \gg \mu_*(t)$.

By Lemma~\ref{lem:cd-length}, $e_{n, m+1}^L - e_{n, m}^R \geq C_{\bs u} \mu_*(e_{n, m}^R)$,
so in particular we obtain $\rho^{(a)}(e_{n, m+1}^L)\gg \mu_*(e_{n, m+1}^L)$ for all $m \in \{0, 1, \ldots, N_n - 1\}$.
Note that we also have $\rho^{(a)}(e_{n, 0}^L) = \rho^{(a)}(a_n) \gg \mu_*(a_n) = \mu_*(e_{n, 0}^L)$,
by the choice of $\rho^{(a)}(a_n)$.
Since, by the property (7), $\mu_*$ changes at most by a factor $2\kappa_2^{-2}$ on $[e_{n, m}^L, e_{n, m}^R]$ and $\rho^{(a)}$ is increasing,
we have $\rho^{(a)}(e_{n, m}^R) \gg \mu_*(e_{n, m}^R)$.

Finally, if $t - e_{n, m}^R \leq \mu_*(e_{n, m}^R)$, then $\mu_*(t) \leq 2\mu_*(e_{n, m}^R)$,
which again implies $\rho^{(a)}(t) \gg \mu_*(t)$.

The function $\rho^{(b)}$ is defined similarly, but integrating from $b_n$ backwards.
Properties (1), (2), (3) are clear.
By the expression for $\Omega_{\rho(t)}(\bs u(t))$, see Lemma~\ref{lem:vir},
we have
\begin{equation}
|\Omega_{\rho(t)}(\bs u(t))| \lesssim (1+|\rho'(t)|)\|\bs u(t)\|_{\cE(\rho(t), 2\rho(t))}^2 \lesssim \sqrt{\delta_n} \to 0,
\end{equation}
which proves the property (4).

\end{proof}
We need one more elementary result.
\begin{lem}
\label{lem:subdivision}
If $\mu_*: [a, b] \to (0, \infty)$ is a $1$-Lipschitz function
and $b - a \geq \frac 14 \mu_*(a)$, then there exists a sequence
$a = a_0 < a_1 < \ldots < a_l < a_{l+1} = b$ such that
\begin{equation}
\label{eq:lip-partition}
\frac 14 \mu_*(a_i) \leq a_{i+1} - a_i \leq \frac 34 \mu_*(a_i),\qquad\text{for all }i \in \{1, \ldots, l\}.
\end{equation}
\end{lem}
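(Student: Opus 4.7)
My plan is to build the sequence greedily using a conservative step that keeps a safety margin on both sides of the desired two-sided bound. Set $a_0 := a$ and, as long as $b - a_i > \tfrac 34 \mu_*(a_i)$, define $a_{i+1} := a_i + \tfrac 14 \mu_*(a_i)$. Let $l$ be the first index for which the termination criterion $b - a_l \leq \tfrac 34 \mu_*(a_l)$ holds, and then set $a_{l+1} := b$. The step size $\tfrac 14 \mu_*(a_i)$ is chosen so that every non-terminal interval satisfies both inequalities in \eqref{eq:lip-partition} with room to spare, while leaving enough slack to control the terminal interval via the Lipschitz hypothesis.

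Termination follows from the continuity of $\mu_*$ on the compact interval $[a,b]$: if the recursion never halted, the monotone bounded sequence $(a_i)$ would converge to some $a_\infty \in [a,b]$, where $\mu_*(a_\infty) > 0$, contradicting $a_{i+1} - a_i = \tfrac 14 \mu_*(a_i) \to 0$. For each non-terminal index $1 \leq i \leq l - 1$, the identity $a_{i+1} - a_i = \tfrac 14 \mu_*(a_i)$ trivially satisfies \eqref{eq:lip-partition}.

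The main step, and the only nontrivial one, is the terminal interval at $i = l$. The upper bound $b - a_l \leq \tfrac 34 \mu_*(a_l)$ is exactly the termination criterion. For the lower bound, assume $l \geq 1$. Since the criterion failed at step $l-1$, we have $b - a_{l-1} > \tfrac 34 \mu_*(a_{l-1})$, hence
\[
b - a_l = (b - a_{l-1}) - \tfrac 14 \mu_*(a_{l-1}) > \tfrac 12 \mu_*(a_{l-1}).
\]
The $1$-Lipschitz property gives $\mu_*(a_l) \leq \mu_*(a_{l-1}) + \tfrac 14 \mu_*(a_{l-1}) = \tfrac 54 \mu_*(a_{l-1})$, so $\tfrac 14 \mu_*(a_l) \leq \tfrac{5}{16}\mu_*(a_{l-1}) < b - a_l$, as required. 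The degenerate case $l = 0$ is vacuous, since the condition in \eqref{eq:lip-partition} is imposed only for $i \in \{1, \ldots, l\}$; the hypothesis $b - a \geq \tfrac 14 \mu_*(a) > 0$ simply ensures $b > a$, so that the sequence is well-defined and strictly increasing.
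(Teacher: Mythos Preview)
Your proof is correct and follows essentially the same approach as the paper: the same greedy construction $a_{i+1} = a_i + \tfrac14\mu_*(a_i)$ with the same stopping rule, and the same Lipschitz computation $\mu_*(a_l) \le \tfrac54\mu_*(a_{l-1})$ to secure the lower bound on the terminal interval. You are slightly more explicit than the paper in that you spell out the termination argument and the vacuous $l=0$ case, both of which the paper leaves implicit.
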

\begin{proof}
We define inductively $a_{i+1} := a_i + \frac 14\mu_*(a_i)$, as long as $b - a_i > \frac 34 \mu_*(a_i)$.
We need to prove that $b - a_i > \frac 34 \mu_*(a_i)$ implies $b - a_{i+1} > \frac 14\mu_*(a_{i+1})$.

Since $\mu_*$ is $1$-Lipschitz, $\mu_*(a_{i+1}) = \mu_*(a_i + \mu_*(a_i)/4)
\leq \mu_*(a_i) + \mu_*(a_i)/4 = \frac 54 \mu_*(a_i)$, thus
\begin{equation}
b - a_{i+1} = b - a_i - \frac 14 \mu_*(a_i) > \frac 34 \mu_*(a_i) - \frac 14\mu_*(a_i)
> \frac{5}{16}\mu_*(a_i) \geq \frac 14 \mu_*(a_{i+1}).
\end{equation}
\end{proof}
\begin{rem}
\label{rem:subdivision}
Note that \eqref{eq:lip-partition} and the fact that $\mu_*$ is $1$-Lipschitz
imply $\inf_{t \in [a_i, a_{i+1}]}\mu_*(t) \geq \frac 14\mu_*(a_i)$
and $\sup_{t \in [a_i, a_{i+1}]}\mu_*(t) \leq \frac 74\mu_*(a_i)$, thus
\begin{equation}
\frac 17 \sup_{t \in [a_i, a_{i+1}]}\mu_*(t) \leq a_{i+1} - a_i \leq 3\inf_{t \in [a_i, a_{i+1}]}\mu_*(t),
\end{equation}
in other words the length of each subinterval is comparable with
both the smallest and the largest value of $\mu_*$ on this subinterval.
\end{rem}
\begin{lem}
\label{lem:virial-decrease}
Let $\rho$ be the function given by Lemma~\ref{lem:rho} and set
\begin{equation}
\label{eq:fv-def}
\fv(t) := \int_0^\infty\partial_t u(t) r\partial_r u(t) \chi_{\rho(t)}\,r\vd r.
\end{equation}
\begin{enumerate}[1.]
\item There exists a sequence $\theta_n \to 0$ such that the following is true. If $[\wt a_n, \wt b_n] \subset [a_n, b_n]$ is such that
\begin{equation}
\wt b_n - \wt a_n \geq \frac 14 \mu_*(\wt a_n)\quad\text{and}\quad
\bfd(t) \geq \theta_n\text{ for all }t \in [\wt a_n, \wt b_n],
\end{equation}
then
\begin{equation}
\label{eq:intermediate}
\fv(\wt b_n) < \fv(\wt a_n).
\end{equation}
\item 
For any $c, \theta > 0$ there exists $\delta > 0$ such that if $n$ is large enough, $[\wt a_n, \wt b_n] \subset [a_n, b_n]$,
\begin{equation}
c \mu_*(\wt a_n) \leq \wt b_n - \wt a_n \quad\text{and}\quad
\bfd(t) \geq \theta\text{ for all }t \in [\wt a_n, \wt b_n],
\end{equation}
then
\begin{equation}
\label{eq:intermediate-2}
\fv(\wt b_n) - \fv(\wt a_n) \leq  -\delta \sup_{t\in[\wt a_n, \wt b_n]}\mu_*(t).
\end{equation}
\end{enumerate}
\end{lem}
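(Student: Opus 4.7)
The plan is to integrate the virial identity from Lemma~\ref{lem:vir} on $[\wt a_n, \wt b_n]$ and extract a lower bound on the resulting time-integrated kinetic energy via the Compactness Lemma~\ref{lem:compact}. From \eqref{eq:virial}, together with Lemma~\ref{lem:rho}(4) which supplies a sequence $\theta_n' \to 0$ such that $|\Omega_{1,\rho(t)}(\bs u(t)) + \tfrac{D-2}{2}\Omega_{2,\rho(t)}(\bs u(t))| \le \theta_n'$ uniformly in $t \in [a_n, b_n]$, one obtains
$$\fv(\wt b_n) - \fv(\wt a_n) \le -\int_{\wt a_n}^{\wt b_n}\int_0^\infty (\p_t u(t,r))^2 \chi_{\rho(t)}(r)\,r^{D-1}\,\ud r\,\ud t + \theta_n'(\wt b_n - \wt a_n).$$
(The precise Lyapunov form of $\fv$ requires the $\dot H^1$-critical generator $r\p_r u + \tfrac{D-2}{2}u$; the lower-order correction coming from the extra $u$-term is absorbed into the $\Omega$ error, using uniform $\E$-boundedness of $\bs u$.) Both conclusions then reduce to showing the double integral above dominates the right-hand error in the respective regimes.

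\textbf{Part 1 via contradiction.} Suppose the statement fails: there is a subsequence and subintervals $[\wt a_n, \wt b_n] \subset [a_n, b_n]$ with $\wt b_n - \wt a_n \ge \tfrac14\mu_*(\wt a_n)$, some sequence $\theta_n \to 0$ (to be selected by a standard diagonal argument) with $\bfd(t) \ge \theta_n$ on $[\wt a_n, \wt b_n]$, but $\fv(\wt b_n) \ge \fv(\wt a_n)$. The Setup then forces the kinetic integral to be $\le \theta_n'(\wt b_n - \wt a_n)$. Applying Lemma~\ref{lem:subdivision} to the $1$-Lipschitz function $\mu_*$ and pigeonholing, one extracts a subinterval $[\alpha_n, \beta_n]$ on which the averaged kinetic energy still vanishes and $\beta_n - \alpha_n \simeq \mu_*(t)$ uniformly (Remark~\ref{rem:subdivision}). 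Set $\rho_n := \beta_n - \alpha_n$ and $\bs u_n(s, r) := \bs u(\alpha_n + s, r)$. By Lemma~\ref{lem:rho}(2) we have $\rho(t) \gg \mu_*(t) \simeq \rho_n$ and $\rho(t) \ll \mu_{K+1}(t)$ on $[\alpha_n, \beta_n]$, so one can choose $R_n \to \infty$ with $R_n\rho_n \le 2\inf_{[\alpha_n, \beta_n]}\rho(t)$ and $R_n \rho_n \ll \mu_{K+1}(t)$. The hypotheses of Lemma~\ref{lem:compact} are then met for $\bs u_n$, producing $s_n \in [0, \rho_n]$ and $r_n \to \infty$ with $r_n \le R_n$ such that $\bs\delta_{r_n \rho_n}(\bs u(\alpha_n + s_n)) \to 0$. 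Writing $t_n^\star := \alpha_n + s_n$ and $r_n' := r_n \rho_n/\mu_*(t_n^\star) \to \infty$, Lemma~\ref{lem:mustar}(iii) yields $\bfd(t_n^\star) \to 0$, contradicting $\bfd(t_n^\star) \ge \theta_n$ once $\theta_n$ is chosen to decay slowly enough.

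\textbf{Part 2.} The same scheme applies quantitatively. Fix $c, \theta > 0$ and suppose toward contradiction that for every $\delta > 0$, infinitely many $n$ admit intervals $[\wt a_n, \wt b_n]$ with $\wt b_n - \wt a_n \ge c \mu_*(\wt a_n)$, $\bfd(t) \ge \theta$ throughout, and $\fv(\wt b_n) - \fv(\wt a_n) > -\delta \sup_{[\wt a_n, \wt b_n]} \mu_*(t)$. Letting $\delta = 1/n$, Remark~\ref{rem:subdivision} gives $\sup\mu_* \simeq \wt b_n - \wt a_n$, so the time-integrated kinetic energy is $o(\wt b_n - \wt a_n)$. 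Running the subdivision--rescaling--Compactness argument of Part 1 verbatim extracts $t_n^\star \in [\wt a_n, \wt b_n]$ with $\bfd(t_n^\star) \to 0$, contradicting $\bfd(t_n^\star) \ge \theta > 0$. \emph{The main obstacle} in both parts is verifying that $\rho(t)/\rho_n \to \infty$ uniformly on the chosen subinterval, so that the Compactness Lemma can be applied with a genuinely expanding relative cutoff; this is ensured by the $1$-Lipschitz property of $\mu_*$ (Lemma~\ref{lem:mustar}(i)) combined with $\rho(t) \gg \mu_*(t)$ from Lemma~\ref{lem:rho}(2), which keeps $\mu_*$ from varying by more than an $O(1)$ factor on subintervals of length $\simeq \mu_*$.
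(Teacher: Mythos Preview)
Your overall strategy matches the paper's: integrate the virial identity, argue by contradiction that the averaged localized kinetic energy vanishes, subdivide via Lemma~\ref{lem:subdivision} to land on a subinterval of length comparable to $\mu_*$, apply the Compactness Lemma~\ref{lem:compact}, and contradict via Lemma~\ref{lem:mustar}(iii). Your observation about the definition of $\fv$ (that one must really work with the multiplier $r\partial_r u + \tfrac{D-2}{2}u$ as in \eqref{eq:virial} to get the clean identity $\fv'(t) = -\int(\partial_t u)^2\chi_{\rho} + o_n(1)$) is well taken.

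There is, however, a genuine logical error in how you set up the contradiction for Part~1. You write that failure of the statement produces ``some sequence $\theta_n \to 0$ (to be selected by a standard diagonal argument)'' together with violating intervals, and then conclude by saying $\bfd(t_n^\star)\to 0$ contradicts $\bfd(t_n^\star)\ge\theta_n$ ``once $\theta_n$ is chosen to decay slowly enough''. This is backwards. The claim is existential in the sequence $(\theta_n)$; its negation does \emph{not} hand you a sequence $\theta_n\to 0$ to play with, and in any case $\bfd(t_n^\star)\to 0$ is perfectly compatible with $\bfd(t_n^\star)\ge\theta_n$ when $\theta_n\to 0$. The correct negation (and this is what the paper does) is that there is a \emph{fixed} $\theta>0$ and, along a subsequence, intervals $[\wt a_n,\wt b_n]\subset[a_n,b_n]$ with $\wt b_n-\wt a_n\ge\tfrac14\mu_*(\wt a_n)$, $\bfd(t)\ge\theta$ throughout, and $\fv(\wt b_n)\ge\fv(\wt a_n)$. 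With $\theta>0$ fixed, your compactness argument then yields $\bfd(t_n^\star)\to 0$, which now genuinely contradicts $\bfd(t_n^\star)\ge\theta$. Once this is established for every fixed $\theta>0$, the sequence $\theta_n\to 0$ in the statement is obtained by the diagonal argument you allude to. Your Part~2 contradiction, by contrast, is set up correctly (fixed $c,\theta$, $\delta_n\to 0$), and the rest of the argument goes through as you describe.
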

\begin{proof}
By the virial identity, we obtain
\begin{equation}
\label{eq:virial-fin}
\fv'(t)  =
- \int_0^\infty (\partial_t u(t))^2 \chi_{\rho(t)}\,r\vd r + o_n(1).
\end{equation}
We argue by contradiction. If the claim is false, then there exists $\theta >0$
and an infinite sequence $[\wt a_n, \wt b_n] \subset [a_n, b_n]$
(as usual, we pass to a subsequence in $n$ without changing the notation)
such that
\begin{equation}
\wt b_n - \wt a_n \geq \frac 14 \mu_*(\wt a_n)\quad\text{and}\quad
\bfd(t) \geq \theta\text{ for all }t \in [\wt a_n, \wt b_n],
\end{equation}
and
\begin{equation}
\fv(\wt b_n) - \fv(\wt a_n) \geq 0.
\end{equation}

By Lemma~\ref{lem:subdivision}, there exists a subinterval of $[\wt a_n, \wt b_n]$, which we still denote $[\wt a_n, \wt b_n]$, such that
\begin{equation}
\frac 14 \mu_*(\wt a_n) \leq \wt b_n - \wt a_n \leq \frac 34 \mu_*(\wt a_n)\quad\text{and}\quad
\fv(\wt b_n) - \fv(\wt a_n) \geq 0.
\end{equation}
Let $\wt\rho_n := \inf_{t \in [\wt a_n, \wt b_n]}\rho(t)$. From \eqref{eq:virial-fin}, we have
\begin{equation}
\lim_{n\to\infty}\frac{1}{\wt b_n - \wt a_n}\int_{\wt a_n}^{\wt b_n}\int_0^{\frac 12 \wt \rho_n}(\partial_t u(t))^2\,r\vd r = 0.
\end{equation}
By Lemma~\ref{lem:rho}, $\inf_{t\in[\wt a_n, \wt b_n]}\mu_{K+1}(t) \gg \wt\rho_n \gg \inf_{t\in[\wt a_n, \wt b_n]}\mu_*(t)
\simeq \sup_{t\in[\wt a_n, \wt b_n]}\mu_*(t)$, so Lemma~\ref{lem:compact} yields sequences $t_n \in [\wt a_n, \wt b_n]$ and $1 \ll r_n \ll \mu_{K+1}(t_n)/\mu_*(t_n)$ such that
\begin{equation}
\lim_{n\to\infty}\bs\de_{r_n\mu_*(t_n)}(\bs u(t_n)) = 0,
\end{equation}
which is impossible by Lemma~\ref{lem:mustar} (iii). The first part of the lemma is proved.

In the second part, we can assume without loss of generality $\wt b_n - \wt a_n \leq \frac 34 \mu_*(\wt a_n)$.
Indeed, in the opposite case, we apply Lemma~\ref{lem:subdivision}
and keep only one of the subintervals where $\mu_*$ attains its supremum, and on the remaining subintervals we use
\eqref{eq:intermediate}.

After this preliminary reduction, we argue again by contradiction.
If the claim is false, then there exist $c, \theta >0$, a sequence $\delta_n \to 0$
and a sequence $[\wt a_n, \wt b_n] \subset [a_n, b_n]$ (after extraction of a subsequence) such that
\begin{equation}
c\mu_*(\wt a_n) \leq \wt b_n - \wt a_n \leq \frac 34 \mu_*(\wt a_n)\quad\text{and}\quad
\bfd(t) \geq \theta\text{ for all }t \in [\wt a_n, \wt b_n],
\end{equation}
and
\begin{equation}
\fv(\wt b_n) - \fv(\wt a_n) \geq -\delta_n\mu_*(\wt a_n)
\end{equation}
(we use the fact that $\mu_*(\wt a_n)$ is comparable to $\sup_{t\in[\wt a_n, \wt b_n]}\mu_*(t)$, see Remark~\ref{rem:subdivision}).

Let $\wt\rho_n := \inf_{t \in [\wt a_n, \wt b_n]}\rho(t)$. From \eqref{eq:virial-fin}, we have
\begin{equation}
\lim_{n\to\infty}\frac{1}{\wt b_n - \wt a_n}\int_{\wt a_n}^{\wt b_n}\int_0^{\frac 12 \wt \rho_n}(\partial_t u(t))^2\,r\vd r = 0.
\end{equation}
We now conclude as in the first part.
\end{proof}

\begin{proof}[Proof of Theorem~\ref{thm:main}]
Let $\theta_n$ be the sequence given by Lemma~\ref{lem:virial-decrease}, part 1.
We partition $[a_n, b_n]$ applying Lemma~\ref{lem:cedf} for this sequence $\theta_n$.
Note that this partition is different than the one used in the proof of Lemma~\ref{lem:rho}.
We claim that
for all $m \in \{0, 1, \ldots, N_n-1\}$
\begin{align}
\label{eq:intermediate-fin-1}
\fv(c_{n, m}^R) - \fv(e_{n, m}^R) &\leq o_n(1)\mu_*(c_{n, m}^R), \\
\label{eq:intermediate-fin-2}
\fv(f_{n, m+1}^L) - \fv(f_{n, m}^R) &\leq o_n(1)\mu_*(f_{n, m}^R), \\
\label{eq:intermediate-fin-3}
\fv(e_{n, m+1}^L) - \fv(c_{n, m+1}^L) &\leq o_n(1)\mu_*(c_{n, m+1}^L).
\end{align}
Here, $o_n(1)$ denotes a sequence of positive numbers converging to $0$ when $n\to\infty$.
In order to prove the first inequality, we observe that if $c_{n, m}^R - e_{n, m}^R \geq \frac 14 \mu_*(e_{n, m}^R)$,
then \eqref{eq:intermediate} applies and yields $\fv(c_{n, m}^R) - \fv(e_{n, m}^R)  < 0$.
We can thus assume $c_{n, m}^R - e_{n, m}^R \leq \frac 14 \mu_*(e_{n, m}^R) \leq \frac{\kappa_2^{-2}}{2} \mu_*(c_{n, m}^R)$,
where the last inequality follows from Lemma~\ref{lem:cedf}, property \ref{it:7}.
But then \eqref{eq:virial-fin} again implies the required bound.
The proofs of the second and third bound are analogous.

We now analyse the compactness intervals $[c_{n, j}^R, f_{n, j}^R]$ and $[f_{n, j+1}^L, c_{n, j+1}^L]$.
We claim that there exists $\delta > 0$ such that for all $n$ large enough and $m \in \{0, 1, \ldots, N_n\}$
\begin{equation}
\label{eq:compactness-fin}
\fv(c_{n, m+1}^L) - \fv(c_{n, m}^R) \leq -\delta \max(\mu_*(c_{n, m}^R), \mu_*(c_{n, m+1}^L)).
\end{equation}
We consider separately the two cases mentioned in Lemma~\ref{lem:cedf}, property \ref{it:6}.
If $\bfd(t) \geq \epsilon$ for all $t \in [c_{n, m}^R, c_{n, m+1}^L]$, then Lemma~\ref{lem:cd-length}
yields $c_{n, m+1}^L - c_{n,m}^R \geq C_{\bs u}^{-1}\mu_*(c_{n,m}^R)$, so we can apply \eqref{eq:intermediate-2}, which proves \eqref{eq:compactness-fin}.
If $\bfd(f_{n,m}^R) = \epsilon$, then we apply the same argument on the time interval $[c_{n,m}^R, f_{n,m}^R]$ and obtain
\begin{equation}
\label{eq:compactness-fin-4}
\fv(f_{n, m}^R) - \fv(c_{n, m}^R) \leq -\delta \max(\mu_*(c_{n, m}^R), \mu_*(f_{n, m}^R)),
\end{equation}
and similarly
\begin{equation}
\label{eq:compactness-fin-5}
\fv(c_{n, m+1}^L) - \fv(f_{n, m+1}^L) \leq -\delta \max(\mu_*(c_{n, m+1}^L), \mu_*(f_{n,m+1}^L)).
\end{equation}
The bound \eqref{eq:intermediate-fin-2} yields \eqref{eq:compactness-fin}.

Finally, on the intervals $[e_{n, m}^L, e_{n,m}^R]$, for $n$ large enough Lemma~\ref{lem:rho} yields $|\rho'(t)| \leq 1$
for almost all $t$, and Lemma~\ref{lem:virial-error} implies $|\fv'(t)| \lesssim \bfd(t)$.
By Lemma~\ref{lem:cedf}, properties \ref{it:1} and \ref{it:7}, we obtain
\begin{equation}
\label{eq:modulation-fin}
\begin{aligned}
\fv(e_{n,m}^R) - \fv(e_{n,m}^L) &\leq o_n(1)\mu_*(c_{n,m}^R),\qquad\text{for all }m \in \{0, 1, \ldots, N_n-1\}, \\
\fv(e_{n,m}^R) - \fv(e_{n,m}^L) &\leq o_n(1)\mu_*(c_{n,m}^L),\qquad\text{for all }m \in \{1, \ldots, N_n-1, N_n\}.
\end{aligned}
\end{equation}

Taking the sum in $m$ of \eqref{eq:intermediate-fin-1}, \eqref{eq:intermediate-fin-3}, \eqref{eq:compactness-fin} and
\eqref{eq:modulation-fin}, we deduce that there exists $\delta > 0$
and $n$ arbitrarily large such that
\begin{equation}
\fv(b_n) - \fv(a_n) \leq -\delta\max(\mu_*(c_{n,0}^R), \mu_*(c_{n,N_n}^L)).
\end{equation}
But $\mu_*(a_n) \simeq \mu_*(c_{n,0}^R)$ and $\mu_*(b_n) \simeq \mu_*(c_{n,N_n}^L)$, hence
\begin{equation}
\fv(b_n) - \fv(a_n) \leq -\wt\delta\max(\mu_*(a_n), \mu_*(b_n)).
\end{equation}
Lemma~\ref{lem:rho} (1) and \eqref{eq:fv-def} yield
\begin{equation}
|\fv(a_n)| \ll \mu_*(a_n), \qquad |\fv(b_n)| \ll \mu_*(b_n),
\end{equation}
a contradiction which finishes the proof.
\end{proof}

\subsection{Absence of elastic collisions}
\label{ssec:inelastic}
This section is devoted to proving Proposition~\ref{prop:inelastic}
Our proof closely follows Step 3 in our proof of \cite[Theorem 1.6]{JL1}.
\begin{proof}[Proof of Proposition~\ref{prop:inelastic}]
Suppose that a solution $\bs u$ of \eqref{eq:nlw}, defined on its maximal time of existence
$t \in (T_-, T_+)$, is a pure multi-bubble in both time directions in the sense of Definition~\ref{def:pure}, in other words
\begin{equation}
\lim_{t \to T_+}\bfd(t) = 0, \qquad\text{and}\qquad \lim_{t \to T_-}\bfd(t) = 0, 
\end{equation}
and the radiation $\bs u^* = \bs u^*_L$ or $\bs u^* = \bs u^*_0$ in both time directions satisfies $\bs u^* \equiv 0$. 
In this proof, all the $N$ bubbles can be thought of as ``interior'' bubbles
thus, whenever we invoke the results from the preceding sections, it should always
be understood that $K = N$.
Applying Lemma~\ref{lem:mod-static} with $\theta = 0$ and $M = N$,
we obtain from \eqref{eq:g-bound-A} and \eqref{eq:g-bound-0} that
\begin{equation}
\label{eq:pure-g-bound}
\bfd(t)^2 \leq C \bigg(\sum_{j \in \cS}\Big(\frac{\lambda_j(t)}{\lambda_{j+1}(t)}\Big)^\frac{D-2}{2}
+ \sum_{k=1}^K \big(a_k^-(t)^2 + a_k^+(t)^2\big)\bigg).
\end{equation}
Inspecting the proof of Lemma~\ref{lem:mod-1}, it follows that the last inequality
and the fact that $\bs u^* = 0$ imply that Lemma~\ref{lem:mod-1} holds with $\zeta_n = 0$.
Similarly, Lemma~\ref{cor:modul} holds with $\delta_n = 0$.

Let $\eta > 0$ be a small number to be chosen later and $t_+$ be such that
$\bfd(t) \leq \eta$ for all $t \geq t_+$.
Lemma~\ref{lem:ejection} yields
\begin{equation}
\int_{t_+}^{t}\bfd(t)\ud t \leq C_0 \big(\bfd(t_+)^{\frac{4}{D-2}} \lambda_N(t_+),
\bfd(t)^{\frac{4}{D-2}} \lambda_N(t)\big)
\end{equation}
and passing to the limit $t \to T_+$ we get
\begin{equation}
\label{eq:integral-of-d}
\int_{t_+}^{T_+}\bfd(t)\ud t \leq C_0 \bfd(t_+)^{\frac{4}{D-2}} \lambda_N(t_+).
\end{equation}
From the bound $|\lambda_N'(t)| \lesssim \bfd(t)$, see \eqref{eq:lam'} with $\zeta_n = 0$,
together with \eqref{eq:integral-of-d}, implies that $\lim_{t \to T_+}\lambda_N(t)$
is a finite positive number, thus $T_+ = +\infty$.

Analogously, $T_- = -\infty$ and $\lim_{t \to -\infty}\lambda_N(t) \in (0, +\infty)$ exists.

The remaining part of the argument is exactly the same as in \cite{JL1}, but we reproduce it
here for the reader's convenience.

Let $\delta > 0$ be arbitrary. Inspecting the proof of Lemma~\ref{lem:virial-error},
we see that in the present case it holds with $\epsilon_n = 0$, thus for any $R > 0$ we have
$\big|\Omega_{1,R}(\bs u(t)) + \frac{D-2}{2}\Omega_{2,R}(\bs u(t))\big| \leq C_0\bfd(t)$.
From this bound and the estimates above, we obtain existence of $T_1, T_2 \in \bR$ such that
\begin{align}
\int_{-\infty}^{T_1}\Big|\Omega_{1,R}(\bs u(t)) + \frac{D-2}{2}\Omega_{2,R}(\bs u(t))\Big|\ud t &\leq \frac 13 \delta, \\
\int_{T_2}^{+\infty}\Big|\Omega_{1,R}(\bs u(t)) + \frac{D-2}{2}\Omega_{2,R}(\bs u(t))\Big|\ud t &\leq \frac 13 \delta
\end{align}
for any $R > 0$. On the other hand, because of the bound $|\Om_{j,R}(\bs u(t))| \le C_0 \|\bs u(t)\|_{\cE(R, 2R)}$ and since $[T_1, T_2]$ is a finite time interval, for all $R$ sufficiently large we have
\begin{equation}
\int_{T_1}^{T_2}\Big|\Omega_{1,R}(\bs u(t)) + \frac{D-2}{2}\Omega_{2,R}(\bs u(t))\Big|\ud t \leq \frac 13 \delta,
\end{equation}
in other words
\begin{equation}
\int_{\bR}\Big|\Omega_{1,R}(\bs u(t)) + \frac{D-2}{2}\Omega_{2,R}(\bs u(t))\Big|\ud t \leq \delta.
\end{equation}
Integrating the virial identity \eqref{eq:virial} with $\rho(t) = R$ over the real line,
we obtain
\begin{equation}
\int_{-\infty}^{+\infty}\int_0^\infty (\partial_t u(t, r)\chi_R(r))^2\,r^{D-1}\vd r\ud t \leq \delta.
\end{equation}
By letting $R \to +\infty$, we get
\begin{equation}
\int_{-\infty}^{+\infty}\int_0^\infty (\partial_t u(t, r))^2\,r^{D-1}\vd r\ud t \leq \delta,
\end{equation}
which implies the $\bs u$ is stationary since $\delta$ is arbitrary.
\end{proof}

\appendix 

\section{Modifications to the argument in the case $D =5$} 
In this section we outline the technical changes to the arguments in Section~\ref{sec:decomposition} needed to prove Theorem~\ref{thm:main} dimensions $D =5$.
\subsection{Decomposition of the solution}
The set-up in Sections~\ref{ssec:proximity} holds without modification for $D=5$. The number $K \ge1$ is defined as in Lemma~\ref{lem:K-exist}, the collision intervals $[a_n, b_n] \in \calC_K(\eta, \eps_n)$ are as in Definition~\ref{def:K-choice}, and  the sequences of signs $\vec \s_n \in \{-1, 1\}^{N-K}$,  scales $\vec \mu(t) \in (0, \infty)^{N-K}$, and the sequence $\nu_n \to 0$ and  the function $\nu(t) = \nu_n \mu_{K+1}(t)$ are as in Lemma~\ref{lem:ext-sign}. 

 Lemma~\ref{lem:mod-1} also holds with a minor modification to the stable/unstable components. Let $J \subset [a_n, b_n]$ be any time interval on which $\bfd(t) \le \eta_0$, where $\eta_0$ is as in  Lemma~\ref{lem:mod-1}. Let $\vec \iota \in \{ -1, 1\}^K, \vec \lam(t) \in (0, \infty)^K$, $\bs g(t) \in \E$, and $a_{j}^{\pm}(t)$  be as in the statement of Lemma~\ref{lem:mod-1}. Define for each $1 \le j \le K$, the modified stable/unstable components, 
 \EQ{
 \ti a_{j}^{\pm}(t) := \La \bs \al_{\lam_j(t)}^{\pm} \mid \bs g(t) + \sum_{i < j} \iota_i \bs W_{\lam_i(t)} \Ra 
 }
 The estimate~\eqref{eq:dta} will hold for $\ti a_j^{\pm}(t)$ rather than for $a_j^{\pm}(t)$, see~\eqref{eq:a-tia-5} and~\eqref{eq:dta-5} below. 
 We make a similar modification (i.e., removing the interior bubbles from $\bs g(t)$) to the refined modulation parameter $\xi_j(t)$. For each $j \in \{1, \dots, K-1\}$, we set 
 \EQ{
 \xi_j(t) = \lam_j(t) - \frac{\iota_j}{\| \Lam W \|_{L^2}^2} \La \chi( \cdot/ L \lam_j(t)) \Lam W_{\U{\lam_j(t)}} \mid g(t)  +  \sum_{i <j} \iota_i W_{\lam_i(t)} \Ra
 }
 where $L>0$ is a large parameter to be determined below. 
 The refined modulation parameter $\be_j(t)$ requires no modifications and is defined as in~\eqref{eq:beta-def} for all $j \in \{1, \dots, K-1\}$. 

With these definitions, the following analogue of Lemma~\ref{cor:modul} holds. 

%

\begin{prop}[Refined modulation, $D=5$]
\label{prop:modul-5} 
 Let $c_0  \in (0, 1)$. There exists  constants $L_0 = L_0(c_0)>0$, $\eta_0 = \eta_0(c_0)$, as well as  $c= c(c_0)$ and $R = R(c_0)>1$ as in Lemma~\ref{lem:q},  a constant $C_0>0$, and a 
decreasing sequence $\epsilon_n \to 0$ so that the following is true. 

Suppose $L > L_0$ and  $J \subset [a_n, b_n]$ is an open time interval with $\epsilon_n \leq \bfd(t) \le \eta_0$
for all $t \in J$, where $\calS :=  \{ j \in \{1, \dots, K-1\} \mid \iota_{j}  = \iota_{j+1} \}$.  
Then, for all $t \in J$, 
\EQ{\label{eq:g-bound-5} 
\| \bs g(t) \|_{\E} + \sum_{i \not \in \calS}  (\lambda_i(t) / \lambda_{i+1}(t))^\frac 34 \le  \max_{ i  \in \calS}  (\lambda_i(t) / \lambda_{i+1}(t))^\frac 34 +  \max_{1 \leq i \leq K}|a_i^\pm(t)|\\
}
and, 
\EQ{ \label{eq:d-bound-5} 
\frac{1}{C_0} \bfd(t) \le \max_{i \in \calS} (\lambda_i(t) / \lambda_{i+1}(t))^\frac 34  +  \max_{1 \leq i \leq K}|a_i^\pm(t)| \le C_0 \bfd(t) ,
}
and, 
\begin{equation}\label{eq:xi_j-lambda_j-5}
\Big|\frac{\xi_j(t)}{\lambda_{j}(t)} - 1\Big| \le c_0  
\end{equation}
Moreover,  for all $t \in J$, 
\EQ{ \label{eq:xi_j'-5} 
\abs{ \xi'_j(t) } \le C_0 \bfd(t) , 
}
\begin{equation}\label{eq:xi_j'-beta_j-5} 
\Big|\xi_j'(t) - \beta_j(t)\Big| \leq c_0\bfd(t), 
\end{equation}
and,  
\EQ{ \label{eq:beta_j'-5} 
 \beta_{j}'(t) &\ge  \Big( \iota_j \iota_{j+1}\om^2 -   c_0\Big) \frac{1}{\lam_{j}(t)} \left( \frac{\lam_{j}(t)}{\lam_{j+1}(t)} \right)^{\frac{3}{2}} +    \Big({-}\iota_j \iota_{j-1}\om^2 -  c_0\Big) \frac{1}{\lam_{j}(t)} \left( \frac{\lam_{j-1}(t)}{\lam_{j}(t)} \right)^{\frac{3}{2}}   \\
&\quad  -  \frac{c_0}{\lam_j(t)}  \bfd(t)^2 - \frac{C_0}{\lam_j(t)} \Big( (a_j^+(t))^2 + (a_j^-(t))^2 \Big),   
}
where, by convention, $\lambda_0(t) = 0, \lam_{K+1}(t) = \infty$ for all $t \in J$, and $\om^2>0$ is as in~\eqref{eq:omega-def}
Finally, for each $j \in \{1, \dots, K\}$, 
\EQ{ \label{eq:a-tia-5} 
\Big| \ti a_j^\pm(t) - a_j^\pm(t) \Big| \le C_0 \bfd(t)^2
}
and
\EQ{ \label{eq:dta-5} 
\Big| \dd t \ti a_j^\pm(t) \mp \frac{\kappa}{\lambda_j(t)} \ti a_j^\pm(t) \Big| \leq \frac{C_0}{\lambda_j(t)}\bfd(t)^2. 
}
\end{prop}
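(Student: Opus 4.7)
\medskip

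The proof will follow the same architecture as Lemma~\ref{cor:modul}, reducing to the lemmas from Section~\ref{ssec:mod} and Section~\ref{ssec:ref-mod}, with two structural adjustments that compensate for the slow decay $|\Lam W(r)| \simeq r^{-3}$ in $D = 5$. Specifically, both $\xi_j$ and the stable/unstable projections are redefined to incorporate the contribution of the smaller-scale bubbles $\sum_{i<j}\iota_i \bs W_{\lam_i(t)}$; this is necessary because, when $D=5$, the pairings $\langle \bs\al_{\lam_j}^\pm\mid \bs W_{\lam_i}\rangle$ and $\langle \Lam W_{\U{\lam_j}}\mid \Lam W_{\U{\lam_i}}\rangle$ with $i<j$ decay only like $(\lam_i/\lam_j)^{3/2}$, which is exactly the critical rate $\bfd(t)^2$ and cannot be treated as a genuinely lower-order error. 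The coercivity estimates~\eqref{eq:g-bound-5} and~\eqref{eq:d-bound-5} are direct transcriptions from Lemma~\ref{lem:mod-1}, since the proofs of~\eqref{eq:d-g-lam} and~\eqref{eq:g-upper} are dimension-agnostic once $\calZ$ is chosen compactly supported. The estimate~\eqref{eq:a-tia-5} is an immediate consequence of the definition of $\ti a_j^\pm$, together with the pointwise bound $|\langle\bs\al_{\lam_j}^\pm\mid \bs W_{\lam_i}\rangle|\lesssim (\lam_i/\lam_j)^{3/2}\lesssim \bfd(t)^2$ for $i<j$, which uses only $\calY\in L^1\cap L^\infty$ and the far-field asymptotic of $W_{\lam_i}$ at scales $\gg \lam_i$.

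For~\eqref{eq:xi_j-lambda_j-5} I will expand $\xi_j-\lam_j$ into the piece involving $g$ and a piece involving each $W_{\lam_i}$, $i<j$. The $g$-piece is controlled by $\|g\|_{L^{5/2}}\lesssim\|\bs g\|_\E$ together with the explicit bound $\|\chi(\cdot/L\lam_j)\Lam W_{\U{\lam_j}}\|_{L^{5/3}}\lesssim \log(L)^{3/5}$, which is absorbed by taking $\eta_0$ small after $L$ is fixed; each $W_{\lam_i}$-piece contributes at most $(\lam_i/\lam_j)^{3/2}\lesssim \bfd(t)^2$, also small. The dynamical bound~\eqref{eq:xi_j'-5} follows from~\eqref{eq:lam'} and routine $L^{5/2}\times L^{5/3}$-estimates on the time derivatives of the correction term. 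For~\eqref{eq:xi_j'-beta_j-5}, the computation of $\xi_j'$ parallels~\eqref{eq:xi'-exp}: one differentiates the correction, uses~\eqref{eq:g-eq} to substitute $\p_t g$, and extracts the leading term $-\iota_j\|\Lam W\|_{L^2}^{-2}\langle\Lam W_{\U{\lam_j}}\mid\dot g\rangle$. The key structural feature of $D=5$ is that the two cross terms $-\iota_j\|\Lam W\|_{L^2}^{-2}\langle\chi(\cdot/L\lam_j)\Lam W_{\U{\lam_j}}\mid\sum_{i\neq j}\iota_i\lam_i'\Lam W_{\U{\lam_i}}\rangle$ and $-\iota_j\|\Lam W\|_{L^2}^{-2}\langle\chi(\cdot/L\lam_j)\Lam W_{\U{\lam_j}}\mid\p_t\sum_{i<j}\iota_i W_{\lam_i}\rangle$ have a mutual cancellation: the time derivative $\p_t W_{\lam_i}=\lam_i'\Lam W_{\U{\lam_i}}$ is precisely the term that, absent the correction, would violate the bound $c_0\bfd(t)$ in~\eqref{eq:xi_j'-beta_j-5}. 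The remaining errors are of size $(\lam_{i-1}/\lam_i)^{3/2}+(\lam_i/\lam_{i+1})^{3/2}+o_L(1)$ times $\bfd(t)$, acceptable.

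The main obstacle will be~\eqref{eq:beta_j'-5}: the derivation of the monotonicity formula retraces~\eqref{eq:beta'-exp} term by term, but two ingredients require genuine work in $D=5$. First, the bound~\eqref{eq:8-term} on $\langle(A(\lam_j)-\lam_j^{-1}\Lam)W_{\lam_j}\mid f_\bfq(\vec\iota,\vec\lam,g)\rangle$ uses the pointwise inequality $|f(x_1+x_2)-f(x_1)-f'(x_1)x_2|\lesssim |x_2|^{7/3}$, valid in $D=5$, combined with a splitting of the integration domain into $r\lesssim L\sqrt{\lam_{j-1}\lam_j}$ and $r\gg L\sqrt{\lam_{j-1}\lam_j}$ as in the proof of~\eqref{eq:8-term} but without the second inequality in~\eqref{eq:fq-ineq}; the missing bound is replaced by a direct $L^{5/2}$-estimate on $g$ together with~\eqref{eq:L-A}. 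Second, and more subtly, the term $\langle\bs\al_{\lam_j}^-\mid\p_t\bs\calW(\vec\iota,\vec\lam)\rangle$ from the analogue of~\eqref{eq:a'-exp} picks up cross contributions $\lam_i'\langle\calY_{\U{\lam_j}}\mid\Lam W_{\U{\lam_i}}\rangle$ which, for $i<j$, are of size $(\lam_i/\lam_j)^{3/2}\lesssim\bfd(t)^2$; these are \emph{not} lower order in $D=5$, but precisely balance against the contribution obtained by differentiating the extra summand $\sum_{i<j}\iota_i\bs W_{\lam_i}$ in the definition of $\ti a_j^\pm$, yielding~\eqref{eq:dta-5}. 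Once~\eqref{eq:dta-5} is in hand, the localized coercivity estimate~\eqref{eq:coercive-virial}, applied with $\ti a_j^\pm$ in place of $a_j^\pm$, closes~\eqref{eq:beta_j'-5} exactly as in Lemma~\ref{cor:modul}; the remaining outer-boundary errors are handled by~\eqref{eq:outer-error}, which is dimension-independent.
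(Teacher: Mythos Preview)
Your architecture matches the paper's proof: the same structural role for the interior-bubble corrections in $\xi_j$ and $\ti a_j^\pm$, the same cancellation between the $\sum_{i<j}$ cross terms arising from $\partial_t g$ and from $\partial_t\sum_{i<j}\iota_i W_{\lambda_i}$, and the same reduction of~\eqref{eq:beta_j'-5} to the proof of~\eqref{eq:beta_j'}. There are, however, several computational slips you should fix.

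First, the Sobolev exponent in $D=5$ is $\tfrac{2D}{D-2}=\tfrac{10}{3}$, not $\tfrac{5}{2}$; the bound $\|g\|_{L^{5/2}}\lesssim\|\bs g\|_\E$ fails by scaling. The paper pairs $\|g\|_{L^{10/3}}$ with $\|\chi(\cdot/L\lambda_j)\Lambda W_{\underline{\lambda_j}}\|_{L^{10/7}}\lesssim L^{1/2}$, yielding $L^{1/2}\|\bs g\|_\E$ in~\eqref{eq:xi_j-lambda_j-5} rather than your $(\log L)^{3/5}$; the same correction applies to the $L^{5/2}$ occurrence in your treatment of the eighth term.

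Second, your size claim for the problematic cross terms in~\eqref{eq:dta-5} is off by an exponent: the pairing $\langle\calY_{\underline{\lambda_j}}\mid\Lambda W_{\underline{\lambda_i}}\rangle$ for $i<j$ scales like $(\lambda_i/\lambda_j)^{(D-4)/2}=(\lambda_i/\lambda_j)^{1/2}$, so the full contribution is $\frac{1}{\lambda_j}\bfd(t)\,(\lambda_i/\lambda_j)^{1/2}\sim\frac{1}{\lambda_j}\bfd(t)^{5/3}$, not $(\lambda_i/\lambda_j)^{3/2}$. Your conclusion that this cannot be absorbed into $\frac{1}{\lambda_j}\bfd(t)^2$ and must be cancelled by the $\ti a_j^\pm$ correction is nonetheless correct.

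Third, you overstate the difficulty of the eighth term in $D=5$: since $|f_{\bfq}|\lesssim |g|^{7/3}$ gives $\|f_{\bfq}\|_{L^{10/7}}\lesssim\|\bs g\|_\E^{7/3}$ and $7/3>2$, pairing with~\eqref{eq:L-A} already produces $\frac{c_0}{\lambda_j}\bfd(t)^{7/3}\ll\frac{c_0}{\lambda_j}\bfd(t)^2$ with no domain splitting. This is why the paper simply says~\eqref{eq:beta_j'-5} is proved exactly as~\eqref{eq:beta_j'}. Finally, the localized coercivity~\eqref{eq:coercive-virial} is applied with $a_j^\pm$, not $\ti a_j^\pm$; the two are interchangeable here by~\eqref{eq:a-tia-5}.
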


\begin{proof} 
The estimates~\eqref{eq:g-bound-5} and~\eqref{eq:d-bound-5} follow as in the proofs of the corresponding estimates in Lemma~\ref{cor:modul}. Next, we have, 
\EQ{
\Big|\frac{\xi_j}{\lambda_{j}} - 1\Big|  &\lesssim \frac{1}{\lam_{j}} \| \chi(  \cdot/ L \lam_j ) \Lam W_{\U{\lam_j}} \|_{L^{\frac{10}{7}}} \| g \|_{L^{\frac{10}{3}}} +\sum_{i < j} \frac{1}{\lam_j} \Big| \La \chi(  \cdot/ L \lam_j ) \Lam W_{\U{\lam_j}}  \mid W_{\lam_i} \Ra \Big| \\
&\lesssim L^{\frac{1}{2}} \| \bs g \|_{\E}  +L^2  \sum_{i < j} \Big( \frac{\lam_i}{\lam_j} \Big)^{\frac{3}{2}}
}
which proves~\eqref{eq:xi_j-lambda_j-5} as long as $\eta_0$ is sufficiently small compared to $L$. 

Next, we compute $\xi_j'(t)$. We have, 
\EQ{ \label{eq:xi'-exp-5} 
\xi_j' &=  \lam_j'  - \frac{\iota_j}{\| \Lam W \|_{L^2}^2}  \La \chi( \cdot/ L\lam_j ) \Lam W_{\U{\lam_j}} \mid \p_t g  \Ra  +  \frac{\iota_j}{\| \Lam W \|_{L^2}^2} \La \chi( \cdot/ L\lam_j ) \Lam W_{\U{\lam_j}} \mid  \sum_{i< j} \iota_i \lam_i' \Lam W_{\U \lam_i} \Ra\\
  &\quad + \frac{\iota_j}{\| \Lam W \|_{L^2}^2}\frac{ \lam_j' }{\lam_j} \La   (r \p_r \chi)( \cdot/  L\lam_j)  \Lam W_{\U{\lam_j}} \mid g +   \sum_{i <j} \iota_i W_{\lam_i(t)} \Ra \\
  &\quad + \frac{\iota_j}{\| \Lam W \|_{L^2}^2} \frac{\lam_{j}'}{\lam_j} \La \chi( \cdot/ L\lam_j) \ULam \Lam W_{\U{\lam_j}} \mid g+   \sum_{i <j} \iota_i W_{\lam_i(t)} \Ra . 
}
The last two terms above are acceptable errors. Indeed, 
\EQ{
\Big|&  \frac{ \lam_j' }{\lam_j} \La   (r \p_r \chi)( \cdot/  L\lam_j)  \Lam W_{\U{\lam_j}} \mid g +   \sum_{i <j} \iota_i W_{\lam_i(t)} \Ra \Big| \\
&\lesssim  \frac{|\lam_j'|}{\lam_j} \Big( \|  (r \p_r \chi)( \cdot/  L\lam_j)  \Lam W_{\U{\lam_j}} \|_{L^{\frac{10}{7}}}  \| g \|_{L^{\frac{10}{3}}} +\sum_{i < j}  \Big| \La   (r \p_r \chi)( \cdot/  L\lam_j)  \Lam W_{\U{\lam_j}}  \mid W_{\lam_i} \Ra \Big|\Big) \\
&\lesssim \Big( L^{\frac{1}{2}} \| \bs g \|_{\E}  +L^2  \sum_{i < j} \Big( \frac{\lam_i}{\lam_j} \Big)^{\frac{3}{2}} \Big) \bfd(t) 
}
and similarly, 
\EQ{
\Big|& \frac{\lam_{j}'}{\lam_j} \La \chi( \cdot/ L\lam_j) \ULam \Lam W_{\U{\lam_j}} \mid g+   \sum_{i <j} \iota_i W_{\lam_i(t)} \Ra  \Big| \lesssim \Big( L^{\frac{1}{2}} \| \bs g \|_{\E}  +L^2  \sum_{i < j} \Big( \frac{\lam_i}{\lam_j} \Big)^{\frac{3}{2}} \Big) \bfd(t) 
}
Using~\eqref{eq:g-eq} in the second term in~\eqref{eq:xi'-exp-5} gives 
\EQ{
- \frac{\iota_j}{\| \Lam W \|_{L^2}^2}  \La \chi( \cdot/ L\lam_j ) \Lam W_{\U{\lam_j}} \mid \p_t g  \Ra&= - \frac{\iota_j}{\| \Lam W \|_{L^2}^2}  \La \chi( \cdot/ L\lam_j ) \Lam W_{\U{\lam_j}} \mid \dot g  \Ra  \\
&\quad -  \frac{\iota_j}{\| \Lam W \|_{L^2}^2}  \La\chi( \cdot/ L\lam_j ) \Lam W_{\U{\lam_j}} \mid \sum_{i=1}^K \iota_i \lam_{i}' \Lam W_{\U{\lam_i}} \Ra\\
&\quad - \frac{\iota_j}{\| \Lam W \|_{L^2}^2}  \La\chi( \cdot/ L\lam_j ) \Lam W_{\U{\lam_j}} \mid \phi( u, \nu)  \Ra. 
}
The first term on the right satisfies, 
\EQ{
- \frac{\iota_j}{\| \Lam W \|_{L^2}^2}  \La \chi( \cdot/ L\lam_j ) \Lam W_{\U{\lam_j}} \mid \dot g  \Ra   &=  -\frac{\iota_j}{\| \Lam W \|_{L^2}^2}  \La  \Lam W_{\U{\lam_j}} \mid \dot g  \Ra  +  \frac{\iota_j}{\| \Lam W \|_{L^2}^2}  \La (1-\chi( \cdot /L\lam_j )) \Lam W_{\U{\lam_j}} \mid \dot g  \Ra  \\
& =  -\frac{\iota_j}{\| \Lam W \|_{L^2}^2}  \La  \Lam W_{\U{\lam_j}} \mid \dot g  \Ra  + o_L(1)\|\bs g \|_{\E}. 
}
where the $o_L(1)$ term can be made as small as we like by taking $L>0$ large. 
Using~\eqref{eq:lam'-new}, the second term yields, 
\EQ{
 -  \frac{\iota_j}{\| \Lam W \|_{L^2}^2}  &\La \chi( \cdot/ L\lam_j ) \Lam W_{\U{\lam_j}} \mid \sum_{i=1}^K \iota_i \lam_{i}' \Lam W_{\U{\lam_i}} \Ra 
 \\
 &= - \lam_j'    - \sum_{i \neq j} \frac{\iota_j\iota_i \lam_{i}'}{\| \Lam W \|_{L^2}^2}   \La\chi( \cdot/ L\lam_j ) \Lam W_{\U{\lam_j}} \mid   \Lam W_{\U{\lam_i}} \Ra\\
 &\quad  +   \frac{  \lam_{j}' }{\| \Lam W \|_{L^2}^2}  \La (1-  \chi( \cdot/ L\lam_j )) \Lam W_{\U{\lam_j}} \mid \Lam W_{\U{\lam_j}} \Ra \\
 & = - \lam_j'   - \sum_{i < j} \frac{\iota_j\iota_i \lam_{i}'}{\| \Lam W \|_{L^2}^2}   \La\chi( \cdot/ L\lam_j ) \Lam W_{\U{\lam_j}} \mid   \Lam W_{\U{\lam_i}} \Ra + O( (\lam_j/ \lam_{j+1})^{\frac{5}{2}} + o_L(1))  \bfd(t). 
} 
Finally, the third term vanishes due to the fact that for each $j< K$,  $L\lam_j  \ll \lam_K \ll \nu$, and hence
\EQ{
  \La \chi( \cdot/ L\lam_j ) \Lam W_{\U{\lam_j}} \mid \phi( u, \nu)  \Ra  = 0. 
}
Plugging this all back into~\eqref{eq:xi'-exp-5} gives, 
\EQ{
\Big|\xi_j'  +  \frac{\iota_j}{\| \Lam W \|_{L^2}^2}  \La \chi( \cdot/ L\lam_j ) \Lam W_{\U{\lam_j}} \mid \dot g  \Ra \Big|  \le c_0 \bfd(t) 
}
after first choosing $L$ sufficiently large, and then $\eta_0$ sufficiently small. The estimates~\eqref{eq:xi_j'-5} is immediate, and~\eqref{eq:xi_j'-beta_j-5} now follows as in the proof of~\eqref{eq:lam_j'-beta_j} in Lemma~\ref{cor:modul}. 

The estimate~\eqref{eq:beta_j'-5} is proved exactly as in the proof of~\eqref{eq:beta_j'} in Lemma~\ref{cor:modul}. 

Next, we have, 
\EQ{
\abs{ a_{j}^{\pm} - \ti a_j^{\pm}} &\lesssim  \Big| \La \bs \al_{\lam_j}^{\pm} \mid \sum_{i < j} \iota_i \bs W_{\lam_i} \Ra  \Big| \lesssim \sum_{i < j} \frac{\lam_i}{\lam_j} \Big| \La \calY_{\U{\lam_j}}  \mid W_{\U{\lam_i}} \Ra \Big|  \lesssim \sum_{i < j} \Big(\frac{\lam_i}{\lam_j} \Big)^{\frac{3}{2}} 
}
which proves~\eqref{eq:a-tia-5}. 

Lastly, we prove~\eqref{eq:dta-5}, which is analogous to the proof of~\eqref{eq:dta}, but now using~\eqref{eq:g-bound-5} and~\eqref{eq:d-bound-5}, and noting an extra cancellation of the contribution of the interior bubbles. 
We compute, 
\EQ{ \label{eq:a'-5} 
\frac{\ud}{\ud t} \ti a_{j}^- = \La \p_t  \bs \al_{\lam_j}^- \mid \bs g + \sum_{i <j} \iota_i \bs W_{\lam_i}\Ra + \La \bs \al_{\lam_j}^- \mid \p_t \bs g \Ra - \sum_{i< j} \iota_i\frac{  \kappa}{2} \frac{\lam_i'}{\lam_j}  \La  \calY_{\U{\lam_j}} \mid  \Lam W_{\U{\lam_i}} \Ra
}
Expanding the first term on the right gives, 
\EQ{
\La \p_t  \bs \al_{\lam_j}^- \mid \bs g\Ra  &= \frac{\kappa}{2}\La \p_t( \lam_j^{-1} \calY_{\U{\lam_j}}) \mid g + \sum_{i <j} \iota_i  W_{\lam_i} \Ra + \frac{1}{2} \La \p_t( \calY_{\U{\lam_j}}) \mid \dot g \Ra \\
& = -\frac{\kappa}{2}\frac{\lam_j'}{\lam_j}\La  \lam_j^{-1} \calY_{\U{\lam_j}} + \frac{1}{\lam_j} ( \ULam \calY)_{\U{\lam_j}} \mid g + \sum_{i <j} \iota_i  W_{\lam_i}\Ra - \frac{1}{2} \frac{\lam_j'}{\lam_j} \La  (\ULam  \calY)_{\U{\lam_j}}) \mid \dot g \Ra 
}
and thus, 
\EQ{
\Big|\La \p_t  \bs \al_{\lam_j}^- \mid \bs g\Ra \Big| \lesssim \frac{1}{\lam_j}\bfd(t)^2  . 
}
We use~\eqref{eq:g-eq-ham} to expand the second term, 
\EQ{ \label{eq:a'-exp-5D} 
\La \bs \al_{\lam_j}^- \mid \p_t \bs g \Ra &= \La \bs \al_{\lam_j}^- \mid J \circ \uD^2 E( \bs  \calW(\vec \iota, \vec \lam)) \bs g  \Ra  \\
&\quad +\La \bs \al_{\lam_j}^- \mid J \circ \Big( \uD E( \bs  \calW(\vec \iota, \vec \lam) + \bs g)  - \uD^2 E( \bs  \calW(\vec \iota, \vec \lam)) \bs g\Big)\Ra \\
&\quad -  \La \bs \al_{\lam_j}^- \mid \p_t \bs \calW( \vec \iota, \vec \lam) \Ra \\
&\quad + \La \bs \al_{\lam_j}^- \mid  \Big(\chi( \cdot/ \nu)  J \circ \uD E( \bs u) - J \circ \uD E( \chi(\cdot/ \nu) \bs u)\Big)  \Ra \\
&\quad - \frac{\nu'}{\nu}\La \bs \al_{\lam_j}^- \mid  (r \p_r \chi)( \cdot/ \nu) \bs u\Ra
}
By~\eqref{eq:alpha-pair} the first term on the right gives the leading order, 
\EQ{
\La \bs \al_{\lam_j}^- \mid J \circ \uD^2 E( \bs  \calW(\vec \iota, \vec \lam)) \bs g  \Ra = -\frac{\kappa}{\lam_j} a_j^-.
}
Next, we expand, 
\EQ{
\La \bs \al_{\lam_j}^- \mid J \circ \Big( \uD E( \bs  \calW(\vec \iota, \vec \lam) + \bs g)  &- \uD^2 E( \bs  \calW(\vec \iota, \vec \lam)) \bs g\Big)\Ra  \\
&= -\frac{1}{2} \La \calY_{\U \lam_j} \mid f( \calW( \vec \iota, \vec \lam) + g) - f( \calW( \vec \iota, \vec \lam))  - f'( \calW( \vec \iota, \vec \lam) )g \Ra \\
&\quad - \frac{1}{2} \La \calY_{\U \lam_j} \mid f( \calW( \vec \iota, \vec \lam))  - \sum_{i =1}^K \iota_i f(W_{\lam_i}) \Ra.
}
The first line satisfies, 
\EQ{
\Big| \La \calY_{\U \lam_j} \mid f( \calW( \vec \iota, \vec \lam) + g) - f( \calW( \vec \iota, \vec \lam))  - f'( \calW( \vec \iota, \vec \lam) )g \Ra \Big| \lesssim \frac{1}{\lam_j} (\bfd(t)^2 + o_n(1)).
}
Noting that $f( \calW( \vec \iota, \vec \lam))  - \sum_{i =1}^K \iota_i f(W_{\lam_i}) = f_{\bfi}(\vec \iota, \vec \lam)$, the same argument used to prove Lemma~\ref{lem:interaction} gives, 
\EQ{
\Big| \La \calY_{\U \lam_j} \mid  f_{\bfi}(\vec \iota, \vec \lam) \Ra\Big|  \lesssim \frac{1}{\lam_j} \Big( \Big(\frac{\lam_{j}}{\lam_{j+1}} \Big)^{\frac{3}{2}} + \Big(\frac{\lam_{j-1}}{\lam_{j}} \Big)^{\frac{3}{2}} \Big) \lesssim \frac{1}{\lam_j} \bfd(t)^2 .
}
Consider now the third line in~\eqref{eq:a'-exp-5D}. 
 \EQ{
 -  \La \bs \al_{\lam_j}^- \mid \p_t \bs \calW( \vec \iota, \vec \lam) \Ra &=  \frac{\kappa}{2} \iota_j \frac{ \lam_j' }{\lam_j} \La \calY_{\U{\lam_j}}  \mid  \Lam W_{\U{\lam_j}} \Ra  + \sum_{i \neq j}  \iota_i \frac{\kappa}{2} \frac{\lam_i' }{\lam_j} \La \calY_{\U{\lam_j}}  \mid \Lam W_{\U{\lam_i}} \Ra  \\
 &= \sum_{i \neq j}  \iota_i \frac{\kappa}{2} \frac{\lam_i' }{\lam_j} \La \calY_{\U{\lam_j}}  \mid \Lam W_{\U{\lam_i}} \Ra 
 }
where in the last equality we used the vanishing $\La \calY \mid \Lam W \Ra$. Noting the estimates, 
\EQ{
\Big| \La \calY_{\U{\lam_j}}  \mid \Lam W_{\U{\lam_i}} \Ra  \Big| \lesssim  \Big(\frac{\lam_{i}}{\lam_j} \Big)^{\frac{5}{2}}  \mif i>j  
}
we obtain, 
\EQ{
\Big| - \La \bs \al_{\lam_j}^- \mid \p_t \bs \calW( \vec \iota, \vec \lam) \Ra - \sum_{i \neq j}  \iota_i \frac{\kappa}{2} \frac{\lam_i' }{\lam_j} \La \calY_{\U{\lam_j}}  \mid \Lam W_{\U{\lam_i}} \Ra \Big| \lesssim  \frac{1}{\lam_j} \bfd(t)^2 .
}
Using~\eqref{eq:nu-prop} and~\eqref{eq:nu'} we see that the last two lines of~\eqref{eq:a'-exp-5D} satisfy, 
\EQ{
\Big| \La \bs \al_{\lam_j}^- \mid  \Big(\chi( \cdot/ \nu)  J \circ \uD E( \bs u) - J \circ \uD E( \chi(\cdot/ \nu) \bs u)\Big)  \Ra  \Big|  \lesssim  \frac{1}{\lam_j} o_n(1),  \\
\Big| \frac{\nu'}{\nu}\La \bs \al_{\lam_j}^- \mid  (r \p_r \chi)( \cdot/ \nu) \bs u\Ra \Big| \lesssim \frac{1}{\lam_j} o_n(1). 
}
Plugging this all back into~\eqref{eq:a'-5} and using~\eqref{eq:a-tia-5} we obtain, 
\EQ{
\Big| \ti a_j^- + \frac{\kappa}{\lam_j} \ti a_j^- \Big| \lesssim \frac{1}{\lam_j} (\bfd(t)^2 + o_n(1))
}
This completes the proof after ensuring $\eps_n$ is large enough so that the $o_n(1)$ term above can be absorbed into $\bfd(t)$. 
\end{proof} 

\subsection{Conclusion of the proof}
Using the modulation estimates above, we can prove the following analog of Lemma~\ref{lem:ejection}.
\begin{lem}
\label{lem:ejection-5D}
Let $D = 5$. If $\eta_0$ is small enough,
then there exists $C_0 \geq 0$ depending only on $N$ such that the following is true.
If $[t_1, t_2] \subset I_*$ is a finite time interval such that $\bfd(t) \leq \eta_0$ for all $t \in [t_1, t_2]$, then
\begin{gather}
\label{eq:lambdaK-bd-5D}
\sup_{t \in [t_1, t_2]} \lambda_K(t) \leq \frac 43\inf_{t \in [t_1, t_2]} \lambda_K(t), \\
\label{eq:int-d-bd-5D}
\int_{t_1}^{t_2} \bfd(t)\ud t\leq C_0\big(\bfd(t_1) \lambda_K(t_1) + \bfd(t_2) \lambda_K(t_2)\big).
\end{gather}
\end{lem}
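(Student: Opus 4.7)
The proof follows the same structure as Lemma~\ref{lem:ejection}, using Proposition~\ref{prop:modul-5} in place of Lemma~\ref{cor:modul} and replacing the stable/unstable components $a_j^\pm$ by their modified counterparts $\ti a_j^\pm$ in the Lyapunov functional. The key new ingredient is that the coercivity~\eqref{eq:d-bound-5} reads $\max_{j\in\calS}(\lambda_j/\lambda_{j+1})^{3/4} + \max_j|a_j^\pm|\simeq \bfd(t)$, substituting $\tfrac 34$ for $\tfrac{D-2}{4}$ throughout the argument. As in the higher-dimensional case, the estimate~\eqref{eq:lambdaK-bd-5D} is an immediate consequence of~\eqref{eq:int-d-bd-5D} combined with $|\lambda_K'(t)|\lesssim \bfd(t)$ (cf.~\eqref{eq:lam'}) and smallness of $\eta_0$, so the main task is to prove~\eqref{eq:int-d-bd-5D}.

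For the integral bound, I would introduce the auxiliary functional
\[
\phi(t) := \sum_{j\in\calS}2^{-j}\xi_j(t)\beta_j(t) - C_1\sum_{j=1}^K\lambda_j(t)\,\ti a_j^-(t)^2 + C_1\sum_{j=1}^K\lambda_j(t)\,\ti a_j^+(t)^2,
\]
with $C_1$ to be chosen large. I use $\ti a_j^\pm$ rather than $a_j^\pm$ so that the stable/unstable ODE~\eqref{eq:dta-5} applies directly; the substitution is harmless since $|a_j^\pm-\ti a_j^\pm|\lesssim\bfd^2$ by~\eqref{eq:a-tia-5}. Then I would verify $\phi'(t)\ge c_2\bfd(t)^2$ by the same weighted-sum cancellation as in Lemma~\ref{lem:ejection}: the contribution of $\sum_{j\in\calS}2^{-j}\xi_j\beta_j'$, after applying~\eqref{eq:beta_j'-5} and the index-shift trick (which uses $\iota_j\iota_{j+1}=1$ iff $j\in\calS$ to render the cross sum nonnegative), yields $\omega^2 2^{-N-1}\sum_{j\in\calS}(\lambda_j/\lambda_{j+1})^{3/2}$; differentiating $\pm\lambda_j(\ti a_j^\pm)^2$ via~\eqref{eq:dta-5} produces $2C_1\kappa\sum_j[(\ti a_j^+)^2+(\ti a_j^-)^2]$; and via~\eqref{eq:xi_j'-beta_j-5} together with $|\lambda_j'|\lesssim\bfd$ the $\xi_j'\beta_j$ and $\lambda_j'(\ti a_j^\pm)^2$ contributions are $O(\bfd^2)$. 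Choosing $C_1$ large and invoking~\eqref{eq:d-bound-5} together with~\eqref{eq:a-tia-5} closes the lower bound.

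The essential new calculation is the upper bound $\phi(t)/\lambda_K(t)\lesssim\bfd(t)^2$, which replaces the bound $\phi/\lambda_K\lesssim\bfd^{(D+2)/(D-2)}$ used when $D\geq 6$. The $(\ti a_j^\pm)^2$ terms contribute $\lesssim \bfd^2$ directly since $\lambda_j/\lambda_K\le 1$ and $|\ti a_j^\pm|\lesssim\bfd$. For the $\xi_j\beta_j$ terms, I use $\xi_j/\lambda_K\lesssim\lambda_j/\lambda_{j+1}$ and invoke the inverted coercivity $\lambda_j/\lambda_{j+1}\lesssim\bfd^{4/3}$ from~\eqref{eq:d-bound-5}, together with $|\beta_j|\lesssim\bfd$ from~\eqref{eq:xi_j'-5}--\eqref{eq:xi_j'-beta_j-5}, producing a bound $\lesssim\bfd^{7/3}\leq\bfd^2$. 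Combining the lower and upper bounds for $\phi$, we get $\phi'\gtrsim\bfd\cdot(\phi/\lambda_K)^{1/2}$, equivalently $2\lambda_K^{1/2}(\phi^{1/2})'\gtrsim\bfd$; rewriting this as
\[
(\lambda_K^{1/2}\phi^{1/2})' - \tfrac 12\lambda_K^{-1/2}\lambda_K'\phi^{1/2}\gtrsim\bfd
\]
and bounding the cross term by $|\lambda_K'|(\phi/\lambda_K)^{1/2}\lesssim\bfd^2\leq\eta_0\bfd$ to absorb it into the right-hand side, integration over $[t_3,t_2]$, where $t_3$ is the (unique if it exists) zero of the monotone function $\phi$, together with the symmetric integration over $[t_1,t_3]$, yields the asserted inequality~\eqref{eq:int-d-bd-5D}.

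The main technical obstacle is ensuring that the $\xi_j\beta_j$ contribution to $\phi/\lambda_K$ can be absorbed by $\bfd^2$ despite the fact that one only has the linear bound $|\beta_j|\lesssim\bfd$ rather than a cubic-type estimate; the ratio bound $\lambda_j/\lambda_{j+1}\lesssim\bfd^{4/3}$ available in $D=5$ via~\eqref{eq:d-bound-5} is precisely strong enough to gain the missing power, and in fact delivers the sharper intermediate bound $\bfd^{7/3}$, from which the asserted estimate with linear power of $\bfd$ on the right-hand side of~\eqref{eq:int-d-bd-5D} is immediate for $\bfd\le\eta_0\leq 1$.
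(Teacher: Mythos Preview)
Your proof is correct and follows essentially the same approach as the paper: the same auxiliary functional $\phi$ (with $\ti a_j^\pm$ in place of $a_j^\pm$), the same weighted-sum argument for $\phi'\gtrsim\bfd^2$, the same upper bound $\phi/\lambda_K\lesssim\bfd^{7/3}\lesssim\bfd^2$ via $\lambda_j/\lambda_{j+1}\lesssim\bfd^{4/3}$ and $|\beta_j|\lesssim\bfd$, and the same integration of $(\lambda_K^{1/2}\phi^{1/2})'$ after absorbing the cross term. The only cosmetic difference is that the paper retains the nonnegative term $\sum_{j\in\cS}2^{-j}\beta_j^2$ explicitly in the lower bound for $\phi'$, whereas you fold $\xi_j'\beta_j$ into the $O(\bfd^2)$ error; since $\beta_j^2\ge 0$ this changes nothing.
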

\begin{proof}[Sketch of a proof]
\textbf{Step 1.} is exactly the same as for Lemma~\ref{lem:ejection}.

\noindent
\textbf{Step 2.}
Let $C_1 > 0$ be a large number chosen below and consider the auxiliary function
\begin{equation}
\phi(t) := \sum_{j \in \cS}2^{-j}\xi_j(t)\beta_j(t) - C_1\sum_{j=1}^K \lambda_j(t)\wt a_j^-(t)^2
+ C_1\sum_{j=1}^K \lambda_j(t) \wt a_j^+(t)^2.
\end{equation}
We claim that for all $t \in [t_1, t_2]$
\begin{equation}
\label{eq:dtphi-lbound-5D}
\phi'(t) \geq c_2\bfd(t)^2,
\end{equation}
with $c_2 > 0$ depending only on $N$. The remaining part of Step 1 is devoted to proving this bound.

Using \eqref{eq:xi_j'-beta_j-5}, \eqref{eq:dta-5} and recalling that $|\lambda_K'(t)| \lesssim \bfd(t)$, we obtain
\begin{equation}
\label{eq:dtphi-lbound-3-5D}
\begin{aligned}
\phi'(t) \geq \sum_{j \in \cS}2^{-j}\beta_j^2(t) + \sum_{j\in\cS}2^{-j}\lambda_j(t)\beta_j'(t)
+ C_1\nu\sum_{j=1}^K \big(\wt a_j^-(t)^2 + \wt a_j^+(t)^2\big) - c_0 \bfd(t)^2,
\end{aligned}
\end{equation}
where $c_0 > 0$ can be made arbitrarily small.
We focus on the second term of the right hand side. Like in Step 2. of the proof of Lemma~\ref{lem:ejection},
only using \eqref{eq:beta_j'-5} instead of \eqref{eq:beta_j'}, we obtain
\begin{equation}
\sum_{j\in\cS}\lambda_j(t)\beta_j'(t) \geq 2^{-N-1}\omega^2\sum_{j\in\cS}\Big(\frac{\lambda_j(t)}{\lambda_{j+1}(t)}\Big)^\frac{D-2}{2} - C_2\sum_{k=1}^K\big(a_j^-(t)^2 + a_j^+(t)^2\big) - c_0\bfd(t)^2.
\end{equation}
The bound \eqref{eq:a-tia-5} implies that \eqref{eq:dtphi-lbound-3-5D} holds with with $\wt a_j^\pm$ replaced by $a_j^\pm$.
Taking $C_1 > C_2 / \nu$ and using \eqref{eq:d-bound-5}, we get \eqref{eq:dtphi-lbound-5D}.

\noindent
\textbf{Step 3.}
As in Step 3. of the proof of Lemma~\ref{lem:ejection}, it suffices to check that if $t_3 \in [t_1, t_2]$
and $\phi(t) > 0$ for all $t \in (t_3, t_2]$, then
\begin{equation}
\label{eq:int-d-bd-1-5D}
\int_{t_3}^{t_2}\bfd(t)\ud t \leq C_0\bfd(t_2)\lambda_K(t_2).
\end{equation}

Observe that for all $t \in (t_3, t_2]$ we have
\begin{equation}
\label{eq:phi-lambdaK-bd-5D}
\begin{aligned}
\frac{\phi(t)}{\lambda_K(t)} &\lesssim \sum_{j \in \cS}\frac{\lambda_j(t)}{\lambda_{K}(t)}|\beta_j(t)| + \sum_{j=1}^K \frac{\lambda_j(t)}{\lambda_K(t)}\big(\wt a_j^-(t)^2 + \wt a_j^+(t)^2\big) \\
&\lesssim \sum_{j \in \cS}\frac{\lambda_j(t)}{\lambda_{j+1}(t)}|\beta_j(t)| + \bfd(t)^2
\lesssim \bfd(t)^{\frac 43 + 1} + \bfd(t)^2 \lesssim \bfd(t)^2.
\end{aligned}
\end{equation}
Combining this bound with \eqref{eq:dtphi-lbound-5D}, for all $t \in (t_3, t_2]$ we get
\begin{equation}
\phi'(t) \geq c_2 \sqrt{\phi(t) / \lambda_K(t)} \bfd(t),
\end{equation}
thus
\begin{equation}
\label{eq:dtphi-lbound-2-5D}
\sqrt{\lambda_K(t)} \big(\sqrt{\phi(t)}\big)' \gtrsim \bfd(t).
\end{equation}
Using \eqref{eq:phi-lambdaK-bd-5D} and $|\lambda_K'(t)| \lesssim \bfd(t)$, we get
\begin{equation}
\big(\sqrt{\lambda_K(t)\phi(t)}\big)' - \sqrt{\lambda_K(t)} \big(\sqrt{\phi(t)}\big)' = \frac 12\lambda_K'(t)\sqrt{\phi(t)/\lambda_K(t)} \gtrsim -\bfd(t)^2.
\end{equation}
Since $\bfd(t)$ is small, \eqref{eq:dtphi-lbound-2-5D} yields
\begin{equation}
\big(\sqrt{\lambda_K(t)\phi(t)^\frac{4}{D+2}}\big)' \gtrsim \bfd(t)
\end{equation}
which, integrated, gives
\begin{equation}
\int_{t^3}^{t_2}\bfd(t)\ud t \lesssim \sqrt{\lambda_K(t_2)\phi(t_2)} - \sqrt{\lambda_K(t_3)\phi(t_3)}
\leq \sqrt{\lambda_K(t_2)\phi(t_2)}.
\end{equation}
Invoking \eqref{eq:phi-lambdaK-bd-5D}, we obtain \eqref{eq:int-d-bd-1-5D}.
\end{proof}
Using Lemma~\ref{lem:ejection-5D} in lieu of Lemma~\ref{lem:ejection},
the remaining arguments in Section~\ref{sec:conclusion} hold for $D=5$ without changes.

\section{Modifications to the argument in the case $D = 4$}  \label{sec:D=4}
In this section we outline the changes to the arguments in Section~\ref{sec:decomposition} and  Section~\ref{sec:conclusion} needed to prove Theorem~\ref{thm:main} dimension $D=4$. 

\subsection{Decomposition of the solution} 

The set-up in Sections~\ref{ssec:proximity} holds without modification for $D=4$. The number $K \ge1$ is defined as in Lemma~\ref{lem:K-exist}, the collision intervals $[a_n, b_n] \in \calC_K(\eta, \eps_n)$ are as in Definition~\ref{def:K-choice}, and  the sequences of signs $\vec \s_n \in \{-1, 1\}^{N-K}$,  scales $\vec \mu(t) \in (0, \infty)^{N-K}$, and the sequence $\nu_n \to 0$ and  the function $\nu(t) = \nu_n \mu_{K+1}(t)$ are as in Lemma~\ref{lem:ext-sign}. 

 Lemma~\ref{lem:mod-1} also holds with a minor modification to the stable/unstable components. Let $J \subset [a_n, b_n]$ be any time interval on which $\bfd(t) \le \eta_0$, where $\eta_0$ is as in  Lemma~\ref{lem:mod-1}. Let $\vec \iota \in \{ -1, 1\}^K, \vec \lam(t) \in (0, \infty)^K$, $\bs g(t) \in \E$, and $a_{j}^{\pm}(t)$  be as in the statement of Lemma~\ref{lem:mod-1}. Define for each $1 \le j \le K$, the modified stable/unstable components, 
 \EQ{
 \ti a_{j}^{\pm}(t) := \La \bs \al_{\lam_j(t)}^{\pm} \mid \bs g(t) + \sum_{i < j} \iota_i \bs W_{\lam_i(t)} \Ra 
 }
 Let $L>0$ be a parameter to be fixed below and for each $j \in \{1, \dots, K-1\}$ set, 
\EQ{\label{eq:xi-def-k1} 
\xi_j(t) &:=  \lam_j(t)  - \frac{ \iota_j}{8\log(\frac{\lam_{j+1}(t)}{\lam_j(t)}) }\La \chi_{L\sqrt{\lam_j(t) \lam_{j+1}(t)}} \Lam W_{\U{\lam_j(t)}} \mid g(t) + \sum_{i<j} \iota_iW_{\lam_i(t)}  \Ra,
}
and, 
\begin{equation} \label{eq:beta-def-k1} 
\beta_j(t) := - \iota_j\La \chi_{L\sqrt{\xi_j(t) \lam_{j+1}(t)}} \Lam W_{\U{\lam_j(t)}} \mid \dot g(t)\Ra  -   \ang{ \uln A( \lam_j(t)) g(t) \mid \dot g(t)}  
\end{equation}

\begin{prop}[Refined modulation, $D=4$]
\label{prop:modul-k1} 
 Let $c_0  \in (0, 1)$ and $c_1>0$. There exists  $L_0 = L_0(c_0, c_1)>0$, $\eta_0 = \eta_0(c_0, c_1)$, as well as  $c= c(c_0, c_1)$ and $R = R(c_0, c_1)>1$ as in Lemma~\ref{lem:q},  a constant $C_0>0$ and a 
decreasing sequence $\epsilon_n \to 0$ so that the following is true. 

Suppose $L > L_0$ and  $J \subset [a_n, b_n]$ is an open time interval with $\epsilon_n \leq \bfd(t) \le \eta_0$
for all $t \in J$, where $\calS :=  \{ j \in \{1, \dots, K-1\} \mid \iota_{j}  = \iota_{j+1} \}$.  
Then, for all $t \in J$, 
\EQ{\label{eq:g-bound-k1} 
\| \bs g(t) \|_{\E} + \sum_{i \not \in \calS}  (\lambda_i(t) / \lambda_{i+1}(t))^\frac 12 \le  \max_{ i  \in \calS}  (\lambda_i(t) / \lambda_{i+1}(t))^\frac 12  + \max_{1 \leq i \leq K}|a_i^\pm(t)|, \\
}
and, 
\EQ{ \label{eq:d-bound-k1} 
\frac{1}{C_0} \bfd(t) \le \max_{i \in \calS} (\lambda_i(t) / \lambda_{i+1}(t))^\frac 12  + \max_{1 \leq i \leq K}|a_i^\pm(t)| \le C_0 \bfd(t) .
}
\begin{equation}\label{eq:xi_j-lambda_j-k1}
\Big|\frac{\xi_j(t)}{\lambda_{j+1}(t)} - \frac{\lambda_j(t)}{\lambda_{j+1}(t)}\Big| \leq C_0 \frac{\bfd(t)^2}{\log( \lam_{j+1}/ \lam_j)}, 
\end{equation}
as well as,  
\EQ{ \label{eq:a-tia-4} 
\Big| a_j^{\pm}(t) - \ti a_j^{\pm}(t) \Big| \le C_0\bfd(t)^2,  
}
and, 
\begin{equation}
\label{eq:dta-4}
\Big| \dd t \ti a_j^\pm(t) \mp \frac{\kappa}{\lambda_j(t)}\ti a_j^\pm(t) \Big| \leq \frac{C_0}{\lambda_j(t)}\bfd(t)^2 . 
\end{equation}
Moreover, let $j \in \calS$ be such that for all $t \in J$
\EQ{ \label{eq:max-ratio-ass}
c_1 \bfd(t) \le \Big(\frac{\lam_j(t)}{\lam_{j+1}(t)}\Big)^{\frac{1}{2}}.
}
Then for all $t \in J$, 
\EQ{ \label{eq:xi_j'-k1} 
\abs{ \xi'_j(t) }\Big(\log\Big(\frac{\lam_{j+1}(t)}{\lam_j(t)}\Big)\Big)^{\frac{1}{2}} \le C_0 \bfd(t) , 
}
\begin{equation}\label{eq:xi_j'-beta_j-k1} 
\Big|\xi_j'(t) 8\log(\frac{\lam_{j+1}(t)}{\lam_j(t)})- \beta_j(t)\Big| \leq {C_0} \bfd(t),  
\end{equation}
 and,  
\EQ{ \label{eq:beta_j'-4} 
 \beta_{j}'(t) &\ge  \Big(\iota_j \iota_{j+1}16 -   c_0\Big) \frac{1}{\lam_{j+1}(t)} + \Big({-}\iota_j \iota_{j-1}16 -  c_0\Big) \frac{\lam_{j-1}(t)}{\lam_{j}(t)^2}    \\
 &\quad - \frac{c_0}{\lam_j(t)} \bfd(t)^2  - \frac{C_0}{\lam_{j}(t)} \Big( (a_j^+(t))^2 + (a_j^-(t))^2 \Big) 
}
where, by convention, $\lambda_0(t) = 0, \lam_{K+1}(t) = \infty$ for all $t \in J$.  

\end{prop}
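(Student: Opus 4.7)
The proof follows the structure of Lemma~\ref{cor:modul} with modifications dictated by three $D=4$ specificities: $\Lam W \notin L^2$ at infinity, so every pairing with $\Lam W_{\U{\lam_j}}$ must be truncated at a finite scale; the ground state $W$ decays only as $r^{-2}$, so the interior bubbles $\iota_i W_{\lam_i}$ with $i<j$ contribute a non-negligible tail that must be subtracted from $\bs g$ (as encoded in the definitions of $\xi_j$ and $\ti a_j^\pm$); and the logarithmic divergence $\|\Lam W\|_{L^2(r\le R)}^2 = 16\log R + O(1)$ from \eqref{eq:LamW-L2-4} introduces the prefactor $8\log(\lam_{j+1}/\lam_j)$ in the refinement. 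The cut-off scale $L\sqrt{\lam_j \lam_{j+1}}$ (the geometric mean of adjacent scales) is chosen to capture the $L^2$ mass of $\Lam W_{\U{\lam_j}}$ attached to the $j$-th bubble while sitting safely below any influence from $W_{\lam_{j+1}}$.

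\textbf{Static bounds.} The estimates \eqref{eq:g-bound-k1}--\eqref{eq:d-bound-k1} are direct transcriptions of \eqref{eq:g-upper}--\eqref{eq:d-g-lam} with $(D-2)/4=1/2$. For \eqref{eq:xi_j-lambda_j-k1}, I would split the correction in the definition \eqref{eq:xi-def-k1} of $\xi_j$ into the pairing with $g$ and that with $\sum_{i<j}\iota_i W_{\lam_i}$. Cauchy--Schwarz on the first, together with \eqref{eq:LamW-L2-4} to compute the localized $L^2$ norm, gives a contribution of size $\bfd \sqrt{\log(\lam_{j+1}/\lam_j)}/\log(\lam_{j+1}/\lam_j) = \bfd/\sqrt{\log}$, bounded by $\bfd^2/\log$ using $\|\bs g\|_\cE \lesssim \bfd$. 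The second is evaluated directly in $D=4$ and produces $O(\lam_i/\lam_j)\lesssim \bfd^2$ by \eqref{eq:d-bound-k1}. The comparison \eqref{eq:a-tia-4} follows from the standard estimate $|\La \bs\alpha^\pm_{\lam_j} \mid \bs W_{\lam_i}\Ra|\lesssim \lam_i/\lam_j$, a consequence of the exponential decay of $\calY$ and $W(r)\sim r^{-2}$ at infinity. The derivative estimate \eqref{eq:dta-4} is then obtained by repeating the algebra of \eqref{eq:dta}, the additional term from differentiating $\sum_{i<j}\iota_i \bs W_{\lam_i}$ being of size $O(\bfd^3/\lam_j)$.

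\textbf{Dynamical estimates.} For \eqref{eq:xi_j'-k1}--\eqref{eq:xi_j'-beta_j-k1}, I would differentiate $\xi_j$ and expand into three pieces: $\lam_j'$, the derivative of the logarithmic coefficient $1/\log(\lam_{j+1}/\lam_j)$, and the time derivative of the projection $\La\chi_{L\sqrt{\lam_j\lam_{j+1}}}\Lam W_{\U{\lam_j}} \mid g + \sum_{i<j}\iota_i W_{\lam_i}\Ra$. Using \eqref{eq:g-eq} for $\p_t g$ and \eqref{eq:lam'-refined} for the $\lam_i'$, a precise cancellation between $\lam_j'$ and the leading projection-derivative term leaves a residue of size $\bfd/\sqrt{\log(\lam_{j+1}/\lam_j)}$, yielding \eqref{eq:xi_j'-k1}. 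The ignition hypothesis \eqref{eq:max-ratio-ass} is indispensable at this point: it gives $\log(\lam_{j+1}/\lam_j) \lesssim \log(c_1^{-1}\bfd^{-1})$, keeping the logarithmic prefactor bounded in terms of $\bfd$. The comparison \eqref{eq:xi_j'-beta_j-k1} then follows by matching $8\log(\lam_{j+1}/\lam_j)\xi_j'$ with the definition \eqref{eq:beta-def-k1} and controlling the $\U A(\lam_j)g$ piece via the first bullet of Lemma~\ref{lem:A}. For \eqref{eq:beta_j'-4}, I would follow the full expansion \eqref{eq:beta'-exp}, replacing every occurrence of $\Lam W_{\U{\lam_j}}$ by its truncation $\chi_{L\sqrt{\xi_j\lam_{j+1}}}\Lam W_{\U{\lam_j}}$. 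The leading interaction term is evaluated using the asymptotic form $\Lam W(r) \sim -8 r^{-2}$ at infinity together with Lemma~\ref{lem:interaction}, producing the constant $\frac{D-2}{2D}(D(D-2))^{D/2}=16$ for $D=4$, which accounts for the coefficient in \eqref{eq:beta_j'-4}; the quadratic-in-$g$ errors are controlled via \eqref{eq:pohozaev} and \eqref{eq:coercive-virial} exactly as in the $D\ge 6$ case.

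\textbf{Main obstacle.} The principal difficulty is the tight interplay between the cut-off scale and the logarithmic prefactor: the $\sqrt{\log}$-gain in \eqref{eq:xi_j'-k1} is obtained only because the various contributions to $\xi_j'$ cancel up to terms controlled by $1/\log(\lam_{j+1}/\lam_j)$, and without the ignition condition \eqref{eq:max-ratio-ass}, $\log(\lam_{j+1}/\lam_j)$ could be arbitrarily larger than $\log\bfd^{-1}$, destroying both the $\sqrt{\log}$-gain and the coefficient matching in \eqref{eq:xi_j'-beta_j-k1}. A secondary technical point is that the virial correction \eqref{eq:beta-def-k1} uses $\chi_{L\sqrt{\xi_j\lam_{j+1}}}$ rather than $\chi_{L\sqrt{\lam_j\lam_{j+1}}}$, so differentiating $\beta_j$ produces terms involving $\xi_j'$ that must themselves be controlled by \eqref{eq:xi_j'-k1} within the same argument; this mild coupling is resolved by fixing the constants in the order $c_1$ first, then $L$ large depending on $c_0, c_1$, and finally $\eta_0$ small depending on $c_0, c_1, L$.
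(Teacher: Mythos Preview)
Your overall strategy is correct and tracks the paper's argument, but the proof of \eqref{eq:xi_j-lambda_j-k1} contains a genuine gap. Cauchy--Schwarz with the localized $L^2$ norm of $\chi_{L\sqrt{\lam_j\lam_{j+1}}}\Lam W_{\U{\lam_j}}$ cannot give the required $\bfd(t)^2/\log(\lam_{j+1}/\lam_j)$. First, $g$ lies in $H\simeq\dot H^1$, not in $L^2$, so there is no direct $L^2$ pairing; if you force one by writing $\|g\|_{L^2(r\le R)}\le R\|g/r\|_{L^2}\le R\|g\|_H$ with $R=L\sqrt{\lam_j\lam_{j+1}}$, the best you obtain is $\bfd(t)^2/\sqrt{\log(\lam_{j+1}/\lam_j)}$, which misses the target by $\sqrt{\log}$. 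Second, your deduction ``$\bfd/\sqrt{\log}$, bounded by $\bfd^2/\log$ using $\|\bs g\|_{\cE}\lesssim\bfd$'' is false as stated: the ratio $(\bfd/\sqrt{\log})/(\bfd^2/\log)=\sqrt{\log}/\bfd\to\infty$ as $\bfd\to 0$. The correct pairing is the Sobolev duality $L^{4/3}$--$L^4$: in $D=4$ one has $\|g\|_{L^4}\lesssim\|g\|_H$, and since $|\Lam W(r)|\lesssim r^{-2}$ one computes directly $\|\chi_{L\sqrt{\lam_j\lam_{j+1}}}\Lam W_{\U{\lam_j}}\|_{L^{4/3}}\lesssim_L\sqrt{\lam_j\lam_{j+1}}$ with \emph{no} logarithmic loss, yielding
\[
\frac{1}{\lam_{j+1}\log(\lam_{j+1}/\lam_j)}\Big|\big\langle \chi_{L\sqrt{\lam_j\lam_{j+1}}}\Lam W_{\U{\lam_j}}\mid g\big\rangle\Big|\;\lesssim_L\;\frac{(\lam_j/\lam_{j+1})^{1/2}\|g\|_H}{\log(\lam_{j+1}/\lam_j)}\;\lesssim\;\frac{\bfd(t)^2}{\log(\lam_{j+1}/\lam_j)}.
\]
Your bound for the $\sum_{i<j}\iota_iW_{\lam_i}$ piece also drops the $1/\lam_{j+1}$ prefactor: the paper's computation keeps it and produces $(\lam_j/\lam_{j+1})(\lam_i/\lam_j)\lesssim\bfd^4$, which is comfortably below $\bfd^2/\log$.

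The remainder of your outline (the cancellation of $\lam_j'$ in $\xi_j'$, the truncated expansion for $\beta_j'$ with the extra commutator terms coming from the cutoff, the $\ti a_j^\pm$ algebra) matches the paper's proof. Two minor points: \eqref{eq:lam'-refined} is stated only for $D\ge 6$, so for $D=4$ one works directly with the full expansion of $\xi_j'$ and uses only the crude bound $|\lam_j'|\lesssim\bfd$ from \eqref{eq:lam'}; and there is no circularity between \eqref{eq:xi_j'-k1} and \eqref{eq:beta_j'-4}, since \eqref{eq:xi_j'-k1} is established first from the $\xi_j'$ expansion alone and only then fed into the last term of the $\beta_j'$ computation.
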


\begin{rem} 
Proposition~\ref{prop:modul-k1} and its proof are nearly identical to~\cite[Proposition A.1]{JL6} and its proof, which treat the case  $k=1$ for the energy-critical equivariant wave map equation. 
\end{rem}


\begin{proof} 
The estimates~\eqref{eq:g-bound-k1} and~\eqref{eq:d-bound-k1} follow as in the proofs of the corresponding estimates in Lemma~\ref{cor:modul}. We next prove~\eqref{eq:xi_j-lambda_j-k1}. From the definition of $\xi_j(t)$, 
\EQ{
\bigg|\frac{\xi_j}{\lambda_{j+1}} - \frac{\lam_j}{\lam_{j+1}}\bigg| &\lesssim \Big|\frac{ 1}{\log(\frac{\lam_{j+1}}{\lam_j}) } \lam_{j+1}^{-1} \La \chi_{L\sqrt{\lam_j \lam_{j+1}}} \Lam W_{\U{\lam_j}} \mid g \Ra \Big| \\
&\quad + \Big| \frac{ 1}{\log(\frac{\lam_{j+1}}{\lam_j}) }\lam_{j+1}^{-1}\La \chi_{L\sqrt{\lam_j \lam_{j+1}}} \Lam W_{\U{\lam_j}} \mid \sum_{i<j} W_{\lam_i}  \Ra \Big|
}
For the first term on the right we have, 
\EQ{
\Big|\frac{ 1}{\log(\frac{\lam_{j+1}}{\lam_j}) } \lam_{j+1}^{-1} \La \chi_{L\sqrt{\lam_j \lam_{j+1}}} \Lam W_{\U{\lam_j}} \mid g \Ra \Big| 
& \lesssim_L \frac{ 1}{\log(\frac{\lam_{j+1}}{\lam_j}) } \|g \|_{H} (\lam_{j}/ \lam_{j+1})^{\frac{1}{2}}  
}
 Next, for any $i <j$ we have, 
\EQ{
\lam_{j+1}^{-1}|\La \chi_{L\sqrt{\lam_j \lam_{j+1}}} \Lam W_{\U{\lam_j}} \mid W_{\lam_i}  \Ra | &\lesssim_L   \frac{\lam_j}{\lam_{j+1}}  \int_0^{L (\lam_{j+1}/ \lam_j)^{\frac{1}{2}}} | \Lam W(r) | |W_{\lam_i/ \lam_j}(r)|\, r^3 \, \ud r\\
&\lesssim_L  \frac{\lam_j}{\lam_{j+1}}\frac{\lam_i}{\lam_j} \log( \lam_{j+1}/ \lam_j)
}
and hence, 
\EQ{
 \Big| \frac{ 1}{\log(\frac{\lam_{j+1}}{\lam_j}) }\lam_{j+1}^{-1}\La \chi_{L\sqrt{\lam_j \lam_{j+1}}} \Lam W_{\U{\lam_j}} \mid \sum_{i<j} W_{\lam_i}  \Ra \Big|  \lesssim_L \frac{\lam_j}{\lam_{j+1}} \sum_{i<j} \frac{\lam_{i}}{\lam_j}
}
and ~\eqref{eq:xi_j-lambda_j-k1} follows. 

Next using~\eqref{eq:g-bound-k1} and~\eqref{eq:lam'} for each $j$, we have
\EQ{ \label{eq:lam'-k1} 
\abs{ \lam_j'}  \lesssim \bfd(t) 
}
We show that in fact $\xi_j'$ satisfies the improved estimate~\eqref{eq:xi_j'-k1}. We compute, 
\EQ{ \label{eq:xi'-exp-k1} 
\xi_j' &=  \lam_j'  - \frac{ \iota_j}{8(\log(\frac{\lam_{j+1}}{\lam_j}))^2 }( \frac{\lam_{j}'}{\lam_j} - \frac{\lam_{j+1}'}{\lam_{j+1}})\La \chi( \cdot/ L\sqrt{\lam_j \lam_{j+1}}) \Lam W_{\U{\lam_j}} \mid g + \sum_{i<j} \iota_i W_{\lam_i} \Ra\\
& \quad + \frac{ \iota_j}{16\log(\frac{\lam_{j+1}}{\lam_j}) }\big( \frac{\lam_j'}{\lam_j} + \frac{\lam_{j+1}'}{\lam_{j+1}}\big)\La (r \p_r \chi)( \cdot/ L\sqrt{\lam_j \lam_{j+1}}) \Lam W_{\U{\lam_j}} \mid g + \sum_{i<j} \iota_i W_{\lam_i} \Ra \\
&\quad +  \frac{ \iota_j}{8\log(\frac{\lam_{j+1}}{\lam_j}) } \frac{\lam_j'}{\lam_j} \La \chi( \cdot/ L\sqrt{\lam_j \lam_{j+1}}) \ULam \Lam W_{\U{\lam_j}} \mid g + \sum_{i<j} \iota_i W_{\lam_i}  \Ra \\
&\quad  -  \frac{ \iota_j}{8\log(\frac{\lam_{j+1}}{\lam_j}) }\La \chi( \cdot/ L\sqrt{\lam_j \lam_{j+1}})\Lam W_{\U{\lam_j}} \mid \p_t g \Ra\\
&\quad +  \sum_{i<j} \frac{ \iota_i \iota_j}{8\log(\frac{\lam_{j+1}}{\lam_j}) } \lam_i' \La \chi( \cdot/ L\sqrt{\lam_j \lam_{j+1}}) \Lam W_{\U{\lam_j}} \mid \Lam W_{\U{\lam_i}} \Ra
}
The second, third, and fourth terms on the right above contribute acceptable errors. Indeed, 
\EQ{
\Big|  \frac{ \iota_j}{8(\log(\frac{\lam_{j+1}}{\lam_j}))^2 }( \frac{\lam_{j}'}{\lam_j} - \frac{\lam_{j+1}'}{\lam_{j+1}})\La \chi( \cdot/ L\sqrt{\lam_j \lam_{j+1}}) \Lam W_{\U{\lam_j}} \mid g + \sum_{i<j} \iota_iW_{\lam_i} \Ra \Big| &\lesssim \frac{ \bfd(t)}{c_1(\log(\frac{\lam_{j+1}}{\lam_j}))^2} \\
\Big|  \frac{ \iota_j}{16\log(\frac{\lam_{j+1}}{\lam_j}) }\big( \frac{\lam_j'}{\lam_j} + \frac{\lam_{j+1}'}{\lam_{j+1}}\big)\La (r \p_r \chi)( \cdot/ L\sqrt{\lam_j \lam_{j+1}}) \Lam W_{\U{\lam_j}} \mid g + \sum_{i<j} \iota_i W_{\lam_i}\Ra  \Big| & \lesssim
\frac{\bfd(t) }{c_1 \log(\frac{\lam_{j+1}}{\lam_j})} \\
\Big|\frac{ \iota_j}{8\log(\frac{\lam_{j+1}}{\lam_j}) } \frac{\lam_j'}{\lam_j} \La \chi( \cdot/ L\sqrt{\lam_j \lam_{j+1}}) \ULam \Lam W_{\U{\lam_j}} \mid g + \sum_{i<j} \iota_i W_{\lam_i} \Ra \Big| &\lesssim \frac{ \bfd(t)^2}{\log(\frac{\lam_{j+1}}{\lam_j})}
}
with the gain in the last line arising from the fact that $\ULam \Lam W \in L^{\frac{4}{3}}$; see~\eqref{eq:D4-magic}. 
The leading order comes from the second to last term in~\eqref{eq:xi'-exp-k1}.  Using~\eqref{eq:g-eq} gives 
\EQ{
 -  \frac{ \iota_j}{8\log(\frac{\lam_{j+1}}{\lam_j}) }\La \chi( \cdot/ L\sqrt{\lam_j \lam_{j+1}}) \Lam W_{\U{\lam_j}} \mid \p_t g \Ra &= -    \frac{ \iota_j}{8\log(\frac{\lam_{j+1}}{\lam_j}) }\La \chi( \cdot/ L\sqrt{\lam_j \lam_{j+1}}) \Lam W_{\U{\lam_j}} \mid \dot g \Ra\\
&\quad -  \lam_j' \frac{ 1}{8\log(\frac{\lam_{j+1}}{\lam_j}) }\La \chi( \cdot/ L\sqrt{\lam_j \lam_{j+1}}) \Lam W_{\U{\lam_j}} \mid   \Lam W_{\U{\lam_j}} \Ra\\
&\quad  -  \frac{ \iota_j}{8\log(\frac{\lam_{j+1}}{\lam_j}) }\La \chi( \cdot/ L\sqrt{\lam_j \lam_{j+1}}) \Lam W_{\U{\lam_j}} \mid \sum_{i \neq j} \iota_i \lam_{i}' \Lam W_{\U{\lam_i}} \Ra\\
&\quad - \frac{ \iota_j}{8\log(\frac{\lam_{j+1}}{\lam_j}) }\La \chi( \cdot/ L\sqrt{\lam_j \lam_{j+1}}) \Lam W_{\U{\lam_j}}  \mid \phi( u, \nu)  \Ra. 
}
We estimate the contribution of each of the terms on the right above to~\eqref{eq:xi'-exp-k1}. The last term above vanishes due to the support properties of $\phi(u, \nu)$. Using~\eqref{eq:LamW-L2-4}, ~\eqref{eq:lam'-k1} on the second term above, gives 
\EQ{
\Big|-\lam_j' \frac{ 1}{8\log(\frac{\lam_{j+1}}{\lam_j}) }\La \chi( \cdot/ L\sqrt{\lam_j \lam_{j+1}}) \Lam W_{\U{\lam_j}} \mid   \Lam W_{\U{\lam_j}} \Ra + \lam_j'| \lesssim\frac{ \bfd(t)}{\log(\frac{\lam_{j+1}}{\lam_j})}
}
which means this terms cancels the term $\lam'$ on the right-hand side of~\eqref{eq:xi'-exp-k1}  up to an acceptable error. Next we write, 
\EQ{
-  \frac{ \iota_j}{8\log(\frac{\lam_{j+1}}{\lam_j}) }\La \chi( \cdot/ L\sqrt{\lam_j \lam_{j+1}}) &\Lam W_{\U{\lam_j}} \mid \sum_{i \neq j} \iota_i \lam_{i}' \Lam W_{\U{\lam_i}} \Ra \\
&=  -\sum_{i<j} \frac{ \iota_i \iota_j}{8\log(\frac{\lam_{j+1}}{\lam_j}) } \lam_i' \La \chi( \cdot/ L\sqrt{\lam_j \lam_{j+1}}) \Lam W_{\U{\lam_j}} \mid \Lam W_{\U{\lam_i}} \Ra\\
&\quad  -  \sum_{i>j} \frac{ \iota_i \iota_j}{8\log(\frac{\lam_{j+1}}{\lam_j}) } \lam_i' \La \chi( \cdot/ L\sqrt{\lam_j \lam_{j+1}}) \Lam W_{\U{\lam_j}} \mid \Lam W_{\U{\lam_i}} \Ra
}
The first term cancels the last term in~\eqref{eq:xi'-exp-k1}. For the second term we estimate, if $i>j$, 
\EQ{
|\La \chi( \cdot/ L\sqrt{\lam_j \lam_{j+1}}) \Lam W_{\U{\lam_j}} \mid \Lam W_{\U{\lam_i}} \Ra|  \lesssim \lam_j/ \lam_{j+1}
}
and thus, using~\eqref{eq:lam'-k1} the second term in the previous equation contributes an acceptable error. Plugging all of these estimates back into~\eqref{eq:xi'-exp-k1} gives the estimate, 
\EQ{ \label{eq:xi'-lead} 
\Big| \xi_j' +  \frac{ \iota_j}{8\log(\frac{\lam_{j+1}}{\lam_j}) }\La \chi( \cdot/ L\sqrt{\lam_j \lam_{j+1}}) \Lam W_{\U{\lam_j}} \mid \dot g \Ra \Big| \lesssim \frac{\bfd(t)}{c_1\log(\frac{\lam_{j+1}}{\lam_j})} 
}
Using~\eqref{eq:g-bound-k1} and $\|\chi( \cdot/ L\sqrt{\lam_j \lam_{j+1}})\Lam W_{\U{\lam_j}}  \|_{L^2} \lesssim_L (\log(\frac{\lam_{j+1}}{\lam_j}))^{\frac{1}{2}}$,   we deduce the estimate, 
\EQ{
\Big| \frac{ \iota_j}{8\log(\frac{\lam_{j+1}}{\lam_j}) }\La \chi( \cdot/ L\sqrt{\lam_j \lam_{j+1}}) \Lam W_{\U{\lam_j}} \mid \dot g \Ra\Big| \lesssim_L \frac{\bfd(t)}{(\log(\frac{\lam_{j+1}}{\lam_j}))^{\frac{1}{2}}} 
}
which completes the proof of~\eqref{eq:xi_j'-k1}. 

Next we compare $\be_j$  and $2\xi_j' \log( \lam_{j+1}/ \lam_j)$. Using~\eqref{eq:beta-def-k1} we have, 
\EQ{
\Big|  \ang{ \uln A( \lam_j(t)) g(t) \mid \dot g(t)} \Big| \lesssim \| \bs g \|_{\E}^2 \lesssim  \bfd(t)^2 , 
}
We also note the estimate
\EQ{
\Big| \La ( \chi( \cdot/ L\sqrt{\lam_j \lam_{j+1}}) - \chi( \cdot/ L\sqrt{\lam_j \lam_{j+1}}) \Lam Q_{\U{\lam_j}} \mid \dot g \Ra\Big|  \le \frac{1}{c_1^2} \bfd(t)^2  . 
}
which is a consequence of~\eqref{eq:xi_j-lambda_j-k1}. Using~\eqref{eq:xi'-lead} the estimate~\eqref{eq:xi_j'-beta_j-k1} follows. 

Finally, the proof of the estimate~\eqref{eq:beta_j'-4} is nearly identical to the argument used to prove~\eqref{eq:beta_j'}, differing only in a  few places where the cut-off $\chi_{L \sqrt{\xi_j \lam_{j+1}}}$ is  involved. Arguing as in the proof of ~\eqref{eq:beta_j'} we arrive at the formula, 
\EQ{ \label{eq:beta'-exp-k1} 
\beta_j' &=  -  \frac{\iota_j }{\lam_j}  \ang{ \Lam W_{\lam_j} \mid f_{\bfi}( \iota, \vec \lam) } +  \ang{ \uln A( \lam_j) g \mid -\De g }   + \ang{ (A(\lam_j) - \uln A( \lam_j)) g \mid \ti  f_{\bfq}( \vec \iota,  \vec \lam, g)} \\
&\quad + \ang{ \chi( \cdot/ L \sqrt{\xi_j \lam_{j+1}}) \Lam W_{\U{\lam_j}} \mid ( \LL_{\calW} - \LL_{\lam_j}) g} + \iota_j \frac{\lam_j' }{\lam_j}\ang{ \big( \frac{1}{\lam_j} \ULam - \U{A}( \lam_j) \big) \Lam W_{\lam_j} \mid \dot g} \\
&\quad    - \ang{ {A}(\lam_j) \sum_{i =1}^K \iota_i W_{\lam_i} \mid f_{\bfq}( \vec \iota,  \vec\lam, g)}   - \ang{ A( \lam_j) g \mid \ti  f_{\bfq}( \vec \iota,  \vec \lam, g)} \\
&\quad    + \iota_j \ang{ ({A}( \lam_j) -  \frac{1}{\lam_j}\chi( \cdot/ L \sqrt{\xi_j \lam_{j+1}})\Lam) W_{\lam_j} \mid f_{\bfq}( \vec \iota,  \vec\lam, g)}  -    \frac{\lam_j'}{\lam_j} \ang{ \lam_j \p_{\lam_j} \uln A( \lam_j) g \mid \dot g} \\
&\quad + \sum_{ i \neq j} \iota_i \ang{ {A}(\lam_j) W_{\lam_i} \mid  f_{\bfq}( \vec \iota,  \vec\lam, g)}  - \sum_{i \neq j} \iota_i \lam_{i}'  \ang{ \uln A( \lam_j) \Lam W_{\U{\lam_i}} \mid \dot g}   - \ang{ \uln A( \lam_j) g \mid f_{\bfi}(  \iota, \vec \lam) } \\
&\quad  - \iota_j \ang{ \chi( \cdot/ L \sqrt{\xi_j \lam_{j+1}})\Lam W_{\U{\lam_j}} \mid  \dot \phi( u, \nu)} -  \ang{ \uln A( \lam_j) \phi( u, \nu) \mid \dot g} -  \ang{ \uln A( \lam_j) g \mid \dot  \phi( u, \nu)} \\
&\quad + \frac{\iota_j }{\lam_j}  \ang{ (1- \chi( \cdot/ L \sqrt{\xi_j \lam_{j+1}}))\Lam W_{\lam_j} \mid f_{\bfi}(  \iota, \vec \lam) } -  \iota_j \frac{\lam_j' }{\lam_j}\ang{  (1- \chi( \cdot/ L \sqrt{\xi_j \lam_{j+1}}))\ULam \Lam W_{\U{\lam_j}} \mid \dot g}\\
&\quad + \ang{ \LL_{\lam_j} (\chi( \cdot/ L \sqrt{\xi_j \lam_{j+1}})\Lam W_{\U{\lam_j}}) \mid  g}
 + \frac{ \iota_j}{2}\big( \frac{\xi_j'}{\xi_j} + \frac{\lam_{j+1}'}{\lam_{j+1}}\big)\La \Lam \chi( \cdot/ L \sqrt{\xi_j \lam_{j+1}}) \Lam W_{\U{\lam_j}} \mid \dot g \Ra 
}
All but the last four terms above are treated exactly as in the proof of~\eqref{eq:beta_j'}. For the fourth-to-last term a direct computation gives, 
\EQ{
\Big|\frac{\iota_j }{\lam_j}  \ang{ (1- \chi( \cdot/ L \sqrt{\xi_j \lam_{j+1}}))\Lam W_{\lam_j} \mid f_{\bfi}(\iota, \vec \lam) }  \Big| \ll \frac{1}{\lam_j} \Big( \frac{\lam_j}{\lam_{j+1}} + \frac{\lam_{j-1}}{\lam_j} \Big). 
}
For the third-to-last term, we use that $\ULam \Lam W \in L^2$ (see~\eqref{eq:D4-magic}), ~\eqref{eq:lam'-k1},  and~\eqref{eq:g-bound-k1} to deduce that, 
\EQ{
\Big| \iota_j \frac{\lam_j' }{\lam_j}\ang{  (1- \chi( \cdot/ L \sqrt{\xi_j \lam_{j+1}}))\ULam \Lam W_{\U{\lam_j}} \mid \dot g} \Big| \ll \frac{1}{\lam_j} \bfd(t)^2,
}
The size of the constant $L>0$ becomes relevant only in the second-to-last term. Indeed, since $\LL \Lam W = 0$, we have, 
\begin{multline} 
\LL_{\lam_j} (\chi_{L \sqrt{\xi_j \lam_{j+1}}}\Lam W_{\U{\lam_j}}) \\
= \frac{1}{L^2 \xi_j \lam_{j+1}} (\De \chi)( \cdot/ L \sqrt{\xi_j \lam_{j+1}})\Lam W_{\U{\lam_j}} +  \frac{2}{L \sqrt{ \xi_j \lam_{j+1}}} \chi'( \cdot/ L \sqrt{\xi_j \lam_{j+1}}) \frac{(r \p_r\Lam W)_{ \U{\lam_j}}}{r} . 
\end{multline} 
And therefore, using~\eqref{eq:g-bound-k1} and~\eqref{eq:xi_j-lambda_j-k1} we obtain the estimate, 
\EQ{
\Big| \ang{ \LL_{\lam_j} (\chi( \cdot/ L \sqrt{\xi_j \lam_{j+1}})\Lam W_{\U{\lam_j}}) \mid  g} \Big| \lesssim \frac{1}{L} \frac{1}{\lam_{j}} \bfd(t)^2
}
for a uniform constant, independent of $L$. Taking $L>1$ large enough relative to $c_0$ makes this an acceptable error. Finally, for the last term we use the improved estimate~\eqref{eq:xi_j'-k1} for $\xi_j'$ and~\eqref{eq:xi_j-lambda_j-k1} to obtain, 
\EQ{
\abs{ \frac{\xi_j'}{\xi_j} + \frac{\lam_{j+1}'}{\lam_{j+1}}}  \lesssim \frac{1}{\lam_j} \Big( \abs{ \xi_j'} + \frac{\lam_j}{\lam_{j+1}} \abs{ \lam_{j+1}'} \Big) \ll \frac{1}{\lam_j} \bfd(t), 
}
and hence, 
\EQ{
\Big|  \frac{ \iota_j}{2}\big( \frac{\xi_j'}{\xi_j} + \frac{\lam_{j+1}'}{\lam_{j+1}}\big)\La (r \p_r  \chi)( \cdot/ L\sqrt{\xi_j \lam_{j+1}}) \Lam W_{\U{\lam_j}} \mid \dot g \Ra  \Big| \ll\frac{1}{\lam_j} \bfd(t). 
}
This completes the proof of~\eqref{eq:beta_j'-4}. 
Lastly, we note that the estimates~\eqref{eq:a-tia-4} and~\eqref{eq:dta-4} follow from exactly the same arguments used to prove~\eqref{eq:a-tia-5} and~\eqref{eq:dta-5} in Proposition~\ref{prop:modul-5}. 
\end{proof}

We note that Lemma~\ref{lem:virial-error} and its proof remain valid for $D = 4$.

\subsection{Conclusion of the proof}
We have the following analog of Lemma~\ref{lem:ejection}.
\begin{lem}
\label{lem:ejection-4D}
Let $D = 4$. If $\eta_0$ is small enough,
then there exists $C_0 \geq 0$ depending only on $N$ such that the following is true.
If $[t_1, t_2] \subset I_*$ is a finite time interval such that $\bfd(t) \leq \eta_0$ for all $t \in [t_1, t_2]$, then
\begin{gather}
\label{eq:lambdaK-bd-4D}
\sup_{t \in [t_1, t_2]} \lambda_K(t) \leq \frac 43\inf_{t \in [t_1, t_2]} \lambda_K(t), \\
\label{eq:int-d-bd-4D}
\int_{t_1}^{t_2} \bfd(t)\ud t\leq C_0\big(\bfd(t_1) \lambda_K(t_1) + \bfd(t_2) \lambda_K(t_2)\big).
\end{gather}
\end{lem}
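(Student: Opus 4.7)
The plan is to adapt the Lyapunov/weighted-sum strategy of Lemma~\ref{lem:ejection-5D}, with the additional complication that the refined modulation inequalities of Proposition~\ref{prop:modul-k1} require the ignition condition~\eqref{eq:max-ratio-ass} and are therefore only available, on a given subinterval of $[t_1, t_2]$, for those indices $j \in \calS$ whose bubble ratio $(\lam_j/\lam_{j+1})^{1/2}$ is comparable to $\bfd$. Exactly as in the $D=5,6$ cases, \eqref{eq:lambdaK-bd-4D} is a consequence of \eqref{eq:int-d-bd-4D} together with the pointwise bound $|\lam_K'(t)| \lesssim \bfd(t)$ from~\eqref{eq:lam'}, so the task reduces to the integrated estimate \eqref{eq:int-d-bd-4D}.

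Fix a small $c_1 > 0$ and, following Definition~\ref{def:ignition}, declare an index $j \in \calS$ to be \emph{ignited} at time $t$ when $(\lam_j(t)/\lam_{j+1}(t))^{1/2} \geq c_1\,\bfd(t)$. By \eqref{eq:d-bound-k1}, at every $t \in [t_1,t_2]$ either some $j \in \calS$ is ignited, or $\max_i |a_i^\pm(t)| \gtrsim \bfd(t)$. In the second alternative, integrating \eqref{eq:dta-4} forward from $t_1$ for $a_i^+$ and backward from $t_2$ for $a_i^-$, and using $\lam_i(t) \leq \lam_K(t)$ together with the exponential instability of the $\pm\kappa/\lam_i$ modes, already yields $\int \bfd(t)\,\ud t \lesssim \bfd(t_1)\lam_K(t_1) + \bfd(t_2)\lam_K(t_2)$. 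The core work therefore consists of handling subintervals on which at least one $j \in \calS$ is ignited and attractive bubble interactions drive the growth of $\bfd$. On any such subinterval $J$ on which a specific index $j^\ast$ remains continuously ignited, I will construct a Lyapunov function of the schematic form
\[
\phi_{j^\ast}(t) := \frac{\xi_{j^\ast}(t)\,\beta_{j^\ast}(t)}{8\log(\lam_{j^\ast+1}(t)/\lam_{j^\ast}(t))} - C_1\sum_{i=1}^K \lambda_i(t)\,\tilde a_i^-(t)^2 + C_1\sum_{i=1}^K \lambda_i(t)\,\tilde a_i^+(t)^2,
\]
where the logarithmic normalization is introduced precisely so that the factor $8\log(\lam_{j^\ast+1}/\lam_{j^\ast})$ appearing in~\eqref{eq:xi_j'-beta_j-k1} is absorbed into the definition of $\phi_{j^\ast}$. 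Using~\eqref{eq:xi_j-lambda_j-k1}, \eqref{eq:xi_j'-k1}, \eqref{eq:xi_j'-beta_j-k1}, \eqref{eq:beta_j'-4} and \eqref{eq:dta-4}, and noting that the ignition condition promotes the coercive term $\tfrac{1}{\lam_{j^\ast+1}}$ in \eqref{eq:beta_j'-4} to $\gtrsim c_1^2 \bfd(t)^2/\lam_{j^\ast}$, one establishes
\[
\phi_{j^\ast}'(t) \gtrsim \frac{\bfd(t)^2}{\lam_{j^\ast}(t)},\qquad \frac{|\phi_{j^\ast}(t)|}{\lam_{j^\ast}(t)} \lesssim \bfd(t)^2,
\]
and a comparison argument analogous to Step~3 of the proof of Lemma~\ref{lem:ejection-5D} yields $\int_J \bfd(t)\,\ud t \lesssim \bfd\,\lam_{j^\ast}\big|_{\partial J}$.

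The hard part is gluing these local bounds across the whole of $[t_1, t_2]$ while controlling boundary contributions that appear whenever the dominant ignited index switches. To this end I will induct on the maximal index of $\calS$ that can become ignited on $[t_1, t_2]$: the base case is the purely $a^\pm$-driven regime discussed above. For the inductive step, at a transition time $s$ where ignition at $j^\ast$ is handed over to indices strictly less than $j^\ast$, the matched ratios $(\lam_{j^\ast}(s)/\lam_{j^\ast+1}(s))^{1/2} \simeq c_1 \bfd(s)$ force $\lam_{j^\ast}(s) \lesssim \bfd(s)^2 \lam_{j^\ast+1}(s) \lesssim \bfd(s)^2 \lam_K(s)$, so the boundary contribution $\bfd(s)\lam_{j^\ast}(s) \lesssim \bfd(s)^3 \lam_K(s)$ is of higher order in $\bfd$ and is absorbable once the inductive hypothesis is applied to the truncated sub-configuration $\{1,\dots,j^\ast-1\}$; the contribution of the outer indices $\{j^\ast+1,\dots,K\}$ to $\bfd(t)$ is, by \eqref{eq:g-bound-k1}, of size $o(\bfd(t))$ and is harmless. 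Organizing this bookkeeping so that the resulting constant depends only on $N$, and so that the telescoping boundary terms collapse to $\bfd(t_1)\lam_K(t_1) + \bfd(t_2)\lam_K(t_2)$, is the main technical obstacle, and is the raison d'\^etre of Definition~\ref{def:ignition}.
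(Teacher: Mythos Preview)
Your high-level plan (Lyapunov functional, ignition, induction) is right in spirit, but the coercivity step has a gap. On a subinterval where $j^\ast\in\calS$ is ignited, the lower bound \eqref{eq:beta_j'-4} contains the term $(-16\iota_{j^\ast}\iota_{j^\ast-1}-c_0)\,\lam_{j^\ast-1}/\lam_{j^\ast}^2$, and when $j^\ast-1\in\calS$ this is $\le -16\,\lam_{j^\ast-1}/\lam_{j^\ast}^2$. Knowing that $j^\ast$ is the \emph{maximal} ignited index only constrains ratios with $i>j^\ast$; it says nothing about $\lam_{j^\ast-1}/\lam_{j^\ast}$, which may well be $\simeq\bfd^2$. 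After multiplying by $\xi_{j^\ast}\simeq\lam_{j^\ast}$, the negative contribution $\simeq -16\,\bfd^2$ then dominates the positive one $\gtrsim 16\,c_1^2\bfd^2$ coming from ignition, since $c_1<1$. The weighted-sum cancellation that repairs exactly this difficulty for $D\ge 5$ is unavailable here because \eqref{eq:beta_j'-4} holds only under \eqref{eq:max-ratio-ass}. (Incidentally, your displayed relations $\phi_{j^\ast}'\gtrsim\bfd^2/\lam_{j^\ast}$ and $|\phi_{j^\ast}|/\lam_{j^\ast}\lesssim\bfd^2$ are dimensionally inconsistent: $\phi_{j^\ast}$ has units of length, so $\phi_{j^\ast}'$ is of order $\bfd^2/\log$, and $|\phi_{j^\ast}|/\lam_{j^\ast}\lesssim|\beta_{j^\ast}|/\log\lesssim\bfd/\sqrt{\log}$.)

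The paper's organization runs in the opposite direction from yours. The ignition condition of Definition~\ref{def:ignition} requires all \emph{lower} indices to be small (see \eqref{eq:ignit-induct-1}, \eqref{eq:ignit-induct-3}); this is precisely what kills the bad $\lam_{j-2}/\lam_{j-1}^2$ term in $\beta_{j-1}'$. Under that hypothesis, Lemma~\ref{lem:ignition-lemma} uses a different monotone quantity, $\phi=\beta_{j-1}+c_3\Phi(\xi_{j-1}/\lam_j)$ with $\Phi(x)=\sqrt{-x\log x}$, rather than your $\xi\beta/\log$. The abstract induction (Lemmas~\ref{lem:ignition-induct} and~\ref{lem:ejection-par}) then removes the \emph{smallest} bubble, so that the lower-index smallness hypothesis propagates. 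Your ``maximal ignited index'' scheme and the associated gluing do not supply the smallness of the indices \emph{below} $j^\ast$ that the modulation estimate requires.
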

Due to the fact that some of the estimates in Proposition~\ref{prop:modul-k1}
hold only under the additional assumption \eqref{eq:max-ratio-ass},
we were not able to adapt to the current setting the proof for $D \geq 5$ given above.
We will provide a different proof, closer to \cite[Section 5]{JL6}.

We introduce below the notion of \emph{ignition condition}.
We stress that the definition which follows is meaningful
for \emph{any} continuous functions, not necessarily the ones given by the modulation.
\begin{defn}\label{def:ignition} 
Let $I$ be a time interval, $K \in \bN$, $\iota_j \in \{{-}1, 1\}$ and $\lambda_j, a_j^-, a_j^+ \in C(I)$
for all $1 \leq j \leq K$. Set
\begin{align}
\cS &:= \{i: 1 \leq i \leq K-1 \text{ and }\iota_i = \iota_{i+1}\}, \\
\bfd_\tx{par}(t) &:= \sqrt{\sum_{i \in \cS}\frac{\lambda_i(t)}{\lambda_{i+1}(t)} +\sum_{1 \leq i \leq K}\big(a_i^+(t)^2+a_i^-(t)^2\big)}.
\end{align}
We say that $\iota_1, \ldots, \iota_K, \lambda_1, \ldots, \lambda_K, a_1^-, \ldots, a_K^-, a_1^+, \ldots, a_K^+$
satisfy the \emph{ignition condition} with parameters $c_1, c_2, C_2 > 0$
if for any $I = [t_1, t_2]$, $t_0 \in I$ and $j\in\{1, \ldots, K\}$ satisfying at least one of the two pairs of conditions:
\begin{itemize}
\item\vspace{1em}~

\vspace*{-3.4em}
\begin{equation}
\label{eq:ignit-induct-1}
\sum_{i \in \cS, i < j}\frac{\lambda_i(t)}{\lambda_{i+1}(t)} + \sum_{i=1}^{j-1}a_i^\pm(t)^2 \leq c_2\bfd_\tx{par}(t)^2, \qquad\text{for all }t \in [t_1, t_2],
\end{equation}
and
\begin{equation}
\label{eq:ignit-induct-2}
a_j^+(t_0)^2 + a_j^-(t_0)^2 \geq c_1\bfd_\tx{par}(t_0)^2,
\end{equation}
\item\vspace{1.5em}~

\vspace*{-3.5em}
\begin{equation}
\label{eq:ignit-induct-3}
\sum_{i \in \cS, i < j-1}\frac{\lambda_i(t)}{\lambda_{i+1}(t)} + \sum_{i=1}^{j-1}a_i^\pm(t)^2 \leq c_2\bfd_\tx{par}(t)^2, \qquad\text{for all }t \in [t_1, t_2],
\end{equation}
and
\begin{equation}
\label{eq:ignit-induct-4}
\iota_{j-1}\iota_{j}\frac{\lambda_{j-1}(t_0)}{\lambda_j(t_0)} \geq c_1\bfd_\tx{par}(t_0)^2,
\end{equation}
\end{itemize}
there is at least one of the bounds:
\begin{equation}
\label{eq:ode-dbd-left}
\int_{t_1}^{t_0} \bfd_\tx{par}(t)\ud t \leq C_2 \bfd_\tx{par}(t_1)\lambda_K(t_1)
\end{equation}
or
\begin{equation}
\label{eq:ode-dbd-right}
\int_{t_0}^{t_2} \bfd_\tx{par}(t)\ud t \leq C_2 \bfd_\tx{par}(t_2)\lambda_K(t_2).
\end{equation}
\end{defn}
\begin{rem}
If the ignition condition is satisfied with given parameters $c_1, c_2, C_2 > 0$,
then it is also satisfied with any parameters $(\wt c_1, \wt c_2, \wt C_2)$
such that $\wt c_1 \geq c_1$, $\wt c_2 \leq c_2$ and $\wt C_2 \geq C_2$.
\end{rem}
\begin{rem}
By convention, $\lambda_0(t) = 0$ for all $t \in [t_1, t_2]$, thus \eqref{eq:ignit-induct-4}
never holds for $j = 1$. Clearly, \eqref{eq:ignit-induct-4} cannot hold either if $j-1\notin \cS$.
\end{rem}

\begin{lem}
\label{lem:ignition-induct}
If $(\iota_1, \ldots, \iota_K, \lambda_1, \ldots, \lambda_K, a_1^-, \ldots, a_K^-, a_1^+, \ldots, a_K^+)$ satisfy the ignition condition
with parameters $c_1, c_2, C_2$ and $I$ is a time interval such that
\begin{equation}
\label{eq:dsharp-small}
\frac{\iota_1\iota_2+1}{2}\frac{\lambda_1(t)}{\lambda_2(t)}
+ a_1^-(t)^2 + a_1^+(t)^2 \leq \frac 12 \min(1,c_2) \bfd_\tx{par}(t)^2\qquad\text{for all }t \in I,
\end{equation}
then $(\iota_2, \ldots, \iota_K, \lambda_2, \ldots, \lambda_K, a_2^-, \ldots, a_K^-, a_2^+, \ldots, a_K^+)$ satisfy the ignition condition
with parameters $(2c_1, \frac 12 \min(1, c_2), 2C_2)$ on the interval $I$.
\end{lem}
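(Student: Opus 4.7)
Write $\bfd_\tx{par}^{(1)}$ and $\cS^{(1)} := \cS$ for the quantities associated to the original $K$-tuple, and $\bfd_\tx{par}^{(2)}$ and $\cS^{(2)} := \cS\cap\{2,\ldots,K-1\}$ for those associated to the truncated $(K{-}1)$-tuple $(\iota_2,\ldots,\iota_K,\lambda_2,\ldots,\lambda_K,a_2^-,\ldots,a_K^-,a_2^+,\ldots,a_K^+)$.  The first step is a purely algebraic comparison: since $\frac{\iota_1\iota_2+1}{2}\frac{\lambda_1(t)}{\lambda_2(t)} = \mathbbm{1}_{\cS^{(1)}\setminus\cS^{(2)}}(1)\frac{\lambda_1(t)}{\lambda_2(t)}$, the difference $\bfd_\tx{par}^{(1)}(t)^2 - \bfd_\tx{par}^{(2)}(t)^2$ is exactly the left-hand side of \eqref{eq:dsharp-small}. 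Combined with \eqref{eq:dsharp-small}, this yields
\begin{equation}
\bfd_\tx{par}^{(2)}(t)^2 \leq \bfd_\tx{par}^{(1)}(t)^2 \leq 2\,\bfd_\tx{par}^{(2)}(t)^2 \qquad \text{for all } t \in I,
\end{equation}
since $\frac{1}{2}\min(1,c_2) \leq \frac 12$.

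The plan is then to show, given $[t_1,t_2]\subset I$, $t_0\in[t_1,t_2]$ and $j\in\{2,\ldots,K\}$ satisfying either the truncated analog of \eqref{eq:ignit-induct-1}--\eqref{eq:ignit-induct-2} or of \eqref{eq:ignit-induct-3}--\eqref{eq:ignit-induct-4} with the new parameters $(2c_1,\tfrac12\min(1,c_2),2C_2)$, that the \emph{same} $j$ triggers the original ignition condition with parameters $(c_1,c_2,C_2)$ on $[t_1,t_2]$. Consider the first case: assume
\begin{equation}
\sum_{i\in\cS^{(2)},\, i<j}\frac{\lambda_i(t)}{\lambda_{i+1}(t)} + \sum_{i=2}^{j-1}a_i^\pm(t)^2 \leq \tfrac{1}{2}\min(1,c_2)\bfd_\tx{par}^{(2)}(t)^2
\end{equation}
for all $t\in[t_1,t_2]$ and $a_j^+(t_0)^2+a_j^-(t_0)^2\geq 2c_1\bfd_\tx{par}^{(2)}(t_0)^2$. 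Adding the first-index contribution, which is controlled by \eqref{eq:dsharp-small}, and using the comparison above gives
\begin{equation}
\sum_{i\in\cS^{(1)},\, i<j}\frac{\lambda_i(t)}{\lambda_{i+1}(t)} + \sum_{i=1}^{j-1}a_i^\pm(t)^2 \leq \min(1,c_2)\bfd_\tx{par}^{(1)}(t)^2 \leq c_2\bfd_\tx{par}^{(1)}(t)^2,
\end{equation}
while $a_j^+(t_0)^2+a_j^-(t_0)^2\geq 2c_1\bfd_\tx{par}^{(2)}(t_0)^2\geq c_1\bfd_\tx{par}^{(1)}(t_0)^2$, so \eqref{eq:ignit-induct-1}--\eqref{eq:ignit-induct-2} hold. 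The second case, based on \eqref{eq:ignit-induct-3}--\eqref{eq:ignit-induct-4}, is handled identically, noting that $\iota_{j-1}\iota_j\frac{\lambda_{j-1}(t_0)}{\lambda_j(t_0)}\geq 2c_1\bfd_\tx{par}^{(2)}(t_0)^2$ survives the factor-$\sqrt 2$ comparison.

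By the assumed ignition condition, one of the bounds \eqref{eq:ode-dbd-left}--\eqref{eq:ode-dbd-right} holds for $\bfd_\tx{par}^{(1)}$. Using $\bfd_\tx{par}^{(2)}\leq\bfd_\tx{par}^{(1)}$ on the left-hand side and $\bfd_\tx{par}^{(1)}(t_i)\leq\sqrt 2\,\bfd_\tx{par}^{(2)}(t_i)$ on the right-hand side ($i=1$ or $i=2$), we obtain the corresponding integral bound for $\bfd_\tx{par}^{(2)}$ with constant $\sqrt 2\,C_2 \leq 2C_2$, which is the required conclusion. Since nothing in the argument is subtle—everything reduces to the two-sided comparison between $\bfd_\tx{par}^{(1)}$ and $\bfd_\tx{par}^{(2)}$ furnished by \eqref{eq:dsharp-small}—there is no real obstacle; the only care needed is to keep track of which constants get the factor $\sqrt 2$ (for the integral bound) versus $2$ (for \eqref{eq:ignit-induct-1}/\eqref{eq:ignit-induct-2}) during the translation.
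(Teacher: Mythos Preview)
Your proof is correct and follows essentially the same approach as the paper's own proof. The paper uses the notation $\sh\bfd$ and $\fl\bfd$ where you write the left-hand side of \eqref{eq:dsharp-small} and $\bfd_\tx{par}^{(2)}$, but the argument is identical: establish the two-sided comparison $\bfd_\tx{par}^{(2)}\leq\bfd_\tx{par}^{(1)}\leq\sqrt{2}\,\bfd_\tx{par}^{(2)}$, add the index-$1$ contribution to recover the original hypotheses \eqref{eq:ignit-induct-1}--\eqref{eq:ignit-induct-4}, and then transfer the resulting integral bound back through the comparison.
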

\begin{proof}
Set
\begin{align}
\sh\bfd(t) &:= \sqrt{\frac{\iota_1\iota_2+1}{2}\frac{\lambda_1(t)}{\lambda_2(t)}
+ a_1^-(t)^2 + a_1^+(t)^2}, \\
\fl\bfd(t) &:= \sqrt{\sum_{i \in \cS \setminus\{1\}}\frac{\lambda_i(t)}{\lambda_{i+1}(t)} +\sum_{i=2}^K\big(a_i^+(t)^2+a_i^-(t)^2\big)}
= \sqrt{\bfd_\tx{par}(t)^2 - \sh\bfd(t)^2}.
\end{align}
By assumption \eqref{eq:dsharp-small}, we have $\sh\bfd(t)^2 \leq \frac 12\bfd_\tx{par}(t)^2$, which implies $\bfd_\tx{par}(t) \leq \sqrt 2\fl\bfd(t)$.

We verify the ignition condition for $(\iota_2, \ldots, \iota_K, \lambda_2, \ldots, \lambda_K, a_2^-, \ldots, a_K^-, a_2^+, \ldots, a_K^+)$. If $j \geq 2$ and
\begin{equation}
\sum_{i \in \cS, 1< i < j}\frac{\lambda_i(t)}{\lambda_{i+1}(t)} + \sum_{i=2}^{j-1}a_i^\pm(t)^2 \leq \frac 12 \min(1, c_2)\fl\bfd(t)^2, \qquad\text{for all }t \in [t_1, t_2],
\end{equation}
then adding $\sh\bfd(t)^2$ to both sides and using \eqref{eq:dsharp-small},
we get \eqref{eq:ignit-induct-1}.
Also, $a_j^+(t_0)^2 + a_j^-(t_0)^2 \geq 2c_1 \fl\bfd(t_0)$ implies \eqref{eq:ignit-induct-2}.
Since we assume $(\iota_j, \lambda_j, a^-_j, a^+_j)_{j=1}^K$ satisfy the ignition condition, we obtain
at least one of the bounds \eqref{eq:ode-dbd-left}, \eqref{eq:ode-dbd-right}.
Since $\fl\bfd(t) \leq \bfd_\tx{par}(t) \leq \sqrt 2\fl\bfd(t)$, we obtain
the same bound with with $\fl\bfd$ instead of $\bfd_\tx{par}$ and $2C_2$ instead of $C_2$.
The case where bounds \eqref{eq:ignit-induct-3} and \eqref{eq:ignit-induct-4} hold is similar.
\end{proof}

\begin{lem}
\label{lem:ejection-par}
For all $K \in \bN$ and functions $c_2, C_2: (0, c_1^*] \to (0, \infty)$,
increasing and decreasing respectively,
there exist $c_{1, \min}, C_1 > 0$
such that if for all $c_1 \geq c_{1, \min}$, $(\iota_j, \lambda_j, a^-_j, a^+_j)_{j=1}^K$ satisfy the ignition condition with parameters $(c_1, c_2(c_1), C_2(c_1))$ on a time interval $I = [t_1, t_2]$, then
\begin{gather}
\int_I \bfd_\tx{par}(t)\ud t \leq C_1(\lambda_K(t_1)\bfd_\tx{par}(t_1) + \lambda_K(t_2)\bfd_\tx{par}(t_2)). \label{eq:ode-intd}
\end{gather}
\end{lem}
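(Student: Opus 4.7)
\emph{Approach.} The plan is to prove the lemma by induction on $K$, following the strategy of \cite[Section 5]{JL6}. The base case $K = 0$ is trivial since $\bfd_\tx{par} \equiv 0$, so \eqref{eq:ode-intd} holds with any $C_1 > 0$. For the inductive step, the central tool is the head/tail decomposition
\begin{equation*}
\bfd_\tx{par}(t)^2 = \sh\bfd(t)^2 + \fl\bfd(t)^2, \qquad
\sh\bfd(t)^2 := \frac{\iota_1\iota_2+1}{2}\frac{\lambda_1(t)}{\lambda_2(t)} + a_1^-(t)^2 + a_1^+(t)^2,
\end{equation*}
combined with a dichotomy on whether $\sh\bfd$ is uniformly small relative to $\bfd_\tx{par}$ on $I$.

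\emph{Small-head case.} If $\sh\bfd(t)^2 \leq \tfrac12\min(1, c_2(c_1))\bfd_\tx{par}(t)^2$ for all $t \in I$, then by Lemma~\ref{lem:ignition-induct} the truncated tuple $(\iota_j, \lambda_j, a_j^\pm)_{j=2}^K$ satisfies the ignition condition on $I$ with rescaled parameter functions (roughly $c_1' \mapsto 2c_1$, $c_2 \mapsto \tfrac12\min(1, c_2)$, $C_2 \mapsto 2C_2$). Invoking the inductive hypothesis for $K-1$ yields
\begin{equation*}
\int_I \fl\bfd(t)\,\ud t \leq C_1^{(K-1)}\bigl(\lambda_K(t_1)\fl\bfd(t_1) + \lambda_K(t_2)\fl\bfd(t_2)\bigr),
\end{equation*}
and since $\fl\bfd \leq \bfd_\tx{par} \leq \sqrt{2}\,\fl\bfd$ on $I$ under the small-head assumption, one concludes with $C_1 = \sqrt{2}\,C_1^{(K-1)}$.

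\emph{Large-head case.} If there exists $t_0 \in I$ with $\sh\bfd(t_0)^2 > \tfrac12\min(1, c_2(c_1))\bfd_\tx{par}(t_0)^2$, then at $t_0$ either $a_1^-(t_0)^2 + a_1^+(t_0)^2$ (always admissible for $j = 1$) or $\iota_1\iota_2\,\lambda_1(t_0)/\lambda_2(t_0)$ (admissible for $j = 2$ when $\iota_1 = \iota_2$) accounts for a definite fraction of $\bfd_\tx{par}(t_0)^2$; crucially, the companion global conditions \eqref{eq:ignit-induct-1} and \eqref{eq:ignit-induct-3} are vacuous for these indices. Choosing $c_{1,\min}$ small enough to trigger \eqref{eq:ignit-induct-2} with $j = 1$, respectively \eqref{eq:ignit-induct-4} with $j = 2$, the ignition condition produces one of \eqref{eq:ode-dbd-left}, \eqref{eq:ode-dbd-right} at $t_0$. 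This controls $\int \bfd_\tx{par}$ on one side of $t_0$, and for the complementary sub-interval I would iterate: the ignition condition is inherited on any sub-interval (shrinking the interval only weakens \eqref{eq:ignit-induct-1}, \eqref{eq:ignit-induct-3}), so on the remaining side either the small-head alternative now holds and the inductive hypothesis closes the estimate on the sub-interval, or a new large-head point is found and the ignition step is repeated. A monotonicity argument using that $t \mapsto \int_{t_1}^t \bfd_\tx{par}(s)\,\ud s$ is non-decreasing (analogous to the especially clean scenario $K = 1$, where ignition fires at every time) closes this finite iteration.

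\emph{Main obstacle.} The delicate point is the simultaneous bookkeeping of $(c_1, c_2(c_1), C_2(c_1))$ through the recursion. At each reduction from $K$ to $K-1$ via Lemma~\ref{lem:ignition-induct}, all three quantities are modified, and the choice of $c_{1,\min}^{(K)}$ must be compatible both with the inductive hypothesis's lower threshold $c_{1,\min}^{(K-1)}$ (applied to the rescaled functions) and with the smallness condition needed to fire ignition in the large-head case. I expect this can be arranged by exploiting the fact that the ignition condition is assumed for an entire range $c_1 \in [c_{1,\min}, c_1^*]$: different values of $c_1$ in this range can be deployed in the different branches of the dichotomy, namely smaller $c_1$ (near $c_{1,\min}$) for the large-head branch where we need to trigger \eqref{eq:ignit-induct-2}/\eqref{eq:ignit-induct-4} easily, and larger $c_1$ for the small-head branch so that $c_2(c_1)$ is not too small.
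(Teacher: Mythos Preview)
Your overall induction-and-reduction strategy matches the paper's, and the small-head branch (use Lemma~\ref{lem:ignition-induct}, then the inductive hypothesis for $K-1$) is exactly right. However, there is a genuine gap in your large-head branch.

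You assert that when the head is large at $t_0$ because $\iota_1\iota_2\,\lambda_1(t_0)/\lambda_2(t_0)$ is large, ignition can be fired with $j=2$ since ``the companion global conditions \eqref{eq:ignit-induct-1} and \eqref{eq:ignit-induct-3} are vacuous for these indices.'' This is false for $j=2$: condition \eqref{eq:ignit-induct-3} with $j=2$ reads $a_1^-(t)^2 + a_1^+(t)^2 \leq c_2\,\bfd_\tx{par}(t)^2$ for \emph{all} $t\in[t_1,t_2]$, which is a genuine global constraint that need not hold on $I$. So you cannot fire ignition with $j=2$ directly. The paper handles this by a two-pass peeling argument: first define $A=\{t:a_1^+(t)^2+a_1^-(t)^2\ge c_{1,\min}\bfd_\tx{par}(t)^2\}$, split $A=A_-\cup A_+$ according to which of \eqref{eq:ode-dbd-left}/\eqref{eq:ode-dbd-right} fires (using $j=1$, which \emph{is} always admissible), and set $\sh t_1=\max A_-$, $\sh t_2=\min A_+$; this controls the integral outside $(\sh t_1,\sh t_2)$ and guarantees $a_1^\pm$ are small on $(\sh t_1,\sh t_2)$. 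Only then, on this smaller interval, is \eqref{eq:ignit-induct-3} for $j=2$ satisfied, and a second analogous pass with the set $B=\{t:\lambda_1(t)/\lambda_2(t)\ge c_3\bfd_\tx{par}(t)^2\}$ peels off the contribution of $\lambda_1/\lambda_2$, leaving a residual interval $(\fl t_1,\fl t_2)$ on which the full head is small. The $\max/\min$ of the closed sets $A_\pm, B_\pm$ replaces your vague ``finite iteration'' and ``monotonicity argument'' with a single clean step.

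Your final paragraph correctly identifies the delicate bookkeeping of the parameters through the recursion; the paper resolves it by defining $\fl c_2(c_1):=\tfrac12\min(1,c_2(c_1/2))$, $\fl C_2(c_1):=2C_2(c_1/2)$, obtaining $\fl c_{1,\min}$ from the inductive hypothesis for these functions, and then setting $c_3:=\tfrac12\fl c_2(\fl c_{1,\min})$ and $c_{1,\min}:=\min\bigl(\tfrac12\fl c_2(\fl c_{1,\min}),\,c_2(c_3)\bigr)$, so that the thresholds used in the two peeling passes mesh with the inductive call.
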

\begin{proof}
Induction with respect to $K$.

\noindent
\textbf{Step 1.} For $K = 1$, we let $c_{1, \min} = \frac 12$.
We will only use the fact that the ignition condition is satisfied with parameters
$c_1 = \frac 12, c_2, C_2$ for some $c_2, C_2 > 0$.
The conditions \eqref{eq:ignit-induct-1} and \eqref{eq:ignit-induct-2} hold for $j = 1$,
thus for all $t_0$ we have \eqref{eq:ode-dbd-left} or \eqref{eq:ode-dbd-right}.
Let
\begin{align}
A_- := \{t_0 \in I: \eqref{eq:ode-dbd-left}\text{ holds}\}, \qquad
A_+ := \{t_0 \in I: \eqref{eq:ode-dbd-right}\text{ holds}\},
\end{align}
so that $A_-$ and $A_+$ are closed sets, and $I = A_- \cup A_+$.
We define
\begin{align}
\sh t_1 := \max\,A_-, \qquad \sh t_2 := \min\,A_+.
\end{align}
We adopt the convention that $\sh t_1 = t_1$ if $A_- = \emptyset$,
and similarly $\sh t_2 = t_2$ if $A_+ = \emptyset$.
With these conventions, we find that $\sh t_1 \geq \sh t_2$.
By the ignition condition, we have
\begin{equation}
\int_{t_1}^{t_2}\bfd_\tx{par}(t)\ud t \leq \int_{t_1}^{\sh t_1}\bfd_\tx{par}(t)\ud t + \int_{\sh t_2}^{t_2}\bfd_\tx{par}(t)\ud t \leq C_2(c_{1, \min})\big(\bfd_\tx{par}(t_1)\lambda_K(t_1) + \bfd_\tx{par}(t_2)\lambda_K(t_2)\big),
\end{equation}
which settles the base case $K = 1$.

\noindent
\textbf{Step 2.}
We continue with the induction step.
Set $\fl c_2(c_1) := \frac 12 \min(1, c_2(c_1/2))$ and $\fl C_2(c_1) := 2C_2(c_1/2)$
for all $c_1 \in (0, c_1^*]$.
Let $\fl c_{1, \min} > 0$ be the number given by the induction hypothesis
(for $K-1$ instead of $K$)
for these functions $\fl c_2$ and $\fl C_2$.
We set $c_3 := \frac 12 \fl c_2(\fl c_{1,\min})$ and $c_{1, \min} := \min\big(\frac 12 \fl c_2(\fl c_{1,\min}), c_2(c_3)\big)$.

Assume $(\iota_j, \lambda_j, a^-_j, a^+_j)_{j=1}^K$ satisfy the ignition condition
for all $c_1 \geq c_{1, \min}$, and let
\begin{equation}
\label{eq:ode2-a1cond}
A := \{t \in [t_1, t_2]: a_1^+(t)^2 + a_1^-(t)^2 \geq c_{1,\min}\bfd_\tx{par}(t)^2\}.
\end{equation}
By the ignition condition, \eqref{eq:ode-dbd-left} or \eqref{eq:ode-dbd-right} holds for all $t_0 \in A$,
with $C_2 = C_2(c_{1, \min})$.
Let
\begin{align}
A_- := \{t_0 \in A: \eqref{eq:ode-dbd-left}\text{ holds}\}, \qquad
A_+ := \{t_0 \in A: \eqref{eq:ode-dbd-right}\text{ holds}\},
\end{align}
so that $A_-$ and $A_+$ are closed sets, and $A = A_- \cup A_+$.
We define
\begin{align}
\sh t_1 := \max\,A_-, \qquad \sh t_2 := \min\,A_+.
\end{align}
We adopt the convention that $\sh t_1 = t_1$ if $A_- = \emptyset$,
and similarly $\sh t_2 = t_2$ if $A_+ = \emptyset$.
It is not excluded either that $\sh t_1 \geq \sh t_2$.

By the ignition condition, we have
\begin{equation}
\int_{t_1}^{\sh t_1}\bfd_\tx{par}(t)\ud t + \int_{\sh t_2}^{t_2}\bfd_\tx{par}(t)\ud t \leq C_2(c_{1, \min})\big(\bfd_\tx{par}(t_1)\lambda_K(t_1) +\bfd_\tx{par}(t_2)\lambda_K(t_2)\big),
\end{equation}
so it remains to consider the interval $(\sh t_1, \sh t_2)$.
Notice that $(\sh t_1, \sh t_2) \subset I \setminus A$.

\noindent
\textbf{Step 3.}
We treat separately the cases $\iota_1\iota_2 = -1$ and $\iota_1\iota_2 = 1$.
In the former case, we set $\fl t_1 = \sh t_1$, $\fl t_2 = \sh t_2$,
and go to the next step.

Assume $\iota_1\iota_2 = 1$ and let
\begin{equation}
\label{eq:ode2-lambda1cond}
B := \Big\{t \in [\sh t_1, \sh t_2]: \frac{\lambda_1(t)}{\lambda_2(t)} \geq c_3\bfd_\tx{par}(t)^2\Big\}.
\end{equation}
We have $c_{1, \min} \leq c_2(c_3)$,
thus $a_1^+(t)^2 + a_1^-(t)^2 \leq c_2(c_3)$ for all $t \in [\sh t_1, \sh t_2]$,
so that the ignition condition implies
that \eqref{eq:ode-dbd-left} or \eqref{eq:ode-dbd-right} holds for all $t_0 \in A$,
with $c_2 = c_2(c_3)$ and $C_2 = C_2(c_3)$.
Let
\begin{align}
B_- := \{t_0 \in B: \eqref{eq:ode-dbd-left}\text{ holds}\}, \qquad
B_+ := \{t_0 \in B: \eqref{eq:ode-dbd-right}\text{ holds}\},
\end{align}
so that $B_-$ and $B_+$ are closed sets, and $B = B_- \cup B_+$.
We define
\begin{align}
\fl t_1 := \max\,B_-, \qquad \fl t_2 := \min\,B_+.
\end{align}
We adopt the convention that $\fl t_1 = \sh t_1$ if $B_- = \emptyset$,
and similarly $\fl t_2 = \sh t_2$ if $B_+ = \emptyset$.
It is not excluded either that $\fl t_1 \geq \fl t_2$.

By the ignition condition, we have
\begin{equation}
\int_{\sh t_1}^{\fl t_1}\bfd_\tx{par}(t)\ud t + \int_{\fl t_2}^{\sh t_2}\bfd_\tx{par}(t)\ud t \leq C_2(c_3)\big(\bfd_\tx{par}(\sh t_1)\lambda_K(\sh t_1) + \bfd_\tx{par}(\sh t_2)\lambda_K(\sh t_2)\big),
\end{equation}
so it remains to consider the interval $(\fl t_1, \fl t_2)$.
Notice that $(\fl t_1, \fl t_2) \subset I \setminus (A\cup B)$.

\noindent
\textbf{Step 4.}
We check that for all $\fl c_1 \geq \fl c_{1,\min}$, $(\iota_j, \lambda_j, a^-_j, a^+_j)_{j=2}^K$
satisfy the ignition condition with parameters $(\fl c_1, \fl c_2(\fl c_1), \fl C_2(\fl c_1))$
on the interval $[\fl t_1, \fl t_2]$. We have
\begin{equation}
\label{eq:param1small-flat}
a_1^+(t)^2 + a_1^-(t)^2 + \frac{\iota_1\iota_2 + 1}{2}\frac{\lambda_1(t)}{\lambda_2(t)} \leq (c_{1,\min} + c_3)\bfd_\tx{par}(t)^2 \leq \fl c_2(\fl c_{1,\min})\bfd_\tx{par}(t)^2.
\end{equation}
We definition of $\fl c_2$, we also have
\begin{equation}
\fl c_2(\fl c_{1,\min}) \leq \frac 12\min(1, c_2(\fl c_{1,\min}/2)) \leq \frac 12 \min(1,c_2(\fl c_1/2)),
\end{equation}
where in the last step we used the fact that $c_2$ is an increasing function.
Thus, by Lemma~\ref{lem:ignition-induct}, $(\iota_j, \lambda_j, a^-_j, a^+_j)_{j=2}^K$ satisfy the ignition condition
with parameters $\fl c_1$, $\frac 12 \min(1, c_2(\fl c_1/2)) = \fl c_2(\fl c_1)$, $2C_2(\fl c_1/2) = \fl C_2(\fl c_1)$.

By the induction hypothesis, we have
\begin{equation}
\int_{\fl t_1}^{\fl t_2}\fl\bfd(t)\ud t \leq \fl C_1\sup_{\fl t_1 \leq t \leq \fl t_2}(\lambda_K(t)\fl\bfd(t)),
\end{equation}
where
\begin{equation}
\fl\bfd(t) := \sqrt{\sum_{i \in \cS \setminus\{1\}}\frac{\lambda_i(t)}{\lambda_{i+1}(t)} +\sum_{i=2}^K\big(a_i^+(t)^2+a_i^-(t)^2\big)}.
\end{equation}
The bound \eqref{eq:param1small-flat} and $\fl c_2(\fl c_{1,\min})\leq 1$ imply
$\bfd_\tx{par}(t) \leq \sqrt 2\fl \bfd(t)$ for all $t \in [\fl t_1, \fl t_2]$,
and the desired bound follows.
\end{proof}
Next, we prove that the modulation parameters satisfy the ignition condition.
\begin{lem}
\label{lem:ignition-lemma}
For any $c_1 \in (0, c_1^*]$ there exist $\eta_0, c_2, C_2 > 0$ such that
the following is true.
Let $\bs u$ be a solution of \eqref{eq:nlw} for $D = 4$, let $\bfd$ be
defined by \eqref{eq:d-intro}, $I$ a time interval such that $\bfd(t) \leq \eta_0$
for all $t \in I$, and let $(\iota_j, \lambda_j, a^-_j, a^+_j)_{j=1}^K$
be the modulation parameters as in Lemma~\ref{lem:mod-1}.
Then $(\iota_j, \lambda_j, a^-_j, a^+_j)_{j=1}^K$
satisfy the ignition condition with parameters $(c_1, c_2, C_2)$ on $I$.
\end{lem}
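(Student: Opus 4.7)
\medskip

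\noindent
The plan is to verify the ignition condition by exploiting the two distinct ejection mechanisms encoded in Definition~\ref{def:ignition}: instability of the stable/unstable modes $a_j^\pm$ (ignition via \eqref{eq:ignit-induct-2}), and the attractive interaction of adjacent same-sign bubbles $\lambda_{j-1}/\lambda_j$ (ignition via \eqref{eq:ignit-induct-4}). First I fix $c_1 \in (0, c_1^*]$, choose $c_0 = c_0(c_1) \ll c_1$ so that Proposition~\ref{prop:modul-k1} applies with this $c_0$ and $\eta_0 = \eta_0(c_0, c_1)$, pick $c_2 \ll c_0$ small enough that the smallness hypotheses \eqref{eq:ignit-induct-1} and \eqref{eq:ignit-induct-3} render negligible every subleading term in the differential inequalities of Proposition~\ref{prop:modul-k1}, and take $C_2$ to be a sufficiently large absolute constant. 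Note that by \eqref{eq:d-bound-k1}, $\bfd(t)\simeq \bfd_\tx{par}(t)$, so we use them interchangeably.

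\medskip

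\noindent
\textbf{Case A (unstable ignition).} Assuming \eqref{eq:ignit-induct-1} and \eqref{eq:ignit-induct-2}, choose the sign for which $|a_j^\pm(t_0)|^2 \geq \tfrac{c_1}{2}\bfd_\tx{par}(t_0)^2$; by symmetry under time reversal we may treat $|a_j^+(t_0)|$ dominant and prove the forward estimate \eqref{eq:ode-dbd-right}. Estimate \eqref{eq:a-tia-4} transfers the bound to $|\tilde a_j^+(t_0)|\gtrsim \sqrt{c_1}\bfd_\tx{par}(t_0)$. A standard bootstrap using \eqref{eq:dta-4}, \eqref{eq:d-bound-k1} and \eqref{eq:ignit-induct-1} then shows $|\tilde a_j^+(t)|\gtrsim \bfd_\tx{par}(t)$ on $[t_0, t_2]$, and hence
\begin{equation}
\frac{\ud}{\ud t}\log|\tilde a_j^+(t)|\geq \frac{\kappa - Cc_1^{-1/2}\eta_0}{\lambda_j(t)}.
\end{equation}
Integrating backward from $t_2$ gives the pointwise decay $|\tilde a_j^+(t)|\leq |\tilde a_j^+(t_2)|\exp\!\bigl(-c\int_t^{t_2}\ud s/\lambda_j(s)\bigr)$. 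Coupled with a secondary bootstrap $\lambda_j(t)\leq 2\lambda_j(t_2)$ on $[t_0,t_2]$ (closed via $|\lambda_j'|\lesssim \bfd$ and the integral bound we are proving), a change of variables yields
\begin{equation}
\int_{t_0}^{t_2}\bfd(t)\,\ud t\lesssim \int_{t_0}^{t_2}|\tilde a_j^+(t)|\,\ud t\lesssim \lambda_j(t_2)|\tilde a_j^+(t_2)|/c\leq C_2\lambda_K(t_2)\bfd_\tx{par}(t_2).
\end{equation}

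\medskip

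\noindent
\textbf{Case B (interaction ignition).} Positivity of $\lambda_{j-1}/\lambda_j$ in \eqref{eq:ignit-induct-4} forces $\iota_{j-1}\iota_j=1$, so $j-1\in\cS$. Apply \eqref{eq:beta_j'-4} at index $j-1$: the leading term is $(16-c_0)/\lambda_j(t)$; the remaining contributions are controlled by the smallness \eqref{eq:ignit-induct-3} and amount to at most $Cc_2\bfd_\tx{par}^2/\lambda_{j-1}(t)$, which is dominated by $1/\lambda_j(t)$ thanks to \eqref{eq:ignit-induct-4} and $c_2,c_0\ll c_1$. Thus $\beta_{j-1}'(t)\geq c(c_1)/\lambda_j(t)$ on $[t_1, t_2]$. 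Let $t_\ast$ be the unique zero of $\beta_{j-1}$ (or the relevant endpoint if $\beta_{j-1}$ has constant sign). On $[t_\ast, t_2]$, $\beta_{j-1}\geq 0$, so by \eqref{eq:xi_j'-beta_j-k1} and \eqref{eq:xi_j-lambda_j-k1} the ratio $\lambda_{j-1}/\lambda_j$ is essentially nondecreasing, which—given \eqref{eq:ignit-induct-3}—makes $\bfd_\tx{par}$ essentially nondecreasing on $[t_\ast, t_2]$, so $\bfd(t)\lesssim \bfd(t_2)$. Since $|\beta_{j-1}(t_2)|\lesssim \sqrt{|\log \eta_0|}\bfd(t_2)$ by \eqref{eq:beta-def-k1}, integration of $\beta_{j-1}'\gtrsim 1/\lambda_j$ gives $t_2 - t_\ast\lesssim \lambda_j\sqrt{|\log\eta_0|}\bfd(t_2)$, whence
\begin{equation}
\int_{t_\ast}^{t_2}\bfd\,\ud t\lesssim \sqrt{|\log\eta_0|}\,\lambda_j\bfd(t_2)^2\leq \sqrt{|\log\eta_0|}\,\eta_0\,\lambda_K(t_2)\bfd_\tx{par}(t_2),
\end{equation}
which is $\leq C_2 \lambda_K(t_2)\bfd_\tx{par}(t_2)$ after shrinking $\eta_0$. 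The backward analogue on $[t_1,t_\ast]$ gives \eqref{eq:ode-dbd-left}, and one of the two applies to $t_0$ depending on its position relative to $t_\ast$.

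\medskip

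\noindent
\textbf{Main obstacle.} The delicate point is the bootstrap in Case A establishing $|\tilde a_j^+(t)|\gtrsim \bfd_\tx{par}(t)$ throughout $[t_0, t_2]$. A priori, growth of $\bfd_\tx{par}$ in directions unrelated to $\tilde a_j^+$ could cause the error $C_0\bfd^2/\lambda_j$ in \eqref{eq:dta-4} to compete with the leading exponential growth. Closing the bootstrap requires crucially that \eqref{eq:ignit-induct-1} confines $\bfd_\tx{par}^2$ to be essentially realized by the components with index $\geq j$, that the unstable mode dominates the stable one forward in time (symmetrically backward), and that any competing growth at a higher index is itself governed by the ignition condition with indices $\geq j$, permitting an inductive usage (as in the proof of Lemma~\ref{lem:ejection-par}) once the present lemma is established. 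An analogous subtlety in Case B is that the logarithmic factor from $\|\chi\Lambda W_{\uln\lambda_j}\|_{L^2}^2\simeq \log(\lambda_j/\lambda_{j-1})$—a manifestation of the slow decay of $\Lambda W$ specific to $D=4$—must be absorbed into $\eta_0$, which is possible only because $\lambda_{j-1}/\lambda_j\lesssim \eta_0^2$ gives $\log(\lambda_j/\lambda_{j-1})\lesssim |\log\eta_0|$ and $\sqrt{|\log\eta_0|}\,\eta_0\to 0$.
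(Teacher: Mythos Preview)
Your Case~A is along the right lines, but the resolution you propose for the bootstrap obstacle is circular: you cannot invoke the ignition condition at higher indices to control ``competing growth,'' because the ignition condition is precisely what you are proving. The paper closes the bootstrap directly. On the bootstrap interval $[t_0,t_3]$ (where $\tilde a_j^+(t)\geq \tfrac{\sqrt{c_1}}{4}\bfd_\tx{par}(t)$), the exponential growth of $\tilde a_j^+$ yields $\int_{t_0}^{t_3}\bfd_\tx{par}\lesssim \bfd_\tx{par}(t_3)\lambda_j(t_3)$. Then the crude ODE bounds $|\lambda_k'|\lesssim\bfd$ and $|(\tilde a_k^\pm)'|\lesssim\bfd/\lambda_k$ for $k\geq j$, together with this integral bound, force $\lambda_k$, the ratios $\lambda_k/\lambda_{k+1}$ for $k\geq j$, the modes $\tilde a_k^\pm$ for $k>j$, and $|\tilde a_j^-|$ all to change by at most $c_4\bfd_\tx{par}(t_3)$. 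Since \eqref{eq:ignit-induct-1} already controls the indices $<j$, one concludes $a_j^+(t_3)^2\geq \tfrac{c_1}{2}\bfd_\tx{par}(t_3)^2$, improving the bootstrap and forcing $t_3=t_2$. No induction is needed.

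Your Case~B has two genuine gaps. First, you claim $\beta_{j-1}'(t)\geq c(c_1)/\lambda_j(t)$ on all of $[t_1,t_2]$, using \eqref{eq:ignit-induct-4} to absorb the error $c_0\bfd^2/\lambda_{j-1}$ in \eqref{eq:beta_j'-4}. But \eqref{eq:ignit-induct-4} is only assumed at the single time $t_0$, and more fundamentally \eqref{eq:beta_j'-4} itself is stated only under the hypothesis \eqref{eq:max-ratio-ass}, which again holds \emph{a priori} only at $t_0$. A bootstrap on $\xi_{j-1}/\lambda_j\gtrsim c_1\bfd_\tx{par}^2$ must be set up and closed, exactly as in Case~A. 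Second, your claim that $\beta_{j-1}\geq 0$ makes $\lambda_{j-1}/\lambda_j$ ``essentially nondecreasing'' does not follow from \eqref{eq:xi_j'-beta_j-k1}: that estimate only gives $\xi_{j-1}'\geq -C_0\bfd/\big(8\log(\lambda_j/\lambda_{j-1})\big)$, and this logarithmically-damped error is exactly what prevents a naive monotonicity argument. The paper handles both issues simultaneously by introducing the auxiliary function $\phi(t)=\beta_{j-1}(t)+c_3\Phi(\xi_{j-1}(t)/\lambda_j(t))$ with $\Phi(x)=\sqrt{-x\log x}$; the derivative of the correction precisely absorbs the $C_0\bfd$ error in \eqref{eq:xi_j'-beta_j-k1}, yielding $\phi'\geq 4/\lambda_j$ on the bootstrap interval and the sharper bound $\int\bfd\lesssim\bfd(t_3)^2\sqrt{-\log\bfd(t_3)}\lambda_j(t_3)$, which is then used (as in Case~A) to show the other parameters do not move, closing the bootstrap.
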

\begin{proof}
Assume first that \eqref{eq:ignit-induct-1} and \eqref{eq:ignit-induct-2} hold.
We have $|a_j^+(t_0)| \geq |a_j^-(t_0)|$ or $|a_j^+(t_0)| \leq |a_j^-(t_0)|$.
We will show that the former implies \eqref{eq:ode-dbd-right}, and the latter implies \eqref{eq:ode-dbd-left}.
Since the two cases are analogous, we only consider the first one.
To fix ideas, assume $a_j^+(t_0) > 0$, the case $a_j^+(t_0) < 0$
being analogous, so that
\begin{equation}
a_j^+(t_0) \geq \frac{\sqrt{c_1}}{\sqrt 2}\bfd_\tx{par}(t_0)\quad\Rightarrow\quad \wt a_j^+(t_0) \geq \frac{2\sqrt{c_1}}{3}\bfd_\tx{par}(t_0),
\end{equation}
where the last inequality follows from \eqref{eq:a-tia-4}.

Set
\begin{equation}
t_3 := \sup\big\{t \in [t_0, t_2]: \wt a_j^+(t) \geq \frac{\sqrt{c_1}}{4}\bfd_\tx{par}(t)\text{ for all }t \in [t_0, t_3]\big\}.
\end{equation}
If $\eta_0$ is small enough, then \eqref{eq:dta-4} yields
\begin{equation}
\big(\wt a_j^+\big)'(t) \geq \frac{\kappa}{2\lambda_j(t)}\wt a_j^+(t),\qquad\text{for all }t \in [t_0, t_3].
\end{equation}
Integrating, and using again the inequality defining $t_3$, we obtain
\begin{equation}
\label{eq:t3-first-case}
\int_{t_0}^{t_3}\bfd_\tx{par}(t)\ud t \lesssim \bfd_\tx{par}(t_3)\sup_{t\in[t_0, t_3]}\lambda_j(t).
\end{equation}
Since $|\lambda_k'(t)| \lesssim \bfd(t) \lesssim \bfd_\tx{par}(t)$,
for all $k \geq j$ we obtain
\begin{equation}
\label{eq:lambda_k_constant}
\sup_{t\in[t_0, t_3]}\lambda_k(t) \leq (1+c_4)\inf_{t\in[t_0, t_3]}\lambda_k(t),
\end{equation}
where $c_4$ can be made arbitrarily small upon adjusting $\eta_0$.
Similarly, for $k > j$ \eqref{eq:dta-4} yields
\begin{equation}
\big|\big(\wt a_k^\pm\big)'(t)\big| \lesssim \bfd_\tx{par}(t) / \lambda_k(t),
\end{equation}
so \eqref{eq:t3-first-case} together with \eqref{eq:lambda_k_constant} yield
\begin{equation}
|\wt a_k^\pm(t_3) - \wt a_k^\pm(t_0)| \leq c_4 \bfd_\tx{par}(t_3), \qquad\text{for all }k > j.
\end{equation}
Also, \eqref{eq:dta-4} yields
\begin{equation}
\max(0, \dd t|\wt a_j^-(t)|) \lesssim \bfd_\tx{par}(t)^2 / \lambda_j(t),
\end{equation}
thus using again \eqref{eq:t3-first-case} and \eqref{eq:lambda_k_constant}
with $k = j$, we have
\begin{equation}
|\wt a^-_j(t_3)| -|\wt a^-_j(t_0)|  \leq c_4\bfd_\tx{par}(t_3).
\end{equation}
Set
\begin{align}
\sh \bfd(t)^2 &:= \sum_{i \in \cS, i < j}\frac{\lambda_i(t)}{\lambda_{i+1}(t)} + \sum_{i=1}^{j-1}a_i^\pm(t)^2, \\
\fl \bfd(t)^2 &:= \sum_{i \in \cS, i \geq j}\frac{\lambda_i(t)}{\lambda_{i+1}(t)}
+ \sum_{i=j+1}^{K}a_i^\pm(t)^2 + a_j^-(t)^2.
\end{align}
From the bounds above, we obtain $\fl \bfd(t_3)^2 \leq \fl \bfd(t_0)^2 + c_4\bfd_\tx{par}(t_3)^2$, with $c_4$ small. By \eqref{eq:ignit-induct-1},
we have $\sh\bfd(t_3)^2 \leq c_2 \bfd_\tx{par}(t_3)$.
Since $\wt a_j^+$ is increasing on $[t_0, t_3]$, we obtain
\begin{equation}
\begin{aligned}
\bfd_\tx{par}(t_3)^2 &= \sh \bfd(t_3)^2 + \fl\bfd(t_3)^2 + a_j^+(t_3)^2
\leq c_2 \bfd_\tx{par}(t_3)^2 + \fl\bfd(t_0)^2 + c_4\bfd_\tx{par}(t_3)^2
+ a_j^+(t_3)^2 \\
& \leq (c_2 + c_4)\bfd_\tx{par}(t_3)^2 + \big(1 + 9/(4c_1)\big)a_j^+(t_3)^2.
\end{aligned}
\end{equation}
If $c_1, c_2$ and $c_4$ are small enough, this implies $a_j^+(t_3) \geq \frac{\sqrt{c_1}}{2}\bfd_\tx{par}(t_3)$,
thus $t_3 = t_2$ and \eqref{eq:t3-first-case} yields \eqref{eq:ode-dbd-right}.

Assume now that \eqref{eq:ignit-induct-3} and \eqref{eq:ignit-induct-4} hold.
We will prove that $\beta_{j-1}(t_0) \geq 0$ implies \eqref{eq:ode-dbd-right}.
An analogous argument would show that $\beta_{j-1}(t_0) \leq 0$ implies \eqref{eq:ode-dbd-left}.

Set
\begin{equation}
t_3 := \sup\big\{t \in [t_0, t_2]: \xi_{j-1}(t) / \lambda_j(t) \geq \frac{c_1}{4} \bfd_\tx{par}(t)^2\big\}.
\end{equation}
Then, choosing $c_2$ in \eqref{eq:ignit-induct-3} small enough, \eqref{eq:beta_j'-4} yields
\begin{equation}
\beta_{j-1}'(t) \geq 8\lambda_j(t)^{-1}, \qquad\text{for all }t\in [t_0, t_3].
\end{equation}

For $0 < x \ll 1$, set $\Phi(x) := \sqrt{-x\log x}$. Note that
\begin{equation}
\begin{aligned}
\sqrt{x} &\sim \Phi(x) / \sqrt{{-}\log \Phi(x)}, \\
\Phi'(x) &= \frac{\sqrt{-\log x}}{2\sqrt x} + O(({-}x\log x)^{-1/2}) > 0.
\end{aligned}
\end{equation}
With $c_2 > 0$ to be determined, consider the auxiliary function
\begin{equation}
\phi(t) := \beta_{j-1}(t) + c_3\Phi\big(\xi_{j-1}(t)/\lambda_{j}(t)\big).
\end{equation}
The Chain Rule gives
\begin{equation}
\phi'(t) = \beta_{j-1}'(t) + c_3 \frac{\xi_{j-1}'(t)}{\lambda_{j}(t)}\Phi'\Big(\frac{\xi_{j-1}(t)}{\lambda_{j}(t)}\Big).
\end{equation}
By \eqref{eq:xi_j'-k1}, we have $|\xi_{j-1}'(t)| \lesssim (\xi_{j-1}(t) / \lambda_{j}(t))^\frac 12\log({-}\xi_{j-1}(t) / \lambda_{j}(t))^{-1/2}$,
hence we can choose $c_3$ such that
\begin{equation}
\label{eq:f'-lbound-k1}
\phi'(t) \geq 4{\lambda_{j}(t)^{-1}}, \qquad\text{for all }t \in [t_0, t_3].
\end{equation}
If we consider $\wt \phi(t) := \beta_{j-1}(t) + \frac{c_3}{2}\Phi\big(\xi_{j-1}(t)/\lambda_{j}(t)\big)$
instead of $\phi$, then the computation above shows that $\wt \phi$ is increasing.
We have $\wt \phi(t_l) \geq 0$, so $\wt \phi(t) \geq 0$ for all $t \in [t_0, t_3]$,
implying
\begin{equation}
\label{eq:d-ubound-k1}
\bfd(t) \lesssim \sqrt{\xi_{j-1}(t)/\lambda_{j}(t)} \lesssim
\phi(t)/\sqrt{{-}\log \phi(t)}.
\end{equation}

The bound \eqref{eq:f'-lbound-k1} yields
\begin{equation}
\label{eq:fund-thm-for-f-k1}
\begin{aligned}
\big(\lambda_{j}(t)\phi(t)^{2}/\sqrt{{-}\log \phi(t)}\big)' \gtrsim \phi(t)/\sqrt{{-}\log \phi(t)}.
\end{aligned}
\end{equation}
We observe that $|\phi(t)| \lesssim \Phi(\bfd(t)^2)$, hence $\phi(t)^2/\sqrt{-\log \phi(t)} \lesssim \bfd(t)^2\sqrt{{-}\log\bfd(t)}$ and
\begin{equation}
\int_{t_0}^{t_3}\phi(t)/\sqrt{{-}\log \phi(t)}\ud t \lesssim
\lambda_{j}(t_3)\phi(t_3)^{2}/\sqrt{-\log \phi(t_3)} \lesssim \bfd(t_3)^{2}\sqrt{{-}\log\bfd(t_{3})}\lambda_{j}(t_3).
\end{equation}
Thus, \eqref{eq:d-ubound-k1} yields
\begin{equation}
\label{eq:int-d-second}
\int_{t_0}^{t_3}\bfd(t)\ud t \lesssim \bfd(t_3)^{2}\sqrt{{-}\log\bfd(t_{3})}\lambda_{j}(t_3).
\end{equation}
The argument from the first part of the proof yields \eqref{eq:lambda_k_constant}, for all $k \geq j$.
Also, \eqref{eq:dta-4} gives $\big|\big(\wt a_j^\pm\big)'(t)\big| \lesssim
 \lambda_j(t)^{-1}\bfd(t)$, thus using again \eqref{eq:d-ubound-k1}
and \eqref{eq:lambda_k_constant} we get, for all $k \geq j$,
\begin{equation}
|\wt a_k^\pm(t_3) - \wt a_k^\pm(t_0)| \leq c_4\bfd_\tx{par}(t_3)
\end{equation}
with $c_4$ small, since the right hand side of \eqref{eq:int-d-second} is $\ll \bfd(t_3)\lambda_j(t_3)$ if $\eta_0$ is small.
Finally, \eqref{eq:xi_j'-beta_j-k1} and $\beta_{j-1}(t) \geq 0$ imply,
again using \eqref{eq:int-d-second},
\begin{equation}
\label{eq:compare-of-xis}
\frac{\xi_{j-1}(t_3)}{\lambda_j(t_3)} \geq (1 - c_4)\frac{\xi_{j-1}(t_0)}{\lambda_j(t_0)} \quad\Rightarrow\quad \frac{\lambda_{j-1}(t_3)}{\lambda_j(t_3)} \geq (1 - c_4)\frac{\lambda_{j-1}(t_0)}{\lambda_j(t_0)},
\end{equation}
where in the last step we use \eqref{eq:xi_j-lambda_j-k1}.
Set
\begin{align}
\sh \bfd(t)^2 &:= \sum_{i \in \cS, i < j-1}\frac{\lambda_i(t)}{\lambda_{i+1}(t)} + \sum_{i=1}^{j-1}a_i^\pm(t)^2, \\
\fl \bfd(t)^2 &:= \sum_{i \in \cS, i \geq j}\frac{\lambda_i(t)}{\lambda_{i+1}(t)}
+ \sum_{i=j}^{K}a_i^\pm(t)^2.
\end{align}
From the bounds above, we obtain $\fl \bfd(t_3)^2 \leq \fl \bfd(t_0)^2 + c_4\bfd_\tx{par}(t_3)^2$, with $c_4$ small. By \eqref{eq:ignit-induct-1},
we have $\sh\bfd(t_3)^2 \leq c_2 \bfd_\tx{par}(t_3)$.
Applying \eqref{eq:compare-of-xis}, we obtain
\begin{equation}
\begin{aligned}
\bfd_\tx{par}(t_3)^2 &= \sh \bfd(t_3)^2 + \fl\bfd(t_3)^2 + \lambda_{j-1}(t_3) / \lambda_j(t_3)\\
&\leq c_2 \bfd_\tx{par}(t_3)^2 + \fl\bfd(t_0)^2 + c_4\bfd_\tx{par}(t_3)^2
+ \lambda_{j-1}(t_3) / \lambda_j(t_3) \\
& \leq (c_2 + c_4)\bfd_\tx{par}(t_3)^2 + \big((1-c_4)^{-1} + c_1^{-1}\big)\lambda_{j-1}(t_3) / \lambda_j(t_3).
\end{aligned}
\end{equation}
If $c_1, c_2$ and $c_4$ are small enough, this implies $\xi_{j-1}(t_3) / \lambda_j(t_3) \geq \frac{c_1}{2}\bfd_\tx{par}(t_3)^2$,
thus $t_3 = t_2$ and \eqref{eq:int-d-second} yields \eqref{eq:ode-dbd-right}.
\end{proof}
\begin{proof}[Proof of Lemma~\ref{lem:ejection-4D}]
It suffices to prove \eqref{eq:int-d-bd-4D}, and \eqref{eq:lambdaK-bd-4D}
will follow by the same argument as in the proof of Lemma~\ref{lem:ejection}.

For given $c_1 \in (0, c_1^*]$, let $\eta_0 = \eta_0(c_1)$,
$c_2 = c_2(c_1)$ and $C_2 = C_2(c_1)$ be given by Lemma~\ref{lem:ignition-lemma}.
One can always decrease $\eta_0 = \eta_0(c_1)$ and $c_2 = c_2(c_1)$, and increase $C_2 = C_2(c_1)$. Thus, it is now easy to construct inreasing functions
$c_2, \eta_0: (0, c_1^*] \to (0, \infty)$,
and a decreasing function $C_2: (0, c_1^*] \to (0, \infty)$,
such that for all $c_1$ the modulation parameters satisfy
the ignition condition with parameters $(c_1, c_2(c_1), C_2(c_1))$
on any time interval on which $\bfd(t) \leq \eta_0(c_1)$.
We first take a decreasing sequence of values $c_1$
converging to zero, find corresponding sequences of values $c_2, \eta_0$ and $C_2$
which we can assume monotone,
and then set $c_2, \eta_0$ and $C_2$ constant between consecutive values of $c_1$.

The conclusion follows from Lemma~\ref{lem:ejection-par}
and the fact that $\bfd_\tx{par}(t) \simeq \bfd(t)$ for all $t \in I$, see \eqref{eq:d-bound-k1}.
\end{proof}

\bibliographystyle{plain}
\bibliography{researchbib}

\bigskip
\centerline{\scshape Jacek Jendrej}
\smallskip
{\footnotesize
 \centerline{CNRS and LAGA, Universit\'e  Sorbonne Paris Nord}
\centerline{99 av Jean-Baptiste Cl\'ement, 93430 Villetaneuse, France}
\centerline{\email{jendrej@math.univ-paris13.fr}}
} 
\medskip 
\centerline{\scshape Andrew Lawrie}
\smallskip
{\footnotesize
 \centerline{Department of Mathematics, Massachusetts Institute of Technology}
\centerline{77 Massachusetts Ave, 2-267, Cambridge, MA 02139, U.S.A.}
\centerline{\email{alawrie@mit.edu}}
}

\end{document}